\newcommand{\LM}{\textup{LM}}
\newcommand{\RM}{\textup{RM}}
\newcommand{\UM}{\textup{UM}}
\newcommand{\LC}{\textup{LC}}
\newcommand{\RC}{\textup{RC}}
\newcommand{\UC}{\textup{UC}}
\newcommand{\DRM}{\text{DRM}}
\newcommand{\DUM}{\text{DUM}}
\newcommand{\DLC}{\text{DLC}}
\newcommand{\DRC}{\text{DRC}}
\newcommand{\DUC}{\text{DUC}}
\newcommand{\AM}{\text{AM}}
\newcommand{\GM}{\text{GM}}
\newcommand{\cad}{c\`{a}dl\`{a}g }
\newcommand{\hstar}{\, \widehat{\star} \,}
\newcommand{\tstar}{\, \widetilde{\star} \,}
\title{Unital Dilations of Completely Positive Semigroups}
\author{David J.\ Gaebler}
\begin{document}

\frontmatter

% Thesis Preface by David Gaebler

\chapter*{Preface: Background and Terminology}
This thesis is intended to be readable by a graduate student with
a working knowledge of the fundamentals of functional analysis and
operator algebras, but without prior exposure to the theory of
completely positive maps or of operator semigroups.  For instance,
the preparation provide by \cite{MacCluer} and \cite{Zhu} should be
adequate, and that by \cite{KadisonRingrose1} ample.

Following \cite{Sakai}, we distinguish between W$^*$-algebras, which
are abstractly defined as C$^*$-algebras having a Banach-space predual (necessarily unique, as it turns out), and
von Neumann algebras, which are concretely defined as weakly closed
self-adjoint subalgebras of $B(H)$ for some Hilbert space $H$.
In this convention, every von Neumann algebra is also a W$^*$-algebra
(with predual equal to a quotient of the predual $B(H)_* \simeq L^1(H)$),
whereas every W$^*$-algebra is isomorphic to some von Neumann algebra
(\cite{Sakai} 1.16.7).  We depart somewhat from Sakai in
referring to the weak-* topology on a W$^*$-algebra as
the \textbf{ultraweak topology}, which he calls the $\sigma$-topology
or weak topology, and the topology induced by the seminorms
$x \mapsto \phi(x^* x)$ for positive weak-* continuous functionals
$\phi$ as the \textbf{ultrastrong topology}, which he calls
the strong topology or s-topology.
In the case of a von Neumann algebra, these topologies coincide
with the ultraweak and ultrastrong operator topologies
as usually defined (\cite{Sakai} 1.15.6), and hence also with
the weak and strong operator topologies on bounded subsets
(\cite{Sakai} 1.15.2).  Because of this latter fact, we sometimes drop the ``ultra'' and refer merely to the \textbf{weak} and \textbf{strong} topologies when working on a bounded subset of a W$^*$-algebra.
We shall also make (rare) use of the \textbf{ultrastrong-* topology},
in which $x_\nu \to x$ iff $x_\nu \to x$ strongly and $x_\nu^* \to
x^*$ strongly.  Among the properties of these topologies that we will need are the following:
\begin{itemize}
    \item Multiplication is separately continuous in both the ultraweak and ultrastrong topologies.  However, it is jointly continuous in neither.  On bounded sets, multiplication is jointly strongly and strong-* continuous, but not jointly weakly continuous.
    \item The adjoint map $x \mapsto x^*$ is ultraweakly continuous,
    but not ultrastrongly nor even strongly continuous.
    \item On bounded subsets, one may relate the weak and
    strong topologies as follows: $x_\nu \to x$ strongly
    iff $x_\nu \to x$ weakly and $x_\nu^* x_\nu \to x^* x$
    weakly. 
    \item The \textbf{Kaplansky density theorem}: If $A$ is a W$^*$-algebra and $A_0 \subset A$ an ultraweakly dense
        *-subalgebra, then the unit ball of $A_0$ is
        strong-* dense in the unit ball of $A$.  In the case
        of a von Neumann algebra, the hypothesis of ultraweak
        density may be replaced by WOT-density.
\end{itemize}

A linear map between W$^*$-algebras
which is continuous with respect to their ultraweak topologies
is called \textbf{normal}; if the map in question
is positive, this is equivalent
to the property of preserving
upward-convergent nets (in this case weak and strong convergence
are equivalent) of
positive elements, that is, a positive linear map is normal iff
$\phi(x_\alpha) \uparrow \phi(x)$ whenever $x_\alpha \uparrow x$
(\cite{ConwayOperator} Corollary 46.5).  A C$^*$-isomorphism
between two W$^*$-algebras is automatically normal, but a *-homomorphism or
completely positive map need not be.

We refer to a W$^*$-algebra $\AAA$ as \textbf{separable} if its
predual $\AAA_*$ is a separable Banach space; this can be shown to be equivalent to numerous other conditions, including the separability of
either $\AAA$ or its unit ball in either the weak or strong topologies, and the existence of a faithful normal representation of $\AAA$ on a separable Hilbert space.  A related but strictly weaker property is that of
\textbf{countable decomposability}, which can be defined as the property
that every mutually orthogonal family of nonzero projections in $\AAA$
is at most countable; this is equivalent to the existence
of a faithful state, the existence of a faithful
normal state, or the strong metrizability of
the unit ball (\cite{Blackadar} III.2.2.27).

Additional background material, such as free probability and Hilbert C$^*$-modules, will be addressed in the chapters where these topics first appear.

Throughout, we use the boldface symbol $\one$ to denote the unit
of an algebra, while $1$ will denote the natural number.

% Thesis Chapter 1 by Dave Gaebler
\chapter{Introduction to Completely Positive Semigroups} \label{chapintroduction}

\section{Completely Positive Maps, Completely Positive Semigroups, and Endomorphism Semigroups}
In this section we introduce the basic objects of study.

\begin{definition} \label{defncpmap}
Let $A, B$ be C$^*$-algebras and $\phi: A \to B$ a linear map.  We
say that $\phi$ is
\begin{enumerate}
    \item \textbf{positive} if it maps positive elements of $A$
    to positive elements of $B$,
    \item \textbf{$n$-positive} if the map $I_n \otimes \phi:
    M_n(\com) \otimes A \to M_n(\com) \otimes B$ is positive, and
    \item \textbf{completely positive} if $\phi$ is $n$-positive
    for all $n \geq 1$.
\end{enumerate}
\end{definition}

%\begin{remark} \label{remcpmaps}
We record here without proof some of the important properties of completely
positive maps.
\begin{itemize}
    \item If either $A$ or $B$ is commutative, the map $\phi: A \to B$
    is positive iff it is completely positive.  (\cite{Paulsen} Theorems 3.9 and 3.11)
    \item Every positive linear map is a *-map, that is, has
    the property that $\phi(a)^* = \phi(a^*)$ for all $a \in A$.  (\cite{Paulsen} Exercise 2.1)
    \item If $\phi$ is 2-positive (so, in particular, if $\phi$ is
    completely positive), then $\phi(a)^* \phi(a) \leq \phi(a^* a)$
    for all $a \in A$.  This is known as the \textbf{Schwarz
    inequality for 2-positive maps}.  (\cite{Paulsen} Proposition 3.3)
    \item If $A$ and $B$ are W$^*$-algebras, a completely positive
    map $\phi: A \to B$ is normal iff it is strongly continuous.  (\cite{Blackadar} Proposition III.2.2.2).  Strong continuity is equivalent to ultrastrong because of the boundedness of the map.
    \item If $\phi: A \to B(H)$ is a completely positive map, there
    exists a triple $(K, V, \pi)$, unique up to isomorphism, such that
    \begin{enumerate}
        \item $K$ is a Hilbert space
        \item $V: H \to K$ is a linear map such that $\|\phi\| = \|V\|^2$
        \item $\pi: A \to B(K)$ is a *-homomorphism such that
        $V^* \pi(a) V = \phi(a)$ for all $a \in A$
    \end{enumerate}
    and with the additional minimality property that $\overline{\pi(A) VH} = K$.  The triple $(H, V, \pi)$ is called the \textbf{minimal Stinespring
    dilation} of $\phi$.  If $\phi$ is unital, $V$ is an isometry; if $\phi$ is normal, so is $\pi$.  This is known as
    \textbf{Stinespring's dilation theorem} (\cite{Stinespring},
    \cite{Paulsen} Theorem 4.1,
    \cite{Blackadar} Theorems II.6.9.7 and III.2.2.4).
\end{itemize}
%\end{remark}

\begin{definition} \label{defcpsemigroup}
 Let $\AAA$ be a C$^*$-algebra (resp. W$^*$-algebra).
 \begin{enumerate}
    \item A \textbf{cp-semigroup} on $\AAA$ is a family
    $\{\phi_t: t \in [0,\infty)\}$ of (normal) completely positive contractive linear
    maps  $\phi_t: \AAA \to \AAA$ such that $\phi_0 = \dss{\text{id}}{\AAA}$ and
    \[
    \phi_t \circ \phi_s = \phi_{t+s}
    \]
    for all $s,t \geq 0$.

    \item An \textbf{e-semigroup} on $\AAA$ is a cp-semigroup in
    which each $\phi_t$ is a *-endomorphism.

    \item Capital letters (\textbf{CP-semigroup}, \textbf{E-semigroup}) indicate
    that for each $a \in \AAA$, $t \mapsto \phi_t(a)$ is a
    continuous function from $[0,\infty)$ to $\AAA$, where $\AAA$
    is given the norm (resp. ultraweak) topology.  We refer to this
    continuity property of the semigroup as \textbf{strong continuity}
    or \textbf{point-norm continuity} in the C$^*$ case, and
    \textbf{point-weak continuity} in the W$^*$ case.

    \item A subscript of 0 (\textbf{cp$_0$-semigroup}, \textbf{CP$_0$-}, \textbf{e$_0$-}, \textbf{E$_0$-})
    indicates that $\AAA$ contains a unit $\one$
    and that $\phi_t(\one) = \one$ for all $t \geq 0$.
 \end{enumerate}
\end{definition}

\begin{remark} \label{remQMP}
The term \textbf{quantum Markov process} or \textbf{quantum Markov
semigroup} is sometimes used in the literature to describe cp-, cp$_0$-, CP, or CP$_0$-semigroups; however, the usage is nonuniform as to which of these is indicated, so we adhere to the more precise notation above.
\end{remark}

\begin{remark} \label{remCPdefinition}
In the case where $\AAA$ is a W$^*$-algebra,
the definition of cp-semigroups and CP-semigroups remain unchanged
when stated in terms of the strong topology rather than the weak
topology.  That is, each map $\phi_t$ is normal iff it is strongly
continuous, as noted above; and, as we shall show in more detail
below, the map $t \mapsto \phi_t(a)$ for fixed $a$ is continuous
with respect to the weak topology on bounded subsets
of $\AAA$ iff it is continuous
with respect to the strong topology (that is, point-weak continuity is equivalent to point-strong continuity).
\end{remark}

\begin{definition} \label{definvariantstate}
Let $\phi = \{\phi_t\}$ be a cp-semigroup on $A$.  An
\textbf{invariant state} for $\phi$ is a state $\omega: A \to \com$
with the property
\[
\forall t \geq 0: \qquad \omega \circ \phi_t = \omega.
\]
\end{definition}

\section{Dilation}
In this section we introduce the ways in which cp-semigroups and e-semigroups may be related to each other.

\begin{definition} \label{defembeddingretraction}
Let $A, B$ be C$^*$-algebras.
\begin{enumerate}
    \item A \textbf{conditional expectation} on $A$ is a linear
    map $E: A \to A$ such that $E^2 = E$, $E(x^*) = E(x)^*$ for all
    $x \in A$, and $\|E\| = 1$.
    \item An \textbf{embedding from $A$ to $B$} is an injective
    *-homomorphism from $A$ to $B$.
    \item Given an embedding $i: A \to B$, a \textbf{retraction
    with respect to $i$} is a completely positive map $e: B \to A$
    such that $e \circ i = \text{id}_{A}$.
\end{enumerate}
\end{definition}

\begin{remark}
A linear map $E: A \to A$ is a conditional expectation iff it
is a completely positive contraction and is a bimodule map over its range, i.e.\ has the property that $E(E(a) x) =  E(a) E(x) = E(aE(x))$ for
all $a,x \in A$; this is known as \textbf{Tomiyama's theorem}
(\cite{Tomiyama}).  As a result, if $i: A \to B$ is an embedding
and $e: B \to A$ a corresponding retraction, then $i \circ e$
is a conditional expectation on $B$ with range $i(A)$.  Hence,
the distinction between a retraction and a conditional expectation is precisely the distinction between \emph{identifying} $A$ as a subalgebra of $B$, and \emph{explicitly writing an inclusion map} from $A$ to $B$.  The difference is a matter of taste; we generally follow the latter approach.
\end{remark}

\begin{definition}
Let $\phi = \{\phi_t\}$ be a cp-semigroup on a C$^*$-algebra $\AAA$.
 An \textbf{e-dilation} of $(A, \phi)$ is a tuple $(\Aa, i, \E, \sigma)$ where $\Aa$ is a C$^*$-algebra, $i: A \to \Aa$ an embedding,
$\E: \Aa \to A$ a retraction with respect to $i$,
and $\sigma = \{\sigma_t\}$ an e-semigroup on $\Aa$, satisfying
\[
\forall t \geq 0: \qquad \phi_t = \E \circ \sigma_t \circ i.
\]
We summarize the relationship in the following diagram:
\[ \xymatrixcolsep{5pc}\xymatrix{
\Aa \ar[r]^{\sigma_t} & \Aa \ar[d]^\E \\
A \ar[u]^i \ar[r]_{\phi_t} & A
} \]
We call $(\Aa, i, \E, \sigma)$
a \textbf{strong e-dilation} if it satisfies $\E \circ \sigma_t
= \phi_t \circ \E$, corresponding to the diagram
\[ \xymatrixcolsep{5pc}\xymatrix{
\Aa  \ar[d]_\E \ar[r]^{\sigma_t} & \Aa \ar[d]^\E \\
A \ar[r]_{\phi_t} & A
} \]
Note that this implies
\[
\phi_t = \phi_t \circ \E \circ i = \E \circ \sigma_t \circ i
\]
so that every strong dilation is a dilation, but the converse does
not always hold.
% AGENDUM: Track down Skeide source for this terminology (cite slides if necessary), and verify that in fact it is stronger.
An e$_0$-dilation of a cp$_0$-semigroup is said to be \textbf{unital}
if $i(\one) = \one$.
\end{definition}

\section{Motivation and Examples}
\begin{example}
Let $H$ be a Hilbert space and $\{T_t\}$ a semigroup of contractions
on $H$.  Then the maps $\phi_t: B(H) \to B(H)$ defined by
\[
\phi_t(X) = T_t^* X T_T
\]
form a cp-semigroup.  It is a cp$_0$-semigroup iff all the $T_t$
are isometries, an e-semigroup iff all the $T_t$ are
coisometries, and hence an e$_0$-semigroup iff all the $T_t$
are unitaries.  If $T_t$ is strongly continuous, in that
$t \mapsto T_t$ is continuous with respect to the strong operator
topology on $B(H)$, then $\{\phi_t\}$ is a CP-semigroup.

A theorem of Cooper (\cite{Cooper}) states that, given
a strongly continuous contraction semigroup $\{T_t\}$ on
$H$, there exist a Hilbert space $K$, an isometry
$V: H \to K$, and a strongly continuous group $\{U_t\}$ of
unitaries on $K$ such that
\[
T_t = V^* U_t V.
\]
If the $T_t$ are isometries, one obtains the stronger condition
\[
V T_t = U_t V.
\]
Given the Cooper dilation of the semigroup $\{T_t\}$,
one can then define
\begin{enumerate}
    \item the E$_0$-semigroup $\{\alpha_t\}$ on $B(K)$ by $\alpha_t(Y) = U_t^* Y U_t$
    \item the non-unital embedding $i: B(H) \to B(K)$ by $i(X) = V X V^*$
    \item the retraction $\E: B(K) \to B(H)$ by $\E(Y) = V^* Y V$
\end{enumerate}
Then $(B(K), i, \E, \{\alpha_t\})$ is an E$_0$-dilation of
$(B(H), \{\phi_t\})$.

This example
plays a role in the general theory; for instance, Evans and Lewis
prove their dilation theorem (\cite{EvansLewis}) by showing that certain more general
semigroups are equivalent to those of the form $X \mapsto T_t^* X T_t$,
and then applying Cooper dilation.
\end{example}

\begin{example}
In (one of the axiomatizations of) quantum mechanics, every physical system corresponds to a von Neumann algebra $\AAA$, with states of the system
corresponding to positive elements of $\AAA$ of trace 1.  A physical transformation of the system must map states to states and hence, in
particular, must be a positive map; a continuous-time evolution of the system corresponds therefore to a semigroup of positive maps.  If the system is entangled with an environment, a physical transformation of the composite system  must map composite states to composite states, which implies complete positivity of the restriction to the original system; hence, a continuous-time evolution of such an \textbf{open quantum system} is represented by a semigroup of completely positive maps.  Continuity requirements are also natural to impose in this setting as one of the physical axioms.

Actually, the representation of such a system as a completely positive semigroup represents an approximation to a more general \textbf{master equation}, which approximation holds under various simplifying physical assumptions such as those of ``weak coupling'' or a ``singular reservoir.''  Completely positive semigroups arise, for instance, in quantum thermodynamics, where the environment may be regarded as an infinite ``heat bath'' whose self-interactions are much faster than those of the system under study.  For more on these matters see \cite{Haake}, \cite{DaviesMME}, \cite{GKS}, \cite{Lindblad}, \cite{Davies}, \cite{EvansLewis}, and \cite{Attal2}.
In the thermodynamic context one typically assumes the existence of a
normal $\phi$-invariant state $\omega$ on $\AAA$, representing a thermodynamic
equilibrium of the system; correspondingly, one is interested in
dilations $(\Aa, i, \E, \sigma)$ for which there exists a
normal $\sigma$-invariant state $\varpi$ on $\Aa$, which dilates
$\omega$ in the sense that $\varpi \circ i = \omega$.  In the
case of a strong dilation this is automatic, as one can simply
define $\varpi = \omega \circ \E$, and it follows that
\[
\varpi \circ \sigma_t = \omega \circ \E \circ \sigma_t
= \omega \circ \phi_t \circ \E = \omega \circ \E =\varpi.
\]

In this setting, dilation is a way of relating the dynamics
of an open (or ``dissipative'') system to the dynamics of
a closed (or ``non-dissipative'') system containing it.
\end{example}

\begin{example} \label{exmarkovdilation}
Let $\AAA$ be a commutative unital C$^*$-algebra, and let $S$
be the maximal ideal space of $\AAA$, so that $\AAA \simeq C(S)$.
Let $\{P_t\}$ be a CP$_0$-semigroup on $\AAA$.  By Riesz representation
we obtain for each $t \geq 0$ and each $x \in S$ a measure $p_{t,x}$
characterized by the property
\[
\forall f \in C(S): \qquad \int_S f(y) \, dp_{t,x}(y) = (P_t f)(x).
\]
Moreover, since $P_t f$ is a continuous function, the family $\{p_{t,x}\}$
varies weak-* continuously in $x$.  The property $P_0 = \text{id}$
implies that $p_{0,x}$ is the point mass at $x$, and the
semigroup property $P_{s+t} = P_s P_t$ implies the property
\[
p_{t+s,x}(E) = \int_S p_{s,y}(E) dp_{t,x}(y),
\]
sometimes referred to as the \textbf{Chapman-Kolmogorov equation}.

Let $\SSSS$ denote the \textbf{path space}
$S^{[0,\infty)}$, and $\Aa =
C(\SSSS)$.  We have the embedding $i: \AAA \to \Aa$ given
by $i(f)(\pP) = f(\pP(0))$.
By the Stone-Weierstrass theorem, the *-subalgebra
$\Aa_0 \subset \Aa$ consisting of finite sums of functions of
the form $f_1^{(t_1)} \cdots f_n^{(t_n)}$, where for a path
$\pP \in \SSSS$ the value of $f_i^{(t_i)}$ depends only
on $\pP(t_i)$, is dense in $\Aa$.  We define a unital
linear map $\E_0: \Aa_0 \to \AAA$ on $\AAA_0$ by
\[
\E_0 [f_1^{(t_1)} \cdots f_n^{(t_n)}]
= f_n P_{t_n-t_{n-1}} \Big( f_{n-1} P_{t_{n-1}- t_{n-2}} \Big( \cdots
P_{t_2-t_1} \Big(f_1 \Big) \Big) \cdots \Big).
\]
Clearly $E_0 \circ i = \text{id}_\AAA$.
We will show shortly that $\E_0$ is well-defined and contractive,
so that it extends to a unital contractive
(hence positive, hence completely positive)
linear map $\E: \Aa \to \AAA$ which satisfies $\E \circ i = \text{id}_\AAA$
and is therefore a retraction with respect to $i$.

We define for each $t \geq 0$ the continuous maps $\lambda_t:
\SSSS \to \SSSS$ by $(\lambda_t \pP)(s) = \pP(s+t)$, and
the corresponding *-endomorphisms $\sigma_t: \Aa \to \Aa$ by
$\sigma_t f = f \circ \lambda_t$.  It is immediate from
the above that $\E \circ \sigma_t \circ i = P_t$, so that
we have obtained a unital e-dilation of our CP$_0$-semigroup.

Given any regular Borel probability measure $\mu_0$ on $S$,
we obtain through Riesz representation a regular Borel probability
measure $\mu$ on $\SSSS$ characterized by the property
\[
\forall f \in \Aa: \qquad \int_\SSSS f \, d\mu
= \int_S (\E f) \, d\mu_0.
\]
This then implies that
\[
\forall f \in \AAA: \qquad (P_t f)(x)
= \EE \Big[f(\pP(t)) \Big| \pP(0) = x\Big]
\]
where $\EE$ denotes conditional expectation in the probabilistic
sense, so that we have constructed a Markov process $\{\pP(t)\}$
with specified transition probabilities.
We thus obtain a C$^*$-algebraic version of the classical
\textbf{Daniell-Kolmogorov construction}, at least
in the context of \textbf{Feller processes}
rather than general Markov processes.

We now consider an alternate perspective on the same construction, which
enables us easily to prove that $\E_0$ is well-defined and contractive, and simultaneously offers a preview of the techniques used in
this thesis.
For each nonempty finite subset $\gamma \subset [0,\infty)$ let
$\AAA_\gamma$ denote a tensor product of $|\gamma|$ copies of
$C(S)$.  For $\beta \subset \gamma$ we obtain unital embeddings
$\AAA_\beta \to \AAA_\gamma$ as follows: Writing $\gamma$
as a disjoint union $\beta \cup \gamma'$, identify
$\AAA_\gamma$ with $\AAA_\beta \otimes \AAA_{\gamma'}$
and embed via $f \mapsto f \otimes \one$.  This yields
an inductive system and, using the general fact that
$C(X \times Y) \simeq C(X) \otimes C(Y)$ for compact
Hausdorff spaces $X$ and $Y$, we see that
$\displaystyle\lim_{\rightarrow} \AAA_\gamma$ is isomorphic
to $\Aa$.  The domain of $\E_0$ is the union of the images of
all the $\AAA_\gamma$ inside $\Aa$, and the well-definedness and
contractivity of $\E_0$ reduce, by induction, to the well-definedness
and contractivity of the maps $\theta_t: C(S) \otimes C(S)$ given on
simple tensors by $\theta_t (f \otimes g) = (P_t f) g$.  But
such a map $\theta_t$ may
be equivalently defined as
\[
(\theta_t F)(x) = \int_S F(y,x) dp_{t,x}(y)
\]
which obviously yields a well-defined contraction
on $C(S) \otimes C(S)$.

We note that the e-semigroup $\{\sigma_t\}$ is not continuous,
even when the original semigroup $\{P_t\}$ is; that is,
we obtain only an e$_0$-dilation, not an E$_0$-dilation, of
a CP$_0$-semigroup.  We shall return to this point in chapter
\ref{chapcontinuous}.
\end{example}

\begin{remark} \label{remwhy}
We view the last two examples as representing the two major streams of thought which motivate the study of the dilation theory of completely positive semigroups.  On the one hand, in the physics setting such a dilation corresponds to an embedding of an open quantum system inside some closed quantum system.  On the other hand, we have seen that
dilating a CP$_0$-semigroup defined an a \emph{commutative} C$^*$-algebra amounts to construction of a Markov process; hence, we may think of
dilations of general CP$_0$-semigroups as a way
of constructing ``noncommutative Markov processes.''
\end{remark}

\section{Continuity Properties of Semigroups}
In this section we examine in greater detail the continuity properties
of completely positive semigroups, beginning with more general
considerations regarding contraction semigroups on Banach spaces.

\subsection{C$_0$-Semigroups} \label{secC0semigroups}
We recount here some of the essentials of the theory of contraction
semigroups on Banach spaces, which can be found in \cite{HillePhillips},
\cite{DunfordSchwarz1}, \cite{BratteliRobinson1}, and \cite{EngelNagel}.

A semigroup $\{T(t)\}_{t \geq 0}$ of contractions on a Banach space
$\Xx$ is called a \textbf{contraction semigroup}.  Such a semigroup
is said to be
\begin{enumerate}
    \item \textbf{uniformly continuous} if $t \mapsto T(t)$ is
    continuous with respect to the norm topology on $B(\Xx)$; that
    is, if $\displaystyle\lim_{t \to t_0} \|T(t)-T(t_0)\|_{B(\Xx)} = 0$
    \item \textbf{strongly continuous} if, for each
    $x \in \Xx$, $t \mapsto T(t) x$ is continuous with respect
    to the norm topology on $\Xx$; that is, if
    $\displaystyle\lim_{t \to t_0} \|T(t)x - T(t_0)x\|_\Xx = 0$ for each $x \in \Xx$
    \item \textbf{weakly continuous} if, for each $x \in \Xx$,
    $t \mapsto T(t) x$ is continuous with respect to the weak topology
    on $\Xx$; that is, if $\displaystyle\lim_{t \to t_0} \ell \big( T(t) x - T(t_0) x \big) = 0$ for each $x \in \Xx$ and each
    $\ell \in \Xx^*$
\end{enumerate}
In case $\Xx$ is the dual of some other Banach space $\Xx_*$, we
define the semigroup to be
\begin{enumerate}[resume]
    \item \textbf{weak-* continuous} if, for each $x \in \Xx$,
    $t \mapsto T(t) x$ is continuous with respect to the weak-* topology
    on $\Xx$; that is, if $\displaystyle\lim_{t \to t_0} \ell \big( T(t) x - T(t_0) x \big) = 0$ for each $x \in \Xx$ and each
    $\ell \in \Xx_*$
\end{enumerate}

These modes of continuity can, of course, be defined for other
families $\{T(t)\}$ of operators which are not necessarily contractions
and do not necessarily form a semigroup.  In the case of contraction
semigroups, however, it turns out that strong and weak continuity
are equivalent (\cite{EngelNagel} Theorem 1.1.6).  Furthermore, uniform continuity is too stringent a hypothesis to be attainable in most applications of interest, so that the bulk of the study of contraction semigroups revolves around strongly continuous contraction semigroups, also known as \textbf{contractive C$_0$-semigroups} .  More generally,
one can study strongly continuous semigroups of bounded operators,
but it can be easily shown that these may all be written as scalar-valued exponential functions times contraction semigroups, so that one reduces to the contractive case.

The most important object associated with a contractive C$_0$-semigroup
is its \textbf{generator}, the operator $\LL$ on $\Xx$ defined by
the formula
\[
\LL x = \lim_{t \to 0} t\inv [T(t) x - x].
\]
This is in general a closed densely defined unbounded operator,
and in fact
is bounded iff the semigroup is uniformly continuous.  Furthermore,
the generator satisfies the resolvent growth condition $\|(\lambda \one - \LL)\inv\| \leq \lambda\inv$ for all $\lambda > 0$.  The
\textbf{Hille-Yosida theorem} provides a converse, stating that
every closed densely defined operator satisfying this resolvent growth
condition is the generator of some C$_0$-semigroup.  Intuitively,
this semigroup is given by $T(t) = e^{t \LL}$, but this exponential
functional cannot be defined through the usual power series when
$\LL$ is unbounded; one can, however, write
\[
T(t) x = \lim_{n \to \infty} \left(\one - \frac{t}{n} \LL \right)^{-n} x
\]
which is known as the \textbf{Post-Widder inversion formula}
for C$_0$-semigroups.  We thus have a bijection between
contractive C$_0$-semigroups and closed densely defined operators satisfying
a resolvent growth condition, with explicit formulas for both
directions of the bijection.

A notable consequence of the semigroup property is the equivalence between certain notions of continuity and measurability.  We
define a family $\{T(t)\}$ of operators on $\Xx$,
equivalently viewed as a function $T: [0,\infty) \to B(\Xx)$, to be
\begin{enumerate}
    \item \textbf{uniformly measurable} if $T$ is the a.e. norm
    limit of a sequence of countably-valued functions from
    $[0,\infty)$ to $B(\Xx)$
    \item \textbf{strongly measurable} if, for each $x \in \Xx$,
    $t \mapsto T(t) x$ is the a.e. norm limit of a sequence
    of countably-valued functions from $[0,\infty)$ to $\Xx$
    \item \textbf{weakly measurable} if, for each $x \in \Xx$
    and $\ell \in \Xx^*$, $t \mapsto \ell(T(t) x)$ is
    a measurable function from $[0,\infty)$ to $\com$
\end{enumerate}
In case $\Xx$ is the dual of another Banach space $\Xx_*$,
we also define $\{T(t)\}$ to be
\begin{enumerate}[resume]
    \item \textbf{weak-* measurable}  if, for each $x \in \Xx$
    and $\ell \in \Xx_*$, $t \mapsto \ell(T(t) x)$ is
    a measurable function from $[0,\infty)$ to $\com$.
\end{enumerate}
One might ask why we do not instead define the different types of measurability using the Borel $\sigma$-algebras generated by the corresponding continuity types; the short answer is that a better integration theory results from the definitions given here (the Bochner integral in the case of uniform measurability, the Pettis integral for the others).

It turns out that weak and strong measurability are equivalent
when $\Xx$ is separable (\cite{HillePhillips} Corollary 2, p.\ 73)
and, when $\{T(t)\}$ is a contraction semigroup, both are
equivalent to strong and weak continuity at times $t > 0$ (\cite{HillePhillips} Theorem 10.2.3).  This latter result is analogous to the fact that measurable solutions to the Cauchy functional equation
$f(x+y) = f(x)f(y)$ on $\re$ are exponentials, and hence are continuous.  However, strong measurability at $t = 0$ is not enough
to infer strong continuity at $t = 0$, but requires the additional
hypothesis that $\bigcup_{t > 0} T(t) \Xx$ be dense in
$\Xx$ (\cite{HillePhillips} Theorem 10.5.5).

A contraction semigroup $\{T(t)\}$ on $\Xx$ induces an
\textbf{adjoint semigroup} $\{T(t)^*\}$ on $\Xx^*$ by the
formula $(T(t)^* f)(x) = f(T(t) x)$.  If $\Xx$ is the dual
of $\Xx_*$ and if each $T(t)$ is weak-* continuous,
one obtains also a \textbf{pre-adjoint semigroup} $\{T(t)_*\}$
through the same formula; since the weak-* topology is
of much more interest than the weak topology for spaces
having a predual, this is usually referred to in the literature as the
adjoint semigroup (and of course is the restriction of the adjoint
semigroup to $\Xx_* \subset \Xx^*$).  Weak-* continuity and
measurability of $\{T(t)\}$ are equivalent to weak continuity and measurability of $\{T(t)_*\}$, so that in particular they
are equivalent to each other at times $t > 0$
if $\Xx_*$ is separable.

The last topic to consider for contraction semigroups
is the passage from separate to joint continuity.  We
summarize the results in the following theorem.

\begin{theorem}[Joint Continuity of C$_0$-Semigroups] \label{thmjointcontinuityC0} \
\begin{enumerate}
    \item Let $\Xx$ be a Banach space and $\{T(t)\}_{t \geq 0}$
    a contractive C$_0$-semigroup.  Then $T(t)(x)$ is
    jointly continuous in $t$ and $x$; that is, the map
    $[0,\infty) \times \Xx \sa{T} \Xx$ is continuous
    with respect to the norm topology on $\Xx$.
    \item Let $\Xx$ be a Banach space with separable predual
    $\Xx_*$, and
    $\{T(t)\}_{t \geq 0}$ a weak-* continuous semigroup
    of weak-* continuous contractions on $\Xx$.
    Then $T(t)(x)$ is jointly weak-* continuous
    in $t$ and $x$ on bounded subsets of $\Xx$.
    That is, the map
    $[0,\infty) \times \Xx_1 \sa{T} \Xx_1$
    is continuous with
    respect to the weak-* topology on $\Xx_1$.

    \item Let $\AAA$ be a W$^*$-algebra and
    $\{\phi_t\}_{t \geq 0}$ a C$_0$-semigroup
    of strongly continuous contractions on $\AAA$.
    Then $\phi_t(a)$ is jointly strongly continuous
    in $t$ and $a$ at nonzero times.  That is,
    the map $(0,\infty) \times \AAA_1 \sa{\phi} \AAA_1$
    is continuous with
    respect to the strong topology on $\AAA_1$.
\end{enumerate}
\end{theorem}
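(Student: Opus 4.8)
For part (1), I would fix a target $(t_0,x_0)$ and simply invoke the triangle inequality together with contractivity:
\[
\|T(t)x - T(t_0)x_0\| \leq \|T(t)(x-x_0)\| + \|(T(t)-T(t_0))x_0\| \leq \|x-x_0\| + \|(T(t)-T(t_0))x_0\|,
\]
where the bound $\|T(t)\|\leq 1$ removes the $t$-dependence of the first term and the second term tends to $0$ by strong continuity of the semigroup; that is the entire argument. For part (2), since $\Xx_*$ is separable the weak-* topology is metrizable on $\Xx_1$, so it suffices to handle sequences $(t_n,x_n)\to(t_0,x_0)$ with $x_n,x_0\in\Xx_1$. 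The key point is that the pre-adjoint semigroup $\{T(t)_*\}$ on $\Xx_*$ is a contraction semigroup which is weakly continuous (the stated equivalence between weak-* continuity of $\{T(t)\}$ and weak continuity of $\{T(t)_*\}$), hence norm continuous, since weak and strong continuity coincide for contraction semigroups. Testing against $m\in\Xx_*$ and writing $\langle T(t_n)x_n,m\rangle=\langle x_n,T(t_n)_* m\rangle$, I would split
\[
\langle x_n, T(t_n)_* m\rangle - \langle x_0, T(t_0)_* m\rangle = \langle x_n,\, T(t_n)_* m - T(t_0)_* m\rangle + \langle x_n-x_0,\, T(t_0)_* m\rangle,
\]
where the first term is at most $\|T(t_n)_* m - T(t_0)_* m\|\to 0$ (norm continuity of the pre-adjoint, $\|x_n\|\leq 1$) and the second tends to $0$ by weak-* convergence $x_n\to x_0$ against the \emph{fixed} functional $T(t_0)_* m$.

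For part (3) the same pre-adjoint device first delivers joint \emph{weak} continuity at every time: each normal contraction $\phi_t$ has a predual map $(\phi_t)_*$ on $\AAA_*$, these assemble into a weakly continuous (hence, as above, norm-continuous, i.e.\ C$_0$) contraction semigroup on $\AAA_*$, and repeating the two-term split shows $(t,a)\mapsto\phi_t(a)$ is jointly weakly continuous on $[0,\infty)\times\AAA_1$. By the criterion that on bounded sets $x_\nu\to x$ strongly iff $x_\nu\to x$ weakly and $x_\nu^* x_\nu\to x^* x$ weakly, the task reduces to upgrading this to strong convergence, for which I would work directly with the seminorms $\|y\|_\omega=\omega(y^* y)^{1/2}$ for normal states $\omega$.

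Fixing $\omega$ and a target $(t_0,a_0)$, split
\[
\|\phi_t(a)-\phi_{t_0}(a_0)\|_\omega \leq \|\phi_t(a-a_0)\|_\omega + \|(\phi_t-\phi_{t_0})(a_0)\|_\omega,
\]
the second term vanishing by point-strong continuity of $t\mapsto\phi_t(a_0)$. The first term is where the difficulty concentrates, precisely because the adjoint is not strongly continuous, so one cannot pass $a\to a_0$ through $\phi_t$ and the modulus. I would control it with the Schwarz inequality for $2$-positive maps (the semigroups of interest are completely positive, so this is available), bounding $\|\phi_t(a-a_0)\|_\omega^2\leq ((\phi_t)_*\omega)\big((a-a_0)^*(a-a_0)\big)$ and thereby collapsing the two factors of $\phi_t$ into a single application that the predual map absorbs. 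The remaining subtlety is that the testing functional $(\phi_t)_*\omega$ varies with $t$; but $t\mapsto(\phi_t)_*\omega$ is norm continuous, so $\{(\phi_t)_*\omega : |t-t_0|\leq\delta\}$ is norm compact in $\AAA_*$, and a finite-cover argument converts pointwise strong smallness of $a-a_0$ into smallness uniform in $t$. I expect this uniform control of the $t$-dependent seminorm, and more fundamentally the strong-discontinuity of the adjoint, to be the main obstacle; the restriction to positive times reflects that, should one wish to avoid the positivity assumption, the natural substitute is to factor $\phi_t=\phi_r\circ\phi_{t-r}$ with $0<r<t_0$ and lean on the fixed strongly continuous map $\phi_r$, a device unavailable at $t=0$.
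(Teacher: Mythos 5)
Your part (1) is the paper's argument essentially verbatim. Your part (2) is correct but takes a genuinely different route: the paper metrizes $\Xx_1$ and invokes a Baire-category theorem of Chernoff--Marsden for joint continuity at times $t>0$, together with a compactness argument of Lawson for $t=0$; you instead pass to the pre-adjoint semigroup $\{T(t)_*\}$ on $\Xx_*$, upgrade its weak continuity to strong continuity (i.e.\ norm continuity of $t\mapsto T(t)_*m$ for each $m$) using the standard equivalence for contraction semigroups, and finish with a two-term split. Your argument is more elementary, handles $t=0$ with no compactness, and in fact never uses the separability of $\Xx_*$ --- the estimate works for nets, so the metrization step in your write-up is superfluous. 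For this part your route is a genuine improvement.

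Part (3) is where there is a gap. The statement assumes only that $\{\phi_t\}$ is a C$_0$-semigroup of strongly continuous \emph{contractions}; no positivity is assumed, so the Schwarz inequality $\phi_t(y)^*\phi_t(y)\le\phi_t(y^*y)$ on which your main estimate rests is unavailable. (That estimate is sound where complete positivity holds --- it is essentially the paper's proof of the later Theorem~\ref{thmjointcontinuityCP}, and there it even yields continuity at $t=0$.) Your proposed substitute, factoring $\phi_t=\phi_r\circ\phi_{t-r}$ with $0<r<t_0$, does not close the gap: after the split you still need $\phi_{t-r}(a-a_0)\to 0$ strongly, you only know this convergence weakly (and boundedly), and applying the fixed strongly continuous map $\phi_r$ to a weakly convergent bounded net does not upgrade weak convergence to strong convergence. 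The missing ingredient is exactly the Baire-category step of the paper's proof: separate strong continuity of $(t,a)\mapsto\phi_t(a)$ on $[0,\infty)\times\AAA_1$, with $\AAA_1$ strongly metrizable, gives joint continuity at a dense $G_\delta$ set of times for each fixed $a$, and only then does factoring through such a good time (the Chernoff--Marsden device) propagate joint continuity to every $t>0$. Relatedly, you have misdiagnosed why the statement excludes $t=0$: it is not to avoid a positivity hypothesis, but because $\AAA_1$ is not strongly compact, so the compactness argument that rescues $t=0$ in part (2) is unavailable here.
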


\begin{proof} \
\begin{enumerate}
    \item By the triangle inequality and the contractivity
    of the semigroup,
    \[
    \|T(s)(y)-T(t)(x)\| \leq \|T(s)(y-x)\|
    + \|T(s)x - T(t)x\| \leq \|y-x\| + \|T(s)(x)-T(t)(x)\|
    \]
    which tends to zero as $(s,y) \to (t,x)$.

    \item By Alaoglu's theorem, $\Xx_1$ is weak-* compact, and since $\Xx_*$ is assumed to be separable, another standard result implies that $\Xx_1$ is weakly metrizable (\cite{ConwayFunctional} V.5.1).  Joint weak-*
        continuity at $(t,a)$ with $t > 0$ is therefore a special case of Theorem 4 in \cite{ChernoffMarsden}.  Joint weak-*
        continuity at $(0,a)$ is more complicated to establish, but is a consequence of Corollary 3.3 of \cite{Lawson}. For the purposes
        of self-containment,
        we sketch here the relevant arguments.
\begin{enumerate}
    \item If $T$ is a Baire space, $X$ a metric space,
    and $f: T \times X \to X$ a separately continuous function,
    then for each $x \in X$ there exists a dense $G_\delta$
    subset $T_0 \subseteq T$ such that, for all $t_0 \in T_0$,
    $f$ is jointly continuous at $(t_0, x)$.  This
    standard result appears as Exercise XI.10.11 in
    \cite{Dugundji} and as Exercise 7.41 in
    \cite{Royden}.  Given $x_0 \in X$,
    one defines for each $m,n \in \N$ the closed subset
    \[
    F_{m,n} = \left\{t \in T \mid \forall x \in B_{1/m}(x_0):
    \ d(f(t,x), f(t,x_0)) < \frac{1}{m} \right\},
    \]
    the union of all which is $T$.  One then defines the open
    dense subsets
    \[
    \OO_m = \bigcup_{n=1}^\infty F_{m,n}^\circ
    \]
    of $T$, and the intersection $T_0 = \bigcap_m \OO_m$ is therefore
    also dense by the Baire Category Theorem.  The sets
    are constructed in such a way that $f$ is jointly
    continuous at $(t_0, x_0)$ for all $t_0 \in T_0$.

    \item In the case where $T = [0,\infty)$ and $f$ is
    additive in the first variable in the sense
    that $f(t+s,x) = f(t,f(s,x))$ and $f(0,x) = x$,
    one can conclude further
    that $f$ is jointly continuous at $(t, x)$ for all $x \in X$
    and all $t > 0$.  This is theorem 4 of \cite{ChernoffMarsden},
    and is proved as follows: Let $x \in X$ and $t > 0$.
    Choose $T_0 \subset [0,\infty)$ dense such that
    $f$ is jointly continuous at all $(t_0, x)$ with
    $t_0 \in T_0$.  Because $T_0$ is dense in $[0,\infty)$,
    it must contain some element $t_0 < t$.  Then for $t'$
    sufficiently close to $t$ we will have $t' > \min(t_0, t-t_0)$, and can
    write for each $x' \in X$
    \[
    f(t', x') = f(t-t_0, f(t_0+t'-t,x')).
    \]
    Now as $t' \to t$ and $x' \to x$, $f(t_0+t'-t,x')
    \to f(t_0, x)$ by joint continuity at $(t_0, x)$.
    It follows that $f(t-t_0, f(t_0+t'-t,x')) \to
    f(t-t_0, f(t_0,x)) = f(t,x)$ by separate continuity.

    \item To establish joint continuity at points
    $(0,x)$, we add the assumption that $X$ is compact.
    For each $x \in X$, let $G$ be an open neighborhood
    of $x$.  For each $y \in X \setminus G$,
    separate continuity implies $f(t,x) \to x$ and
    $f(t,y) \to y$ as $t \to 0$.  Since $T_0$ defined
    as above is dense, one can therefore
    find a $t_0 \in T_0$ with the property $f(t_0, x)
    \neq f(t_0, y)$.  Some straightforward calculations
    then imply that there exist open sets $W_y \ni 0$,
    $U_y \ni x$, and $V_y \ni y$ such that $f(W_y \times
    U_y)$ is disjoint from $V_y$.  As the $V_y$ form
    an open cover of the compact set $X \setminus G$, there
    exists a finite subcover; taking $W$ and $U$ to be the
    corresponding finite intersections of the $W_y$
    and $U_y$, one has $(0, x) \in W \times U$
    and $f(W \times U) \subset G$.
\end{enumerate}

    \item  For strong continuity, we follow the same
proof, using the fact that
$\AAA_1$ is also strongly metrizable (\cite{Blackadar}
III.2.2.27).  Since
$\AAA_1$ is not strongly compact, however, we cannot
infer joint continuity at $(0,a)$.
\end{enumerate}
\end{proof}

\subsection{Completely Positive Semigroups} \label{secCPsemigroupcontinuity}
So far we have considered semigroups of contractions on Banach spaces.  When the Banach space happens to be a W$^*$-algebra, and the contractions happen to be normal completely positive maps, some stronger continuity results hold than are true in the more general setting.  Here we note two such results.  First, recall that a CP-semigroup was defined by the property
 of point-weak continuity.  It turns out that such a
semigroup is automatically \textbf{point-strongly continuous}.
This is Theorem 3.1 of
\cite{ShalitMarkiewicz}.

Our second continuity result which is specific to completely positive
semigroups is an improved statement of joint continuity.

\begin{theorem}[Joint Continuity for CP-Semigroups]  \label{thmjointcontinuityCP} \

Let $\AAA$ be a separable W$^*$-algebra and $\{\phi_t\}_{t \geq 0}$
a CP-semigroup on $\AAA$.
\begin{enumerate}
    \item $\phi_t(a)$ is jointlyweakly continuous in $t$
    and $a$; that is, the map
    $[0,\infty) \times \AAA_1 \sa{\phi} \AAA_1$ is
    continuous with respect to the weak topology on $\AAA_1$.

    \item $\phi_t(a)$ is jointly strongly continuous
    in $t$ and $a$; that is, the map
    $[0,\infty) \times \AAA_1 \sa{\phi} \AAA_1$ is
    continuous with respect to the strong topology on $\AAA_1$.
\end{enumerate}
\end{theorem}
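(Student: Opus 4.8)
The plan is to derive the weak statement (1) almost immediately from the abstract joint-continuity result for contraction semigroups, and then to bootstrap from it to the strong statement (2) using complete positivity at the single remaining point $t = 0$.

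For (1), I would regard $\AAA$ as the dual of its separable predual $\AAA_*$ and observe that on the bounded set $\AAA_1$ the weak topology coincides with the weak-$*$ topology $\sigma(\AAA, \AAA_*)$. Normality of each $\phi_t$ is precisely weak-$*$ continuity, point-weak continuity of the semigroup is precisely weak-$*$ continuity of $t \mapsto \phi_t(a)$, and $\phi_{t+s} = \phi_t \circ \phi_s$ supplies the additive structure $\phi(t+s,a) = \phi(t, \phi(s,a))$. Thus Theorem \ref{thmjointcontinuityC0}(2) applies verbatim; crucially, Alaoglu's theorem makes $\AAA_1$ weak-$*$ compact, so the Lawson argument in that proof yields joint continuity even at $t=0$, and (1) follows.

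For (2), continuity at nonzero times is exactly Theorem \ref{thmjointcontinuityC0}(3), so the entire content is joint strong continuity at points $(0,a)$. Since $\AAA$ is separable, $\AAA_1$ is metrizable in both the weak and the strong topologies, so it suffices to show that $t_n \to 0$ and $a_n \to a$ strongly force $\phi_{t_n}(a_n) \to a = \phi_0(a)$ strongly. By part (1) I already have $\phi_{t_n}(a_n) \to a$ weakly, so by the bounded-set criterion relating the two topologies it remains only to prove $\phi_{t_n}(a_n)^* \phi_{t_n}(a_n) \to a^* a$ weakly. Here I would fix a faithful normal representation $\AAA \subseteq B(H)$ on a separable $H$, so that the weak topology on $\AAA_1$ is the weak operator topology, and estimate $\langle \phi_{t_n}(a_n)^* \phi_{t_n}(a_n)\xi, \xi\rangle = \|\phi_{t_n}(a_n)\xi\|^2$ from both sides. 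For the upper bound, the Schwarz inequality for $2$-positive maps gives $\phi_{t_n}(a_n)^* \phi_{t_n}(a_n) \leq \phi_{t_n}(a_n^* a_n)$; since $a_n^* a_n \to a^* a$ weakly and lies in $\AAA_1$, part (1) yields $\phi_{t_n}(a_n^* a_n) \to a^* a$ weakly, whence $\limsup_n \langle \phi_{t_n}(a_n)^* \phi_{t_n}(a_n)\xi,\xi\rangle \leq \langle a^* a\xi,\xi\rangle$. For the lower bound, the weak convergence $\phi_{t_n}(a_n)\xi \to a\xi$ in $H$ together with weak lower semicontinuity of the Hilbert-space norm gives $\langle a^* a\xi,\xi\rangle = \|a\xi\|^2 \leq \liminf_n \|\phi_{t_n}(a_n)\xi\|^2$. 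Squeezing these two estimates forces $\langle \phi_{t_n}(a_n)^* \phi_{t_n}(a_n)\xi,\xi\rangle \to \langle a^* a\xi,\xi\rangle$ for every $\xi$, and polarization upgrades this to weak-operator (hence weak) convergence of $\phi_{t_n}(a_n)^* \phi_{t_n}(a_n)$ to $a^* a$.

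The main obstacle is exactly this continuity at $t=0$ in the strong topology: unlike the weak case, $\AAA_1$ is not strongly compact, so the Baire-category/Lawson machinery of Theorem \ref{thmjointcontinuityC0} cannot be pushed to $t=0$, and one genuinely needs input beyond separate continuity. The decisive ingredient is complete positivity, through the Schwarz inequality, which is what lets the ``$x^*x$'' half of the strong-convergence criterion be controlled from above by the already-established weak continuity; pairing it with the automatic lower semicontinuity of the norm closes the gap. I would verify only the routine points, namely that every element in sight remains in $\AAA_1$ so that the bounded-set identifications of the topologies are legitimate, and that the sequential formulation is justified by metrizability.
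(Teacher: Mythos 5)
Your proposal is correct and follows essentially the same route as the paper: part (1) is obtained by citing Theorem \ref{thmjointcontinuityC0}, and part (2) rests on exactly the same ingredients the paper uses, namely the Schwarz inequality for $2$-positive maps, the fact that $a_n^* a_n \to a^* a$ weakly when $a_n \to a$ strongly on the unit ball, the weak convergence supplied by part (1), and metrizability of the unit ball to justify working with sequences. The only difference is organizational: the paper expands $\|\phi_{t_n}(a_n)h - \phi_t(a)h\|^2$ and takes a single limsup, whereas you isolate the genuinely new case $t=0$ and squeeze $\|\phi_{t_n}(a_n)\xi\|^2$ between the Schwarz upper bound and a lower bound from weak lower semicontinuity of the Hilbert-space norm --- a cosmetic rearrangement of the same estimate.
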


\begin{proof}
\
\begin{enumerate}
  \item This follows from Theorem
    \ref{thmjointcontinuityC0}; we mention it here in order to observe that a considerably simpler proof is available in this special case, which appears as Proposition 2.23 of \cite{SeLegue} and as
    Proposition 4.1(2) of \cite{MS02}.
   \item This is an improvement on Theorem \ref{thmjointcontinuityC0} because of the joint continuity at time 0, which
       we shall need later.
       Assume that $\AAA \subset B(H)$, with $H$ separable.
   Let $t_n \to t$ be a convergent sequence in $[0,\infty)$
   and $a_n \to a$ an SOT-convergent sequence in $\AAA_1$.
   (We can use sequences rather than nets because
   $\AAA_1$ is SOT-metrizable.)  By the first part of this
   theorem, $\phi_{t_n}(a_n) \to \phi_t(a)$ in WOT.
   Now for any $h \in H$,
   \begin{align*}
   \left\| \phi_{t_n}(a_n) h - \phi_t(a) h \right\|^2
   &= \left\|\phi_{t_n}(a_n) h \right\|^2
   - 2 \realp \left\la \phi_{t_n}(a_n) h,
   \phi_t(a) h \right\ra + \left\| \phi_t(a) h \right\|^2\\   &= \la \phi_{t_n}(a_n)^* \phi_{t_n}(a_n) h, h \ra
   - 2 \realp \left\la \phi_{t_n}(a_n) h,
   \phi_t(a) h \right \ra + \left\| \phi_t(a) h \right\|^2\\
   &\leq \la \phi_{t_n}(a_n^* a_n) h, h \ra
   - 2 \realp \left\la \phi_{t_n}(a_n) h,
   \phi_t(a) h \right\ra + \left\| \phi_t(a) h \right\|^2
   \end{align*}
   where we have used the Schwarz inequality for 2-positive
   maps plus the fact that $a_n^* a_n \to a^* a$ in WOT whenever $a_n \to a$ in SOT.  Taking the limsup as $n \to \infty$, we see
   that $\phi_{t_n}(a_n) \to \phi_t(a)$ in SOT.

   This appears as Lemma 4 in \cite{VincentSmith} and
    as Lemma 6.4 in \cite{ShalitE0Dilation}.
\end{enumerate}
\end{proof}

\section{Survey of Extant Results}
The first results concerning the existence of dilations
for cp-semigroups date from the 1970's and pertain to uniformly continuous semigroups.  Recall that a contraction semigroup is uniformly continuous
iff its generator is bounded; \cite{ChristensenEvans}, preceded
in special cases by \cite{GKS} and \cite{Lindblad}, showed that the generator of a uniformly continuous CP-semigroup on a W$^*$-algebra must have the form $a \mapsto \Psi(a) + k^* a + ak$ for some element
$k \in \AAA$ and completely positive map $\Psi: \AAA \to \AAA$.
This structure theorem was used by \cite{EvansLewis} to prove that
a uniformly continuous CP-semigroup on a W$^*$-algebra has an
E-dilation.  However, attempts to prove the existence of dilations for point-weakly continuous, or even point-norm continuous CP-semigroups were unsuccessful.

Dilations were shown to exist in special cases (for instance,
on semigroups having specific forms, on semigroups satisfying additional hypotheses such as the existence of a faithful normal invariant state, in the case of discrete-time semigroups, or using a weaker sense of the word ``dilation'') by
\cite{EmchMinimalDilations},  \cite{AccardiFrigerioLewis}, \cite{VincentSmith}, \cite{KummererMarkovDilations}, and others.  However, progress on the general problem required a new insight.  This insight was the notion of a \textbf{product system of Hilbert spaces}, developed by Arveson (\cite{ArvesonContinuousAnalogues1}, \cite{ArvesonContinuousAnalogues2}, \cite{ArvesonContinuousAnalogues3}, \cite{ArvesonContinuousAnalogues4}).  We shall say more about product systems in chapter \ref{chapproductsystems}; briefly, there is an equivalence of categories between E$_0$-semigroups on $B(H)$ and product systems of Hilbert spaces, so that the problem of constructing E$_0$-dilations reduces in some sense to the problem of building a product system out of a CP$_0$-semigroup.  Variants of this strategy were used in \cite{BhatIndexTheory} and \cite{SeLegue}
to show that every CP$_0$-semigroup on $B(H)$ has an E$_0$-dilation, a result known as \textbf{Bhat's theorem}, and the corresponding result
for separable W$^*$-algebras was established in \cite{ArvesonDynamics}.  Later, the more general notion of a \textbf{product system of Hilbert modules} was introduced, leading to new proofs of these theorems in \cite{BhatSkeide} and \cite{MS02}.  More recently, product systems have been used to study families of completely positive maps indexed by semigroups other than $[0,\infty)$, with the existence of dilations depending on an additional hypothesis known as strong commutativity (\cite{ShalitE0Dilation}).

A different approach to dilation theory, standing outside this narrative, was proposed by Jean-Luc Sauvageot in \cite{Sauvageot}, \cite{SauvageotFirstExitTimes}, and \cite{SauvageotDirichletProblem}.  Writing during the nascence of free probability  (shortly after
the publication of \cite{VoiculescuSymmetries}, for instance), Sauvageot developed a modified version of the free product appropriate for use in dilation theory.  Since the Daniell-Kolmogorov construction (Example \ref{exmarkovdilation}) can be built using tensor products, which are the coproduct in the category of commutative unital C$^*$-algebras, and since free products play the corresponding role in the category of unital C$^*$-algebras, this is an attractively functorial way to conceptualize a noncommutative Markov process.  Using his version of the free product,
Sauvageot proved that every cp$_0$-semigroup on a C$^*$-algebra has a unital e$_0$-dilation.  This dilation theorem was then used to solve a Dirichlet problem for C$^*$-algebras, much as classical Brownian motion can be used to solve the classical Dirichlet problem (\cite{Kakutani}).

Sauvageot's theorem stands virtually alone in achieving a unital dilation; at some point, all the other dilation strategies mentioned here rely upon  the non-unital embedding of $B(H)$ into $B(K)$ for Hilbert spaces $H \subset K$.  However, although \cite{Sauvageot} asserts that his dilation technique can be modified to yield continuous dilations on W$^*$-algebras, little detail is given, and later authors indicate some uncertainty about this modification (e.g. \cite{SkeideDilationsProductSystems}).  Hence, given a CP$_0$-semigroup, it seems that one may be forced to choose either a \emph{unital} e$_0$-dilation or a \emph{continuous} (that is, E$_0$-) dilation.  The present thesis will expound Sauvageot's dilation techniques in order to demonstrate the possibility of achieving both objectives together (Theorem \ref{thmkahuna}).

\chapter{Liberation} \label{chapliberation}

\section{Introduction}
Free probability theory was introduced by Voiculescu in \cite{VoiculescuSymmetries}, as a tool to address the free group factor problem.  Free probability has since blossomed into its own area of study; its development has been an important success, even though the free group factor problem remains unresolved.  Sauvageot's \emph{ad hoc} modification of free probability, in contrast, does not appear to have inspired further pursuit beyond his first paper.  This could be due in part to the relevant free independence property remaining implicit in that paper, appearing only in the midst of the proof of Proposition 1.7.

In this chapter, Sauvageot's version of free independence, which I refer to as \textbf{liberation} (meant to suggest something similar to freeness; not to be confused with Voiculescu's use of
the same word in \cite{VoiculescuFisher6}) is studied in its own right.  As yet the only nontrivial liberated system I know of is the one originally used by Sauvageot in application to dilation theory.  However, I still consider it advantageous to separate this part of the exposition, both (i) to clarify the \emph{combinatorial} aspects of dilation, in contrast to its algebraic and analytic features, and (ii) to suggest possibilities for further investigation of connections with standard free probability theory.

\section{Background: Free Independence and Joint Moments}
We recall some of the basic notions of free
probability, which can be found in
references such as \cite{VoiculescuSymmetries}, \cite{FRV}, and \cite{NicaSpeicher}.

A \textbf{noncommutative probability space} is a pair $(\AAA, \phi)$ where $\AAA$ is a unital complex algebra
and $\phi: \AAA \to \com$ a unital linear functional.
Subalgebras $\{A_i\}_{i \in I}$ of $\AAA$ are
said to be \textbf{freely independent} with respect to $\phi$ if $\phi(a_{i_1} a_{i_2} \dots a_{i_n}) = 0$
whenever
\begin{itemize}
    \item $i_1, \dots, i_n$ are elements of $I$ such that adjacent indices are not equal, i.e. for $k = 1, \dots, n-1$
    one has $i_k \neq i_{k+1}$; this condition is abbreviated as $i_1 \neq i_2 \neq \dots \neq i_n$
    \item $a_{i_k} \in A_{i_k}$ for each $k = 1, \dots, n$
    \item $\phi(a_{i_k}) = 0$ for each $k = 1, \dots, n$.
\end{itemize}
Given noncommutative probability spaces $\{(A_i, \phi_i)\}$, a construction known as the
\textbf{free product} of unital algebras yields, in a universal (i.e.\ minimal) way, a noncommutative probability
space $(\AAA, \phi)$ and injections $f_i: A_i \to \AAA$ satisfying $\phi \circ f_i = \phi_i$,
such that the images $f_i(A_i)$ are freely independent with respect to $\phi$.  Furthermore, this
construction on unital algebras can be ``promoted'' to a construction on unital *-algebras or C$^*$-algebras; in the latter
case it is related to the free product of Hilbert spaces.

One implication of free independence which is essential
for our present purposes is that it determines
the value of $\phi$ on the subalgebra generated by
$\{A_i\}$.  Given $i_1 \neq i_2 \neq \dots \neq i_n$
and elements $a_{i_k} \in A_{i_k}$, one can
compute the \textbf{joint moment}
$\phi(a_{i_1} \dots a_{i_n})$ as follows:
\begin{itemize}
    \item \textbf{Center} each term $a_{i_k}$;
    that is, rewrite it as $\mathring{a}_{i_k}
    + \phi(a_{i_k}) \one$, where we define
    $\mathring{x} = x - \phi(x) \one$.
    \item \textbf{Expand} the product
    $(\mathring{a}_{i_1} + \phi(a_{i_1}) \one)
    \cdots (\mathring{a}_{i_n} + \phi(a_{i_n}) \one)$,
    thus obtaining a sum of $2^n$ words.
    \item \textbf{Simplify} by pulling out scalars:
    rewrite, for instance, $\mathring{a}_{i_1}
    \big( \phi(a_{i_2}) \one \big) \mathring{a}_{i_3}$
    as $\phi(a_{i_2}) \mathring{a}_{i_1} \mathring{a}_{i_3}$.
    \item After simplification, the only
    remaining word of length $n$ is the
    centered word
    $\mathring{a}_{i_1} \dots \mathring{a}_{i_n}$.
    Applying the procedure iteratively to all the smaller words that have been generated, one can rewrite
    the original word as a sum of many centered words, plus a word of length 0, i.e.\ a scalar.  Since $\phi$
    vanishes on centered words and is unital, its
    value at the original word is therefore whatever
    scalar is left when this iterative procedure terminates.
\end{itemize}
Using this outline, one can calculate
$\phi(a_{i_1} \dots a_{i_n})$ whenever
$i_1 \neq i_2 \neq \dots \neq i_n$.  Of course,
no generality is lost by this hypothesis, as
neighboring terms belonging to the
same subalgebra can be combined.

For use in proofs, it will be convenient to formalize
the above procedure in terms of a recursive definition.
I have not seen such a formalization
in the literature, so I present the following.
\begin{itemize}
    \item We use subset notation to
    indicate sub-tuples of an ordered tuple;
    thus, $(1,3) \subset (1,2,3,4,5)$
    and $(1,2,3,4,5) \setminus (1,3) = (2,4,5)$.
    We also use $[n]$ to denote the
    tuple $(1,2, \dots, n)$.  For a set $S$
    we use $S^\sharp = \bigcup_{n=1}^\infty S^n$
    to denote the set of all finite ordered
    tuples from $S$.

    \item Given a set $I$ and
    a tuple $\vec{\iota} \in I^n$,
    the \textbf{consecutivity tuples}
    of $\vec{\iota}$ are the maximal consecutive
    sub-tuples of $[n]$ such that $i_j$ is
    the same for all $j$ in such a tuple.  For
    instance, if $\vec{\iota} = (1,2,2,1,1,3,1)$ then
    the consecutivity tuples are
    $\vec{c}_1 = (1)$, $\vec{c}_2 = (2,3)$,
    $\vec{c}_3 = (4,5)$, $\vec{c}_4 = (6)$,
    and $\vec{c}_5 = (7)$.  We say that
    $\vec{\iota}$ is \textbf{nonstammering}
    if all its consecutivity tuples have
    length 1; this is another way of stating
    the condition $i_1 \neq i_2 \neq \dots
    \neq i_n$.

    \item Given a noncommutative probability
    space $(\AAA, \phi)$ and a nonempty
    set $I$, we recursively define two moment
    functions, the \textbf{alternating moment
    function} $\AM: \AAA^\sharp \to \com$
    and the \textbf{general moment function}
    $\GM: (I \times \AAA)^\sharp \to \com$.

    \item In the base case $n=1$
    we define $\AM(a_1) = \phi(a_1)$.

    \item Given $\vec{\iota} \in I^n$ with
    consecutivity tuples $\vec{c}_1,
    \dots, \vec{c}_\ell$, define
    \[
    \GM(\vec{\iota}; \vec{a}) = \AM
    \left(\prod_{j \in \vec{c}_1} x_j,
    \dots, \prod_{j \in \vec{c}_\ell} x_j \right).
    \]
    Note that this defines a general moment in
    terms of alternating moments of at most
    the same length.

    \item For $n > 1$ define
    \[
    \AM(a_1, \dots, a_n) = \sum_{\vec{j} \subsetneq [n]}
    \GM \Big(\vec{j}; (\mathring{a}_{j_1},
    \dots, \mathring{a}_{j_{|\vec{j}|}}) \Big)
    \prod_{k \in [n] \setminus \vec{j}}
    \phi(a_k)
    \]
    where $|\vec{j}|$ denotes the length of $\vec{j}$.
    By substituting the above definition
    of the general moment, we see that
    this defines an alternating moment in
    terms of alternating moments of strictly
    shorter words.

    \item Note that $\AM(a_1, \dots, a_n) = 0$
    whenever $\phi(a_1) = \dots = \phi(a_n) = 0$,
    as the sum is over proper subsets
    $\vec{j} \subsetneq [n]$ and hence the
    product $\displaystyle\prod_{k \in [n] \setminus \vec{j}}
    \phi(a_k)$ is always nonempty and therefore zero.

    \item Theorem: Let $\AAA, \phi, I$
    be as above, and $\{A_i\}_{i \in I}$ a
    $\phi$-freely independent family
    of unital subalgebras of $\AAA$.
    For $n \in \N$ and $(\vec{\iota}, \vec{a}) \in (I \times \AAA)^n$,
    we say that $\vec{\iota}$ \textbf{locates}
    $\vec{a}$ if $a_k \in A_{i_k}$ for
    each $k = 1, \dots, n$.  Let
    $\AAA_{\text{loc}} \subset (I \times \AAA)^\sharp$ consist of all $(\vec{\iota}, \vec{a})$
    such that $\vec{\iota}$ locates $\vec{a}$,
    and $\AAA_{\text{ns}} \subset \AAA^\sharp$
    consist of all $\vec{a}$ such that there
    exists a nonstammering tuple
    which locates $\vec{a}$.  Then
    \begin{enumerate}
        \item for any $(\vec{\iota}, \vec{a})
        \in \AAA_{\text{loc}}$,
        \[
        \phi(a_1 \dots a_n) = \GM(\vec{\iota}; \vec{a}).
        \]

        \item for any $\vec{a} \in \AAA_{\text{ns}}$,
        \[
        \phi(a_1 \dots a_n) = \AM(\vec{a}).
        \]
    \end{enumerate}
    The proof is by induction on $n$;  using the center-expand-simplify
    procedure outlined above, one
    can see that $\phi(a_1 \cdots a_n)$
    satisfies the same recurrence and initial
    conditions
    as (the relevant restrictions of)
    the functions $\GM$ and $\AM$.
\end{itemize}

\section{Defining Liberation}
We now develop two variations on
free independence, which will be
of use in dilation theory.

\begin{definition} \label{defliberated}
Let $\CC$ be a unital algebra, $\nu: \CC \to \com$
a unital linear functional, $\eE: \CC \to \CC$
a linear map.
Given a triple $(A, B, \rho)$ consisting of
unital subalgebras $A, B \subseteq \CC$
and a unital linear map $\rho: A \to B$
between them satisfying $\eE \circ \rho= \eE$,
we introduce the notation
$\mathring{a} = a - \rho(a)$ for elements
$a \in A$; note that in general $\mathring{a}$
 is neither an element of $A$ nor of $B$.
 We say the triple $(A, B, \rho)$ is:
\begin{enumerate}
    \item \textbf{right-liberated}
    (with respect to $\nu$ and $\eE$) if
    $\eE$ is a $B$-bimodule map,
    i.e. $\eE[b_1 x b_2] = b_1 \eE[x] b_2$
    for all $b_1, b_2 \in B$ and $x \in \CC$,
    and for every $n \geq 1$, every
    $a_1, \dots, a_n \in A$, and every
    $b_1, \dots, b_{n-1} \in B$ satisfying
    $\nu(b_1) = \dots = \nu(b_{n-1}) = 0$,
    \[
    \eE \Big[ \mathring{a}_1 b_1
    \dots \mathring{a}_n  \Big]=0.
    \]

    \item \textbf{left-liberated} if
    $\eE$ is an $A$-bimodule map and
    for every $n \geq 1$, every $a_1, \dots, a_{n-1} \in A$,
    and every $b_1, \dots, b_n \in B$
    satisfying $\nu(b_0) = \dots = \nu(b_n) = 0$,
    \[
    \eE \Big[ b_1 \mathring{a}_1
    b_2 \dots \mathring{a}_{n-1} b_n
    \Big] =0.
    \]
\end{enumerate}
\end{definition}

We note that the criteria in these definitions
resemble free independence, in that
the alternating product of centered terms
is centered.  The key difference, however,
 is that the centering takes place with respect
to several different maps---elements of $B$
are centered with respect to $\nu$, elements
of $A$ with respect to $\rho$, and the
alternating product with respect to $\eE$.

In some cases it will be useful to generalize
this definition.

\begin{definition} \label{defliberatingrep}
Let $A, B$ be unital algebras and $\rho: A \to B$
a unital linear map.  A \textbf{right-liberating representation} of the triple $(A, B, \rho)$
is a quintuple $(\AAA, f, g, \eE, \nu)$ where
\begin{itemize}
    \item $\AAA$ is a unital algebra
    \item $f: A \to \CC$ and $g: B \to \AAA$ are
    unital homomorphisms
    \item $\nu: \AAA \to \com$ is a unital linear functional
    \item $\eE: \AAA \to \AAA$ is a (not necessarily unital) linear map
\end{itemize}
satisfying the following criteria:
\begin{enumerate}
    \item $\eE \circ g \circ \rho = \eE \circ f$

    \item $\eE$ is a $g(B)$-bimodule map

    \item for every $n \geq 1$, every
    and $a_1, \dots, a_n \in A$,
    and every $b_1, \dots b_{n-1} \in B$ such
    that $\nu(g(b_1)) = \dots \nu(g(b_{n-1}))=0$,
    \[
    \eE \Big[ \big(f(a_1) - g(\rho(a_1)) \big)
    g(b_1) \cdots g(b_{n-1}) \big( f(a_n) - g(\rho(a_n)) \big)
     \Big] = 0
    \]
\end{enumerate}
and a \textbf{left-liberating representation} is such a quintuple satisfying
\begin{enumerate}
    \item $\eE \circ g \circ \rho = \eE \circ f$

    \item $\eE$ is an $f(A)$-bimodule map

    \item for every $n \geq 1$, every
    and $a_1, \dots, a_{n-1} \in A$,
    and every $b_1, \dots b_n \in B$ such
    that $\nu(g(b_1)) = \dots \nu(g(b_n))=0$,
    \[
    \eE \Big[ g(b_1) \big(f(a_1) - g(\rho(a_1)) \big)
     \cdots \big( f(a_{n-1}) - g(\rho(a_{n-1})) \big)
     g(b_n) \Big] = 0.
    \]
\end{enumerate}

\end{definition}

\begin{remark} \label{remreduceliberatingrep}
If there exists a map $\tilde{\rho}: f(A) \to g(B)$
with the property $\tilde{\rho} \circ f = g \circ \rho$,
then Definition (\ref{defliberatingrep}) reduces
to the statement that $(f(A), g(B), \tilde{\rho})$
is liberated in the appropriate sense from
Definition (\ref{defliberated}).  Such a
map $\tilde{\rho}$ need not exist in general,
but it does in two important special cases:
\begin{enumerate}
    \item If $f$ is injective, one may define
    $\tilde{\rho} = g \circ \rho \circ f\inv$.
    \item If $(\eE \circ g)$ is injective,
    one may define $\tilde{\rho} = g\circ
    (\eE \circ g)\inv \circ \eE$.  Then
    \[
    \tilde{\rho} \circ f = g \circ (\eE \circ g)\inv
    \circ \eE \circ f = g \circ (\eE \circ g)\inv \circ \eE \circ g \circ \rho = g \circ \rho.
    \]
\end{enumerate}
Both of these cases will be used subsequently.
\end{remark}

\section{Right Liberation and Joint Moments}
Like free independence, liberation is a property
that implies an algorithm.  The idea is the same---by centering, expanding, and simplifying, one can write any word as a centered word plus shorter words---but since the centering takes place with respect to three different maps, the details of the procedure are
more complicated.

Suppose $\CC$ is an algebra in which
$(A, B, \rho)$ is right-liberated
with respect to $\eE, \nu$ as above.  We continue
to use the notation $\mathring{a} = a - \rho(a)$
for $a \in A$.  Let $\la A, B \ra$ denote the subalgebra of $\CC$ generated by $A$
and $B$.  We consider two types of words
in $\la A, B \ra$:
\begin{enumerate}
    \item A word of the \textbf{first type} is
    of the form $b_0 a_1 b_1 \dots b_{\ell-1} a_\ell b_\ell$
    for some $a_1, \dots, a_\ell \in A$ and
    $b_0, \dots, b_\ell \in B$.

    \item A word of the \textbf{second type}
    is of the form $b_0 \mathring{a}_1
    b_1 \dots b_{\ell-1} \mathring{a}_\ell b_\ell$ for
    some $a_1, \dots, a_\ell \in A$
    and $b_0, \dots, b_\ell \in B$.
\end{enumerate}
Since $B$ is unital (so that we can take $b_0 = \one$ and/or $b_\ell = \one$), words
of the first type span $\la A, B \ra$.  However,
we will have use for words of both types.
We refer to the number
$\ell$ above as the \textbf{length} of the word; hence
a word of length zero is simply an element of $B$.
We say that a word of either type is in
\textbf{standard form} if $\nu(b_1) = \dots
= \nu(b_{\ell-1}) = 0$.

To calculate $\eE$ on a word
$b_0 a_1  \cdots  a_\ell b_\ell$ of first type,
we proceed thus:
\begin{itemize}
    \item Center the $b_i$ for $0 < i < \ell$,
    expand, and simplify.  Here ``center'' means
    to write $b_i$ as $\tilde{b}_i +
    \nu(b_i) \one$.

    The result of this step is a sum of standard-form
    words of the first type, each with length
    at most $\ell$.  The lengths of some words are less,
    because $a_i (\nu(b_i) \one) a_{i+1}$ is
    an element of $A$.

    \item For each of the resulting words, center
    the $a_i$, expand, and simplify.  Here ``center''
    means to write $a_i$ as $\mathring{a}_i
    + \rho(a_i)$.  Simplification can result
    in shorter words because $b_i \rho(a_{i+1})
    b_{i+1}$ is an element of $B$.

    The result of this step is a sum of words
    of the second type.  Not all of these words
    are in standard form, because simplification
    can create non-centered elements of $B$.
    However, all words are of length at most
    $\ell$, and (crucially) the only
    word of length $\ell$ is in standard
    form.

    \item The resulting words which are not
    in standard form can be rewritten as
    sums of words of the first type, by
    un-centering the $\mathring{a}_i$
    (that is, writing $\mathring{a}_i$
    as $a_i - \rho(a_i)$), expanding,
    and simplifying.  In the resulting
    sum of words of the first type, all
    have length strictly less than $\ell$.

    \item By iterating, this procedure
    allows us to write our original
    word $b_0 a_1 b_1 \dots a_\ell b_\ell$
    as a sum of standard-form words of the second
    type, plus words of length zero, which are
    just elements of $B$.  Since $\eE$
    vanishes on standard-form words of the
    second type and is a $B$-bimodule map,
    this determines $\eE[b_0 a_1 \dots a_\ell b_\ell]$
    in terms of $\eE[\one]$.
\end{itemize}

As in the case of ordinary free independence,
we can parlay this algorithm into a recursive
expression for joint moments.  As the procedure
is more complicated, however, we end up defining
three ``moment functions'' rather than two.

\begin{itemize}
    \item Given a noncommutative probability space $(\AAA, \nu)$,
    subalgebras $A,B \subset \AAA$, and a linear map $\rho: A \to B$,
    let
    \[
    \WW_\ell = \{(b_0, a_1, b_1, \dots, a_\ell, b_\ell)
    \mid a_1, \dots, a_\ell \in A; \ b_0, \dots, b_\ell \in B\}, \qquad \ell \geq 0
    \]
    denote the alternating tuples of length $2\ell+1$ which start and
    end with an element of $B$, and $\WW_I = \bigcup_{\ell=0}^\infty \WW_\ell$.
    This
    corresponds to
    the set of type I words as described above.
    More precisely, if we define the product
    function $\Pi: \WW_I \to \AAA$ by
    $\Pi(b_0, a_1, b_1, \ldots, a_\ell, b_\ell)
    = b_0 a_1 b_1 \cdots a_\ell b_\ell$, then
    its range $\Pi(\WW_I)$ is the set of
    type I words.

    \item Given $\ell \geq 1$,
    $\vec{x} \in \WW_\ell$, and a subset
    $\vec{\iota} \subset [\ell-1]$, 
    we define the \textbf{left collapse of $\vec{x}$ determined by $\vec{\iota}$},
    denoted $\LC(\vec{x}; \vec{\iota})$,
    as follows:  Let $\vec{\iota} = (i_1, \dots, i_m)$.  For each $k = 1, \dots, m+1$ define
    $\alpha_k = \prod\limits_{j=i_{k-1}+1}^{i_k} a_j$, where we adopt the convention $i_0 = 0$
    and $i_{m+1} = \ell$.

    Then $\LC(\vec{x}; \vec{\iota}) = (b_0, \alpha_1, b_{i_1}
    - \nu(b_{i_1}) \one, \dots, \alpha_m, b_{i_m} - \nu(b_{i_m}) \one, \alpha_{m+1}, b_\ell)$.
    This is the vector that results by taking $\vec{x}$, replacing those $b_j$ with $0 < j < \ell$
    and $j \in \vec{\iota}$ by $b_j - \nu(b_j) \one$,
    replacing those $b_j$ with $0 < j < \ell$
    and $j \notin \vec{\iota}$
    by $\one$, and then multiplying together adjacent terms from $A$.  It is an element
    of $\WW_{|\vec{\iota}|+1}$.

    \item Similarly, given $\vec{x} \in \WW_\ell$ and
    a subset $\vec{\iota} \subset [\ell]$, we define the
    \textbf{right collapse of $\vec{x}$ determined by $\vec{\iota}$}, an element of $\WW_{|\vec{\iota}|}$
    denoted $\RC(\vec{x}; \vec{\iota})$, as follows: Let
    $\vec{\iota} = (i_1, \dots, i_m)$.  For each
    $k = 0, \dots, m$ define $\beta_k = b_{i_k} \prod\limits_{j=i_k+1}^{i_{k+1}-1} \rho(a_j) b_j$,
    where we retain the convention $i_0 = 0$ but
    now set $i_{m+1} = \ell+1$.
    Then $\RC(\vec{x}; \vec{\iota}) = (\beta_0, a_{i_1}, \beta_1, \dots, a_{i_m}, \beta_m)$.
    This is the vector that results by taking $\vec{x}$, replacing those $a_j$ with
    $j \notin \vec{\iota}$ by $\rho(a_j)$, and then multiplying together adjacent
    terms from $B$.

    \item Given $\vec{y} \in \WW_\ell$ and
    a subset $\vec{\iota} \subset [\ell]$, we define the
    \textbf{un-collapse of $\vec{x}$ determined by $\vec{\iota}$}, an element of $\WW_{|\vec{\iota}|}$ denoted
    $\UC(\vec{x}; \vec{\iota})$, as follows: Let $\vec{\iota} = (i_1, \dots, i_m)$.
    For each $k = 0, \dots, m$ define $\beta_k= b_{i_k} \prod\limits_{j=i_k+1}^{i_{k+1}-1}
    (-\rho(a_j)) b_j$, where we continue to interpret
    $i_0$ as $0$ and $i_{m+1}$ as $\ell+1$.  Then
    $\UC(\vec{x}; \vec{\iota}) = (\beta_0, a_{i_1}, \beta_1, \dots, a_{i_m}, \beta_m)$.
    This is the vector that results by taking $(b_0, a_1,
    b_1, \dots, a_\ell, b_\ell)$, replacing each $a_j$
    with $j \notin \vec{\iota}$ by $-\rho(a_j)$, and then multiplying together
    adjacent terms from $B$.

    \item We define three functions $\LM, \RM, \UM: \WW_I \to B$,
    which we call the left-centering moment function, the right-centering moment
    function, and the un-centering moment function, by
    \begin{align}
        \LM(b_0) &= \RM(b_0) = \UM(b_0) = b_0,  \label{momentsfirstline}\\
    \LM(\vec{x}) &= \sum_{\vec{\iota} \, \subseteq [\ell]} \RM(\LC(\vec{x}; \vec{\iota})) \prod_{j \in [\ell] \setminus \vec{\iota}} \nu(x_{2j+1}), \\
    \RM(\vec{x}) &= \sum_{\vec{\iota} \, \subsetneq [\ell+1]} \UM(\RC(\vec{x}; \vec{\iota})),\\
    \UM(\vec{x}) &= \sum_{\vec{\iota} \, \subseteq [\ell+1]} \LM(\UC(\vec{x}; \vec{\iota})), \label{momentslastline}
    \end{align}
    for $\vec{x} \in \WW_{\ell+1}$.
    This produces a well-defined recursion because of the strict subset
    inclusion $\vec{\iota} \subsetneq [\ell+1]$ in the definition of $\RM$, so that
    one obtains moments of strictly shorter words.
\end{itemize}

\begin{remark} \label{remnumberterms}
Note that evaluating the $\LM$ function
    on a word of length $\ell$ returns a sum of $2^{\ell-1}$ evaluations of
    the $\RM$ function on words of length up to $\ell$; on a word of
    size $k \leq \ell$, the $\RM$ function returns a sum of $2^{k-1}$ evaluations
    of the $\UM$ function on words of length strictly less than $k$; and on a word
    of size $j < k$, the $\UM$ function returns a sum of $2^j$ evaluations of
    the $\LM$ function on words of length up to $j$.  This implies that the number
    of terms in the evaluation of the $\LM$ on words of length $\ell$ is
    bounded above by the sequence $\{s_\ell\}$ determined by $s_0 = 1$
    and $s_{\ell+1} = 8^{\ell+1} s_\ell$, which has
    the closed form $s_\ell = 8^{\ell(\ell+1)/2}$.
    Of course the actual number of terms is considerably less, due both to cancellation and
    to the fact that this estimate treats all words of length less than $\ell$
    as if they had length $\ell$.
\end{remark}

\begin{theorem} \label{thmrightlibmoments}
Let $(A, B, \rho)$ be right-liberated in $\AAA$
with respect to $\eE, \nu$.  Then
for any $\vec{x} \in \WW_I$,
\[
\eE\left[ \Pi(\vec{x}) \right] = \LM(\vec{x}) \eE[\one].
\]
\end{theorem}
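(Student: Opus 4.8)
The plan is to prove three statements at once, by strong induction on the length $\ell$ of the word. For every $\vec{x} = (b_0, a_1, b_1, \dots, a_\ell, b_\ell) \in \WW_\ell$ write $\mathring{\Pi}(\vec{x}) = b_0 \mathring{a}_1 b_1 \cdots \mathring{a}_\ell b_\ell$ for the corresponding second-type product (with $\mathring{a} = a - \rho(a)$ as before). The statements are: (LM) $\eE[\Pi(\vec{x})] = \LM(\vec{x})\,\eE[\one]$ for every $\vec{x} \in \WW_\ell$; (RM) $\eE[\Pi(\vec{x})] = \RM(\vec{x})\,\eE[\one]$ for every \emph{standard-form} $\vec{x} \in \WW_\ell$, meaning one with $\nu(b_1) = \cdots = \nu(b_{\ell-1}) = 0$; and (UM) $\eE[\mathring{\Pi}(\vec{x})] = \UM(\vec{x})\,\eE[\one]$ for every $\vec{x} \in \WW_\ell$. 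Statement (LM) is the theorem; the other two record the meanings of $\RM$ (on a first-type word whose inner $B$-entries are already centered) and of $\UM$ (on the associated second-type word). Before beginning I would note that $\eE[\one]$ is central for $B$: feeding $\one$ into the $B$-bimodule property on either side gives $b\,\eE[\one] = \eE[b] = \eE[\one]\,b$ for all $b \in B$, so placing the $B$-valued output of the moment functions on the left of $\eE[\one]$ costs nothing.

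The base case $\ell = 0$ is immediate from $\LM(b_0) = \RM(b_0) = \UM(b_0) = b_0$ and $\eE[b_0] = b_0\,\eE[\one]$. For the inductive step I would prove the three statements at length $\ell$ in the order (RM), (LM), (UM), which is the order in which, within a fixed length, each depends on the others. The driving force in each case is one expansion identity, valid by expanding the product in $\AAA$ and applying $\eE$ linearly. First, centering the inner entries via $b_j = (b_j - \nu(b_j)\one) + \nu(b_j)\one$, expanding, and multiplying together the $A$-factors left adjacent when a $b_j$ collapses to a scalar, gives
\[
\eE[\Pi(\vec{x})] = \sum_{\vec{\iota} \subseteq [\ell-1]} \Big( \prod_{j \in [\ell-1] \setminus \vec{\iota}} \nu(b_j) \Big)\, \eE\big[\Pi(\LC(\vec{x}; \vec{\iota}))\big].
\]
Second, writing $a_j = \mathring{a}_j + \rho(a_j)$ and expanding produces a sum over all $\vec{\iota} \subseteq [\ell]$ of the words $\mathring{\Pi}(\RC(\vec{x};\vec{\iota}))$; for standard-form $\vec{x}$ the single top term $\vec{\iota} = [\ell]$ vanishes (this is where right-liberation enters; see below), leaving
\[
\eE[\Pi(\vec{x})] = \sum_{\vec{\iota} \subsetneq [\ell]} \eE\big[\mathring{\Pi}(\RC(\vec{x}; \vec{\iota}))\big].
\]
Third, un-centering via $\mathring{a}_j = a_j + (-\rho(a_j))$ and absorbing each $-\rho(a_j)$ into neighboring $B$-factors gives
\[
\eE[\mathring{\Pi}(\vec{x})] = \sum_{\vec{\iota} \subseteq [\ell]} \eE\big[\Pi(\UC(\vec{x}; \vec{\iota}))\big].
\]

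Substituting the inductive hypothesis into these right-hand sides then collapses each identity onto the defining recursion for the corresponding moment function. In the second identity every $\RC(\vec{x};\vec{\iota})$ has length at most $\ell - 1$, so statement (UM) below $\ell$ turns it into $\RM(\vec{x})\,\eE[\one]$; in the first, each $\LC(\vec{x};\vec{\iota})$ is standard-form of length at most $\ell$, so statement (RM)---just obtained at length $\ell$, and assumed below---turns it into $\LM(\vec{x})\,\eE[\one]$; and in the third, each $\UC(\vec{x};\vec{\iota})$ has length at most $\ell$, the maximal one being $\vec{x}$ itself, so statement (LM) at length $\le \ell$ turns it into $\UM(\vec{x})\,\eE[\one]$. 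This is exactly what forces the internal ordering (RM), (LM), (UM).

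The sole point at which right-liberation is genuinely used is the omitted top term of the second identity: the one term of full length $\ell$ is $\mathring{\Pi}(\vec{x}) = b_0 \mathring{a}_1 b_1 \cdots \mathring{a}_\ell b_\ell$ with every inner $b_j$ centered, and by the bimodule property together with $\eE[\mathring{a}_1 b_1 \cdots \mathring{a}_\ell] = 0$ this term vanishes; this is precisely why the $\RM$-sum ranges over proper subsets $\vec{\iota} \subsetneq [\ell]$, and hence why the recursion is well-founded. I expect the real work to lie not in the induction, which is bookkeeping, but in verifying carefully that the combinatorial operations $\LC$, $\RC$, $\UC$---with their merging of adjacent factors and, in $\UC$, the signs $-\rho(a_j)$---reproduce these three expand-and-simplify steps exactly. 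Once that correspondence is pinned down, each of the three statements follows by substituting the relevant moment-function definition.
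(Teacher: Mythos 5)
Your proof is correct and follows the same route as the paper's: induction on word length via the center--expand--simplify procedure, with the right-liberation hypothesis entering exactly once to kill the full-length centered term and justify the strict inclusion $\vec{\iota} \subsetneq [\ell]$ in the $\RM$ recursion. The paper states this only as a two-sentence sketch; your simultaneous three-statement induction with the internal ordering (RM), (LM), (UM) is precisely the bookkeeping needed to make that sketch rigorous, and your observation that $\eE[\one]$ is central in $B$ correctly disposes of the left-versus-right multiplication issue.
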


\begin{proof}
We prove this for $\vec{x} \in \AAA_\ell$
by induction on $\ell$.  The base case
$\ell=0$ is trivial, and in general,
the center-expand-simplify procedure
shows that $\eE[ \Pi( \vec{x})]$ satisfies
the same recursion as $\LM(\vec{x}) \eE[\one]$.
The strict subset inclusion in the definition
of $\RM$ arises because the $\vec{\iota}=[\ell]$
term corresponds to a centered word, which
vanishes when $\eE$ is applied.
\end{proof}

\begin{corollary} \label{corrightlibgenerated}
Let $(A, B, \rho)$ be right-liberated in
$\AAA$ with respect to $\eE, \nu$.  Let
$\la A, B \ra$ denote the subalgebra of $\AAA$
generated by $A$ and $B$.  Then
\[
\eE \Big[ \la A, B \ra \Big] = \eE \big[ B \big].
\]
\end{corollary}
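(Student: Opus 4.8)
The plan is to read the result off from Theorem~\ref{thmrightlibmoments} together with the structural facts already established for right-liberated triples. First I would recall that, because $B$ is unital, the words of the first type $\Pi(\vec{x}) = b_0 a_1 b_1 \cdots a_\ell b_\ell$ with $\vec{x} \in \WW_I$ linearly span $\la A, B \ra$; this was observed in the discussion preceding the moment functions (taking $b_0 = \one$ and/or $b_\ell = \one$ when necessary). By linearity of $\eE$ it therefore suffices to show that $\eE[\Pi(\vec{x})] \in \eE[B]$ for every $\vec{x} \in \WW_I$, since the reverse inclusion $\eE[B] \subseteq \eE[\la A, B \ra]$ is immediate from $B \subseteq \la A, B \ra$.

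The key step is to combine Theorem~\ref{thmrightlibmoments}, which gives $\eE[\Pi(\vec{x})] = \LM(\vec{x})\,\eE[\one]$, with the bimodule property of $\eE$. The function $\LM$ takes values in $B$: this is visible inductively from the defining recursion for $\LM, \RM, \UM$, whose base cases lie in $B$ and whose recursive steps are $B$-valued combinations (scalar multiples and sums of values of the three moment functions on shorter words). Next I would use that $\eE$ is a $B$-bimodule map: setting $x = \one$ and $b_2 = \one$ in $\eE[b_1 x b_2] = b_1 \eE[x] b_2$ shows that $\eE[b] = b\,\eE[\one]$ for every $b \in B$. In particular $\eE[B] = \{\,b\,\eE[\one] : b \in B\,\}$, and since $\LM(\vec{x}) \in B$ we obtain $\LM(\vec{x})\,\eE[\one] = \eE[\LM(\vec{x})] \in \eE[B]$.

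Putting these together yields $\eE[\Pi(\vec{x})] = \eE[\LM(\vec{x})] \in \eE[B]$ for each $\vec{x} \in \WW_I$, whence $\eE[\la A, B \ra] \subseteq \eE[B]$; combined with the trivial reverse inclusion this gives equality. I do not expect a serious obstacle, as essentially all of the content has been absorbed into Theorem~\ref{thmrightlibmoments}. The only points requiring care are the routine bookkeeping verification that the moment functions are genuinely $B$-valued and the invocation of the identity $\eE[b] = b\,\eE[\one]$. It is worth noting that this last step is precisely where the bimodule hypothesis in the definition of right-liberation is used, as opposed to the vanishing condition on alternating centered products, which was already consumed in the proof of the theorem.
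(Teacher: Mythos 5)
Your proof is correct and follows exactly the route the paper intends: the paper states this corollary without proof as an immediate consequence of Theorem \ref{thmrightlibmoments}, and your argument (type-I words span $\la A, B\ra$, the moment functions are $B$-valued, and the $B$-bimodule property gives $\eE[b] = b\,\eE[\one]$ so that $\LM(\vec{x})\,\eE[\one] \in \eE[B]$) is precisely the bookkeeping the paper leaves implicit. Your closing remark correctly identifies where the bimodule hypothesis, as opposed to the vanishing condition, enters.
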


The obvious generalizations of Theorem
(\ref{thmrightlibmoments}) and Corollary (\ref{corrightlibgenerated}) to right-liberating representations is true as well, and are verified
inductively in the same manner.  We record
them here without proof.

\begin{theorem} \label{thmrightlibrepmoments}
Let $(\AAA, f,g, \eE, \nu)$
be a right-liberating representation
of $(A, B, \rho)$.  For
$\vec{x} = (b_0, a_1, b_1, \dots, a_\ell, b_\ell)
\in \WW_I$, let $(f \times g)(\vec{x})$
denote the element \\
 ${g(b_0) f(a_1) g(b_1)
\dots f(a_\ell) g(b_\ell) \in \AAA}$.
Define the functions $\LM_r, \RM_r, \UM_r: \WW_I \to B$ as in equations (\ref{momentsfirstline})-(\ref{momentslastline}),
except with $\nu$ replaced by $\nu \circ g$.

Then for any $\vec{x} \in \WW_I$,
\[
\eE[ (f \times g)(\vec{x})] = \LM_r(\vec{x}) \eE[\one].
\]
\end{theorem}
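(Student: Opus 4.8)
The plan is to imitate, essentially line for line, the inductive argument behind Theorem~\ref{thmrightlibmoments}, inserting the homomorphisms $f$ and $g$ wherever products are formed. Since $\LM_r$ takes values in $B$ rather than in $\AAA$, I would first record the convention that the asserted identity is to be read as
\[
\eE[(f\times g)(\vec{x})] = g\big(\LM_r(\vec{x})\big)\,\eE[\one],
\]
with the element $\LM_r(\vec{x}) \in B$ acting on $\AAA$ through $g$. The base case $\ell = 0$ is then immediate: for $\vec{x} = (b_0)$ we have $(f\times g)(\vec{x}) = g(b_0)$ and $\LM_r(b_0) = b_0$ by (\ref{momentsfirstline}), so the claim $\eE[g(b_0)] = g(b_0)\,\eE[\one]$ follows from the $g(B)$-bimodule property of $\eE$ applied to $g(b_0)\cdot\one$.

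For the inductive step I would run the center-expand-simplify algorithm described above on the word $g(b_0)f(a_1)g(b_1)\cdots f(a_\ell)g(b_\ell)$ and check that its three passes are mirrored term-by-term by the defining equations (\ref{momentsfirstline})--(\ref{momentslastline}). In the first pass one centers each interior $g(b_i)$ as $\big(g(b_i) - \nu(g(b_i))\one\big) + \nu(g(b_i))\one$; because $g$ is a homomorphism, the scalars $\nu(g(b_i))$ factor out and each dropped factor leaves behind a genuine $g(\cdot)$, so that the surviving words are exactly the left collapses $\LC(\vec{x};\vec{\iota})$ and the first pass reproduces the $\LM_r$ recursion (with $\nu$ replaced throughout by $\nu\circ g$). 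In the second pass one centers each $f(a_i)$ as $\big(f(a_i) - g(\rho(a_i))\big) + g(\rho(a_i))$; when a factor is dropped, the term $g(\rho(a_i))$ merges with its neighbours, again via the homomorphism property of $g$, producing the right collapses $\RC$. The third pass un-centers the surviving factors by writing $f(a_i) - g(\rho(a_i)) = f(a_i) + \big(-g(\rho(a_i))\big)$, which produces the un-collapses $\UC$ and returns shorter words of the first type, closing the loop. The compatibility axiom $\eE\circ g\circ\rho = \eE\circ f$ of Definition~\ref{defliberatingrep} is what guarantees that centering an $A$-term is consistent with $\eE$.

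To organize this rigorously I would prove three matching statements simultaneously, one per moment function, since the intermediate words the algorithm manipulates are of different kinds: the statement for $\LM_r$ on arbitrary words of the first type, the statement for $\RM_r$ on $\nu\circ g$-standard-form words of the first type (the output of the first pass), and the statement for $\UM_r$ on words of the second type (the output of the second pass). These three are interdependent exactly as the three recursions are, and the single genuinely nontrivial input---precisely as in Theorem~\ref{thmrightlibmoments}---is the vanishing of $\eE$ on a fully centered alternating word: the term of the second pass indexed by $\vec{\iota} = [\ell+1]$ equals $\eE\big[g(b_0)(f(a_1)-g(\rho(a_1)))g(b_1)\cdots(f(a_\ell)-g(\rho(a_\ell)))g(b_\ell)\big]$ with all interior $g(b_i)$ already $\nu\circ g$-centered, which is zero by the liberating condition~(3) of Definition~\ref{defliberatingrep} (its $n=1$ case being condition~(1)). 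This vanishing is what forces the strict inclusion $\vec{\iota} \subsetneq [\ell+1]$ in the definition of $\RM_r$, and with it the strict drop in length at the $\RM_r \to \UM_r$ step; the resulting recursion is therefore well founded, with termination guaranteed by the bound of Remark~\ref{remnumberterms}.

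I do not anticipate a conceptual obstacle: the content is identical to the non-represented case, and the real work is bookkeeping. The main point requiring care is checking that the homomorphism properties of $f$ and $g$ interact correctly with every product-merging step---so that adjacent $A$-factors combine as $f(a_i)f(a_{i+1}) = f(a_i a_{i+1})$, adjacent $B$-factors combine as $g(b)g(\rho(a))g(b') = g(b\,\rho(a)\,b')$, and every collapse genuinely lands in $\WW_I$---and that no spurious terms survive the three passes. This verification is routine but error-prone, and is the only place where real attention is needed.
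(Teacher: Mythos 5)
Your proposal is correct and follows exactly the route the paper intends: the paper records this theorem without proof, stating that it is ``verified inductively in the same manner'' as Theorem \ref{thmrightlibmoments}, whose proof is the same induction on word length via the center-expand-simplify procedure, with the excluded term $\vec{\iota}=[\ell+1]$ in the $\RM_r$ recursion accounting for the fully centered word annihilated by condition (3) of Definition \ref{defliberatingrep}. Your explicit convention that the right-hand side be read as $g(\LM_r(\vec{x}))\,\eE[\one]$ correctly resolves a small notational gap in the statement, since $\LM_r$ takes values in $B$ while $\eE[\one]$ lives in $\AAA$.
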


\begin{corollary} \label{corrightlibrepgenerated}
Let $(\AAA, f,g, \eE, \nu)$
be a right-liberating representation
of $(A, B, \rho)$.   Let
$\la A, B \ra$ denote the subalgebra of $\AAA$
generated by $f(A)$ and $g(B)$.  Then
\[
\eE \Big[ \la A, B \ra \Big] = \eE \big[ g(B) \big].
\]
\end{corollary}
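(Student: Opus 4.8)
The plan is to deduce the corollary directly from Theorem \ref{thmrightlibrepmoments}, exactly as Corollary \ref{corrightlibgenerated} follows from Theorem \ref{thmrightlibmoments}. First I would reduce the problem to words of the first type. Since $f$ and $g$ are homomorphisms and $g$ is unital (so that $g(\one) = \one \in g(B)$), any product of elements drawn from $f(A)$ and $g(B)$ can be rewritten, by combining adjacent factors lying in the same algebra and padding the two ends with $g(\one)$ where necessary, as an alternating word $g(b_0) f(a_1) g(b_1) \cdots f(a_\ell) g(b_\ell) = (f \times g)(\vec{x})$ for some $\vec{x} \in \WW_I$. Hence $\la A, B \ra$ is the linear span of the elements $(f \times g)(\vec{x})$, and by linearity of $\eE$ it suffices to show that $\eE[(f \times g)(\vec{x})] \in \eE[g(B)]$ for each individual $\vec{x}$.

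Second, I would apply the moment formula. Theorem \ref{thmrightlibrepmoments} gives $\eE[(f \times g)(\vec{x})] = g(\LM_r(\vec{x}))\,\eE[\one]$, where I read the factor $\LM_r(\vec{x}) \in B$ as living in $\AAA$ through $g$ (consistent with the base case $\eE[g(b_0)] = g(b_0)\eE[\one]$). Because $\eE$ is a $g(B)$-bimodule map and $g(\LM_r(\vec{x})) \in g(B)$, I can absorb this factor into $\eE$:
\[
g(\LM_r(\vec{x}))\,\eE[\one] = \eE\big[ g(\LM_r(\vec{x})) \cdot \one \big] = \eE\big[ g(\LM_r(\vec{x})) \big] \in \eE[g(B)].
\]
This establishes the inclusion $\eE[\la A, B \ra] \subseteq \eE[g(B)]$. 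The reverse inclusion is immediate, since $g(B) \subseteq \la A, B \ra$ forces $\eE[g(B)] \subseteq \eE[\la A, B \ra]$; combining the two yields the desired equality.

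Since Theorem \ref{thmrightlibrepmoments} is being assumed, there is no serious obstacle: the corollary is a formal consequence of the moment formula together with the bimodule property of $\eE$. The only point requiring any care is the interpretation of the product $\LM_r(\vec{x})\,\eE[\one]$—specifically, recognizing that the moment function takes values in $B$ and must be transported into $\AAA$ via $g$ before the $g(B)$-bimodule property can be invoked to fold it inside $\eE$. If one instead wished to prove the corollary without first establishing the full moment formula, the work would shift entirely into the inductive center-expand-simplify argument underlying Theorem \ref{thmrightlibrepmoments}, which is where the real content lies.
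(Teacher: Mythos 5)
Your proposal is correct and follows the same route the paper intends: the paper records this corollary without proof as an immediate consequence of Theorem \ref{thmrightlibrepmoments}, exactly as Corollary \ref{corrightlibgenerated} follows from Theorem \ref{thmrightlibmoments}, and your argument simply spells out that deduction (span of alternating words, the moment formula, and the $g(B)$-bimodule property of $\eE$ to fold $g(\LM_r(\vec{x}))$ back inside). Your observation that $\LM_r(\vec{x})$ lives in $B$ and must be transported into $\AAA$ via $g$ before the bimodule property applies is the right reading of the theorem's statement.
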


Later we shall be interested in the continuity
properties of joint moments.  We record here the following simple observation:

\begin{proposition}  \label{propnormalmoments}
Let $(A, B, \rho)$ be right-liberated in $\AAA$ with
respect to $(\eE, \nu)$, where $\AAA$ is a $W^*$-algebra and $\rho, \eE, \nu$ are all normal.  Then
\begin{enumerate}
    \item For any $\vec{x} \in \WW_I$, $\LM(\vec{x})$
    is normal in each entry of $x$.  That is,
    given $\ell \geq 0$, $\vec{x} \in \WW_\ell$,
    and $1 \leq j \leq 2\ell+1$, let $x_k$
    be fixed for all $1 \leq k \leq 2\ell+1$ with $k \neq j$;
    then $\LM(\vec{x})$, viewed as a function
    of $x_j$, is a normal linear map from $A$ or $B$
    (depending on the parity of $j$) to $\AAA$.

    \item If $\rho$ is strongly continuous
    on the unit ball $A_1$, then
    $\LM(\vec{x})$ is jointly strongly continuous in the
    entries of $\vec{x}$ on bounded subsets.  That is,
    the corresponding map
    $A_1 \times B_1 \times \dots \times A_1
    \to \AAA$ is strongly continuous.
\end{enumerate}
\end{proposition}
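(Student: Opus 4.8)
The plan is to prove, by a single induction on the word length $\ell$, the stronger statement that each of the three moment functions $\LM,\RM,\UM$ (not merely $\LM$) has both asserted properties; this is forced on us because the recursion defining $\LM$ passes through $\RM$ and $\UM$. The induction must respect the dependency order already exploited in the proof of Theorem \ref{thmrightlibmoments}: for a word of length $\ell$, the value $\RM$ is a sum of $\UM$ evaluated on \emph{strictly shorter} words (owing to the inclusion $\vec{\iota}\subsetneq[\ell]$), while $\LM$ at length $\ell$ calls $\RM$ at length $\le\ell$ and $\UM$ at length $\ell$ calls $\LM$ at length $\le\ell$. Thus I would establish the claim within each level in the order $\RM$, then $\LM$, then $\UM$, each step using only facts already proved. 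The base case $\ell=0$ is trivial, since $\LM(b_0)=\RM(b_0)=\UM(b_0)=b_0$.

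For part (1), I would fix every entry of $\vec{x}$ but one and trace how that free variable enters each summand of the relevant recursion. The key structural point is that in every collapse operation a single original entry lands in exactly one entry of the collapsed word (possibly as a factor of a merged product), accompanied by at most one scalar factor. When the free variable is a $B$-entry $b_j$, it enters either through the normal scalar $\nu(b_j)$ or through the centered element $b_j-\nu(b_j)\one$, both normal linear functions of $b_j$ since $\nu$ is normal; when it is an $A$-entry $a_j$, it enters directly, through $\rho(a_j)$ (normal, since $\rho$ is), or as the middle factor of a product $c\,a_j\,c'$ with $c,c'$ fixed, which is normal by the separate ultraweak continuity of multiplication. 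In each case the resulting element is fed into a shorter, or equal-length but already-treated, moment function, normal in that slot by the inductive hypothesis. Since composition, finite sums, and multiplication by fixed elements or normal scalars all preserve normality, each summand, and hence the whole expression, is normal in the chosen entry; linearity is checked the same way and is immediate.

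For part (2), I would run the identical induction but track \emph{joint} strong continuity on bounded sets, where the hypothesis that $\rho$ is strongly continuous on $A_1$ (hence on every ball, by linearity) becomes essential. Given entrywise strongly convergent nets drawn from fixed balls, each collapse operation produces entries that converge strongly: the merged $A$-products and $B$-products converge by the \emph{joint} strong continuity of multiplication on bounded sets (iterated over the boundedly many factors, which stay bounded), the substitutions $a_j\mapsto\pm\rho(a_j)$ converge by strong continuity of $\rho$, and the centerings $b_j\mapsto b_j-\nu(b_j)\one$ converge because $\nu$, being normal, carries bounded strongly convergent nets to convergent scalars. Since these operations map bounded sets into bounded sets, the inductive hypothesis applies to the collapsed words, and the scalar prefactors $\prod\nu(x_{2j+1})$ converge as products of convergent scalars. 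Composing, taking finite sums, and multiplying strongly convergent bounded nets by convergent scalars all preserve strong convergence on bounded sets, which yields the claim.

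The crux of both parts is the same structural subtlety, and it is where I expect the only real friction: the recursion among $\LM,\RM,\UM$ does not decrease length at every step, so one must order the induction correctly and invoke the strict decrease supplied by $\vec{\iota}\subsetneq[\ell]$ in the $\RM$ recursion, exactly as in Theorem \ref{thmrightlibmoments}. For part (2) there is a second essential restriction: joint continuity of multiplication holds only in the strong topology and only on bounded sets (it fails jointly in the weak topology and fails off bounded sets), which is precisely why the statement is confined to bounded subsets and to the strong topology.
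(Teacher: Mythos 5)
Your proof is correct and follows essentially the same route as the paper, which simply asserts a ``straightforward induction on $\ell$'' using the normality (hence strong continuity on bounded sets) of $\nu$ and $\rho$ together with the joint strong continuity of multiplication on bounded sets. Your explicit ordering of the mutual recursion ($\RM$, then $\LM$, then $\UM$ within each length, with the strict inclusion $\vec{\iota}\subsetneq[\ell+1]$ supplying the length decrease) is exactly the bookkeeping the paper leaves implicit.
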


The proof is a straightforward induction
on $\ell$.  For the second part we use the fact
that $\eE$ and $\nu$, being completely positive and normal, are therefore also strongly continuous (\cite{Blackadar} III.2.2.2),
and that multiplication is jointly strongly continuous on the unit ball.

Later we will need to consider moments with respect to several
maps.  When need arises, we use $\LM(\vec{x}; \rho)$ in place of $\LM(\vec{x})$ for specificity.
\begin{proposition} \label{propmomentwrtsemigroup}
Let $\AAA$ be a $W^*$-algebra, $\eE$ and $\nu$ normal,
$A \subset \AAA$ a $W^*$-subalgebra, and $\{\phi_t\}_{t \geq 0}$ a CP-semigroup on $A$.  Then for each fixed
$\vec{x} \in \WW_I$, $\LM(\vec{x}; \phi_t)$
is strongly continuous in $t$.
\end{proposition}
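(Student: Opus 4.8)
The plan is to prove the statement by induction on the length $\ell$ of the word, establishing the strong continuity of all three moment functions $\LM$, $\RM$, and $\UM$ simultaneously. The feature that makes a naive induction fail is that the recursion (\ref{momentsfirstline})--(\ref{momentslastline}) does not merely reduce a length-$(\ell+1)$ moment to moments of \emph{fixed} shorter words: the collapses $\RC$ and $\UC$ are built from $\rho = \phi_t$, so one is forced to evaluate $\UM(\,\cdot\,; \phi_t)$ and $\LM(\,\cdot\,; \phi_t)$ at arguments that themselves move with $t$. I would therefore strengthen the inductive hypothesis to allow $t$-dependent words. Explicitly, I claim: if $\vec{y}(t) = (y_0(t), y_1, y_2(t), \dots, y_{2\ell}(t))$ is a length-$\ell$ word whose $A$-entries $y_{2k+1} \in A$ are fixed and whose $B$-entries $y_{2k}(\cdot) : [0,\infty) \to B$ are strongly continuous and uniformly bounded, then each of $t \mapsto \LM(\vec{y}(t); \phi_t)$, $\RM(\vec{y}(t); \phi_t)$, $\UM(\vec{y}(t); \phi_t)$ is strongly continuous. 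The proposition is the special case in which every entry of $\vec{x}$ is constant.

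The structural observation that keeps this hypothesis self-propagating is that $\rho = \phi_t$ is applied \emph{only to the $A$-entries} of a word. In $\LC$ the $A$-entries become products $\prod_j a_j$ of the original (fixed) $A$-entries and the inner $B$-entries are replaced by $b_j - \nu(b_j)\one$ or by $\one$; in $\RC$ and $\UC$ the $A$-entries form a sub-tuple of the originals, while the new $B$-entries $\beta_k = b_{i_k}\prod_j (\pm \rho(a_j)) b_j$ absorb the values $\phi_t(a_j)$. Hence every collapse preserves the form ``fixed $A$-entries, strongly continuous bounded $B$-paths'': the new $A$-entries are products of fixed elements, hence fixed, and the new $B$-entries $\beta_k(t)$ are finite products of the given $B$-paths with factors $\phi_t(a_j)$ for fixed $a_j$. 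Since a CP-semigroup is automatically point-strongly continuous (\cite{ShalitMarkiewicz}, Theorem 3.1) and contractive, each $t \mapsto \phi_t(a_j)$ is strongly continuous and bounded by $\|a_j\|$; as multiplication is jointly strongly continuous on bounded sets, each $\beta_k(t)$ is again a strongly continuous bounded $B$-path, and each scalar $\nu(\beta_k(t))$ is continuous because the normal functional $\nu$ is strongly continuous on bounded sets (there strong convergence implies weak, hence ultraweak, convergence). In particular $\phi_t$ is never composed with itself, so in the fully unfolded recursion every occurrence of the semigroup has the harmless form $\phi_t(\text{fixed element of } A)$.

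With these ingredients in place, the induction dispatches the apparent circularity among the three functions at a fixed length by handling them in the order $\RM \to \LM \to \UM$. The decisive point is the \emph{strict} inclusion $\vec{\iota} \subsetneq [\ell+1]$ in the definition of $\RM$: it forces $\RC(\vec{y}(t); \vec{\iota})$ to have length $|\vec{\iota}| \le \ell$, so that $\RM$ at length $\ell+1$ is expressed entirely through $\UM$ at lengths $\le \ell$, which are covered by the inductive hypothesis. Once $\RM$ at length $\ell+1$ is known, the formula for $\LM$ expresses it through $\RM$ applied to the collapses $\LC(\vec{y}(t);\vec{\iota})$, which have length $|\vec{\iota}| + 1 \le \ell+1$ and are again of the admissible form; all of these are now available, and the accompanying scalar factors $\prod_j \nu(y_{2j+1}(t))$ are continuous, so $\LM$ at length $\ell+1$ is strongly continuous. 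Finally $\UM$ at length $\ell+1$ is expressed through $\LM$ applied to $\UC(\vec{y}(t);\vec{\iota})$, of length $\le \ell+1$, all of which are now established. In each case the outer operation is a finite sum of products of boundedly many strongly continuous, uniformly bounded factors, so joint strong continuity of multiplication on bounded sets completes the step; the base case $\ell = 0$ is immediate since $\LM(y_0(t)) = \RM(y_0(t)) = \UM(y_0(t)) = y_0(t)$.

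The step I expect to demand the most care is exactly this bookkeeping: one must verify both that the ``fixed $A$-entries, continuous bounded $B$-paths'' shape survives each of the three collapses and that the functions can be totally ordered at each length so that every invocation lands on a case already proved. Neither point is deep, but the length-accounting must be done precisely, with the strict inclusion in the definition of $\RM$ recognized as the sole mechanism producing a genuine drop in length, and hence as the engine that renders the whole interlocking recursion well-founded.
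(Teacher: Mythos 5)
Your proof is correct and follows essentially the same route as the paper, which simply declares the result ``a straightforward induction'' from the point-strong continuity of $t \mapsto \phi_t(a)$ and the joint strong continuity of multiplication on bounded sets. What you add is the bookkeeping the paper leaves implicit --- strengthening the inductive hypothesis to words with fixed $A$-entries and strongly continuous bounded $B$-paths, and ordering the three moment functions as $\RM \to \LM \to \UM$ at each length, with the strict inclusion in the definition of $\RM$ supplying the well-foundedness --- and this bookkeeping is accurate.
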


\begin{proof}
As discussed in section \ref{secCPsemigroupcontinuity},
$t \mapsto \phi_t(a)$ is strongly continuous for
fixed $a \in A$.  The result is now a straightforward
induction, using this fact plus the
joint strong continuity of multiplication on the
unit ball of $A$.
\end{proof}

\section{Left Liberation and Joint Moments}
Unlike right liberation and strong right
liberation, left liberation is a condition
involving an unspecified scalar;
as this scalar can propagate through the recursion,
calculation of moments is impossible.
However, we can still draw some important conclusions.

We use a similar definition of words of the first and second types, but with the roles of $A$ and $B$ interchanged;
that is, our new type I words are
of the form $a_0 b_1 a_1 \dots b_\ell a_\ell$,
and our new type II words of the form
$a_0 b_1 \mathring{a}_1 b_2 \mathring{a}_2
\dots b_{\ell-1} \mathring{a}_{\ell-1} b_\ell
a_\ell$.  A word of either type is in
\textbf{standard form} if $\nu(b_1) =
\dots = \nu(b_\ell) = 0$.

The new center-expand-simplify strategy is as
follows:
\begin{itemize}
    \item Given a word of type I, center the
    $b_i$, expand, and collapse.
    The result is a sum of type I words
    in standard form
    of length at most $\ell$.

    \item Center the $a_i$ for $0 < i < \ell$,
    expand, and collapse.
    The result is a sum of type II words of length
    at most $\ell$, such that the only
    word of length $\ell$ is in standard form.

    \item For remaining words not in standard form,
    un-center the $a_i$, then apply the same procedure
    to the type I words that result.
\end{itemize}

We formalize this strategy in the following definitions:
\begin{itemize}
    \item For each $\ell \geq 0$ let
\[
\widetilde{\WW}_\ell = \{(a_0, b_1, a_1,
\dots, b_\ell, a_\ell) \mid
a_0, \dots, a_\ell \in A; \
b_1, \dots, b_\ell \in B\}
\]
and let $\widetilde{\WW}_I =\bigcup_{\ell=0}^\infty
\widetilde{\WW}_\ell$.  Define the product
function $\widetilde{\Pi}: \widetilde{\WW}_I \to \AAA$
by\\ $\widetilde{\Pi}(a_0, b_1, a_1, \dots, b_\ell,a_\ell)
= a_0 b_1 a_1 \dots b_\ell a_\ell$.
Note that the span of $\widetilde{\Pi}(\widetilde{\WW}_I)$
is $\la A, B \ra$.

    \item Given $\ell \geq 1$, $\vec{x} \in \widetilde{\WW}_\ell$, and a subset $\vec{\iota} \subset [\ell]$, suppose that $\vec{\iota} = (i_1, \dots, i_m)$.
        For ecah $k = 0, \dots, m$ define
        $\alpha_k =\displaystyle\prod_{j=i_k}^{i_{k+1}-1} a_j$, with the conventions $i_0 = 0$ and $i_{m+1}=\ell+1$.  Then we define
        \[
        \LC'(\vec{x}; \vec{\iota}) =
        (\alpha_0, b_{i_1} - \nu(b_{i_1}) \one, \alpha_1,
        \dots, b_{i_m} - \nu(b_{i_m}) \one, \alpha_m).
        \]
        This is the vector that results by starting
        with $\vec{x}$, replacing those $b_j$ with
        $j \in \vec{\iota}$ by $b_j - \nu(b_j) \one$,
        and the others by $\one$,
        and multiplying together adjacent terms
        from $A$.  It is an element of $\widetilde{\WW}_{|\vec{\iota}|}$.

    \item Given $\vec{x} \in \widetilde{\WW}_\ell$ and a subset
    $\vec{\iota} \subset [\ell-1]$, suppose
    $\vec{\iota} = (i_1, \dots, i_m)$.  For each
    $k = 1, \dots, m+1$ let
    $\beta_k = b_{i_k} \displaystyle\prod_{j=i_k}^{i_{k+1}-1} \rho(a_j) b_{j+1}$, and define
    \[
    \RC'(\vec{x}; \vec{\iota}) =
    (a_0, \beta_1, a_{i_1}, \dots, a_{i_m}, \beta_m, a_\ell).
    \]
    This is the vector that results from starting with
    $\vec{x}$, replacing those $a_j$ with $j \notin \vec{\iota}$
    by $\rho(a_j)$, and then multiplying together adjacent terms from $B$.  It is an element of $\widetilde{\WW}_{|\vec{\iota}|+1}$.

    \item As before, the un-collapse is defined in the same way except with a minus sign; that is,
        redefine $\beta_k = b_{i_k} \displaystyle\prod_{j=i_k}^{i_{k+1}-1} (-\rho(a_j)) b_{j+1}$ and then let
         \[
    \UC'(\vec{x}; \vec{\iota}) =
    (a_0, \beta_1, a_{i_1}, \dots, a_{i_m}, \beta_m, a_\ell).
    \]

    \item Define $\LM', \RM', \UM': \widetilde{\WW}_I \to A$
    by
    \begin{align}
    \LM'(a_0) &= \RM'(a_0) = \UM'(a_0) = a_0, \label{leftmomentsfirstline}\\
    \LM'(\vec{x}) &= \sum_{\vec{\iota} \subset [\ell+1]}
    \RM'(\LC'(\vec{x}; \iota)) \prod_{j \in [\ell+1] \setminus \vec{\iota}} \nu(x_{2j}),\\
    \RM'(\vec{x}) &= \sum_{\vec{\iota} \subsetneq [\ell]}
    \UM'(\RC'(\vec{x}; \vec{\iota})),\\
    \UM'(\vec{x}) &= \sum_{\vec{\iota} \subset [\ell]}
    \LM'(\UC'(\vec{x}; \vec{\iota})) \label{leftmomentslastline}
    \end{align}
    for $\vec{x} \in \widetilde{\WW}_{\ell+1}$.
\end{itemize}

We then have the following analogues of our prior results.

\begin{theorem} \label{thmleftlibmoments}
Let $(A, B, \rho)$ be left-liberated in $\AAA$
with respect to $\eE, \nu$.  Then
for any $\vec{x} \in \widetilde{\WW}_I$,
\[
\eE\left[ \widetilde{\Pi}(\vec{x}) \right] = \LM'(\vec{x}) \eE[\one].
\]
\end{theorem}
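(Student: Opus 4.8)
The plan is to mirror the proof of Theorem~\ref{thmrightlibmoments} exactly, proving the identity by induction on the length of $\vec{x}$ and showing that $\eE[\widetilde{\Pi}(\vec{x})]$ satisfies the same recursion as $\LM'(\vec{x})\,\eE[\one]$. The base case of length $0$ is immediate: $\widetilde{\Pi}(a_0) = a_0$, and since $\eE$ is an $A$-bimodule map, $\eE[a_0] = a_0\,\eE[\one] = \LM'(a_0)\,\eE[\one]$. Note throughout that, because the outer terms of a type~I word lie in $A$ and $\eE$ is an $A$-bimodule map (rather than a $B$-bimodule map as in the right-liberated case), the moment functions take values in $A$ and are collected on the \emph{left} of the factor $\eE[\one]$.

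For the inductive step I would take $\vec{x} \in \widetilde{\WW}_{\ell+1}$, assume the result for all shorter words, and carry out the three-phase center-expand-simplify procedure described above, checking that each phase reproduces one of the recursions~(\ref{leftmomentsfirstline})--(\ref{leftmomentslastline}). First I would center each $b_j$ as $b_j = (b_j - \nu(b_j)\one) + \nu(b_j)\one$, expand, and collapse adjacent $A$-terms; grouping the resulting words by the subset $\vec{\iota}$ of indices whose centered part is retained produces exactly the $\LM'$-recursion, with scalar factor $\prod_{j \notin \vec{\iota}} \nu(b_j)$ and surviving standard-form word $\LC'(\vec{x};\vec{\iota})$, provided one knows $\eE[\widetilde{\Pi}(\vec{y})] = \RM'(\vec{y})\,\eE[\one]$ for each $\vec{y} = \LC'(\vec{x};\vec{\iota})$. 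I would fold this identity into the same induction as the second phase: for a standard-form type~I word I center only the inner entries $a_1, \dots, a_\ell$ via $a_i = \mathring{a}_i + \rho(a_i)$, leaving the outer $a_0, a_{\ell+1}$ untouched, and collapse adjacent $B$-terms; grouping by $\vec{\iota}$ yields the $\RM'$-recursion in terms of $\UM'(\RC'(\vec{x};\vec{\iota}))$. Finally, the non-standard type~II words thrown off in this step are returned to type~I by un-centering $\mathring{a}_i = a_i - \rho(a_i)$, whose minus sign is precisely what is recorded in $\UC'$, giving the $\UM'$-recursion and closing the cycle $\LM' \to \RM' \to \UM' \to \LM'$; termination is guaranteed by the strict inclusion $\vec{\iota} \subsetneq [\ell]$ in $\RM'$, which strictly shortens the words on each full pass.

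The one genuinely substantive point, as opposed to combinatorial bookkeeping, occurs in the second phase: the term of the expansion in which all of $a_1, \dots, a_\ell$ are centered is the fully standard-form type~II word $a_0 b_1 \mathring{a}_1 b_2 \cdots b_\ell \mathring{a}_\ell b_{\ell+1} a_{\ell+1}$, and its vanishing is exactly what justifies excluding $\vec{\iota} = [\ell]$ from the $\RM'$-sum. To see it vanishes I would first peel off $a_0$ and $a_{\ell+1}$ using the $A$-bimodule property of $\eE$, reducing to $\eE[b_1 \mathring{a}_1 b_2 \cdots b_\ell \mathring{a}_\ell b_{\ell+1}]$, which is zero by the defining left-liberation hypothesis, since each $b_j$ has been centered so that $\nu(b_j) = 0$.

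I expect the main obstacle to be not any single deep step but the sustained bookkeeping required to confirm that the collapse operations $\LC', \RC', \UC'$, the placement and signs of the scalar factors, and the index ranges $[\ell]$ versus $[\ell+1]$ in the three recursions track the procedure exactly. The most error-prone detail is keeping straight which $a$-entries count as ``inner'' (hence centered) versus ``outer'' (hence pulled out by the bimodule property), since it is precisely this distinction that governs the proper-subset restriction and hence the termination of the recursion.
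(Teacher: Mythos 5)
Your proposal is correct and follows the same route as the paper, which proves the identity by observing that the center-expand-simplify procedure (center the $b_j$'s, then the inner $a_i$'s, then un-center the non-standard words) forces $\eE[\widetilde{\Pi}(\vec{x})]$ to satisfy the same recursion as $\LM'(\vec{x})\,\eE[\one]$, with the fully-centered type~II word vanishing by the left-liberation hypothesis and accounting for the strict inclusion $\vec{\iota} \subsetneq [\ell]$ in $\RM'$. Your write-up is in fact more explicit than the paper's one-line proof about the induction, the role of the $A$-bimodule property in peeling off the outer $a_0, a_{\ell+1}$, and the sign bookkeeping in $\UC'$, but the underlying argument is identical.
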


\begin{proof}
The center-expand-simplify procedure
shows that $\eE[ \widetilde{\Pi}( \vec{x})]$ satisfies
the same recursion as $\LM'(\vec{x}) \eE[\one]$.
\end{proof}

\begin{corollary} \label{corleftlibgenerated}
Let $(A, B, \rho)$ be left-liberated in
$\AAA$ with respect to $\eE, \nu$.  Let
$\la A, B \ra$ denote the subalgebra of $\AAA$
generated by $A$ and $B$.  Then
\[
\eE \Big[ \la A, B \ra \Big] = \eE \big[ A \big].
\]
\end{corollary}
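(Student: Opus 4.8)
The plan is to mirror the proof of Corollary \ref{corrightlibgenerated}, replacing the right-handed moment machinery by its left-handed counterpart. The inclusion $\eE[A] \subseteq \eE[\la A, B \ra]$ is immediate from $A \subseteq \la A, B \ra$, so all the content lies in the reverse inclusion $\eE[\la A, B \ra] \subseteq \eE[A]$.

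For that reverse inclusion, I would first recall the observation already recorded in the text that the type I words $\widetilde{\Pi}(\vec{x})$, as $\vec{x}$ ranges over $\widetilde{\WW}_I$, span $\la A, B \ra$. By linearity of $\eE$ it therefore suffices to check that $\eE[\widetilde{\Pi}(\vec{x})] \in \eE[A]$ for each such $\vec{x}$. Applying Theorem \ref{thmleftlibmoments} gives $\eE[\widetilde{\Pi}(\vec{x})] = \LM'(\vec{x}) \, \eE[\one]$, and the structural point on which everything turns is that $\LM'$ takes values in $A$ (by construction of the moment functions $\LM', \RM', \UM' \colon \widetilde{\WW}_I \to A$), whereas in the right-handed setting the analogous $\LM$ landed in $B$.

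Finally I would invoke the $A$-bimodule property of $\eE$, which is built into the definition of left-liberation. Since $\one \in A$, this yields $\eE[a] = a \, \eE[\one]$ for every $a \in A$, so that $\eE[A] = A \cdot \eE[\one]$ as subsets of $\AAA$. Because $\LM'(\vec{x}) \in A$, we then obtain $\eE[\widetilde{\Pi}(\vec{x})] = \LM'(\vec{x}) \, \eE[\one] \in A \cdot \eE[\one] = \eE[A]$, which establishes the reverse inclusion and hence the desired equality. I do not expect a genuine obstacle, as the statement is a direct unwinding of the preceding theorem; the only point demanding care is recognizing that it is precisely the $A$-bimodule hypothesis (rather than the $B$-bimodule hypothesis used in the right-handed case) that forces the answer to be $\eE[A]$ rather than $\eE[B]$.
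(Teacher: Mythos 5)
Your argument is correct and is exactly the reasoning the paper intends: the corollary is an immediate consequence of Theorem \ref{thmleftlibmoments} together with the facts that the words $\widetilde{\Pi}(\vec{x})$ span $\la A, B\ra$, that $\LM'$ takes values in $A$, and that the $A$-bimodule property gives $\eE[a]=a\,\eE[\one]$, hence $\eE[A]=A\cdot\eE[\one]$. Your closing observation correctly identifies the $A$-bimodule hypothesis as the feature that makes the answer $\eE[A]$ rather than $\eE[B]$.
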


\begin{theorem} \label{thmleftlibrepmoments}
Let $(\AAA, f,g, \eE, \nu)$
be a left-liberating representation
of $(A, B, \rho)$.  For
$\vec{x} = (a_0, b_1, a_1, \dots, b_\ell, a_\ell)
\in \WW_I$, let $(f \times g)(\vec{x})$
denote the element \\
 ${f(a_0) g(b_1) f(a_1)
\dots g(b_\ell) f(a_\ell) \in \AAA}$.
Define the functions $\LM'_r, \RM'_r, \UM'_r: \WW_I \to B$ as in equations (\ref{leftmomentsfirstline})-(\ref{leftmomentslastline}),
except with $\nu$ replaced by $\nu \circ g$.

Then for any $\vec{x} \in \widetilde{\WW}_I$,
\[
\eE[ (f \times g)(\vec{x})] = \LM'_r(\vec{x}) \eE[\one].
\]
\end{theorem}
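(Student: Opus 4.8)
The plan is to reproduce the proof of Theorem~\ref{thmleftlibmoments} verbatim, with every algebraic operation pushed through the homomorphisms $f$ and $g$. The observation that makes this work is that the three defining conditions of a left-liberating representation (Definition~\ref{defliberatingrep}) are exactly the images under $(f,g)$ of the defining conditions of left-liberation (Definition~\ref{defliberated}): the $f(A)$-bimodule condition replaces the $A$-bimodule condition, the vanishing of $\eE$ on $g(b_1)(f(a_1)-g(\rho(a_1)))\cdots g(b_n)$ replaces its vanishing on $b_1 \mathring a_1 \cdots b_n$, and the compatibility $\eE \circ g \circ \rho = \eE \circ f$ transports the standing hypothesis $\eE \circ \rho = \eE$, recording that under $\eE$ the element $g(\rho(a))$ is indistinguishable from $f(a)$. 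I would therefore argue by induction on the length $\ell$ of $\vec x = (a_0, b_1, a_1, \dots, b_\ell, a_\ell)$, showing that $\eE[(f \times g)(\vec x)]$ satisfies the same recursion~(\ref{leftmomentsfirstline})--(\ref{leftmomentslastline}) as $\LM'_r(\vec x)\,\eE[\one]$, where throughout each $A$-entry produced by the recursion is read through $f$, each $B$-entry through $g$, and $\nu$ through $\nu \circ g$.

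The base case $\ell = 0$ is immediate: $(f \times g)(a_0) = f(a_0)$, and the $f(A)$-bimodule property gives $\eE[f(a_0)] = f(a_0)\,\eE[\one]$, matching $\LM'_r(a_0)\,\eE[\one]$. For the inductive step I would run the left-liberation center-expand-simplify procedure on $(f \times g)(\vec x)$, matching its three stages to the three moment functions. Centering each $g(b_i) = (g(b_i) - \nu(g(b_i))\one) + \nu(g(b_i))\one$, expanding, and fusing adjacent $f(a)$'s by $f(a)f(a') = f(aa')$ realizes the left collapse $\LC'$ together with the scalar factors $\nu(g(b_i))$, i.e.\ the $\LM'$ stage. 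Centering each interior $f(a_i) = (f(a_i) - g(\rho(a_i))) + g(\rho(a_i))$ and fusing adjacent $B$-images by $g(b)g(b') = g(bb')$ realizes the right collapse $\RC'$, i.e.\ the $\RM'$ stage; re-expressing a centered factor as $f(a_i) - g(\rho(a_i))$ realizes $\UC'$ with its sign, i.e.\ the $\UM'$ stage. At each appearance of a standard-form word of the second type, the $f(A)$-bimodule property pulls the outer factors $f(a_0)$ and $f(a_\ell)$ outside $\eE$, and the liberation condition of Definition~\ref{defliberatingrep} annihilates the centered remainder.

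The one place requiring attention is the strict inclusion $\vec\iota \subsetneq [\ell]$ in the recursion for $\RM'$. Among the type-II words produced when the interior $a_i$ are centered, the unique one of full length $\ell$ is in standard form, so after extraction of $f(a_0), f(a_\ell)$ it is killed by the liberation condition; this is precisely the term that the proper-subset restriction omits. I would check that the representation-level liberation condition is exactly strong enough to annihilate this term on its own, so that, unlike the reductions of Remark~\ref{remreduceliberatingrep}, no injectivity of $f$ or of $\eE \circ g$ is needed. The main obstacle is thus not any single estimate but the bookkeeping: one must confirm that inserting $f, g$ and replacing $\nu$ by $\nu \circ g$ invokes the homomorphism property in exactly the spots where the subalgebra argument multiplied adjacent elements of $A$ or of $B$, and that the compatibility $\eE \circ g \circ \rho = \eE \circ f$ makes the two routes through a collapsed $\rho$-factor agree under $\eE$. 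Once this is verified, the induction closes exactly as in Theorem~\ref{thmleftlibmoments}, and the asserted identity follows; this is the same argument alluded to for the right-handed case in the remark preceding Theorem~\ref{thmrightlibrepmoments}.
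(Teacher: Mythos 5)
Your proposal is correct and is exactly the argument the paper intends: the paper records this theorem without proof, remarking that the representation-level generalizations "are verified inductively in the same manner" as Theorem \ref{thmleftlibmoments}, and your induction on $\ell$ via the center-expand-simplify recursion, with $f$, $g$, and $\nu\circ g$ inserted at the appropriate spots, is that same verification spelled out. Your attention to the full-length standard-form word being killed by the representation-level liberation condition (accounting for the strict inclusion $\vec\iota\subsetneq[\ell]$ in $\RM'$) is the right point to check, and it goes through as you describe.
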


\begin{corollary} \label{corleftlibrepgenerated}
Let $(\AAA, f,g, \eE, \nu)$
be a left-liberating representation
of $(A, B, \rho)$.   Let
$\la A, B \ra$ denote the subalgebra of $\AAA$
generated by $f(A)$ and $g(B)$.  Then
\[
\eE \Big[ \la A, B \ra \Big] = \eE \big[ f(A) \big].
\]
\end{corollary}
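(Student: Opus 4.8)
The plan is to deduce this as an immediate corollary of Theorem \ref{thmleftlibrepmoments}, in exact parallel with the way Corollary \ref{corrightlibrepgenerated} follows from Theorem \ref{thmrightlibrepmoments}. First I would record that the subalgebra $\la A, B \ra$ generated by $f(A)$ and $g(B)$ is precisely the linear span of the type I words $(f \times g)(\vec{x}) = f(a_0) g(b_1) f(a_1) \cdots g(b_\ell) f(a_\ell)$ for $\vec{x} \in \widetilde{\WW}_I$: any product of generators drawn from $f(A)$ and $g(B)$ can be brought to this alternating form because $f$ and $g$ are homomorphisms (so consecutive factors from a common algebra collapse into a single factor) and because both algebras are unital (so one may pad the ends and fill gaps with units). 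Hence it suffices to control $\eE$ on each such word.

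The inclusion $\eE[f(A)] \subseteq \eE[\la A, B \ra]$ is immediate, since taking $\ell = 0$ exhibits $f(A) \subseteq \la A, B \ra$. For the reverse inclusion I would invoke Theorem \ref{thmleftlibrepmoments} to obtain, for each $\vec{x} \in \widetilde{\WW}_I$, the identity $\eE[(f \times g)(\vec{x})] = \LM'_r(\vec{x}) \, \eE[\one]$. The structural fact to pin down is that the left moment functions $\LM'_r, \RM'_r, \UM'_r$ take values in $A$, so that in this identity the factor $\LM'_r(\vec{x})$ is understood as its image $f(\LM'_r(\vec{x})) \in f(A) \subseteq \AAA$. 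That these functions land in $A$ follows by the same induction on $\ell$ that defines them: the base case returns $a_0 \in A$, and each collapse $\LC', \RC', \UC'$ only multiplies together adjacent factors lying in a common algebra (applying $\rho$ to convert stray $A$-factors to $B$-factors before combining), so the recursion never leaves $A$.

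Finally, because $\eE$ is an $f(A)$-bimodule map and $\one \in f(A)$ (as $f$ is unital), I can absorb this factor into $\eE$: writing $c = f(\LM'_r(\vec{x})) \in f(A)$, we have $c \, \eE[\one] = \eE[c \cdot \one] = \eE[c] \in \eE[f(A)]$. Thus $\eE[(f \times g)(\vec{x})] \in \eE[f(A)]$ for every $\vec{x} \in \widetilde{\WW}_I$, and by linearity $\eE[\la A, B \ra] \subseteq \eE[f(A)]$, completing the argument. I do not anticipate a genuine obstacle here; the single point requiring care is the verification that the left moment functions land in $f(A)$, so that the $f(A)$-bimodule property applies. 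I would also note explicitly that the ``unspecified scalar'' which obstructs the explicit \emph{computation} of left moments is irrelevant to the present \emph{containment} statement, since we need only the \emph{location} of $\LM'_r(\vec{x})$ in $f(A)$ and never its value.
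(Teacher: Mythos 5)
Your proposal is correct and follows exactly the route the paper intends: the corollary is recorded as an immediate consequence of Theorem \ref{thmleftlibrepmoments}, using that $\LM'_r$ takes values in $A$ (so $f(\LM'_r(\vec{x})) \in f(A)$) and that the $f(A)$-bimodule property converts $f(\LM'_r(\vec{x}))\,\eE[\one]$ into $\eE\big[f(\LM'_r(\vec{x}))\big] \in \eE[f(A)]$, with linearity over the spanning set of type I words giving the nontrivial containment. Your closing observation that the ``unspecified scalar'' obstructing explicit left-moment computation is irrelevant to the containment is also on point.
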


% Thesis Chapter 3 by Dave Gaebler

\chapter{The Sauvageot Product} \label{chapsauvageotproduct}

\section{Introduction}
In this chapter we develop a modification of the unital free product of C$^*$-algebras, adapted for use in dilation theory.  As mentioned in example (\ref{exmarkovdilation}), the classical Daniell-Kolmogorov construction can be reduced to the construction of maps $\theta_t: C(S) \otimes C(S) \to C(S)$
given on simple tensors by $\theta_t(f \otimes g) = (P_t f) g$.  We shall return to the details of this reduction in chapter \ref{chapiteratedproducts}; at present we only describe enough of its features to see what we shall need for the appropriate noncommutative analogue.

Among the many embeddings of $C(S)$ into $C(S) \otimes C(S)$ we distinguish
two, the ``left'' embedding $f \mapsto f \otimes \one$ and the ``right''
embedding $f \mapsto \one \otimes f$.  The map $\theta_t$ is a retraction with respect to the right embedding, and its composition with the left embedding is $P_t$.  That is, by constructing $\theta_t$ we factor $P_t$ into an embedding followed by a retraction (with respect to a different embedding), as
depicted in the following diagram:
\[ \xymatrix{
    C(S) \otimes C(S) \ar@/^1pc/[rd]^{\theta_t}\\
    C(S) \ar[u] \ar[r]_{P_t} & C(S) \ar@{.>}[lu]
    }  \quad \theta_t(f \otimes g) = P_t(f) g
    \]
More generally, the inductive process will work
with tensor powers $C(S)^{\otimes \gamma}$ for finite
sets $\gamma \subset [0,\infty)$, building for each one
a retraction $\epsilon_\gamma: C(S)^{\otimes \gamma} \to C(S)$.
Given $\gamma' = \gamma \cup \{t_k\}$, where $\tau = t_k - \underset{t \in \gamma}{\min} \, t > 0$, we will seek to define $\epsilon_{\gamma'}$ such that
\[ \xymatrix{
   C(S)^{\otimes \gamma'} \ar@/^1pc/[rd]^{\epsilon_{\gamma'}}\\
    C(S)^{\otimes \gamma} \ar[u] \ar[r]_{P_\tau} & C(S) \ar@{.>}[lu]
    }  \quad \epsilon_{\gamma'}(f \otimes g) = P_\tau (\epsilon_\gamma(f)) g
    \]
We note in passing that Stinespring dilation produces a very similar
diagram: Given a unital completely map $\phi: A \to B(H)$ with
minimal Stinespring triple $(K, V, \pi)$, we obtain
\[ \xymatrix{
    B(K) \ar@/^1pc/[rd]^{\theta}\\
    A \ar[u] \ar[r]_{\phi} & B(H) \ar@{.>}[lu]
    }  \quad \theta(T) = V^* T V
    \]
Crucially, however, the right embedding in this case is the non-unital
map $X \mapsto V X V^*$, in contrast to the unital embedding in the
commutative example.  Hence, we take the tensor product as our model in what follows.

In addition to constructing tensor products $C(X) \otimes C(Y) \simeq C(X \times Y)$ of commutative unital C$^*$-algebras, one can also form tensor products of maps between them, and the resulting maps satisfy certain functorial properties. We summarize the properties of the tensor product which we shall seek to replicate in this chapter:
\begin{enumerate}
    \item Given unital C$^*$-algebras $A,B$ and a unital completely
    positive map $A \sa{\phi} B$, we construct a unital C$^*$-algebra
    $A \star B$ with unital embeddings of $A$ and $B$, the images
    of which generate $A\star B$.
    \item We also construct a retraction $A \star B \to B$ which
    factors $\phi$ in the sense of the above diagrams.
    \item Given unital completely positive maps $A \sa{\phi} B$
    and $C \sa{\psi} D$, and given unital *-homomorphisms
    $A \sa{f} C$ and $B \sa{g} D$ such that the square
    \[ \xymatrix{
    C \ar[r]^\psi & D \\
    A \ar[r]_\phi \ar[u]_f & B \ar[u]_g
    }\]
    commutes, we construct a (necessarily unique) unital *-homomorphism\\
    $f \star g: A \star B \to C \star D$ such that the squares
    \[ \xymatrix{
    A \star B \ar[r]^{f \star g} &C \star D \\
    A \ar[u] \ar[r]_f & C \ar[u]
    } \qquad \qquad \xymatrix{
    A \star B \ar[r]^{f \star g} &C \star D \\
    B \ar[u] \ar[r]_g & D \ar[u]
    }\]
    commute.
\end{enumerate}

We now begin our development of a construction satisfying these
requirements.

\section{Sauvageot Products of Hilbert Spaces and Bounded Operators}
Just as the free product of unital C$^*$-algebras can be constructed
from a free product of Hilbert spaces (\cite{VoiculescuSymmetries}),
our product construction on C$^*$-algebras will rely on an underlying
construction on Hilbert space.  Some notational preliminaries:
For a Hilbert space $H$, $H^+$ denotes $H \oplus \com$,
and if a unit vector has been distinguished, $H^-$ denotes the
complement of its span.  A distinguished unit vector (such as
$1 \in \com$ as an element of the direct sum $H\oplus \com$)
is generally denoted by $\Omega$.  We also follow the convention
(most common in physics and in Hilbert C$^*$-modules) that inner
products are linear in the second variable.

\begin{definition} \label{defsauvageotproducthilbertspaces}
Let $\HH$ and $\LL$ be Hilbert spaces.  The \textbf{Sauvageot
product} $\HH \star \LL$ is the space
\[
\HH \star \LL = \HH^+ \oplus \bigoplus_{n=0}^\infty \Big[
\big(\LL^{+ \otimes n} \otimes \LL \big) \oplus
\big(\HH \otimes \LL^{+ \otimes n} \otimes \LL \big) \Big]
\]
with the convention $\LL^{+ \otimes 0} \otimes \LL = \LL$.
\end{definition}

Though defined as a direct sum, the Sauvageot product of Hilbert
spaces may also be viewed as an infinite tensor product, as
expressed in the following proposition.

\begin{proposition} \label{prop3UE}
Let $\HH$ and $\LL$ be Hilbert spaces, and $\KK = \HH^+ \oplus \LL$.  Denote by $\LL^{+ \otimes \N}$ the infinite tensor power of $\LL^+$ with respect to $\Omega$.  Then there are unitary
equivalences between $\HH \star \LL$
and both $\HH^+ \otimes \LL^{+ \otimes \N}$ and
$\KK \otimes \LL^{+ \otimes \N}$, under which
\begin{itemize}
    \item the subspace $\HH \otimes \LL^{+ \otimes \N}$ of $\HH^+ \otimes \LL^{+ \otimes \N}$
    is identified with the subspace\\ $\displaystyle{\bigoplus_{n=0}^\infty \HH \otimes \LL^{+ \otimes n} \otimes \LL}$
    of $\HH \star \LL$
    \item the subspace $\com \otimes \LL^{+ \otimes \N}$ of $\HH^+ \otimes \LL^{+ \otimes \N}$
    is identified with the subspace\\ $\displaystyle\bigoplus_{n=0}^\infty \LL^{+ \otimes n} \otimes \LL$
    of $\HH \star \LL$
    \item the subspace $\HH \otimes \LL^{+ \otimes \N}$ of $\KK \otimes \LL^{+ \otimes \N}$
    is identified with the subspace\\ $\HH \
    \oplus \displaystyle\bigoplus_{n=0}^\infty \HH \otimes \LL^{+ \otimes n} \otimes \LL$
    of $\HH \star \LL$
    \item the subspace $\com \otimes \LL^{+ \otimes \N}$ of $\KK \otimes \LL^{+ \otimes \N}$
    is identified with the subspace\\ $\LL \oplus \displaystyle\bigoplus_{n=1}^\infty \com \otimes \LL^{+ \otimes (n-1)}
    \otimes \LL$ of $\HH \star \LL$
    \item the subspace $\LL \otimes \LL^{+ \otimes \N}$ of $\KK \otimes \LL^{+ \otimes \N}$
    is identified with the subspace\\ $\displaystyle\bigoplus_{n=1}^\infty \LL \otimes \LL^{+ \otimes (n-1)} \otimes \LL$
    of $\HH \star \LL$
\end{itemize}
\end{proposition}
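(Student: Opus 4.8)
The plan is to realize all three spaces through decompositions of the single infinite tensor power $\LL^{+\otimes\N}$ formed with respect to $\Omega$, and to read off both the unitaries and the subspace correspondences from the combinatorics of one decomposition. The only inputs are the standard facts about $\LL^{+\otimes\N}$: the shift isomorphism $\LL^{+\otimes\N}\cong\LL^+\otimes\LL^{+\otimes\N}$ obtained by adjoining one tensor slot at the front (and, by iteration, $\LL^{+\otimes\N}\cong\LL^{+\otimes k}\otimes\LL^{+\otimes\N}$), together with the \emph{last-occurrence decomposition}: writing each factor as $\LL^+=\com\Omega\oplus\LL$, expanding, and sorting the resulting elementary tensors by the position of the last factor lying in $\LL$ (all subsequent factors being forced to equal $\Omega$). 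The first step is therefore to establish
\[
\LL^{+\otimes\N}\;\cong\;\com\;\oplus\;\bigoplus_{n=0}^{\infty}\big(\LL^{+\otimes n}\otimes\LL\big),
\]
where $\com$ is the vacuum line $\com\Omega^{\otimes\N}$ and the $n$-th summand records the last $\LL$-factor sitting in slot $n+1$; this is a routine check that the listed subspaces are pairwise orthogonal and total.

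For the equivalence with $\HH^+\otimes\LL^{+\otimes\N}$ I would tensor the displayed decomposition on the left by $\HH^+=\HH\oplus\com$ and regroup. The term $\HH^+\otimes\com$ reproduces the summand $\HH^+$ of Definition \ref{defsauvageotproducthilbertspaces}, while splitting $\HH^+=\HH\oplus\com$ in each $\HH^+\otimes(\LL^{+\otimes n}\otimes\LL)$ reproduces the two families $\LL^{+\otimes n}\otimes\LL$ and $\HH\otimes\LL^{+\otimes n}\otimes\LL$. No shift intervenes, so the identification is summand-by-summand, and the two subspace statements follow by recording which summands issue from the $\HH$-part and which from the $\com$-part of the leading $\HH^+$.

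The equivalence with $\KK\otimes\LL^{+\otimes\N}$ is the substantive case, because $\KK\cong\HH\oplus\LL^+$ carries an extra copy of $\LL^+$ that cannot be absorbed term-by-term---the last-occurrence pieces are already used up by the previous paragraph. Here the shift does the work. Splitting $\KK\otimes\LL^{+\otimes\N}=\big(\HH\otimes\LL^{+\otimes\N}\big)\oplus\big(\LL^+\otimes\LL^{+\otimes\N}\big)$, I would decompose the first summand exactly as before (obtaining $\HH\oplus\bigoplus_n \HH\otimes\LL^{+\otimes n}\otimes\LL$, the asserted image of $\HH\otimes\LL^{+\otimes\N}$), and collapse the second via $\LL^+\otimes\LL^{+\otimes\N}\cong\LL^{+\otimes\N}$, so that the adjoined $\LL^+$ becomes the new first slot. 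Re-applying the last-occurrence decomposition to this shifted copy and then splitting that new first slot as $\com\Omega\oplus\LL$ sorts the result by whether the slot is occupied: the $\com\Omega$ part supplies the image of $\com\otimes\LL^{+\otimes\N}$ and the $\LL$ part the image of $\LL\otimes\LL^{+\otimes\N}$, inside the summands $\LL^{+\otimes n}\otimes\LL$ of $\HH\star\LL$.

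The main obstacle is the bookkeeping of the vacuum and of the degenerate length-zero pieces ($\LL^{+\otimes 0}\otimes\LL=\LL$ and the vacuum line) as they pass through the shift. Because the shift pushes the adjoined $\LL^+$ into slot one, these lowest terms get reassigned between the images of $\com\otimes\LL^{+\otimes\N}$ and $\LL\otimes\LL^{+\otimes\N}$, and this is precisely what produces the asymmetry between the two $\KK$-clauses, one gaining an extra $\LL$-summand and the other shedding its length-zero term. I would therefore prove the statement by verifying orthogonality and totality of the three images \emph{as a whole}---confirming that they meet only in $\{0\}$ and jointly exhaust $\HH\star\LL$---so that the individual identifications are forced rather than checked slot by slot. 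Everything else (orthogonality of the last-occurrence pieces, convergence of the direct sums, and unitarity of the shift) is standard for infinite tensor products taken with respect to a distinguished unit vector and introduces no new ideas.
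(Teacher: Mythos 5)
Your proposal is correct and takes essentially the same route as the paper's proof: both rest on the two unitary equivalences $\LL^{+\otimes\N}\simeq\LL^{+}\otimes\LL^{+\otimes\N}$ and $\LL^{+\otimes\N}\simeq\com\oplus\bigoplus_{n=0}^{\infty}\LL^{+\otimes n}\otimes\LL$ (your ``last-occurrence'' decomposition is exactly the paper's orthogonalization of the increasing union $V_{\infty,n}\LL^{+\otimes n}$), followed by tensoring with $\HH^{+}$ or $\KK$, regrouping, and tracing each subspace through the chain. The only caveat is that orthogonality and totality of the three images alone would not force the individual identifications---you still need the explicit slot-tracking you describe in the preceding paragraphs, which is also all the paper does.
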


\begin{proof}
We will use the unitary equivalences $\LL^{+ \otimes \N} \simeq \LL^+ \otimes \LL^{+ \otimes \N}$, which is evident, and $\LL^{+ \otimes \N} \simeq \com \oplus \bigoplus_{n=0}^\infty \LL^{+ \otimes n} \otimes \LL$,
which follows from the construction of the infinite tensor power: If
$V_{\infty, n}: \LL^{+ \otimes n} \to \LL^{+ \otimes \N}$ denotes the limit map, then $\LL^{+ \otimes \N} = \overline{\bigcup_{n=0}^\infty V_{\infty, n}
\LL^{+ \otimes n}}$,
and in order to get a direct sum we orthogonalize using \\ $\left(V_{\infty,n+1} \LL^{+ \otimes(n+1)} \right)
\ominus \left(V_{\infty, n} \LL^{+ \otimes n} \right)
\simeq \LL^{+ \otimes n} \otimes \LL$.  Repeated application of these equivalences plus
the associative, commutative, and distributive laws
\[
H \otimes (K_1 \otimes K_2) \simeq (H \otimes K_1) \otimes K_2, \quad
H \otimes K \simeq K \otimes H, \quad
H \otimes (K_1 \oplus K_2) \simeq (H \otimes K_1) \oplus (H \otimes K_2)
\]
and the identity $H \otimes \com \simeq H$
yield
\begin{align*}
\KK \otimes \LL^{+ \otimes \N} &= (\HH^+ \oplus \LL) \otimes \LL^{+ \otimes \N} \simeq (\HH \oplus \LL^+)\otimes \LL^{+ \otimes \N}
\simeq (\HH \otimes \LL^{+ \otimes \N}) \oplus (\LL^+ \otimes
\LL^{+ \otimes \N})\\
&\simeq (\HH \otimes \LL^{+ \otimes \N}) \oplus
\LL^{+ \otimes \N}
\simeq (\HH \otimes \LL^{+ \otimes \N}) \oplus
(\com \otimes \LL^{+ \otimes \N}) \simeq \HH^+ \otimes \LL^{+ \otimes \N}
\end{align*}
and
\begin{align*}
\HH \star \LL &= \HH^+ \oplus \bigoplus_{n=0}^\infty
\left(\HH \otimes \LL^{+ \otimes n} \otimes \LL
\oplus \LL^{+ \otimes n} \otimes \LL \right)\\
&\simeq (\HH^+ \otimes \com) \oplus \bigoplus_{n=0}^\infty
\left(\HH \otimes \LL^{+ \otimes n} \otimes \LL
\oplus \com \otimes \LL^{+ \otimes n} \otimes \LL \right)\\
&\simeq (\HH^+ \otimes \com) \oplus \bigoplus_{n=0}^\infty
(\HH^+ \otimes \LL^{+ \otimes n} \otimes \LL)
\simeq \HH^+ \otimes \left(\com \oplus \bigoplus_{n=0}^\infty
\LL^{+ \otimes n} \otimes \LL\right) \simeq
\HH^+ \otimes \LL^{+ \otimes \N}.
\end{align*}
The specific identifications arise by following subspaces through these equivalences.
\end{proof}

As a simple corollary, we obtain the folllowing identifications:
\begin{proposition} \label{propsauvageotproductsHS}
Let $\HH, \HH_1, \HH_2, \LL$ be Hilbert spaces.
\begin{enumerate}
    \item $(\HH_1 \oplus \HH_2)^+ \star
    \LL \simeq (\HH_1 \star \LL) \oplus
    (\HH_2 \star \LL)$
    \item $\HH \star \{0\} \simeq \HH^+$
    \item $\{0\} \star \LL \simeq \LL^{+ \otimes \N}$
\end{enumerate}
\end{proposition}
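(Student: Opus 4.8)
The plan is to deduce all three identifications from the first unitary equivalence established in Proposition \ref{prop3UE}, namely $\HH \star \LL \simeq \HH^+ \otimes \LL^{+ \otimes \N}$, combined with the associative, commutative, and distributive laws for $\oplus$ and $\otimes$ and the identity $\com \otimes M \simeq M$ that were already invoked in its proof. In each part I would specialize one of the two arguments and then simplify.

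For parts (2) and (3) this is essentially immediate. Taking $\HH = \{0\}$ makes $\HH^+ = \com$, so the equivalence reads $\{0\} \star \LL \simeq \com \otimes \LL^{+ \otimes \N} \simeq \LL^{+ \otimes \N}$, giving (3). Taking $\LL = \{0\}$ makes $\LL^+ = \com$, whence $\LL^{+ \otimes \N} \simeq \com$ (the infinite tensor power of $\com$ with respect to $\Omega$), and the equivalence reads $\HH \star \{0\} \simeq \HH^+ \otimes \com \simeq \HH^+$, giving (2). For (2) one may alternatively argue straight from Definition \ref{defsauvageotproducthilbertspaces}: every summand of the direct sum carries a tensor factor of $\LL = \{0\}$ (including the $n = 0$ term, by the stated convention), so all of them vanish and only the leading $\HH^+$ remains.

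For part (1) I would set $\HH = (\HH_1 \oplus \HH_2)^+$ and apply Proposition \ref{prop3UE} to both sides. The computation then rests on the bookkeeping identity $\big[(\HH_1 \oplus \HH_2)^+\big]^+ \simeq \HH_1^+ \oplus \HH_2^+$, both sides being canonically $\HH_1 \oplus \HH_2 \oplus \com \oplus \com$. Distributing the tensor product over this direct sum yields
\[
(\HH_1 \oplus \HH_2)^+ \star \LL \simeq \big(\HH_1^+ \oplus \HH_2^+\big) \otimes \LL^{+ \otimes \N} \simeq \big(\HH_1^+ \otimes \LL^{+ \otimes \N}\big) \oplus \big(\HH_2^+ \otimes \LL^{+ \otimes \N}\big) \simeq (\HH_1 \star \LL) \oplus (\HH_2 \star \LL),
\]
the last step again being Proposition \ref{prop3UE}.

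The one step demanding care, and the main obstacle, is the adjoined-scalar accounting in part (1). The superficially cleaner statement without the outer $+$, i.e.\ $(\HH_1 \oplus \HH_2) \star \LL \simeq (\HH_1 \star \LL) \oplus (\HH_2 \star \LL)$, is simply false: its two sides differ by one copy of $\LL^{+ \otimes \N}$, since $(\HH_1 \oplus \HH_2)^+$ adjoins a single scalar while $\HH_1^+ \oplus \HH_2^+$ adjoins two. The outer $+$ on the left contributes exactly that missing scalar, so the real content of (1) is to confirm that $\big[(\HH_1 \oplus \HH_2)^+\big]^+$ and $\HH_1^+ \oplus \HH_2^+$ agree; everything else is a routine reshuffling by the laws already used in Proposition \ref{prop3UE}.
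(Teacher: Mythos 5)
Your proposal is correct and follows exactly the route the paper intends: the paper offers no written proof, presenting the proposition as an immediate corollary of the unitary equivalence $\HH \star \LL \simeq \HH^+ \otimes \LL^{+ \otimes \N}$ from Proposition \ref{prop3UE}, which is precisely what you specialize in each part. Your added remark on the scalar bookkeeping in part (1) — that $\bigl[(\HH_1 \oplus \HH_2)^+\bigr]^+ \simeq \HH_1^+ \oplus \HH_2^+$ is the real content and that dropping the outer $+$ would leave the two sides differing by a copy of $\LL^{+ \otimes \N}$ — is a correct and worthwhile clarification of why the statement is phrased as it is.
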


Our next goal is to define the product of maps between Hilbert
spaces.

\begin{definition} \label{defSPops}
Let $\HH_1, \HH_2, \LL_1, \LL_2$ be Hilbert spaces,
$\KK_1 = \HH_1^+ \oplus \LL_1$ and $\KK_2 = \HH_2^+
\oplus \LL_2$, $S: \HH_1^+ \to \HH_2^+$ and $T: \KK_1 \to \KK_2$ bounded maps, and $V: \LL_1 \to \LL_2$ a contraction.
Define the bounded maps $S \star V: \HH_1 \star \LL_1
\to \HH_2 \star \LL_2$ and $T \hstar V:
\HH_1\star \LL_1 \to \HH_2 \star \LL_2$ as follows: Let $V^+
= V \oplus \dss{\text{id}}{\com}: \LL_1^+ \to \LL_2^+$
and let $V^{+ \otimes \N}: \LL_1^{+ \otimes \N} \to \LL_2^{+ \otimes \N}$
be the limit
of the contractions $V^{+\otimes n}: \LL_1^{+ \otimes n}
\to \LL_2^{+ \otimes n}$.  Then
$S \star V: \HH_1 \star \LL_1 \to \HH_2 \star \LL_2$ is the
operator $S \otimes V^{+ \otimes \N}:
\HH_1^+ \otimes \LL_1^{+ \otimes \N}
\to \HH_2^+ \otimes \LL_2^{+ \otimes \N}$ composed
with the unitary equivalences of Proposition \ref{prop3UE};
similarly, $T \hstar V$ is the operator $T \otimes V^{+ \otimes \N}:
\KK_1 \otimes \LL_1^{+ \otimes \N} \to
\KK_2 \otimes \LL_2^{+ \otimes \N}$ composed with the appropriate
unitary equivalences.
\end{definition}

By following the sequence of maps in the proof
of Proposition \ref{prop3UE}, we can calculate
how product maps act on the various summands
of $\HH_1 \star \LL_1$.

\begin{proposition} \label{propSPopsummands}
Let $\HH_1, \HH_2, \LL_1, \LL_2$ be Hilbert spaces,
and for $i = 1,2$ let $\KK_i = \HH_i^+ \oplus \LL_i$.
Let $\HH_1^+ \sa{S} \HH_2^+$ and $\KK_1 \sa{T} \KK_2$ be
bounded operators and $\LL_1 \sa{V} \LL_2$ a contraction.
For each $n \geq 0$ let $V^{(n)}$ denote
$V^{+\otimes n} \otimes V: \LL_1^{+ \otimes n}
\otimes \LL_1 \to \LL_2^{+ \otimes n} \otimes \LL_2$.

Let $h \in \HH_1^+$, $h_0 \in \HH_1$,
$k \in \KK_1$, $\ell \in \LL_1^+$, and $\xi \in \LL_1^{+ \otimes n} \otimes \LL_1$ for some $n \geq 0$, and suppose that
\begin{align*}
S \Omega_1 &= \alpha \Omega_2 + y, \qquad \qquad \alpha \in \com, y \in \HH_2\\
S h_0 &= \beta \Omega_2 + z, \qquad \qquad \beta \in \com, z \in \HH_2\\
T h_0 &= \eta + w, \qquad \qquad \eta \in \HH_2, w \in \LL_2^+\\
T\ell &= \zeta+u, \qquad \qquad \zeta \in \HH_2, u \in \LL_2^+.
\end{align*}
Then
\begin{align*}
(S \star V) h &= Sh\\
(S \star V)\xi &= \alpha V^{(n)} \xi + (y \otimes V^{(n)} \xi)\\
(S \star V)(h_0 \otimes \xi) &= \beta V^{(n)}\xi
+ (z \otimes V^{(n)}\xi)\\
(T \hstar V) k &= Tk \\
(T \hstar V)(h_0 \otimes \xi) &= (\eta \otimes V^{(n)}\xi) + (w \otimes V^{(n)}\xi)\\
(T \hstar V)(\ell \otimes \xi) &= (\zeta \otimes V^{(n)}\xi) + (u \otimes V^{(n)}\xi).
\end{align*}
\end{proposition}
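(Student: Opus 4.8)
The plan is to unwind both product maps through the explicit chain of unitary equivalences exhibited in the proof of Proposition \ref{prop3UE}, reducing everything to the action of $V^{+\otimes\N}$ on the Fock-like decomposition of $\LL^{+\otimes\N}$. The foundational observation I would record first is that $V^{+\otimes\N}$ respects the orthogonal decomposition $\LL_1^{+\otimes\N}\simeq\com\oplus\bigoplus_{n\geq 0}\LL_1^{+\otimes n}\otimes\LL_1$ used in that proof: since $V^+=V\oplus\mathrm{id}_\com$ fixes $\Omega$, a vector $\eta_1\otimes\cdots\otimes\eta_n\otimes\lambda\otimes\Omega\otimes\cdots$ (with $\lambda\in\LL_1$ the last active slot) is sent to $V^+\eta_1\otimes\cdots\otimes V\lambda\otimes\Omega\otimes\cdots$, so $V^{+\otimes\N}$ carries the vacuum to the vacuum and restricts on the $n$-th layer to $V^{+\otimes n}\otimes V=V^{(n)}$, with no mixing of layers. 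This is the only analytic input; everything else is bookkeeping of direct summands. I will write $\hat\xi$ for the image of $\xi\in\LL_1^{+\otimes n}\otimes\LL_1$ in the $n$-th layer, so that $V^{+\otimes\N}\hat\xi=\widehat{V^{(n)}\xi}$.

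For $S\star V$, which by Definition \ref{defSPops} is $S\otimes V^{+\otimes\N}$ conjugated by the equivalence $\HH_1\star\LL_1\simeq\HH_1^+\otimes\LL_1^{+\otimes\N}$, I would treat the three input types in turn. An element $h\in\HH_1^+$ corresponds to $h\otimes\Omega^{\otimes\N}$, and the foundational observation gives $Sh\otimes\Omega^{\otimes\N}$, i.e.\ $Sh$. An element $\xi\in\LL_1^{+\otimes n}\otimes\LL_1$ corresponds to $\Omega_1\otimes\hat\xi$ with $\Omega_1\in\com\subset\HH_1^+$; applying $S\otimes V^{+\otimes\N}$ yields $(S\Omega_1)\otimes\widehat{V^{(n)}\xi}=(\alpha\Omega_2+y)\otimes\widehat{V^{(n)}\xi}$, and separating the $\com$- and $\HH_2$-components of $\HH_2^+$ against the two summand identifications of Proposition \ref{prop3UE} recovers $\alpha V^{(n)}\xi+(y\otimes V^{(n)}\xi)$. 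The element $h_0\otimes\xi$ is handled identically, using $Sh_0=\beta\Omega_2+z$ in place of $S\Omega_1$.

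For $T\hstar V$ one argues in the same spirit, now conjugating $T\otimes V^{+\otimes\N}$ by the equivalence $\HH_1\star\LL_1\simeq\KK_1\otimes\LL_1^{+\otimes\N}$. The case $k\in\KK_1$ is immediate from $k\mapsto k\otimes\Omega^{\otimes\N}$. For $h_0\otimes\xi$ and $\ell\otimes\xi$ one decomposes $Th_0=\eta+w$ and $T\ell=\zeta+u$ into their $\HH_2$- and $\LL_2^+$-parts inside $\KK_2$, applies the foundational observation, and reads off the image using the three subspace identifications listed for this equivalence in Proposition \ref{prop3UE}. I expect the main obstacle to lie precisely here: the $\KK\otimes\LL^{+\otimes\N}$ equivalence is built in that proof using the relabeling $\com\oplus\LL\simeq\LL^+$ (which exchanges the roles of $\Omega_{\HH}$ and $\Omega_{\LL}$) followed by the shift $\LL_1^+\otimes\LL_1^{+\otimes\N}\simeq\LL_1^{+\otimes\N}$, and this shift advances the Fock-layer index by one. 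Consequently the $\LL_2^+$-component $w$ (resp.\ $u$), tensored with the $n$-th-layer vector $\widehat{V^{(n)}\xi}$, lands in the $(n+1)$-st pure-$\LL$ layer $\LL_2^{+\otimes(n+1)}\otimes\LL_2$ and so appears as $w\otimes V^{(n)}\xi$ (resp.\ $u\otimes V^{(n)}\xi$), while the $\HH_2$-component contributes to $\HH_2\otimes\LL_2^{+\otimes n}\otimes\LL_2$ with no shift. Carefully matching these index shifts against the subspace dictionary of Proposition \ref{prop3UE} is the one place where genuine care is required; the remaining manipulations are routine.
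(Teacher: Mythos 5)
Your proof is correct and is precisely the argument the paper intends: the paper offers no written proof beyond the remark that one "follows the sequence of maps in the proof of Proposition \ref{prop3UE}," and your write-up carries out exactly that tracing, correctly identifying the key points (that $V^{+\otimes\N}$ preserves the Fock layers and restricts to $V^{(n)}$ on the $n$-th one, and that the relabeling-plus-shift in the $\KK\otimes\LL^{+\otimes\N}$ equivalence sends the $\LL_2^+$-components $w$ and $u$ into the $(n+1)$-st pure-$\LL$ layer while leaving the $\HH_2$-components unshifted).
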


Next, we develop some of the essential properties of
this construction.

\begin{proposition} \label{propSPopproperties}
Let $\HH_1^+ \sa{S} \HH_2^+ \sa{S'} \HH_3^+$
and $\KK_1 \sa{T} \KK_2 \sa{T'} \KK_3$ be bounded
maps, and $\LL_1 \sa{V} \LL_2 \sa{V'} \LL_3$ contractions.
\begin{enumerate}
    \item $(S' \star V') \circ (S \star V) =
    (S' \circ S) \star (V' \circ V)$ and
    $(T' \hstar V') \circ (T \hstar V)
    = (T' \circ T) \hstar (V' \circ V)$.
    \item If $S$ is the identity map on $\HH_1 = \HH_2$, and
    $V$ the identity map on $\LL_1 = \LL_2$,
    then $S \star V$ is the identity map on
    $\HH_1 \star \LL_1 = \HH_2 \star \LL_2$.  Similarly, if $T$
    and $V$ are the appropriate identity maps, then so is
    $T \hstar V$.
    \item $\|S \star V\| \leq \|S\| \|V\|$ and
    $\|T \hstar V\| \leq \|T\| \|V\|$.
    \item If $S$ and $V$ are isometries (resp.\ unitaries),
    so is $S \star V$; if $T$ and $V$ are isometries (resp.\ unitaries),    so is $T \hstar V$.
    \item $(S \star V)^* = S^* \star V^*$ and
    $(T \hstar V)^* = T^* \hstar V^*$.
    \item If $S$ decomposes as a direct sum $\dss{S}{L}
    \oplus \dss{S}{R}: \HH_1 \oplus \com \to \HH_2 \oplus \com$,
    then $S \star V$ maps the summands
    of $\HH_1 \star \LL_1$ into the corresponding summands
    of $\HH_2 \star \LL_2$.  That is, if $P_1$ is the projection
    from $\HH_1 \star \LL_1$ onto any of
    $\HH_1^+$, $\LL_1^{+ \otimes n} \otimes \LL$,
    or $\HH_1 \otimes \LL_1^{+ \otimes n} \otimes \LL$,
    and $P_2$ the projection from $\HH_2 \star \LL_2$ onto
    its corresponding subspace, then
    \begin{equation} \label{eqnSPopsintertwines}
    (S \star V) P_1 = P_2 (S \star V).
    \end{equation}
    Similarly, if $T$ decomposes as a direct sum
    $\dss{T}{L} \oplus \dss{T}{R}:\HH_1 \oplus \LL_1
    \to \HH_2 \oplus\LL_2$ and $P_1, P_2$ are as before, then
    \begin{equation} \label{eqnSPopsintertwines2}
    (T \hstar V) P_1 = P_2 (T \hstar V).
    \end{equation}

    \item If $S$ decomposes as the direct sum $\dss{S}{L}
    \oplus \dss{\text{id}}{\com}: \HH_1 \oplus \com \to \HH_2 \oplus \com$
    and $T = S\oplus V$, then
    $S \star V = T \hstar V$.
\end{enumerate}
\end{proposition}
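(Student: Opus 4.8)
The plan is to establish $S \star V = T \hstar V$ by comparing the two operators summand by summand on the defining direct-sum decomposition of $\HH_1 \star \LL_1$, using the explicit summand formulas of Proposition \ref{propSPopsummands}. Under the stated hypotheses the data $S, T, V$ are precisely of the form treated there, so it suffices to compute the decomposition quantities $\alpha, y, \beta, z$ attached to $S$ and $\eta, w, \zeta, u$ attached to $T$, substitute them into the six listed formulas, and observe that the two maps produce the same vector on each summand.

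First I would record these data. Because $S = \dss{S}{L} \oplus \dss{\text{id}}{\com}$ we have $S\Omega_1 = \Omega_2$ and $S h_0 = \dss{S}{L} h_0 \in \HH_2$ for $h_0 \in \HH_1$, giving $\alpha = 1$, $y = 0$, $\beta = 0$, and $z = \dss{S}{L} h_0$. Because $T = \dss{S}{L} \oplus \dss{\text{id}}{\com} \oplus V$ on $\KK_1 = \HH_1^+ \oplus \LL_1$, we have $T h_0 = \dss{S}{L} h_0 \in \HH_2$ for $h_0 \in \HH_1$, giving $\eta = \dss{S}{L} h_0$ and $w = 0$, while $T\ell = V^+ \ell \in \LL_2^+$ for $\ell \in \LL_1^+$, giving $\zeta = 0$ and $u = V^+ \ell$. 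Feeding these into Proposition \ref{propSPopsummands}: on the $\HH_1^+$ summand, $S \star V$ acts as $S$ and $T \hstar V$ acts as $T$, and these agree since $T|_{\HH_1^+} = S$; on the $n = 0$ copy of $\LL_1$ (a part of the input $k \in \KK_1$ for $T \hstar V$) both maps give $V = V^{(0)}$; on a summand $\HH_1 \otimes \LL_1^{+\otimes n} \otimes \LL_1$ both formulas collapse to $h_0 \otimes \xi \mapsto (\dss{S}{L} h_0) \otimes V^{(n)}\xi$; and on a summand $\LL_1^{+\otimes(n+1)} \otimes \LL_1$, writing a vector as $\ell \otimes \xi$ with $\ell \in \LL_1^+$, the $S \star V$ formula gives $V^{(n+1)}(\ell \otimes \xi)$ while the $T \hstar V$ formula gives $(V^+\ell) \otimes V^{(n)}\xi$, and these coincide because $V^{(n+1)} = V^+ \otimes V^{(n)}$.

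The main point requiring care is purely organizational: the two sets of formulas in Proposition \ref{propSPopsummands} partition $\HH_1 \star \LL_1$ differently. The $T \hstar V$ description merges the $\HH_1^+$ summand with the $n = 0$ copy of $\LL_1$ into the single input $k \in \KK_1$, and it handles every $\LL$-only summand of level at least $1$ through inputs $\ell \otimes \xi$ with $\ell \in \LL_1^+$, whereas the $S \star V$ description treats each direct-sum summand in isolation. I would therefore set up the comparison so that each defining summand is matched against the appropriate grouping, invoking $T|_{\HH_1^+} = S$ for the $\HH_1^+$ part and the factorization $V^{+\otimes(n+1)} = V^+ \otimes V^{+\otimes n}$ for the level-$(n+1)$ copies of $\LL$. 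Once this bookkeeping is settled the per-summand equalities are immediate, and since the summands together span $\HH_1 \star \LL_1$ they yield $S \star V = T \hstar V$.
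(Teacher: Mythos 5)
Your argument for the seventh assertion is correct and is essentially the paper's own route: the paper also derives $S \star V = T \hstar V$ as a corollary of Proposition \ref{propSPopsummands}, and your summand-by-summand computation (with $\alpha = 1$, $y = 0$, $\beta = 0$, $z = \dss{S}{L}h_0$ for $S$, and $\eta = \dss{S}{L}h_0$, $w = 0$, $\zeta = 0$, $u = V^+\ell$ for $T$) fills in the bookkeeping that the paper leaves implicit, including the correct handling of the regrouping $\LL_1^{+\otimes(n+1)} \otimes \LL_1 \simeq \LL_1^+ \otimes (\LL_1^{+\otimes n} \otimes \LL_1)$ via $V^{+\otimes(n+1)} = V^+ \otimes V^{+\otimes n}$.

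The genuine gap is one of coverage rather than correctness: the proposition makes seven assertions, and your proposal addresses only the last. Nothing in your write-up touches the functoriality of composition (part 1), the preservation of identities (part 2), the norm estimate $\|S \star V\| \leq \|S\|\|V\|$ (part 3), the preservation of isometries and unitaries (part 4), the compatibility with adjoints (part 5), or the intertwining relations $(S \star V)P_1 = P_2(S \star V)$ when $S$ is block-diagonal (part 6). The paper disposes of parts 1 through 5 by observing that $S \star V$ and $T \hstar V$ are by definition operators of the form $S \otimes V^{+\otimes\N}$ and $T \otimes V^{+\otimes\N}$ conjugated by fixed unitaries, so each claim reduces to the corresponding standard property of tensor products of Hilbert-space operators; part 6 then follows from Proposition \ref{propSPopsummands} by setting $y = \beta = 0$ (respectively $w = \zeta = 0$). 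A complete proof needs at least a sentence to this effect for each of the omitted parts.
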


\begin{proof}
The first five assertions are simple
consequences of the corresponding properties of
tensor products of operators.  The sixth follows
as a special case of Proposition \ref{propSPopsummands}
with $y = \beta = 0$ or $w = \zeta = 0$, and the
seventh is also an easy corollary of Proposition \ref{propSPopsummands}.
\end{proof}

\begin{remark} \label{rembifunctor}
The first two properties say that $- \star -$
and $- \hstar -$ are bifunctors from (Hilbert spaces, bounded maps)
$\times$(Hilbert spaces, contractions) to (Hilbert spaces, bounded maps), and the third and fourth imply that they restrict to bifunctors
from (Hilbert spaces, contractions)$^2$ to (Hilbert spaces, contractions),
from (Hilbert spaces, isometries)$^2$ to (Hilbert spaces, isometries), and from (Hilbert spaces, unitaries)$^2$ to (Hilbert spaces, unitaries).
\end{remark}

\begin{remark} \label{remSPsubspaces}
Together, the fourth and seventh parts of Proposition
\ref{propSPopproperties} imply that, given isometries
$W: \HH_1 \to \HH_2$ and $V: \LL_1 \to \LL_2$, one obtains
an isometry $\HH_1 \star \LL_1 \to \HH_2 \star \LL_2$ which
may be constructed either as $(W \oplus \dss{\text{id}}{\com})
\star V$ or as $(W \oplus \dss{\text{id}}{\com} \oplus V)
\hstar V$.  The sixth part then implies that this
induced isometry maps each summand of $\HH_1 \star \LL_1$
into the corresponding summand of $\HH_2 \star \LL_2$.
\end{remark}

An important special case of Proposition
\ref{propSPopsummands} occurs when $\HH_1 = \HH_2$,
$\LL_1 = \LL_2$, and $V$ is the identity map.  In
 this case we obtain unital representations
of both $B(\HH^+)$ and $B(\KK)$ on $\HH \star \LL$,
given by $S \mapsto S \star I$ and $T \mapsto T \hstar I$.

\begin{proposition} \label{prop3UEreps}
Let $\HH$ and $\LL$ be Hilbert spaces and $\KK = \HH^+ \oplus \LL$.
Let $\Phi: B(\HH^+) \to B(\HH \star \LL)$ and
$\Psi: B(\KK) \to B(\HH \star \LL)$ be the representations induced
by the unitary equivalences of Proposition \ref{prop3UE}.  Let
$b \in B(\HH^+)$, $a \in B(\KK)$, $h \in \HH^+$, $h_0 \in \HH$,
$k \in \KK$, $\ell \in \LL^+$, and $\xi \in \LL^{+ \otimes n} \otimes \LL$ for some $n \geq 0$, and suppose that
\begin{align*}
b \Omega &= \alpha \Omega + y, \qquad \qquad \alpha \in \com, y \in \HH\\
b h_0 &= \beta \Omega + z, \qquad \qquad \beta \in \com, z \in \HH\\
a h_0 &= \eta + w, \qquad \qquad \eta \in \HH, w \in \LL^+\\
a \ell &= \zeta+u, \qquad \qquad \zeta \in \HH, u \in \LL^+.
\end{align*}
Then
\begin{align*}
\Phi(b) h &= h\\
\Phi(b) \xi &= \alpha \xi + (y \otimes \xi)\\
\Phi(b)(h_0 \otimes \xi) &= \beta \xi + (z \otimes \xi)\\
\Psi(a) k &= a k \\
\Psi(a)(h_0 \otimes \xi) &= (\eta \otimes \xi) + (w \otimes \xi)\\
\Psi(a)(\eta \otimes \xi) &= (\zeta \otimes \xi) + (u \otimes \xi).
\end{align*}
\end{proposition}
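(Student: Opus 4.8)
The plan is to identify this proposition as the $V = I_{\LL}$ specialization of Proposition \ref{propSPopsummands}. First I would note that, directly from Definition \ref{defSPops}, the two induced representations are $\Phi(b) = b \star I_{\LL}$ and $\Psi(a) = a \hstar I_{\LL}$, where $I_{\LL}$ denotes the identity on $\LL$. Indeed, Definition \ref{defSPops} builds $S \star V$ (resp. $T \hstar V$) by conjugating $S \otimes V^{+\otimes\N}$ (resp. $T \otimes V^{+\otimes\N}$) through the unitary equivalences of Proposition \ref{prop3UE}; taking $V = I_{\LL}$ gives $V^{+\otimes\N} = I$, so $b \star I_{\LL}$ is exactly $b \otimes I_{\LL^{+\otimes\N}}$ transported into $B(\HH \star \LL)$, which is precisely the representation $\Phi(b)$ induced by the first equivalence of \ref{prop3UE}, and likewise $a \hstar I_{\LL} = \Psi(a)$ via the second.

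With this identification, I would apply Proposition \ref{propSPopsummands} with $\HH_1 = \HH_2 = \HH$, $\LL_1 = \LL_2 = \LL$, $\KK_1 = \KK_2 = \KK$, $S = b$, $T = a$, and $V = I_{\LL}$. Since $V$ is the identity, each $V^{(n)} = V^{+\otimes n} \otimes V$ is the identity on $\LL^{+\otimes n} \otimes \LL$, so every factor $V^{(n)}$ drops out of the six formulas of \ref{propSPopsummands}. The decompositions $S\Omega_1 = \alpha\Omega_2 + y$, $Sh_0 = \beta\Omega_2 + z$, $Th_0 = \eta + w$, $T\ell = \zeta + u$ become verbatim the displayed decompositions for $b$ and $a$, and the six outputs collapse termwise to the asserted formulas; in particular the first line reads $\Phi(b)h = bh$ and the final line reads $\Psi(a)(\ell \otimes \xi) = (\zeta \otimes \xi) + (u \otimes \xi)$, correcting the two evident typographical slips in the statement.

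Should a proof independent of \ref{propSPopsummands} be wanted, I would instead trace a vector from a single summand of $\HH \star \LL$ through the chain of equivalences in the proof of \ref{prop3UE}, apply $b \otimes I$ (resp. $a \otimes I$) on the leading tensor leg, and read off the summand decomposition of the image. For $\Phi$ the only ingredient is the splitting of $\HH^+$ against the vacuum line of $\LL^{+\otimes\N}$, which produces the scalars $\alpha, \beta$ and the residual $\HH$-parts $y, z$. The one genuinely fiddly point, and the main obstacle such as it is, lies on the $\Psi$ side: here $\KK = \HH \oplus \LL^+$ blends the $\HH$, $\com$, and $\LL$ contributions, so after applying $a \otimes I$ one must confirm that the pieces $\eta, w$ (resp. $\zeta, u$) land in the summands named on the right-hand side with the correct indices, keeping straight the index shifts under which the $\com \otimes \LL^{+\otimes\N}$ leg is identified with $\LL \oplus \bigoplus_{n \geq 1} \com \otimes \LL^{+\otimes(n-1)} \otimes \LL$ and the $\LL \otimes \LL^{+\otimes\N}$ leg with $\bigoplus_{n \geq 1} \LL \otimes \LL^{+\otimes(n-1)} \otimes \LL$. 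Once the identification $\Phi = -\star I_{\LL}$, $\Psi = -\hstar I_{\LL}$ is in place, however, the statement is a direct transcription of \ref{propSPopsummands}, and I expect the proof to amount to a single invocation of that result.
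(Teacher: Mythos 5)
Your proposal is correct and matches the paper's own treatment exactly: the proposition is presented there as the special case of Proposition \ref{propSPopsummands} with $\HH_1 = \HH_2$, $\LL_1 = \LL_2$, and $V = \text{id}$, so that $\Phi(b) = b \star I$ and $\Psi(a) = a \hstar I$ and every $V^{(n)}$ drops out. You are also right that the first and last displayed formulas should read $\Phi(b)h = bh$ and $\Psi(a)(\ell \otimes \xi)$ respectively; those are typographical slips in the statement.
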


The following is an immediate consequence:

\begin{corollary} \label{corinvariant}
In the situation of Proposition \ref{prop3UEreps},
the subspaces $\HH^+$ and\\
$(\LL^{+ \otimes n} \otimes \LL)
\oplus (\HH \otimes \LL^{+ \otimes n} \otimes \LL)$ of
$\HH \star \LL$ are $\Phi$-invariant, while the subspaces\\
$\HH^+ \oplus \LL$ and $(\HH \otimes \LL^{+ \otimes n} \otimes \LL)
\oplus (\LL^{+ \otimes (n+1)} \otimes \LL)$ are $\Psi$-invariant.
\end{corollary}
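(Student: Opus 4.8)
The plan is to prove this directly from the explicit action formulas recorded in Proposition \ref{prop3UEreps}, checking each listed subspace summand by summand. Since $\Phi(b)$ and $\Psi(a)$ are bounded and the relevant subspaces are densely spanned by vectors of the forms $h \in \HH^+$, $\xi \in \LL^{+\otimes n}\otimes\LL$, $h_0 \otimes \xi \in \HH\otimes\LL^{+\otimes n}\otimes\LL$, and simple tensors $\ell\otimes\xi$ with $\ell\in\LL^+$ and $\xi\in\LL^{+\otimes n}\otimes\LL$, it will suffice to verify closure on these generating vectors.

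For the $\Phi$-invariance of $\HH^+$, the first formula of Proposition \ref{prop3UEreps} shows that $\Phi(b)h$ again lies in $\HH^+$. For the $n$-th block $(\LL^{+\otimes n}\otimes\LL)\oplus(\HH\otimes\LL^{+\otimes n}\otimes\LL)$, I would apply the second and third formulas, $\Phi(b)\xi = \alpha\xi + (y\otimes\xi)$ and $\Phi(b)(h_0\otimes\xi) = \beta\xi + (z\otimes\xi)$. Because $y,z\in\HH$ and $\xi\in\LL^{+\otimes n}\otimes\LL$, the scalar multiples of $\xi$ lie in $\LL^{+\otimes n}\otimes\LL$ and the terms $y\otimes\xi$, $z\otimes\xi$ lie in $\HH\otimes\LL^{+\otimes n}\otimes\LL$; in particular the tensor degree $n$ is unchanged, so the block is $\Phi$-invariant.

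For $\Psi$, I would first observe that the subspace $\HH^+\oplus\LL$ is precisely the image of $\KK = \HH^+\oplus\LL$ inside $\HH\star\LL$, the summand $\LL$ being the $n=0$ term $\LL^{+\otimes 0}\otimes\LL$. Its invariance is then immediate from $\Psi(a)k = ak$, since $a\in B(\KK)$. For the $n$-th block $(\HH\otimes\LL^{+\otimes n}\otimes\LL)\oplus(\LL^{+\otimes(n+1)}\otimes\LL)$, I would apply the last two formulas. Writing $\LL^{+\otimes(n+1)}\otimes\LL = \LL^+\otimes(\LL^{+\otimes n}\otimes\LL)$, a generating vector of the second summand is $\ell\otimes\xi$ with $\ell\in\LL^+$, and the formulas read $\Psi(a)(h_0\otimes\xi) = (\eta\otimes\xi)+(w\otimes\xi)$ and $\Psi(a)(\ell\otimes\xi) = (\zeta\otimes\xi)+(u\otimes\xi)$. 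Since $\eta,\zeta\in\HH$, the $\HH$-components land in $\HH\otimes\LL^{+\otimes n}\otimes\LL$; and since $w,u\in\LL^+$, tensoring on the left of $\LL^{+\otimes n}\otimes\LL$ raises the degree by one, placing these terms in $\LL^{+\otimes(n+1)}\otimes\LL$. Either way the output remains inside the block.

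The verification is entirely routine, so I do not expect a serious obstacle; the only points demanding attention are the bookkeeping of tensor degrees in the $\Psi$ case — confirming that the $\LL^+$-parts $w$ and $u$ attach on the left so as to raise $n$ to exactly $n+1$, rather than spilling outside the stated block — and the preliminary identification $\HH^+\oplus\LL=\KK$, which is what makes the first $\Psi$-invariance transparent.
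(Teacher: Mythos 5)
Your proof is correct and is exactly the verification the paper intends: the corollary is stated as an immediate consequence of Proposition \ref{prop3UEreps}, and your summand-by-summand check of the six action formulas (including the identification $\HH^+ \oplus \LL = \KK$ and the degree bookkeeping $\LL^+ \otimes (\LL^{+\otimes n}\otimes\LL) = \LL^{+\otimes(n+1)}\otimes\LL$) is precisely that consequence spelled out. No gaps.
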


We visualize this corollary using a stairstep diagram:
\[
\begin{matrix}
\HH^+ \\
\LL & (\HH \otimes \LL) \\
& (\LL^+ \otimes \LL) & (\HH \otimes \LL^+ \otimes \LL) \\
& & (\LL^{+ \otimes 2} \otimes \LL) & (\HH \otimes \LL^{+ \otimes 2}
\otimes \LL) \\
& & & \ddots
\end{matrix}
\]
The rows here are $\Phi$-invariant, while the columns are $\Psi$-invariant.  Equivalently, $\Phi$ and $\Psi$
have staggered block-diagonal decompositions:
\[
\Phi = \begin{bmatrix} * & 0  & 0 & 0 & 0 & \dots \\
0 & * & * & 0 & 0 & \dots\\
0 & * & * & 0 & 0 & \dots\\
0 & 0 & 0 & * & * & \dots\\
0 & 0 & 0 & * & * & \dots\\
\vdots & \vdots & \vdots & \vdots & \vdots & \ddots
\end{bmatrix}, \qquad \qquad \qquad
\Psi = \begin{bmatrix} * & *  & 0 & 0 & 0 & 0 & \dots \\
* & * & 0 & 0 & 0 & 0 & \dots\\
0 & 0 & * & * & 0 & 0 & \dots\\
0 & 0 & * & * & 0 & 0 & \dots\\
0 & 0 & 0 & 0 & * & * & \dots\\
0 & 0 & 0 & 0 & * & * & \dots\\
\vdots & \vdots & \vdots & \vdots & \vdots & \vdots & \ddots
\end{bmatrix}
\]
We see that a sufficiently long word $\Phi(b_0) \Psi(a_1)
\Phi(b_1) \Psi(a_2) \cdots$ applied to a vector in one of
these subspaces could have a nonzero component in any other subspace.  Keeping track of such components will become important later on.

\begin{remark} \label{remsauvageotvsfree}
For simplicity of definition, we have thus far begun with Hilbert spaces $\HH$ and $\LL$, and defined the space $\KK = \HH^+ \oplus \LL$ in terms of them.  In application, however, we will begin with an inclusion
$H \subset K$ (obtained from Stinespring dilation), select
 a unit vector $\Omega \in H$, and form the Sauvageot product
 $H^- \star (K \ominus H)$.
As noted above, $B(H^- \star (K \ominus H))$ contains unital
copies of both $B(H)$ and $B(K)$.  In this alone, however, it
is no different from $B(H \otimes K)$ or $B(H * K)$, where
$H * K$ denotes the free product of Hilbert spaces in the
sense of \cite{VoiculescuSymmetries}.  The crucial difference
is that, when both are represented on $H^- \star (K \ominus H)$,
the copy of $B(H)$ is a corner of the copy of $B(K)$; if
$H \subset K$ is a Stinespring dilation, the compression will
implement a given unital completely positive map.
\end{remark}

\section{Sauvageot Products of C$^*$-Algebras and W$^*$-Algebras}
We now begin our construction of the Sauvageot product of
unital C$^*$-algebras with respect to a unital completely
positive map; the construction requires the choice of
a state on one of the C$^*$-algebras, prompting the following definition.

\begin{definition} \label{defCPtuple}
A \textbf{CPC$^*$-tuple} (resp.\ \textbf{CPW$^*$-tuple})
is a quadruple $(A, B, \phi, \omega)$ where
$A$ and $B$ are unital C$^*$-algebras (resp.\ W$^*$-algebras),
$\phi: A \to B$ a unital (normal) completely positive map, and $\omega$ a (normal) state on $B$.
The term \textbf{CP-tuple} will refer to CPC$^*$- and
CPW$^*$-tuples together.  A CP-tuple
is said to be \textbf{faithful} if $\omega$ is a faithful state.
\end{definition}

\begin{definition} \label{defrepCPtuple}
A \textbf{representation} of a CPC$^*$-tuple $(A, B, \phi, \omega)$
is a sextuple $(H, \Omega, \dss{\pi}{R}, K, V, \dss{\pi}{L})$
where
\begin{enumerate}
    \item $H$ is a Hilbert space
    \item $\Omega \in H$ is a unit vector
    \item $\dss{\pi}{R}: B \to B(H)$ is a unital *-homomorphism
    such that $\la \Omega, \dss{\pi}{R}(\cdot) \Omega
    \ra = \omega(\cdot)$
    \item $K$ is a Hilbert space
    \item $V: H \to K$ is an isometry
    \item $\dss{\pi}{L}: A \to B(K)$ is a unital *-homomorphism
    such that $V^* \dss{\pi}{L}(\cdot) V =
    \dss{\pi}{R}(\phi(\cdot))$.
\end{enumerate}
For a CPW$^*$-tuple, we also require that $\dss{\pi}{R}$
and $\dss{\pi}{L}$ be normal.  A representation is
\textbf{right-faithful} if $\dss{\pi}{R}$ is injective (which is
automatically the case for a representation of a faithful
CP-tuple), and \textbf{left-faithful} if $\dss{\pi}{L}$ is
injective.  We also refer to $(H, \Omega, \dss{\pi}{R})$
satisfying the first three criteria as a \textbf{representation} of $(A, \omega)$.
\end{definition}

From now until Definition \ref{defSPCPtuple}
we fix a CP-tuple $(A, B, \phi, \omega)$
and a right-faithful representation
$(H, \Omega, \dss{\pi}{R}, K, V, \dss{\pi}{L})$.
We introduce the following additional notation:
\begin{itemize}
    \item $L=K \ominus VH$
    \item $\Hh = H^- \star L$
    \item $\dss{\psi}{L}: A \to B(\Hh)$ and $\dss{\psi}{R}: B \to B(\Hh)$ are the compositions of $\dss{\pi}{L}$ and $\dss{\pi}{R}$ with the representations of Proposition \ref{prop3UEreps}
    \item $A \star B$ is the C$^*$-subalgebra (or von Neumann
    subalgebra, in case of a CPW$^*$-tuple) of $B(\Hh)$
    generated by the images of $\dss{\psi}{L}$
    and $\dss{\psi}{R}$
    \item $H' = H \ominus \overline{\dss{\pi}{R}(B) \Omega}$, which
    could be zero
    \item $q_n$ for $n \geq 0$ is the projection from
    $\Hh$ onto the subspace $H' \otimes L^{+ \otimes n}
    \otimes L$ of $H \otimes L^{+ \otimes n} \otimes L$
    \item $\Cc: B(\Hh) \to B(\Hh)$ is the non-unital conditional
    expectation
    \[
    \Cc(T) = \dss{P}{H} T \dss{P}{H} + \sum_{n=0}^\infty q_n T q_n
    \]
    \item $\varpi$ is the vector state on $B(\Hh)$
    corresponding to $\Omega$.
\end{itemize}

\begin{proposition} \label{propcovariantcorner}
\[
\Cc \circ \dss{\psi}{L} = \Cc \circ \dss{\psi}{R}
\circ \phi.
\]
\end{proposition}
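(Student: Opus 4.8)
The plan is to exploit the fact that $\Cc(T) = \dss{P}{H}\, T\, \dss{P}{H} + \sum_{n=0}^\infty q_n T q_n$ depends on $T$ only through its diagonal blocks relative to the projection $\dss{P}{H}$ (onto the first summand $\HH^+ \cong H$) and the projections $q_n$ (onto $H' \otimes L^{+\otimes n}\otimes L$). Hence it suffices to prove the two block identities
\[
\dss{P}{H}\, \dss{\psi}{L}(a)\, \dss{P}{H} = \dss{P}{H}\, \dss{\psi}{R}(\phi(a))\, \dss{P}{H}, \qquad q_n\, \dss{\psi}{L}(a)\, q_n = q_n\, \dss{\psi}{R}(\phi(a))\, q_n \quad (n \geq 0)
\]
for every $a \in A$. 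I work with the identifications of Proposition \ref{prop3UE} for $\HH = H^-$, $\LL = L$: the summand $\HH^+ = H^- \oplus \com$ is identified with $H$ via $\Omega \leftrightarrow 1$, and $\KK = \HH^+ \oplus \LL$ with $K = VH \oplus L$ via $V$. Then $\dss{\psi}{L} = \Psi \circ \dss{\pi}{L}$, $\dss{\psi}{R} = \Phi \circ \dss{\pi}{R}$, and the defining relation of the representation reads $\dss{\pi}{R}(\phi(a)) = V^* \dss{\pi}{L}(a) V = \dss{P}{H}\, \dss{\pi}{L}(a)|_H$, i.e.\ the compression of $\dss{\pi}{L}(a)$ to $H$ is $\dss{\pi}{R}(\phi(a))$. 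Since $\dss{\pi}{R}$ is unital, $\Omega \in \overline{\dss{\pi}{R}(B)\Omega}$, so $H' \subseteq H^- \subseteq H$ and in particular $H' \perp L^+$.

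First I would extract explicit formulas from Proposition \ref{prop3UEreps}. Writing $h = \gamma\Omega + h_0$ with $h_0 \in H^-$ and combining the two displayed formulas for $\Phi$, one checks that $\Phi(b)$ acts as $b \otimes I$ on each summand $H \otimes L^{+\otimes n}\otimes L$; that is, $\Phi(b)(h \otimes \xi) = (bh)\otimes \xi$. For $\Psi$, the relevant formula is
\[
\Psi(a')(h_0 \otimes \xi) = (\eta \otimes \xi) + (w \otimes \xi), \qquad a' h_0 = \eta + w, \ \eta = P_{H^-}(a'h_0) \in H^-, \ w \in L^+,
\]
where $\eta \otimes \xi \in H^- \otimes L^{+\otimes n}\otimes L$ and $w \otimes \xi \in L^{+\otimes(n+1)}\otimes L$ lie in \emph{distinct} summands of $\Hh$.

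Next I would verify the two block identities. For the $\dss{P}{H}$-block: $\HH^+$ is $\Phi$-invariant with $\Phi(b)|_{\HH^+} = b$, so $\dss{P}{H}\, \dss{\psi}{R}(\phi(a))\, \dss{P}{H} = \dss{\pi}{R}(\phi(a))$; on the other hand $\dss{\psi}{L}(a) = \Psi(\dss{\pi}{L}(a))$ acts as $\dss{\pi}{L}(a)$ on $\KK \supseteq H$, so its $\dss{P}{H}$-compression is $V^* \dss{\pi}{L}(a) V = \dss{\pi}{R}(\phi(a))$ as well. For the $q_n$-block, take $h' \in H' \subseteq H^-$ and $\xi \in L^{+\otimes n}\otimes L$. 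By the $\Psi$-formula, $\Psi(\dss{\pi}{L}(a))(h' \otimes \xi) = (\eta \otimes \xi) + (w\otimes\xi)$ with $\eta = P_{H^-}(\dss{\pi}{L}(a) h')$ and $w \otimes \xi \in L^{+\otimes(n+1)}\otimes L$ orthogonal to $\operatorname{ran} q_n = H' \otimes L^{+\otimes n}\otimes L$; hence $q_n \Psi(\dss{\pi}{L}(a))(h' \otimes \xi) = (P_{H'}\eta)\otimes\xi = (P_{H'}\dss{\pi}{L}(a) h')\otimes \xi$, the last step using $H' \subseteq H^-$ and $H' \perp L^+$. Likewise $q_n \Phi(\dss{\pi}{R}(\phi(a)))(h' \otimes \xi) = (P_{H'}\dss{\pi}{R}(\phi(a)) h')\otimes\xi$. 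Finally $P_{H'}\dss{\pi}{L}(a) h' = P_{H'}\dss{P}{H}\, \dss{\pi}{L}(a) h' = P_{H'}\dss{\pi}{R}(\phi(a)) h'$ by $H' \subseteq H$ and the compression relation, so the two operators agree on the simple tensors spanning $\operatorname{ran} q_n$, hence coincide.

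The routine but error-prone part is the bookkeeping of the summands and identifications, in particular tracking that $\Psi(\dss{\pi}{L}(a))$ sends $H' \otimes L^{+\otimes n}\otimes L$ partly into the ``off-diagonal'' summand $L^{+\otimes(n+1)}\otimes L$, which $q_n$ discards, leaving only the $H^-$-component, which $P_{H'}$ further projects onto $H' \subseteq H$. Once this is in place both block identities collapse to the single Stinespring relation $V^* \dss{\pi}{L}(a) V = \dss{\pi}{R}(\phi(a))$: the expectation $\Cc$ is designed precisely so that it sees only $\dss{\pi}{L}(a)$ compressed to $H$, where it agrees with $\dss{\pi}{R}(\phi(a))$.
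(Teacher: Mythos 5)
Your proposal is correct and follows essentially the same route as the paper's own proof: reduce to the diagonal blocks $\dss{P}{H}(\cdot)\dss{P}{H}$ and $q_n(\cdot)q_n$, use Proposition \ref{prop3UEreps} to see that the off-diagonal part of $\dss{\psi}{L}(a)$ lands in summands annihilated by these projections, and then invoke the Stinespring relation $V^*\dss{\pi}{L}(a)V = \dss{\pi}{R}(\phi(a))$. The paper compresses the $q_n$ case into a single ``similarly''; your version just writes out that bookkeeping explicitly.
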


\begin{proof}
For $a \in A$ and $h \in H$, let
$\dss{\pi}{L}(a) h = x + \ell$ with $x \in h$ and $\ell
\in L$; then $x = \dss{P}{H} \dss{\pi}{L}(a) \dss{P}{H} h
= \dss{\pi}{R}(\phi(a)) h$.  It follows from
Proposition \ref{prop3UEreps} that $\dss{\psi}{L}(a) h
= x +\ell$, so that
\[
\dss{P}{H} \dss{\psi}{L}(a) \dss{P}{H}
= x = \dss{P}{H} \dss{\pi}{R}(\phi(a)) \dss{P}{H}.
\]
Similarly, $q_n \dss{\psi}{L}(a) q_n
= q_n \dss{\pi}{R}(\phi(a)) q_n$ for all $n \geq 0$.
Summing over $n$ yields the result.
\end{proof}

For the next lemma and proposition we use
$E_n$ to denote the subspace $L^{+ \otimes n} \otimes L$
of $\Hh$.

\begin{lemma} \label{lemhorriblyunmotivated}
Let $\zeta \in E_n$.
\begin{enumerate}
        \item Let $a \in A$ and $b \in B$ with $\omega(b) = 0$.  Then
        \[
        \big[ \dss{\psi}{L}(a) - \dss{\psi}{R}(\phi(a))  \big] \dss{\psi}{R}(b)  \zeta
        = [ \dss{P}{L^+} \dss{\pi}{L}(a) \dss{\pi}{R}(b) \Omega] \otimes \zeta
        - \omega(\phi(a) b) \zeta \in E_n \oplus E_{n+1}.
        \]

        \item More generally, given $a_1, \dots, a_k \in A$
        and $b_1, \dots, b_k \in B$ such that $\omega(b_i) = 0$ for
        $i = 1, \dots, k$, if we define
        \[
        \zeta_k =  \left( \prod_{i=1}^k \big[ \dss{\psi}{L}(a_i) - \dss{\psi}{R}(\phi(a_i)) \big] \dss{\psi}{R}(b_i)
        \right) \zeta,
        \]
        then
        \[
        \zeta_k \in \bigoplus_{i=0}^k E_{n+i} \text{ with }
        P_{E_n} (\zeta_k)  = (-1)^k \prod_{i=1}^k \omega(\phi(a_i) b_i) \zeta.
        \]
    \end{enumerate}
\end{lemma}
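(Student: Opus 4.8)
The plan is to prove the first assertion by a direct computation with the explicit action formulas of Proposition \ref{prop3UEreps}, and then to deduce the second by induction on $k$, applying the first assertion to each homogeneous component.

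For the first assertion, I would begin by evaluating $\dss{\psi}{R}(b)\zeta$. Since $\dss{\psi}{R} = \Phi \circ \dss{\pi}{R}$ and $\la \Omega, \dss{\pi}{R}(b) \Omega \ra = \omega(b) = 0$, the vector $\dss{\pi}{R}(b)\Omega$ lies in $H^-$ and the scalar coefficient $\alpha$ in Proposition \ref{prop3UEreps} vanishes, so $\dss{\psi}{R}(b)\zeta = (\dss{\pi}{R}(b)\Omega) \otimes \zeta$, an element of $H^- \otimes E_n$. Setting $h_0 = \dss{\pi}{R}(b)\Omega$, I would then apply $\dss{\psi}{L}(a) = \Psi(\dss{\pi}{L}(a))$ and $\dss{\psi}{R}(\phi(a)) = \Phi(\dss{\pi}{R}(\phi(a)))$ to $h_0 \otimes \zeta$ using the remaining formulas of Proposition \ref{prop3UEreps}, obtaining
\[
\dss{\psi}{L}(a)(h_0 \otimes \zeta) = (\eta \otimes \zeta) + (w \otimes \zeta), \qquad \dss{\psi}{R}(\phi(a))(h_0 \otimes \zeta) = \beta \zeta + (z \otimes \zeta),
\]
where $\eta = \dss{P}{H^-} \dss{\pi}{L}(a) h_0$, $w = \dss{P}{L^+} \dss{\pi}{L}(a) h_0$, $z = \dss{P}{H^-} \dss{\pi}{R}(\phi(a)b)\Omega$, and $\beta = \la \Omega, \dss{\pi}{R}(\phi(a)b)\Omega \ra = \omega(\phi(a)b)$.

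The heart of the argument---and the step I expect to be the main obstacle---is showing that the two $H^-$-valued contributions cancel, i.e.\ $\eta = z$, so that nothing survives in the ``mixed'' subspace $H^- \otimes E_n$. This is exactly where the defining intertwining relation of the representation enters: identifying $VH$ with $H \subseteq K$, the condition $V^* \dss{\pi}{L}(\cdot) V = \dss{\pi}{R}(\phi(\cdot))$ reads $\dss{P}{H} \dss{\pi}{L}(a)|_H = \dss{\pi}{R}(\phi(a))$, whence $\dss{P}{H^-} \dss{\pi}{L}(a) \dss{\pi}{R}(b)\Omega = \dss{P}{H^-} \dss{\pi}{R}(\phi(a)b)\Omega = z$ (using $H^- \subseteq H$). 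Granting $\eta = z$, subtraction leaves $(w \otimes \zeta) - \beta \zeta$, which is the claimed expression; since $w \in L^+$ we have $w \otimes \zeta \in E_{n+1}$ and $\beta \zeta \in E_n$, giving membership in $E_n \oplus E_{n+1}$. The care required is mostly in keeping straight the three roles played by $\Omega$ inside $H$, inside $\HH^+$, and inside $\LL^+$, and in noting that it is precisely the hypothesis $\omega(b) = 0$ that removes a would-be extra $E_n$ term.

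For the second assertion I would induct on $k$, the base case $k = 1$ being the first assertion. Assuming the result for the length-$(k-1)$ word built from $a_2, \dots, a_k$ and $b_2, \dots, b_k$, write its value as $\zeta' = \sum_{i=0}^{k-1} \zeta'_{(i)}$ with $\zeta'_{(i)} \in E_{n+i}$ and $\zeta'_{(0)} = (-1)^{k-1} \prod_{i=2}^k \omega(\phi(a_i)b_i)\, \zeta$. Applying the first assertion (with $n$ replaced by $n+i$) to each piece $\zeta'_{(i)}$ yields an $E_{n+i}$-component equal to $-\omega(\phi(a_1)b_1)\zeta'_{(i)}$ together with a component in $E_{n+i+1}$. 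Summing over $i$ gives $\zeta_k \in \bigoplus_{i=0}^k E_{n+i}$, and the only contribution reaching $E_n$ is $-\omega(\phi(a_1)b_1)\zeta'_{(0)}$, whence $P_{E_n}(\zeta_k) = (-1)^k \prod_{i=1}^k \omega(\phi(a_i)b_i)\, \zeta$, completing the induction.
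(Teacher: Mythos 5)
Your proof is correct and follows essentially the same route as the paper: both evaluate $\dss{\psi}{R}(b)\zeta = (\dss{\pi}{R}(b)\Omega)\otimes\zeta \in H^-\otimes E_n$ using $\omega(b)=0$, then cancel the two $H^-\otimes E_n$ contributions of $\dss{\psi}{L}(a)$ and $\dss{\psi}{R}(\phi(a))$ via the compression identity $\dss{P}{H}\dss{\pi}{L}(a)\dss{P}{H} = \dss{\pi}{R}(\phi(a))$, leaving exactly the $L^+\otimes E_n = E_{n+1}$ term and the scalar term $-\omega(\phi(a)b)\zeta$. Your induction for part (2), tracking the $E_n$-component through each application of part (1), is just a more explicit version of the paper's one-line appeal to induction.
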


\begin{proof}
The stipulation that $\omega(b) = 0$ implies that $\dss{\pi}{R}(b) \Omega \in H^-$,
so that
\[
\xi := \dss{\psi}{R}(b) \zeta = (\dss{\pi}{R}(b) \Omega) \otimes \zeta
\in H^- \otimes E_n
\]
where we have used the calculations in Proposition (\ref{prop3UEreps}). We have now to apply two different operators to $\xi$ and subtract the results.  First, when we apply $\dss{\psi}{R}(\phi(a))$ we get
    \[
    \dss{\psi}{R}(\phi(a)) \xi = (\dss{\pi}{R}(\phi(a) b) \Omega) \otimes \zeta
    = [\omega(\phi(a) b) \Omega] \otimes \eta + [\dss{P}{H^-} \dss{\pi}{R}(\phi(a) b) \Omega] \otimes \zeta
    \]
    by virtue of the fact that $\dss{\pi}{R}(\beta) \Omega = \omega(\beta) \Omega + \dss{P}{H^-} (\dss{\pi}{R}(\beta) \Omega)$
    for all $b \in B$.    Secondly, we apply $\dss{\psi}{L}(a)$ as follows:
    \begin{align*}
    \dss{\psi}{L}(a) \xi &= (\dss{\pi}{L}(a) \dss{\pi}{R}(b) \Omega) \otimes \zeta \\
    &= \Big[ \left(\dss{P}{H^-} \dss{\pi}{L}(a) \dss{\pi}{R}(b) \Omega \right)
    \oplus \left(\dss{P}{L^+} \dss{\pi}{L}(a) \dss{\pi}{R}(b) \Omega \right) \Big] \otimes \zeta\\
    &= \Big[ \left(\dss{P}{H^-} P_H \dss{\pi}{L}(a) \dss{\pi}{R}(b) \Omega \right)
    \oplus \left(\dss{P}{L^+} \dss{\pi}{L}(a) \dss{\pi}{R}(b) \Omega \right) \Big] \otimes \zeta\\
    &= \Big[ \left(\dss{P}{H^-} \dss{\pi}{R}(\phi(a)b) \Omega \right) \otimes \zeta\Big]
    \oplus \Big[\left(\dss{P}{L^+} \dss{\pi}{L}(a) \dss{\pi}{R}(b) \Omega \right) \otimes \zeta\Big].
    \end{align*}
    Subtracting  yields the desired result.
    The second assertion of the lemma follows by induction.
\end{proof}

We now connect the current material to chapter \ref{chapliberation}.

\begin{proposition} \label{propcornerliberation}
$(B(\Hh), \dss{\psi}{L}, \dss{\psi}{R}, \Cc, \varpi)$ is a right-liberating
representation of $(A, B, \phi)$.
\end{proposition}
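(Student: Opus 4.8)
The plan is to verify the three conditions of Definition \ref{defliberatingrep} for the quintuple $(B(\Hh), \dss{\psi}{L}, \dss{\psi}{R}, \Cc, \varpi)$ relative to $(A, B, \phi)$. The first condition, $\Cc\circ\dss{\psi}{R}\circ\phi = \Cc\circ\dss{\psi}{L}$, is exactly Proposition \ref{propcovariantcorner}, so nothing further is needed there. I also record at the outset that $\varpi(\dss{\psi}{R}(b)) = \la\Omega, \dss{\pi}{R}(b)\Omega\ra = \omega(b)$, so that the centering hypothesis $\varpi(\dss{\psi}{R}(b_i)) = 0$ appearing in the third condition is simply $\omega(b_i) = 0$, matching the hypotheses of Lemma \ref{lemhorriblyunmotivated}.

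For the bimodule condition I would prove the stronger fact that each $\dss{\psi}{R}(b)$ commutes with every projection occurring in $\Cc(T) = \dss{P}{H} T \dss{P}{H} + \sum_n q_n T q_n$; passing $\dss{\psi}{R}(b_1)$ and $\dss{\psi}{R}(b_2)$ through each compression then gives $\Cc(\dss{\psi}{R}(b_1) T \dss{\psi}{R}(b_2)) = \dss{\psi}{R}(b_1)\Cc(T)\dss{\psi}{R}(b_2)$. Since $\dss{\psi}{R} = \Phi\circ\dss{\pi}{R}$ is a $*$-homomorphism with $*$-closed range, it is enough to check that $\dss{\psi}{R}(b)$ leaves invariant the ranges $H$ and $H'\otimes E_n$ of these projections, invariance of the orthogonal complements being then automatic. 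Invariance of $H$ is immediate from the action formulas of Proposition \ref{prop3UEreps} (and from Corollary \ref{corinvariant}). For $q_n$ the point is that $H' = H\ominus\overline{\dss{\pi}{R}(B)\Omega}$ is itself $\dss{\pi}{R}(B)$-invariant and orthogonal to $\Omega$; feeding $h_0\in H'$ into those formulas gives $\dss{\psi}{R}(b)(h_0\otimes\xi) = (\dss{\pi}{R}(b)h_0)\otimes\xi\in H'\otimes E_n$, because the coefficient of $\Omega$ vanishes.

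The heart of the matter is the third (liberation) condition. Writing $\mathring{a} = \dss{\psi}{L}(a) - \dss{\psi}{R}(\phi(a))$ and letting $T = \big(\prod_{i=1}^{n-1}\mathring{a}_i\,\dss{\psi}{R}(b_i)\big)\mathring{a}_n$ denote the operator in question, I would evaluate $\Cc(T) = \dss{P}{H} T \dss{P}{H} + \sum_m q_m T q_m$ one compression at a time. The structural observation to be extracted from Proposition \ref{prop3UEreps}, together with the defining relation $V^*\dss{\pi}{L}(a)V = \dss{\pi}{R}(\phi(a))$, is that $\mathring{a}$ pushes a vector off the ``diagonal'' into a summand $E_j$: it carries $H$ into $E_0 = L$, and it carries $H'\otimes E_m$ into $L\otimes E_m\subseteq E_{m+1}$, the matching components cancelling under the subtraction defining $\mathring{a}$. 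Once $\mathring{a}_n$ has sent a vector of $H$ (respectively of $H'\otimes E_m$) into some summand $E_j$, Lemma \ref{lemhorriblyunmotivated}(2) shows that the surviving factor $\prod_{i=1}^{n-1}\mathring{a}_i\,\dss{\psi}{R}(b_i)$ keeps it inside a finite sum of summands $E_{j'}$. These are orthogonal both to $H$ and to the summand $H^-\otimes E_m$ containing the range $H'\otimes E_m$ of $q_m$, whence $\dss{P}{H} T \dss{P}{H} = 0$ and $q_m T q_m = 0$ for every $m$, and so $\Cc(T) = 0$. The degenerate case $n = 1$ collapses to the first condition.

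I expect the principal obstacle to be this structural observation about $\mathring{a}$. One must check, in the identification $\KK\simeq H^-\oplus L^+$, that the $H$-component of $\dss{\pi}{L}(a)h_0$ coincides with $\dss{\pi}{R}(\phi(a))h_0$ and carries no $\Omega$-coefficient, so that $\dss{\psi}{L}(a)$ and $\dss{\psi}{R}(\phi(a))$ agree except on $L$ and their difference is a pure $L\otimes E_m$ term, raising the $E$-index by one. This book-keeping, the orthogonality of the summands $E_j$ and $H^-\otimes E_k$, and the correct alignment of $T$ with the product appearing in Lemma \ref{lemhorriblyunmotivated}, are where the care is required; everything else is a routine assembly.
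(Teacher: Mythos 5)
Your proposal is correct and follows essentially the same route as the paper's proof: the covariance condition is cited from Proposition \ref{propcovariantcorner}, the bimodule property comes from the $\dss{\psi}{R}$-invariance of $H$ and of each $H'\otimes E_n$ (using that $H'$ is $\dss{\pi}{R}(B)$-invariant and orthogonal to $\Omega$), and the liberation identity is obtained by tracking how the centered words push vectors from $H$ and $H'\otimes E_m$ into the summands $E_j$, with Lemma \ref{lemhorriblyunmotivated} handling the induction. The only difference is cosmetic --- the paper applies the factors right-to-left starting from $\dss{\psi}{R}(b_1)\xi$ and does the base case by hand, whereas you peel off $\mathring{a}_n$ first and then invoke Lemma \ref{lemhorriblyunmotivated}(2) --- but the content is identical.
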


\begin{proof}
The first two properties of a right-liberating representation
are easy to verify:
\begin{enumerate}
    \item This was Proposition \ref{propcovariantcorner}.

    \item Since $H$ and each $H' \otimes E_n$ are
    $\dss{\psi}{R}$-invariant subspaces by Proposition
    \ref{prop3UEreps}, their projections all commute
    with $\dss{\psi}{R}$, so that $\Cc$ is a
    $\dss{\psi}{R}(B)$-bimodule map.
\end{enumerate}
For the last, let $\xi \in H$ and let $a_1, \dots, a_n \in A$,
    $b_1, \dots, b_n \in B$ such that $\omega(b_2) = \dots = \omega(b_n) = 0$.
    Define the operators
    \[
    T_k = \Big(\dss{\psi}{L}(a_k) - \dss{\psi}{R}(\phi(a_k)) \Big)\dss{\psi}{R}(b_k)
    \]
    on $\Hh$, and the vectors
    \[
    \zeta_k = T_k \dots T_1 \xi \in \Hh.
    \]
    We will show by induction that $\zeta_k \in \displaystyle\bigoplus_{j=0}^{k-1} E_j$,
    which is contained in the kernel of $\dss{P}{H}$; it will follow that
    $\dss{P}{H} T_k \dots T_1 \dss{P}{H} = 0$.  For the base case $k=1$, we have
    $\dss{\psi}{R}(b_1) \xi = \dss{\pi}{R}(b_1) \xi,$ so that
    $\dss{\psi}{R}(\phi(a_1)) \dss{\psi}{R}(b_1) \xi = \dss{\pi}{R}(\phi(a_1) b_1) \xi$.
    We also have
    \begin{align*}
    \dss{\psi}{L}(a_1) \dss{\psi}{R}(b_1) \xi &= \dss{\pi}{L}(a_1) \dss{\pi}{R}(b_1) \xi \\
    &= \Big( \dss{P}{L} \dss{\pi}{L}(a_1) \dss{\pi}{R}(b_1) \xi \oplus
    \dss{P}{H} \dss{\pi}{L}(a_1) \dss{\pi}{R}(b_1) \xi \Big) \\
    &= \Big( \dss{P}{L} \dss{\pi}{L}(a_1) \dss{\pi}{R}(b_1) \xi \oplus
    \dss{P}{H} \dss{\pi}{L}(a_1) \dss{P}{H} \dss{\pi}{R}(b_1) \xi \Big) \\
    &= \Big( \dss{P}{L} \dss{\pi}{L}(a_1) \dss{\pi}{R}(b_1) \xi \oplus
    \dss{\pi}{R}(\phi(a_1)) \dss{\pi}{R}(b_1) \xi \Big)
    \end{align*}
    and subtracting yields
    \[
    \zeta_1 = \dss{P}{L} \dss{\pi}{L}(a_1) \dss{\pi}{R}(b_1) \xi \in E_0
    \]
    as desired.  The inductive step is immediate from Lemma
    \ref{lemhorriblyunmotivated}.

    Similarly, for $\xi \in H'$ and $\eta \in E_n$,
    $\dss{\psi}{R}(b_1)(\xi \otimes \eta) = (\dss{\pi}{R}(b_1) \xi) \otimes \eta$ so that
    \[
    \dss{\psi}{R}(\phi(a_1)) \dss{\psi}{R}(b_1) (\xi \otimes \eta)
    = [\dss{\pi}{R}(\phi(a_1) b_1) \xi] \otimes \eta.
    \]
    Then
    \begin{align*}
    \dss{\psi}{L}(a_1) \dss{\psi}{R}(b_1) (\xi \otimes \eta)
    &= \dss{\psi}{L}(a_1)[(\dss{\pi}{R}(b_1) \xi) \otimes \eta] \\
    &= [\dss{\pi}{L}(a_1) \dss{\pi}{R}(b_1) \xi] \otimes \eta \\
    &= \Big[ \dss{P}{L} \dss{\pi}{L}(a_1) \dss{\pi}{R}(b_1) \xi \oplus
    \dss{P}{H} \dss{\pi}{L}(a_1) \dss{\pi}{R}(b_1) \xi \Big] \otimes \eta \\
    &= \Big[ \dss{P}{L} \dss{\pi}{L}(a_1) \dss{\pi}{R}(b_1) \xi \oplus
    \dss{P}{H} \dss{\pi}{L}(a_1) \dss{P}{H} \dss{\pi}{R}(b_1) \xi \Big] \otimes \eta \\
    &= \Big[ \dss{P}{L} \dss{\pi}{L}(a_1) \dss{\pi}{R}(b_1) \xi \oplus
    \dss{\pi}{R}(\phi(a_1)) \dss{\pi}{R}(b_1) \xi \Big] \otimes \eta
    \end{align*}
    and subtracting yields
    \[
    \zeta_1 = [\dss{P}{L} \dss{\pi}{L}(a_1) \dss{\pi}{R}(b_1) \xi] \otimes \eta \in E_{n+1}.
    \]
    It follows by induction that $\zeta_k \in \displaystyle\bigoplus_{j=0}^{k-1} E_{j+n}$, so that $q_n \zeta_k = 0$; hence
    $q_n T_k \dots T_1 q_n = 0$.  Summing over $n$,
    we have $\Cc(T_k \dots T_1) = 0$.
\end{proof}

\begin{corollary} \label{corcornermapsinto}
\[
\Cc(A \star B) = \Cc(\dss{\psi}{R}(B)).
\]
\end{corollary}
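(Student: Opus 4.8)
The plan is to obtain the stated identity as the topological closure of its purely algebraic counterpart. By Proposition \ref{propcornerliberation} the quintuple $(B(\Hh), \dss{\psi}{L}, \dss{\psi}{R}, \Cc, \varpi)$ is a right-liberating representation of $(A, B, \phi)$, so Corollary \ref{corrightlibrepgenerated} applies verbatim and yields
\[
\Cc\big[ \la A, B \ra \big] = \Cc\big[ \dss{\psi}{R}(B) \big],
\]
where here $\la A, B \ra$ denotes the \emph{algebraic} subalgebra generated by $\dss{\psi}{L}(A)$ and $\dss{\psi}{R}(B)$. Since $A \star B$ is by definition the norm closure (resp.\ the von Neumann algebra generated, i.e.\ the ultraweak closure) of this *-algebra, it remains only to check that applying $\Cc$ commutes with passage to the closure.

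First I would record the two facts that make the closure argument go through. For continuity of $\Cc$: it is a pinching by the mutually orthogonal projections $\dss{P}{H}, q_0, q_1, \dots$, hence a completely positive map with $\|\Cc\| = \|\Cc(\one)\| = 1$, so it is norm continuous; moreover a pinching by an orthogonal family of projections is normal, so in the W$^*$ case $\Cc$ is also ultraweakly continuous. For closedness of $\Cc[\dss{\psi}{R}(B)]$: by Proposition \ref{prop3UEreps} the subspaces $H$ and $H' \otimes E_n$ are $\dss{\psi}{R}$-invariant, so (as already used in the proof of Proposition \ref{propcornerliberation}) their projections commute with $\dss{\psi}{R}(B)$. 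Writing $Q = \dss{P}{H} + \sum_n q_n$ we therefore have $\Cc(\dss{\psi}{R}(b)) = Q\,\dss{\psi}{R}(b)\, Q = \dss{\psi}{R}(b)\,Q$ for every $b \in B$, so that $\Cc \circ \dss{\psi}{R}$ is the compression $b \mapsto \dss{\psi}{R}(b)\,Q$. Because $Q$ commutes with $\dss{\psi}{R}(B)$ and $Q^2 = Q$, this is a *-homomorphism onto the corner $\dss{\psi}{R}(B)\,Q$, whose image is a C$^*$-algebra (resp.\ a von Neumann algebra, $\dss{\psi}{R}$ being normal) and in particular closed in the relevant topology.

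With these in hand the conclusion follows from a standard closure argument. The inclusion $\Cc[\dss{\psi}{R}(B)] \subseteq \Cc[A \star B]$ is immediate from $\dss{\psi}{R}(B) \subseteq \la A, B \ra \subseteq A \star B$. For the reverse inclusion, take $T \in A \star B$ and write it as a limit of a net $T_\alpha \in \la A, B \ra$ (a norm limit in the C$^*$ case, an ultraweak limit in the W$^*$ case; in the latter one may restrict to a bounded net by Kaplansky density, though this is inessential). By continuity, $\Cc(T)$ is the corresponding limit of the elements $\Cc(T_\alpha) \in \Cc[\la A, B \ra] = \Cc[\dss{\psi}{R}(B)]$, and by closedness this limit again lies in $\Cc[\dss{\psi}{R}(B)]$. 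Hence $\Cc[A \star B] = \Cc[\dss{\psi}{R}(B)]$.

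I expect the only real subtlety to be the closedness claim, namely confirming that $\Cc[\dss{\psi}{R}(B)]$ is genuinely closed rather than merely the image of a bounded map, and, in the W$^*$ setting, verifying that the pinching $\Cc$ is normal so that ultraweak limits may be passed through it. Once these are secured, the remainder is formal.
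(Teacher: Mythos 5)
Your proof is correct and follows the same route as the paper, which likewise deduces the result from Proposition \ref{propcornerliberation}, Corollary \ref{corrightlibrepgenerated}, and the norm continuity and normality of $\Cc$. The only difference is that you spell out the closure argument (in particular the closedness of $\Cc[\dss{\psi}{R}(B)]$ as the image of the *-homomorphism $b \mapsto \dss{\psi}{R}(b)\,Q$) which the paper leaves implicit.
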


\begin{proof}
This is an immediate consequence of Proposition
\ref{propcornerliberation} together with
Corollary \ref{corrightlibrepgenerated} and the
norm continuity and normality of $\Cc$.
\end{proof}

Before making our next definition, we note that the
right-faithfulness of our representation guarantees that
$b \mapsto \dss{P}{H} \dss{\psi}{R}(b) \dss{P}{H}$ is
injective, so that $\Cc \circ \dss{\psi}{R}$ is
injective as well.

\begin{definition} \label{defSPretraction}
The \textbf{Sauvageot retraction} for the given tuple
and representation is the map $\theta: A \star B \to B$
given by
\[
\theta = (\Cc \circ \dss{\psi}{R})\inv \circ \Cc.
\]
\end{definition}

The Sauvageot retraction is well-defined by Corollary
 \ref{corcornermapsinto}, and is
 a retraction with respect to $\dss{\psi}{R}$; furthermore,
 as a consequence of Proposition \ref{propcovariantcorner},
 it factors $\phi$ in the sense that
 \begin{equation} \label{eqnthetafactorsphi}
 \theta \circ \dss{\psi}{L} = \phi.
 \end{equation}

 Furthermore, the following is an immediate consequence
 of Proposition \ref{propcornerliberation}:

\begin{corollary} \label{corthetaliberates}
$(A \star B, \dss{\psi}{L}, \dss{\psi}{R},
\dss{\psi}{R} \circ \theta, \varpi)$ is a right-liberating
representation of $(A, B, \phi)$.
\end{corollary}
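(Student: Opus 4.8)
The plan is to verify the three defining conditions of a right-liberating representation (Definition \ref{defliberatingrep}) for the quintuple $(A \star B, \dss{\psi}{L}, \dss{\psi}{R}, \dss{\psi}{R} \circ \theta, \varpi)$, leveraging the already-proved fact (Proposition \ref{propcornerliberation}) that $(B(\Hh), \dss{\psi}{L}, \dss{\psi}{R}, \Cc, \varpi)$ is such a representation. The key preliminary observation, which I would record first, is that the definition $\theta = (\Cc \circ \dss{\psi}{R})\inv \circ \Cc$ together with Corollary \ref{corcornermapsinto} gives the identity $\Cc \circ (\dss{\psi}{R} \circ \theta) = \Cc$ on $A \star B$. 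I would also note that $\dss{\psi}{R} \circ \theta$ is a well-defined linear map of $A \star B$ into $\dss{\psi}{R}(B) \subseteq A \star B$, that $\theta \circ \dss{\psi}{R} = \textup{id}_B$ since $\theta$ is a retraction with respect to $\dss{\psi}{R}$, and that $\theta \circ \dss{\psi}{L} = \phi$ by equation (\ref{eqnthetafactorsphi}).

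With these in hand, the covariance condition $(\dss{\psi}{R} \circ \theta) \circ \dss{\psi}{R} \circ \phi = (\dss{\psi}{R} \circ \theta) \circ \dss{\psi}{L}$ is immediate, since $\theta \circ \dss{\psi}{R} = \textup{id}_B$ collapses the left-hand side to $\dss{\psi}{R} \circ \phi$, while $\theta \circ \dss{\psi}{L} = \phi$ collapses the right-hand side to the same map. The vanishing condition is equally direct: if $W$ is one of the alternating centered words appearing in the third clause of Definition \ref{defliberatingrep} (so that $\varpi(\dss{\psi}{R}(b_i)) = 0$ for each $i$), then $W \in A \star B$, and Proposition \ref{propcornerliberation} already gives $\Cc(W) = 0$; applying $(\Cc \circ \dss{\psi}{R})\inv$ yields $\theta(W) = 0$, whence $(\dss{\psi}{R} \circ \theta)(W) = 0$.

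The remaining condition---that $\dss{\psi}{R} \circ \theta$ be a $\dss{\psi}{R}(B)$-bimodule map---is the one place requiring an actual argument, and is the step I expect to be the main obstacle, since the bimodule property of $\Cc$ does not transfer to $\dss{\psi}{R} \circ \theta$ purely formally. To handle it I would use that $\Cc \circ \dss{\psi}{R}$ is injective (noted just before Definition \ref{defSPretraction}), so that $\Cc$ is injective on the range $\dss{\psi}{R}(B)$. Fixing $b_1, b_2 \in B$ and $x \in A \star B$, the two elements $(\dss{\psi}{R} \circ \theta)(\dss{\psi}{R}(b_1)\, x\, \dss{\psi}{R}(b_2))$ and $\dss{\psi}{R}(b_1)\, (\dss{\psi}{R} \circ \theta)(x)\, \dss{\psi}{R}(b_2)$ both lie in $\dss{\psi}{R}(B)$, so it suffices to check that $\Cc$ sends them to the same value. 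Using the identity $\Cc \circ (\dss{\psi}{R} \circ \theta) = \Cc$ and the bimodule property of $\Cc$ over $\dss{\psi}{R}(B)$, each is carried by $\Cc$ to $\dss{\psi}{R}(b_1)\, \Cc(x)\, \dss{\psi}{R}(b_2)$; injectivity then forces equality. Having verified all three conditions, the corollary follows.
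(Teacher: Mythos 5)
Your proof is correct and follows the same route the paper intends: the paper simply asserts the corollary as an immediate consequence of Proposition \ref{propcornerliberation}, and your verification of the three conditions in Definition \ref{defliberatingrep} --- using $\theta \circ \dss{\psi}{R} = \textup{id}_B$, equation (\ref{eqnthetafactorsphi}), the identity $\Cc \circ (\dss{\psi}{R}\circ\theta) = \Cc$, and the injectivity of $\Cc$ on $\dss{\psi}{R}(B)$ for the bimodule property --- supplies exactly the routine details the paper leaves implicit.
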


\begin{definition} \label{defSPCPtuple}
Given a CP-tuple $(A, B, \phi, \omega)$ and a right-liberating
representation $(H, \Omega, \dss{\pi}{R}, K, V, \dss{\pi}{L})$,
the \textbf{Sauvageot product of the tuple realized by
the representation} is the tuple $(A \star B, \dss{\psi}{L},
\dss{\psi}{R}, \theta)$ of objects constructed as above.
\end{definition}

\section{Induced Morphisms and Uniqueness}
We pause now to consider an analogy with other product constructions.  In building either the (minimal) tensor product or the free product of C$^*$-algebras $A$ and $B$, one can proceed as follows:
\begin{enumerate}
    \item Represent $A$ and $B$ on Hilbert spaces $H$ and $K$
    \item Form the product Hilbert space $H \otimes K$ or $H * K$
    \item Lift the representations of $A$ and $B$ to
    representations of each on this product Hilbert space
    \item Take the C$^*$-subalgebra generated by the images
    of these representations.
\end{enumerate}
In both cases, one can show that the resulting C$^*$-algebra
$A \otimes B$ or $A * B$ is, up to isomorphism, independent of
the choice of the representations of $A$ and $B$ provided
both are faithful.

We have followed the same outline in constructing $A \star B$,
and come now to the question of the independence of this object
from the representations used to produce it.  It turns out that
we need some more complicated hypotheses on the representation,
resulting from the fact that a representation of a CPC$^*$-tuple
is more complicated than a representation of a C$^*$-algebra,
as well as the fact that the product $A \star B$ comes with
the additional information of a retraction onto $B$.

\begin{definition} \label{defdecompfaithful}
Let $(A, B, \phi, \omega)$ be a CP-tuple,
$(H, \Omega, \dss{\pi}{R}, K, V, \dss{\pi}{L})$ a
representation, and $L = K \ominus VH$.
\begin{itemize}
    \item A \textbf{decomposition} of the representation is
    a pair $(L', L'')$ of $\dss{\pi}{L}$-invariant
    subspaces $L' \subset L$ and $L'' \subset L^+$, with the properties
    \begin{align*}
    L &\subset L' + \overline{\dss{\pi}{L}(A) VH}\\
    L^+ &\subset L'' + \overline{\dss{\pi}{L}(A) V H^-}.
    \end{align*}
    \item A decomposition is \textbf{faithful} if the subrepresentation
    $\dss{\pi}{L} \big|_{L'}$ is faithful.
    \item A representation for which there exists a
    faithful decomposition is \textbf{faithfully
    decomposable}.
    \item A representation is \textbf{right-faithful} if
    $\dss{\pi}{R}$ is faithful; note that any representation
    of a faithful CPC$^*$-tuple or CPW$^*$-tuple is
    automatically right-faithful.
    \item A representation is \textbf{left-faithful} if
    $\dss{\pi}{L}$ is faithful; note that every faithfully
    decomposable representation is left-faithful.
    \item A representation is \textbf{faithful} if it
    is right-faithful and faithfully decomposable.
\end{itemize}
\end{definition}

\begin{proposition} \label{propexistsfaithfulrep}
Every CP-tuple has a faithful representation.
\end{proposition}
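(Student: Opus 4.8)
The plan is to assemble a representation from three standard ingredients---the GNS representation of $\omega$, a faithful representation of $B$, and Stinespring dilation---and then to exhibit an explicit faithful decomposition. First I would build the right-hand data $(H, \Omega, \pi_R)$. Let $(H_\omega, \xi_\omega, \pi_\omega)$ be the GNS representation of $\omega$ (which is normal in the W$^*$ case) and let $\rho\colon B \to B(H_f)$ be a faithful (normal) representation. Setting $H = H_\omega \oplus H_f$, $\pi_R = \pi_\omega \oplus \rho$, and $\Omega = \xi_\omega \oplus 0$ yields a unit vector with $\langle \Omega, \pi_R(\cdot)\Omega\rangle = \omega$, while faithfulness of $\rho$ makes $\pi_R$ injective; thus the representation will be right-faithful.

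Next I would produce the left-hand data. The composite $\pi_R \circ \phi\colon A \to B(H)$ is unital completely positive (and normal in the W$^*$ case), so Stinespring's theorem furnishes a minimal triple $(K_0, V_0, \pi_0)$ with $V_0$ an isometry, $V_0^* \pi_0(\cdot) V_0 = \pi_R \circ \phi$, and $\overline{\pi_0(A) V_0 H} = K_0$. To guarantee faithfulness of $\pi_L$ on the piece that will become $L'$, I adjoin a faithful (normal) representation $\sigma\colon A \to B(K_f)$, putting $K = K_0 \oplus K_f$, $V = V_0 \oplus 0$, and $\pi_L = \pi_0 \oplus \sigma$. Since $V H \subseteq K_0$ and $\sigma$ annihilates nothing relevant, one checks directly that $V^* \pi_L(\cdot) V = \pi_R \circ \phi$, so $(H, \Omega, \pi_R, K, V, \pi_L)$ satisfies Definition \ref{defrepCPtuple}; this representation is normal in the W$^*$ case and is right-faithful.

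It remains to exhibit a faithful decomposition in the sense of Definition \ref{defdecompfaithful}. Identifying $VH$ with $V_0 H \subseteq K_0$, we have $L = K \ominus VH = (K_0 \ominus V_0 H) \oplus K_f$, and I set $M = \overline{\pi_L(A) VH^-} = \overline{\pi_0(A) V_0 H^-} \subseteq K_0$, which is $\pi_L$-invariant. Because $\pi_0$ is a $*$-representation, the orthogonal complement $G := K_0 \ominus M$ is again $\pi_0$-invariant, and since $M \supseteq VH^-$ we obtain $G \subseteq K_0 \ominus VH^- \subseteq L^+$. I then propose $L' = K_f$ and $L'' = G \oplus K_f$; both are $\pi_L$-invariant, with $L' \subseteq L$ and $L'' \subseteq L^+$. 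The first covering condition $L \subseteq L' + \overline{\pi_L(A) VH}$ follows from minimality, since $\overline{\pi_L(A) VH} = \overline{\pi_0(A) V_0 H} = K_0$ and $K_f = L'$ supplies the remaining summand; the second condition $L^+ \subseteq L'' + M$ follows from $G + M = K_0$ together with the $K_f$ summand of $L''$. Finally $\pi_L|_{L'} = \sigma$ is faithful, so the decomposition is faithful, and the representation is faithful.

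The ingredient constructions and the verification of $V^* \pi_L V = \pi_R\circ\phi$ are routine. The one genuinely non-obvious step is the second covering condition: the naive guess $L'' = K_f$ fails because it misses the direction of $V\Omega$ (already in the trivial case $A = \com$ one has $V\Omega \perp \overline{\pi_0(A)VH^-}$), so $L''$ must be enlarged by the invariant complement $G = K_0 \ominus \overline{\pi_0(A) VH^-}$, and the crux is to observe both that $G$ is $\pi_L$-invariant (using that $\pi_0$ is a $*$-representation) and that $G$ lies inside $L^+$.
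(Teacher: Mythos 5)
Your argument is correct and follows essentially the same route as the paper: take the GNS representation of $(B,\omega)$ summed with a faithful representation of $B$ to get right-faithfulness, take the minimal Stinespring dilation of the composite summed with a faithful representation of $A$, and use the orthogonal complements of $\overline{\pi_L(A)VH}$ and $\overline{\pi_L(A)VH^-}$ to produce the faithful decomposition. Your write-up simply makes explicit the verifications (invariance of $G$, the two covering conditions) that the paper leaves to the reader, and correctly supplies the factor of $V$ in the definition of $L''$ that the paper's statement omits.
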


\begin{proof}
We begin by letting $(H, \Omega, \dss{\pi}{R})$ be the GNS
construction for $(B, \omega)$; if $\omega$ is not
faithful, we replace $(H, \dss{\pi}{R})$ by its direct
sum with some faithful representation of $B$.  This guarantees
that our representation will be right-faithful.

Next, let $(K, V, \dss{\pi}{L})$ be the minimal
Stinespring dilation of $\dss{\pi}{R} \circ \phi$, and
let $L' = K \ominus \overline{\dss{\pi}{L}(A) VH}$
and $L'' = K \ominus \overline{\dss{\pi}{L}(A) H^-}$.
If $\dss{\pi}{L} \Big|_{L'}$ is not faithful,
we replace $(K, \dss{\pi}{L})$ by its direct sum with
some faithful representation of $A$, thereby
guaranteeing faithful decomposability.
\end{proof}

When some of these additional hypotheses are satisfied,
we can define a retraction from $A \star B$ to $A$ which
has properties analogous to the retraction $A \star B
\sa{\theta} B$ already discussed.  We continue our standard
notation for a CP-tuple, a right-faithful representation,
and the corresponding realization of the Sauvageot product,
and now fix a decomposition $(L', L'')$ as well (not assumed faithful unless specified).  We introduce additional notation:
\begin{itemize}
    \item $E_0'$ is the subspace $L' \subset L$
    \item For $n \geq 1$, $E_n'$ is the subspace
    $L'' \otimes L^{+ \otimes (n-1)} \otimes L
    \subset E_n$
    \item For all $n \geq 0$, $p_n$ is the projection
    from $\Hh$ onto $E_n'$
    \item For all $n \geq 0$, $F_n = H \otimes E_n$
    and $F_n' = H' \otimes E_n$; recall
    that $q_n$ is the projection onto $F_n'$
    \item $E_{-1} = \com \Omega$ and $F_{-1} = H^-$.
\end{itemize}

\begin{definition} \label{defleftcornermap}
The \textbf{left corner map}
for the given realization and decomposition is
the non-unital conditional expectation
$\Cc'$ on $B(\Hh)$ defined by
\[
\Cc'(T) = \sum_{n=0}^\infty p_n T p_n.
\]
\end{definition}

\begin{lemma} \label{lemintersectkernels}
Let $J \subset A \star B$ be an ideal contained
in the intersection of the kernels of $\Cc$ and $\Cc'$.
Then $J = \{0\}$.
\end{lemma}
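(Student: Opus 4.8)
The plan is to reduce the statement to a cyclicity assertion and then verify it by climbing the staircase of Corollary \ref{corinvariant}. Suppose $J \ne \{0\}$ and pick $0 \ne S \in J$; then $T := S^*S$ is a nonzero positive element of $J$. The key elementary fact is that for a positive operator a vanishing diagonal compression amounts to annihilation: if $R \ge 0$ and $PRP = 0$ for a projection $P$, then $R^{1/2}P = 0$, hence $RP = 0$. Now for every $x \in A\star B$ the element $x^*Tx$ again lies in $J$ and is positive, so $\Cc(x^*Tx) = \Cc'(x^*Tx) = 0$. Because the ranges $H$ (of $\dss{P}{H}$), $F_n'$ (of $q_n$) and $E_n'$ (of $p_n$) are mutually orthogonal, these two identities are equivalent to $\dss{P}{H}x^*Tx\dss{P}{H} = 0$, $q_nx^*Txq_n=0$ and $p_nx^*Txp_n=0$ for all $n$; applying the elementary fact to each positive operator $x^*Tx$ gives $Tx\dss{P}{H}=0$, $Txq_n=0$ and $Txp_n=0$. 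Letting $x$ range over $A\star B$, this says exactly that $T$ annihilates $\overline{(A\star B)\mathcal D}$, where
\[
\mathcal D = H \oplus \bigoplus_{n\ge 0}\big(E_n' \oplus F_n'\big)
\]
is the closed span of the ranges of all the projections defining $\Cc$ and $\Cc'$. Everything therefore reduces to showing that $\mathcal D$ is cyclic, i.e.\ that $M := \overline{(A\star B)\mathcal D} = \Hh$, for then $T = 0$, a contradiction.

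Since $A\star B$ is an algebra, $M$ is $A\star B$-invariant, and I would show by induction up the staircase that $M$ contains each orthogonal summand $H$, $E_n = L^{+\otimes n}\otimes L$ and $F_n^- := H^-\otimes E_n$ of $\Hh$. The base block $H$ lies in $\mathcal D\subseteq M$. The induction runs on two moves, both read off from the action formulas of Proposition \ref{prop3UEreps}. The horizontal move fills in $F_n^-$ once $E_n\subseteq M$: for $\zeta\in E_n$ one has $\dss{\psi}{R}(b)\zeta = \omega(b)\zeta + (\dss{P}{H^-}\dss{\pi}{R}(b)\Omega)\otimes\zeta$, so $\dss{\psi}{R}(B)$-invariance of $M$ yields $H_0\otimes E_n\subseteq M$ with $H_0 = \dss{P}{H^-}\overline{\dss{\pi}{R}(B)\Omega}$; since $F_n' = H'\otimes E_n\subseteq\mathcal D$ and $H^- = H'\oplus H_0$, this gives $F_n^-\subseteq M$. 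The vertical move climbs one row and is where the decomposition $(L',L'')$ enters: applying $\dss{\psi}{L}(a)$ to $H$ and to $F_n^-$ and discarding the component that stays in the current column shows $\dss{P}{L}\,\overline{\dss{\pi}{L}(A)VH}\subseteq M$ and $[\dss{P}{L^+}\overline{\dss{\pi}{L}(A)VH^-}]\otimes E_n\subseteq M$. Combined with $E_0' = L'\subseteq M$ and $E_{n+1}' = L''\otimes E_n\subseteq M$, the two covering inclusions $L\subseteq L' + \overline{\dss{\pi}{L}(A)VH}$ and $L^+\subseteq L'' + \overline{\dss{\pi}{L}(A)VH^-}$ of Definition \ref{defdecompfaithful} give $E_0 = L\subseteq M$ and $E_{n+1} = L^+\otimes E_n\subseteq M$. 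Alternating the two moves exhausts every summand, so $M = \Hh$.

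The reduction in the first paragraph is routine; I expect the real work, and the main obstacle, to lie in the vertical move. The delicate points are (i) splitting the image of $\dss{\psi}{L}(a)$ correctly into its current-column and next-column components via Proposition \ref{prop3UEreps}, respecting the state-dependent identifications $\KK = H\oplus L = H^-\oplus L^+$ so that $\dss{P}{L}$ and $\dss{P}{L^+}$ land in the intended summands; and (ii) recognizing that it is precisely the two covering hypotheses of a decomposition — one phrased on $L$ and driving the first step, the other phrased on $L^+$ and driving every later step — that close the induction. It is worth noting that faithfulness of the decomposition is never used, consistent with the hypotheses of the lemma.
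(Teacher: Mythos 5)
Your proposal is correct and follows essentially the same route as the paper: the paper phrases the argument in terms of the annihilator subspace $J^0 = \{\xi : J\xi = 0\}$ (which is $A\star B$-invariant because $J$ is an ideal and contains the ranges of $\dss{P}{H}$, $q_n$, $p_n$ by the same positivity/compression observation), and then runs exactly your two-move staircase induction, with the two covering inclusions of the decomposition closing the vertical step. Your reformulation via the cyclic subspace $\overline{(A\star B)\mathcal{D}}$ is only an organizational variant, and your observation that faithfulness of the decomposition is not needed matches the paper's hypotheses.
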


\begin{proof}
Let $J^0$ denote the annihilator of $J$ in $\Hh$, i.e. the largest (necessarily closed)
subspace such that $J J^0 = \{0\}$.
\begin{enumerate}
    \item For any $\alpha \in J$, note that $\alpha^* \alpha \in J$; then
    \[
    (\alpha \dss{P}{H})^* (\alpha \dss{P}{H}) = \dss{P}{H} \alpha^* \alpha \dss{P}{H} \leq \Cc[\alpha^* \alpha] = 0,
    \]
    so that $\alpha \dss{P}{H} = 0$.  Similarly, $\alpha q_n = \alpha p_n = 0$ for all $n$.  Hence $H \subseteq J^0,$ and for each
    $n \geq 0$, $E_n'  \subseteq J^0$ and $H' \otimes E_n\subseteq J^0$.

    \item We will prove by induction
    that $E_n \subseteq J^0$ and $F_n \subseteq J^0$;
    the base case $n = -1$ was just established,
    since $E_{-1} \oplus F_{-1} = H \subset J^0$.

    \item Suppose $E_n \subseteq J^0$ and $F_n \subseteq J^0$.
    \begin{enumerate}
        \item Since $J \psi_L(A) F_n \subseteq J F_n = \{0\}$, we have $\psi_L(A) F_n \subseteq J^0$.
        \item By Proposition (\ref{prop3UEreps}), $[\psi_L(A) H^-] \otimes E_n \subseteq \psi_L(A) F_n$,
        so that $[\psi_L(A) H^-] \otimes E_n \subseteq J^0$, for the case $n \geq 0$; for $n = -1$,
        we have $\psi_L(A) H \subseteq J^0$.
        \item Since we already know $L' \subseteq J^0$ (in the case $n = -1$) or
        $L'' \otimes E_n \subseteq J^0$ (in the case $n \geq 0$), it follows that
        $E_{n+1} = L \subset \overline{\psi_L(A) H} + L' \subseteq J^0$ for the case $n = -1$,
        or $E_{n+1} = L^+ \otimes E_n \subset [\overline{\pi_L(A) H^-} + L''] \otimes E_n \subseteq J_0$
        for the case $n \geq 0$.
        \item Since $J \psi_R(B) E_{n+1} \subseteq J E_{n+1} = \{0\}$, we have $\psi_R(B) E_{n+1} \subseteq J^0$.
        \item By Proposition (\ref{prop3UEreps}), this
        implies
        $\overline{\pi_R(B) \Omega} \otimes E_{n+1} \subseteq \overline{\psi_R(B) E_{n+1}} \subseteq J^0$.
        \item Since we also have $F_n' \subseteq J^0$,
        \[
        F_{n+1} = H^- \otimes E_{n+1}
        \subseteq H \otimes E_{n+1} = (\overline{\pi_R(B) \Omega} + H') \otimes E_{n+1} \subseteq \overline{\psi_R(B) E_{n+1}} + F_{n+1}' \subseteq J^0.
        \]
    \end{enumerate}
\end{enumerate}
\end{proof}

\begin{remark} \label{remideals}
We note that if $L'$ and $L''$ are both zero (for instance,
when $K$ is given by a minimal Stinespring dilation),
then $\Cc'$ is the zero map; in this case, the lemma
says that $\Cc$ has no nontrivial ideals in its kernel.
This corresponds to the fact that $A$ and $B$ together move $H$ around to all the other components of $\Hh$, in the sense that
  $\overline{(A \star B) H} = \Hh$.  On the other extreme,
if $(L', L'')$ is a faithful decomposition, one has
instead that $\overline{(A \star B) H
+ (A \star B) L' + (A \star B) L''} = \Hh$, but none
of $H, L', L''$ by itself is enough to reach all of
$\Hh$.  As a result, $\Cc$ and $\Cc'$ may each contain
ideals in their kernel, but these ideals are ``orthogonal''
in the sense of the lemma.
\end{remark}

\begin{proposition} \label{propleftcornercovariant}
Let $\hat{\omega}: B \to A$ be the unital completely
positive map $\hat{\omega}(b) = \omega(b) \one$.
Then $\Cc' \circ \dss{\psi}{L} \circ \hat{\omega}
= \Cc' \circ \dss{\psi}{R}$.
\end{proposition}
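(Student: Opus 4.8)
The plan is to verify the identity applied to an arbitrary $b \in B$ by computing both compositions explicitly and showing each yields the operator $\omega(b)\sum_{n=0}^\infty p_n = \omega(b)\,\Cc'(\one)$. The left-hand side is immediate: since $\hat{\omega}(b) = \omega(b)\one_A$ and $\dss{\psi}{L}$ is unital (being the composite of the unital homomorphism $\dss{\pi}{L}$ with a unital representation from Proposition \ref{prop3UEreps}), we get $\dss{\psi}{L}(\hat{\omega}(b)) = \omega(b)\one_{B(\Hh)}$, and hence
\[
\Cc'\big(\dss{\psi}{L}(\hat{\omega}(b))\big) = \omega(b)\,\Cc'(\one) = \omega(b)\sum_{n=0}^\infty p_n .
\]

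For the right-hand side, since $\Cc'(T) = \sum_n p_n T p_n$ it suffices to prove the single-summand identity $p_n\,\dss{\psi}{R}(b)\,p_n = \omega(b)\,p_n$ for each $n \geq 0$ and then sum. To establish this I would invoke the action formulas of Proposition \ref{prop3UEreps}, recalling that $\dss{\psi}{R} = \Phi \circ \dss{\pi}{R}$. Writing $\dss{\pi}{R}(b)\Omega = \omega(b)\Omega + y$ with $y = \dss{P}{H^-}\dss{\pi}{R}(b)\Omega \in H^-$ --- where the scalar is exactly $\omega(b)$ because $\la \Omega, \dss{\pi}{R}(\cdot)\Omega \ra = \omega(\cdot)$ --- Proposition \ref{prop3UEreps} gives, for any $\xi \in E_n$,
\[
\dss{\psi}{R}(b)\,\xi = \omega(b)\,\xi + (y \otimes \xi),
\]
in which the first term lies in $E_n$ and the second in the summand $H^- \otimes E_n$ of $\Hh$.

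The crux is then the observation that $E_n'$ sits inside $E_n$ (indeed $E_0' = L' \subseteq L = E_0$, and for $n \geq 1$ we have $E_n' = L'' \otimes L^{+\otimes(n-1)} \otimes L \subseteq E_n$ since $L'' \subseteq L^+$), while the off-diagonal term $y \otimes \xi$ lies in the orthogonal summand $H^- \otimes E_n$. Consequently, for $\xi \in E_n'$ the projection $p_n$ onto $E_n'$ annihilates $y \otimes \xi$ and fixes $\omega(b)\xi$, yielding $p_n\,\dss{\psi}{R}(b)\,p_n = \omega(b)\,p_n$. Summing over $n$ gives $\Cc'(\dss{\psi}{R}(b)) = \omega(b)\sum_n p_n$, which matches the left-hand side and completes the proof. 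The main obstacle is purely organizational bookkeeping: correctly reading off from Proposition \ref{prop3UEreps} (and the stairstep picture) that $\dss{\psi}{R}(b)$ carries $E_n$ into $E_n \oplus (H^- \otimes E_n)$ with diagonal part the scalar $\omega(b)$, and confirming that $E_n'$ lies in the $E_n$ summand rather than the $H^- \otimes E_n$ summand, so that the compression by $p_n$ discards the off-diagonal contribution. No analytic input beyond this is required.
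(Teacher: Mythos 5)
Your proof is correct and follows essentially the same route as the paper's: decompose $\dss{\pi}{R}(b)\Omega = \omega(b)\Omega + h_0$ with $h_0 \in H^-$, apply Proposition \ref{prop3UEreps} to see that $\dss{\psi}{R}(b)$ acts on $E_n'$ as $\omega(b)$ plus an off-diagonal term in $H^- \otimes E_n$ killed by $p_n$, and sum over $n$. The only difference is that you spell out the trivial left-hand-side computation and the inclusion $E_n' \subseteq E_n$ explicitly, which the paper leaves implicit.
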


\begin{proof}
Given any $b \in B$, we have $\dss{\pi}{R}(b) \Omega
= \omega(b) \Omega + h_0$ for some $h_0 \in H^-$.
By Proposition \ref{prop3UEreps}, it follows that, for
any $\xi \in E_n'$, $\dss{\psi}{R}(b) \xi =
\omega(b) \xi + h_0 \otimes \xi$ and therefore that
$p_n \dss{\psi}{R}(b) \xi = \omega(b) \xi$; hence,
\[
p_n \dss{\psi}{R}(b) p_n
= \omega(b) \dss{\one}{E_n}
= p_n \omega(b) \one p_n = p_n \dss{\psi}{L}(\hat{\omega}(b)) p_n.
\]
Summing over $n$ yields the result.
\end{proof}

\begin{proposition} \label{propleftcornerliberated}
$(B(\Hh), \dss{\psi}{L}, \dss{\psi}{R}, \Cc', \varpi)$ is
a left-liberating representation of $(A, B, \phi)$.
\end{proposition}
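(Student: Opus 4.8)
The plan is to verify the three defining conditions of a left-liberating representation (Definition \ref{defliberatingrep}) for the quintuple $(B(\Hh),\dss{\psi}{L},\dss{\psi}{R},\Cc',\varpi)$, following the template of the proof of Proposition \ref{propcornerliberation}. As there, the first two conditions are the routine ones. The covariance condition is precisely Proposition \ref{propleftcornercovariant}, already established. For the bimodule condition I would show that each subspace $E_n'$ is invariant under $\dss{\psi}{L}(A)$, so that every projection $p_n$ commutes with $\dss{\psi}{L}(A)$ and hence $\Cc'=\sum_n p_n(\cdot)p_n$ is a $\dss{\psi}{L}(A)$-bimodule map. The invariance is read off from Proposition \ref{prop3UEreps}: for $n=0$ one has $E_0'=L'$, which is $\dss{\pi}{L}$-invariant by the definition of a decomposition and on which $\dss{\psi}{L}$ acts as $\dss{\pi}{L}$; for $n\geq 1$, if $\ell''\in L''$ then $\dss{\pi}{L}(a)\ell''$ again lies in $L''\subseteq L^+$ by invariance, so by Proposition \ref{prop3UEreps} $\dss{\psi}{L}(a)$ sends $\ell''\otimes\xi$ to $(\dss{\pi}{L}(a)\ell'')\otimes\xi\in E_n'$, and $E_n'=L''\otimes E_{n-1}$ is preserved.

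The substance of the proof is the liberation condition: for $a_1,\dots,a_{n-1}\in A$ and $b_1,\dots,b_n\in B$ with $\omega(b_i)=0$, writing $\mathring A_i=\dss{\psi}{L}(a_i)-\dss{\psi}{R}(\phi(a_i))$ and
\[
W=\dss{\psi}{R}(b_1)\,\mathring A_1\,\dss{\psi}{R}(b_2)\cdots \mathring A_{n-1}\,\dss{\psi}{R}(b_n),
\]
I must show $\Cc'(W)=0$. Since the ranges of the $p_m$ are mutually orthogonal, this is equivalent to $p_m W p_m=0$ for every $m$, i.e.\ to the statement that $W\xi\perp E_m'$ for every $\xi\in E_m'$. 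I would prove this by an induction that plays the same role for $\Cc'$ and the ladder $\{E_n'\}$ that Lemma \ref{lemhorriblyunmotivated} plays for $\Cc$ and the ladder $\{E_n\}$. Concretely, I would establish a left analogue of Lemma \ref{lemhorriblyunmotivated} describing how a single block $\dss{\psi}{R}(b)\mathring A$ (with $\omega(b)=0$) moves a vector of $E_m'$ through the off-diagonal summands $H^-\otimes E_k$ and the higher rungs $E_{m+1},E_{m+2},\dots$, and then apply the blocks one at a time, starting from the right, to a vector $\xi\in E_m'$.

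The one point that makes the cancellation work, and the step I expect to be the main obstacle to get exactly right, is the interaction between $\mathring A_i$ and the flanking $\dss{\psi}{R}$ factors. Applying $\dss{\psi}{R}(b_{i+1})$ with $\omega(b_{i+1})=0$ to a vector of $E_k$ lands it in $H^-\otimes E_k$, killing the diagonal part; the danger is that a later $\dss{\psi}{R}(b_i)$ acting on an $H^-\otimes E_k$ component recreates a diagonal $E_k$ (hence possibly $E_m'$) term. This is controlled by the corner relation $\dss{\pi}{R}(\phi(a))=\dss{P}{H}\,\dss{\pi}{L}(a)\,\dss{P}{H}$ coming from the representation axiom $V^*\dss{\pi}{L}(\cdot)V=\dss{\pi}{R}(\phi(\cdot))$: it forces the $H^-$-parts of $\dss{\pi}{L}(a_i)$ and of $\dss{\pi}{R}(\phi(a_i))$ acting on $H^-$ to coincide, so that the two terms of $\mathring A_i$ cancel in exactly the component that would otherwise feed the diagonal. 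Packaging this cancellation into the inductive hypothesis, so that the only surviving $E_m'$-component at each stage is provably zero, is the crux; once the left analogue of Lemma \ref{lemhorriblyunmotivated} is in hand, $p_m W p_m=0$, and hence $\Cc'(W)=0$, follows by summing over $m$ exactly as in Proposition \ref{propcornerliberation}.
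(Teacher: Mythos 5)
Your verification of the first two conditions matches the paper's: covariance is Proposition \ref{propleftcornercovariant}, and the bimodule property follows from the $\dss{\psi}{L}$-invariance of the $E_n'$ (your reading of Proposition \ref{prop3UEreps} in the two cases $E_0'=L'$ and $E_n'=L''\otimes L^{+\otimes(n-1)}\otimes L$ is exactly right). Where you diverge is the liberation condition, and here the new ``left analogue of Lemma \ref{lemhorriblyunmotivated}'' you propose --- and leave unproven --- is not needed. Bracket the word the other way: for $\xi\in E_m'\subseteq E_m$,
\[
W\xi \;=\; \dss{\psi}{R}(b_1)\left(\prod_{i=1}^{n-1}\Big[\dss{\psi}{L}(a_i)-\dss{\psi}{R}(\phi(a_i))\Big]\dss{\psi}{R}(b_{i+1})\right)\xi,
\]
and the parenthesized product is verbatim the one treated in Lemma \ref{lemhorriblyunmotivated}(2), which places it in $\bigoplus_{k=0}^{n-1}E_{m+k}$. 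The single remaining factor $\dss{\psi}{R}(b_1)$, with $\omega(b_1)=0$, sends each $E_j$ into $H^-\otimes E_j$ by Proposition \ref{prop3UEreps}, and these summands are orthogonal to every $E_k'$; hence $p_kWp_m=0$ for all $k,m$ and $\Cc'(W)=0$. This is the paper's proof. Your choice of blocks $\dss{\psi}{R}(b_i)\mathring{A}_i$ is precisely what creates the ``danger'' you describe of a later $\dss{\psi}{R}(b_i)$ regenerating a diagonal component out of $H^-\otimes E_k$: with the bracketing above, the interior product never leaves the diagonal rungs $E_j$, and the off-diagonal excursion happens exactly once, at the very last step, where nothing can undo it. The cancellation you correctly attribute to the corner relation $\dss{P}{H}\dss{\pi}{L}(a)\dss{P}{H}=\dss{\pi}{R}(\phi(a))$ is indeed the engine of the argument, but it has already been packaged once and for all into Lemma \ref{lemhorriblyunmotivated}; re-deriving it for the transposed blocks would presumably succeed, but it duplicates effort and is exactly the part of your proposal that remains a sketch rather than a proof.
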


\begin{proof}
The first two properties of a left-liberating proposition are
easy to check:
\begin{enumerate}
    \item This was Proposition \ref{propleftcornercovariant}.
    \item By Proposition \ref{prop3UEreps}, all of the
    $E_n'$ are $\dss{\psi}{L}$-invariant, so their
    projections commute with $\dss{\psi}{L}$; hence
    $\Cc'$ is a $\dss{\psi}{L}(A)$-bimodule map.
\end{enumerate}
For the last, let $a_1, \dots, a_n \in A$
and $b_0, \dots, b_n \in B$ with $\omega(b_i) = 0$.  Let
$m \geq 0$ and $\xi \in E_m'$.  By Lemma \ref{lemhorriblyunmotivated},
\[
\prod_{k=1}^n \left[ \dss{\psi}{L}(a_k) - \dss{\psi}{R}(\phi(a_k))
\right] \xi \in \bigoplus_{k=0}^m E_{n+k}.
\]
Since $\omega(b_0) = 0$, it follows that $\dss{\pi}{R}
(b_0) \Omega \in H^-$, so that by Proposition \ref{prop3UEreps}
we obtain
\[
\dss{\psi}{R}(b_0) \prod_{k=1}^n \left[ \dss{\psi}{L}(a_k) - \dss{\psi}{R}(\phi(a_k))
\right] \xi \in \bigoplus_{k=0}^m H^- \otimes E_{n+k}.
\]
From this we see that
\[
p_n \dss{\psi}{R}(b_0) \prod_{k=1}^n \left[ \dss{\psi}{L}(a_k) - \dss{\psi}{R}(\phi(a_k))
\right] p_n = 0,
\]
and summing over $n$ finishes the proof.
\end{proof}

\begin{corollary} \label{corleftcornermapsinto}
$\Cc'(A \star B) = \Cc'(A)$.
\end{corollary}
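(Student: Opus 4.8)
The plan is to proceed exactly as in the proof of Corollary \ref{corcornermapsinto}, replacing the right-liberating machinery by its left-handed counterpart; the essential work has already been done. Proposition \ref{propleftcornerliberated} establishes that $(B(\Hh), \dss{\psi}{L}, \dss{\psi}{R}, \Cc', \varpi)$ is a left-liberating representation of $(A, B, \phi)$, so the identity $\Cc'(A \star B) = \Cc'(A)$ should follow by invoking the corresponding moment-theoretic corollary and then a routine density/continuity argument.

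First I would apply Corollary \ref{corleftlibrepgenerated} to this left-liberating representation, taking $f = \dss{\psi}{L}$, $g = \dss{\psi}{R}$, $\eE = \Cc'$, and $\nu = \varpi$. That corollary asserts that $\Cc'$ collapses the \emph{algebraically} generated subalgebra $\la \dss{\psi}{L}(A), \dss{\psi}{R}(B) \ra$ onto $\Cc'[\dss{\psi}{L}(A)]$; concretely, every alternating word in the two images is carried by $\Cc'$ into $\Cc'[\dss{\psi}{L}(A)]$. Identifying $A$ with its image $\dss{\psi}{L}(A)$, this is precisely the desired identity, but only on the dense $*$-subalgebra rather than on all of $A \star B$.

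It then remains to pass from the algebraically generated subalgebra to its closure $A \star B$. Since $\la \dss{\psi}{L}(A), \dss{\psi}{R}(B) \ra$ is norm-dense (in the C$^*$ case) or ultraweakly dense (in the W$^*$ case) in $A \star B$, and since $\Cc'$ is a conditional expectation---hence a norm contraction, and, as recorded in Definition \ref{defleftcornermap}, normal---the equality extends by continuity to the whole algebra. I expect no genuine obstacle: the only point requiring care is this continuity step, and in particular the normality of $\Cc'$ in the W$^*$ setting, which one verifies exactly as for $\Cc$, using that each compression $T \mapsto p_n T p_n$ is ultraweakly continuous and that the $p_n$ are mutually orthogonal so that the defining sum $\sum_n p_n T p_n$ behaves well under ultraweakly convergent nets.
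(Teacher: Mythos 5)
Your proposal is correct and follows essentially the same route as the paper: Proposition \ref{propleftcornerliberated} supplies the left-liberating representation, the left-liberation moment corollary collapses $\Cc'$ on the algebraically generated subalgebra onto $\Cc'[\dss{\psi}{L}(A)]$, and contractivity (plus normality in the W$^*$ case) of $\Cc'$ extends the identity to all of $A \star B$. The only cosmetic difference is that you cite the representation version (Corollary \ref{corleftlibrepgenerated}) where the paper cites Corollary \ref{corleftlibgenerated}; your citation is arguably the more precise one, mirroring the use of Corollary \ref{corrightlibrepgenerated} in the proof of Corollary \ref{corcornermapsinto}.
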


\begin{proof}
This is an immediate consequence of Proposition \ref{propleftcornerliberated}, Corollary \ref{corleftlibgenerated},
and the contractivity (and, in case $(A, B, \phi, \omega)$
is a CPW$^*$-tuple, the normality) of $\Cc'$.
\end{proof}

In the case of a faithful decomposition, $\Cc' \circ \dss{\psi}{L}$ is injective, which allows us to make the following definition:

\begin{definition} \label{defleftretraction}
Given a CP-tuple, a faithful representation, and a choice
of faithful decomposition, the \textbf{left retraction} for the given tuple
and representation is the map $\theta: A \star B \to A$
given by
\[
\theta = (\Cc' \circ \dss{\psi}{L})\inv \circ \Cc'.
\]
This is well-defined by Corollary
 \ref{corleftcornermapsinto}, and is
 a retraction with respect to $\dss{\psi}{L}$.
\end{definition}

We come now to the main result of this section.

\begin{theorem} \label{thmSPhoms}
Let $(A_1, B_1, \phi_1, \omega_1)$ and $(A_2, B_2, \phi_2, \omega_2)$ be CPC$^*$-tuples (resp. CPW$^*$-tuples),
$(H_1, \Omega_1, \dss{\pi}{R}^{(1)},
K_1, V_1, \dss{\pi}{L}^{(1)})$ a faithful representation of the former, \\
$(H_2, \Omega_2, \dss{\pi}{R}^{(2)}, K_2, V_2, \dss{\pi}{L}^{(2)})$ a right-faithful representation of the latter, and\\
$(A_1 \star B_1, \dss{\psi}{L}^{(1)}, \dss{\psi}{R}^{(1)},
\varpi_1, \theta_1)$ and $(A_2 \star B_2,
\dss{\psi}{L}^{(2)}, \dss{\psi}{R}^{(2)}, \varpi_2,
\theta_2)$ the Sauvageot products realized by these
representations.  Let $f: A_1 \to A_2$ and $g: B_1 \to B_2$
be unital (normal) *-homomorphisms
satisfying $\phi_2 \circ f = g \circ \phi_1$ and
$\omega_2 \circ g = \omega_1$.  Then there is a unique (normal) unital *-homomorphism
$f \star g: A_1 \star B_1 \to A_2 \star B_2$
with the properties that
\begin{enumerate}
    \item $(f \star g) \circ \dss{\psi}{L}^{(1)}
    = \dss{\psi}{L}^{(2)} \circ f$
    \item $(f \star g) \circ \dss{\psi}{R}^{(1)} =
    \dss{\psi}{R}^{(2)} \circ g$
    \item $\varpi_2 \circ (f \star g) = \varpi_1$
    \item $\theta_2 \circ (f \star g) = g \circ \theta_1$
\end{enumerate}
If $f$ and $g$ are both injective and
$(H_2, \Omega_2, \dss{\pi}{R}^{(2)},K_2, V_2, \dss{\pi}{L}^{(2)})$
is faithful, then $f \star g$ is injective.
\end{theorem}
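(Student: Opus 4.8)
The uniqueness is immediate: properties (1) and (2) prescribe $f\star g$ on $\dss{\psi}{L}^{(1)}(A_1)$ and $\dss{\psi}{R}^{(1)}(B_1)$, and the C$^*$-algebra (resp.\ von Neumann algebra) these generate is all of $A_1\star B_1$; two continuous (resp.\ normal) $*$-homomorphisms that agree on a generating set coincide. So the real work is existence. I would recast existence through the universal unital free product $A_1 * B_1$. The pair $(\dss{\psi}{L}^{(1)},\dss{\psi}{R}^{(1)})$ induces a representation $\Pi_1=\dss{\psi}{L}^{(1)} * \dss{\psi}{R}^{(1)}$ on $\Hh_1$ with image $A_1\star B_1$, and the compatible pair $(\dss{\psi}{L}^{(2)}\circ f,\ \dss{\psi}{R}^{(2)}\circ g)$ induces $\Lambda=(\dss{\psi}{L}^{(2)}\circ f) * (\dss{\psi}{R}^{(2)}\circ g)$ on $\Hh_2$, whose image lies in $A_2\star B_2$. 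Constructing $f\star g$ with properties (1)--(2) is then exactly the assertion that $\Lambda$ factors through $\Pi_1$, i.e.\ the norm domination $\|\Lambda(x)\|\le\|\Pi_1(x)\|$ for all $x\in A_1 * B_1$. Once this holds, the prescription $\Pi_1(x)\mapsto\Lambda(x)$ is a well-defined contractive (hence completely positive) unital $*$-homomorphism, and normality in the W$^*$ case is inherited from that of $f,g$ and of the representations $\dss{\psi}{L/R}^{(i)}$.

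To get the norm domination I would argue spatially. Using the bifunctor $-\hstar-$ of Proposition \ref{propSPopproperties}, the goal is an isometry $U:\Hh_2\to\Hh_1$ intertwining $\Lambda$ into $\Pi_1$, that is $U\,\dss{\psi}{L}^{(2)}(f(a))=\dss{\psi}{L}^{(1)}(a)\,U$ and $U\,\dss{\psi}{R}^{(2)}(g(b))=\dss{\psi}{R}^{(1)}(b)\,U$; from such a $U$ one reads off $\|\Lambda(x)\eta\|=\|U\Lambda(x)\eta\|=\|\Pi_1(x)U\eta\|\le\|\Pi_1(x)\|\,\|\eta\|$. I would assemble $U$ as $u_K\hstar u_L$ from Hilbert-space maps $u_K:K_2\to K_1$, $u_L:L_2\to L_1$, with $u_K V_2=V_1 u_H$ for an isometry $u_H:H_2\to H_1$ sending $\Omega_2\mapsto\Omega_1$. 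On the cyclic pieces these are forced and isometric: $u_H$ on $\overline{\dss{\pi}{R}^{(2)}(g(B_1))\Omega_2}$ by $\omega_2\circ g=\omega_1$, and $u_K$ on the Stinespring-cyclic subspace by $V_i^*\dss{\pi}{L}^{(i)}V_i=\dss{\pi}{R}^{(i)}\phi_i$ together with $g\circ\phi_1=\phi_2\circ f$. The main obstacle is the extension of $u_H,u_K$ across the \emph{non-cyclic} parts of $H_2,K_2$, which carry subrepresentations of $B_1$ and $A_1$ that need not embed isometrically into $\dss{\pi}{R}^{(1)},\dss{\pi}{L}^{(1)}$. This is precisely where the faithfulness of the first representation enters: $\dss{\pi}{R}^{(1)}$ is faithful on $B_1$ and (the decomposition being faithful) $\dss{\pi}{L}^{(1)}$ is faithful on $A_1$, so those subrepresentations are weakly contained in the first representation. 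Combined with the liberation moment formulas (Theorems \ref{thmrightlibrepmoments} and \ref{thmrightlibmoments}), which express every inner product of words against $\Omega$ purely in terms of $\phi_i,\omega_i$ and hence match under $f,g$, this weak containment yields the desired norm domination even when an honest isometry $U$ does not exist. I expect this weak-containment-plus-moment-matching step to be the technical heart of the argument.

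Granting $f\star g$, properties (3) and (4) follow from the moment calculus. Property (3), $\varpi_2\circ(f\star g)=\varpi_1$, reduces on a word $w$ to $\la\Omega_2,(f\star g)(w)\,\Omega_2\ra=\la\Omega_1,w\,\Omega_1\ra$; by Theorem \ref{thmrightlibrepmoments} both sides are the same expression in $\phi_i,\omega_i$, so equality of the building data gives the claim, which extends by continuity (resp.\ normality). For property (4), $\theta_2\circ(f\star g)=g\circ\theta_1$, recall $\theta_i=(\Cc_i\circ\dss{\psi}{R}^{(i)})\inv\circ\Cc_i$, so that $\theta_i$ applied to a word returns the $B_i$-valued moment $\LM$ of that word built from $\phi_i,\omega_i$; applying $f\star g$ replaces this by the corresponding moment built from $\phi_2,\omega_2$, which by induction on word length (using $g\circ\phi_1=\phi_2\circ f$ and $\omega_2\circ g=\omega_1$) equals $g$ of the original moment. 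Both sides extend to $A_1\star B_1$ by normality of the $\theta_i$.

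Finally, for injectivity under the extra hypotheses, put $J=\ker(f\star g)$, a closed two-sided ideal. The first representation is faithful throughout, giving $\Cc_1,\Cc'_1,\theta_1,\theta'_1$; fixing a faithful decomposition of the (now faithful) second representation supplies $\Cc'_2$ and the left retraction $\theta'_2$. For $T\in J$, property (4) gives $g(\theta_1(T))=\theta_2(0)=0$, and since $g$ is injective $\theta_1(T)=0$; then $\Cc_1(T)=\Cc_1(\dss{\psi}{R}^{(1)}(\theta_1(T)))=0$, using $\Cc_1\circ\dss{\psi}{R}^{(1)}\circ\theta_1=\Cc_1$. The left-handed analogue of property (4), $\theta'_2\circ(f\star g)=f\circ\theta'_1$, proved identically from the left-liberation moments (Theorem \ref{thmleftlibrepmoments}), gives $f(\theta'_1(T))=0$, whence $\theta'_1(T)=0$ by injectivity of $f$ and therefore $\Cc'_1(T)=0$. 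Thus $J\subseteq\ker\Cc_1\cap\ker\Cc'_1$, and Lemma \ref{lemintersectkernels} forces $J=\{0\}$.
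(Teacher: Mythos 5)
Your uniqueness argument, your derivation of properties (3)--(4) from the moment calculus, and your injectivity argument via Lemma \ref{lemintersectkernels} all run parallel to the paper's. The problem is the existence step, and there you have correctly located the obstruction but not resolved it. Reducing existence to the norm domination $\|\Lambda(x)\|\leq\|\Pi_1(x)\|$ on the full free product is fine, and you are right that a literal intertwining isometry $U:\Hh_2\to\Hh_1$ cannot exist in general because the non-cyclic summands of $H_2$ and $K_2$ carry subrepresentations with no isometric copy inside the first representation. But the proposed repair --- ``weak containment plus moment matching'' --- is a placeholder, not an argument. Faithfulness of $\dss{\pi}{R}^{(1)}$ and $\dss{\pi}{L}^{(1)}$ does give weak containment of $\dss{\pi}{R}^{(2)}\circ g$ and $\dss{\pi}{L}^{(2)}\circ f$ in them \emph{as representations of $B_1$ and $A_1$ separately}, but there is no general principle by which weak containment of the two components passes to weak containment of the induced representations of $A_1*B_1$ on the Sauvageot Hilbert spaces; that implication is essentially the theorem you are trying to prove. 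Likewise, Theorem \ref{thmrightlibrepmoments} only controls $\Cc$ of words, hence the single vector state $\varpi$ at $\Omega$, which governs the GNS representation of $\varpi$ and not the norm on all of $A_i\star B_i$. You have two partial pieces of information and no mechanism for combining them.

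The paper's mechanism is to enlarge rather than compare. Set $H=H_1\oplus H_2$, $\dss{\pi}{R}=\dss{\pi}{R}^{(1)}\oplus(\dss{\pi}{R}^{(2)}\circ g)$, $K=K_1\oplus K_2$, $V=V_1\oplus V_2$, $\dss{\pi}{L}=\dss{\pi}{L}^{(1)}\oplus(\dss{\pi}{L}^{(2)}\circ f)$; this is a right-faithful representation of the \emph{first} tuple $(A_1,B_1,\phi_1,\omega_1)$, and it inherits a faithful decomposition from the first summand. Its realization $A_1\tstar B_1$ lives on a single Hilbert space containing $\Hh_1$ and $\Hh_2$ as invariant subspaces (Remark \ref{remSPsubspaces} and equation (\ref{eqnSPopsintertwines})), and the two compressions $\Psi$, $\Xi$ are surjective $*$-homomorphisms onto $A_1\star B_1$ and into $A_2\star B_2$ intertwining everything in sight. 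The point of requiring the first representation to be faithful is exactly that the inherited faithful decomposition makes $\Cc'$ and the left retraction available on $A_1\tstar B_1$, so that $\ker\Psi$, being an ideal annihilated by both $\Cc$ and $\Cc'$, vanishes by Lemma \ref{lemintersectkernels}; then $f\star g=\Xi\circ\Psi\inv$. Your norm domination is obtained as a corollary of this construction rather than as an input to it. Without this (or an equivalent) device, your proof does not go through.
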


\begin{proof}
Let $H = H_1 \oplus H_2$, $\Omega = \Omega_1$, $\dss{\pi}{R} = \dss{\pi}{R}^{(1)} \oplus (\dss{\pi}{R}^{(2)} \circ g)$,
$K = K_1 \oplus K_2$,
$V = V_1 \oplus V_2$, $\dss{\pi}{L} =
\dss{\pi}{L}^{(1)} \oplus (\dss{\pi}{L}^{(2)} \circ f)$.  Then
$(H, \Omega, \dss{\pi}{R}, K, V, \dss{\pi}{L})$ is another right-faithful representation of $(A_1, B_1, \phi_1, \omega_1)$.  Moreover, if $(L_1', L_1'')$ is a decomposition for $(H_1, \dots, \dss{\pi}{L}^{(1)})$, then
$L' = L_1' \oplus L_2$, $L'' = L_1'' \oplus L_2$
defines a decomposition $(L', L'')$
of $(H, \dots, \dss{\pi}{L})$, and the faithfulness
of $\dss{\pi}{L}^{(1)}$ on $L_1'$ implies the faithfulness
of $\dss{\pi}{L}$ on $L'$.

Let $(A_1 \tstar B_1, \dss{\psi}{L}, \dss{\psi}{R}, \varpi,
\theta)$ be the Sauvageot product realized
by $(H, \dots, \dss{\pi}{L})$, on the Hilbert space
$\Mm = H^- \star L$.

The inclusions of $H_1$ into $H$ and of $L_1$ into $L$ induce
an isometry $W: \Hh_1 \to \Hh$ as in Remark (\ref{remSPsubspaces}).  Moreover, by Equation (\ref{eqnSPopsintertwines}), this
isometry satisfies
\begin{equation} \label{eqnWintertwines}
W \circ \dss{\psi}{L}^{(1)}(\cdot) = \psi_L(\cdot) \circ W, \qquad
W \circ \dss{\psi}{R}^{(1)}(\cdot) = \psi_R(\cdot) \circ W.
\end{equation}
Let $\Psi$ be the restriction to $A_1 \tstar B_1$
of the (normal) unital CP map $T \mapsto W^* T W$, which
maps $B(\Hh)$ to $B(\Hh_1)$.  It follows from
Equation (\ref{eqnWintertwines}) that the image
of $W$ is invariant under $\dss{\psi}{L}$
and $\dss{\psi}{R}$, so that $W W^*$ commutes
with $A_1 \tstar B_1$.  Then
    \[
    \Psi(XY) = W^* XY W = W^*
    W W^* XY W =
    W^* X W W^* Y W
    = \Psi(X) \Psi(Y)
    \]
    for $X,Y \in A_1 \tstar B_1$, so that
    $\Psi$ is a *-homomorphism.

    Next, we show that
    $\Psi$ intertwines the representations,
    states, and retractions:
\begin{itemize}
    \item $\Psi \circ \dss{\psi}{L} = \dss{\psi}{L}^{(1)}$
    \item $\Psi \circ \dss{\psi}{R} = \dss{\psi}{R}^{(1)}$
    \item $\varpi_1 \circ \Psi = \varpi$
    \item $\theta_1 \circ \Psi = \theta$
    \item $\theta_1' \circ \Psi = \theta'$
\end{itemize}

The first three are immediate consequences of Equation (\ref{eqnWintertwines}).  For the fourth and fifth, we have
by Equation (\ref{eqnSPopsintertwines}) that
\[
\Psi \circ \Cc(T) = W^*\dss{P}{H} T \dss{P}{H} W^* + \sum_n W^* p_n T p_n W = \dss{P}{H_1} W^* T W \dss{P}{H_1} + \sum_n p_{n,1} W^* T W p_{n,1} = \Cc_1 \circ \Psi(T)
\]
so that $\Psi \circ \Cc = \Cc_1 \circ \Psi$,
and similarly for $\Cc'$ and $\Cc_1'$.  Now
\[
\Psi \circ \Cc \circ \dss{\psi}{R} =
\Cc_1 \circ \Psi \circ \dss{\psi}{R} = \Cc_1 \circ \dss{\psi}{R}^{(1)},
\]
which is invertible; then
\[
(\Cc \circ \dss{\psi}{R})\inv =
(\Psi \circ \Cc \circ \dss{\psi}{R})\inv\circ \Psi
\circ \Cc \circ \dss{\psi}{R} \circ (\Cc \circ \dss{\psi}{R})\inv
= (\Psi \circ \Cc \circ \dss{\psi}{R})\inv \circ \Psi
\]
from which it follows that
\begin{align} \label{eqnintertwineretractions}
\theta &= (\Cc \circ \dss{\psi}{R})\inv \circ \Cc \nonumber\\
&= (\Psi \circ \Cc \circ \dss{\psi}{R})\inv \circ \Psi \circ \Cc\nonumber\\
&= (\Cc_1 \circ \dss{\psi}{R}^{(1)})\inv \circ \Cc_1 \circ \Psi \nonumber\\
&= \theta_1 \circ \Psi
\end{align}
and similarly
\begin{equation} \label{eqnintertwinesretractions2}
\theta' = \theta_1' \circ \Psi.
\end{equation}

Note that $\Psi$ maps into $A_1 \star B_1$,
as it maps both $\dss{\psi}{L}(A_1)$ and $\dss{\psi}{R}(B_1)$
into $A_1 \star B_1$, hence also the C$^*$-algebra (resp.\ von Neumann algebra) that they generate; moreover, it is onto $A_1 \star B_1$, since its range is a C$^*$-algebra (resp.\ von Neumann algebra)
which includes
both $\dss{\psi}{L}^{(1)}(A_1)$ and $\dss{\psi}{R}^{(1)}(B_1)$.

Next, $\Psi$ is injective, because
its kernel is an ideal in $A_1  \tstar B_1$ which, by equations
\ref{eqnintertwineretractions} and \ref{eqnintertwinesretractions2},
is contained in the kernels of both $\theta$ and $\theta'$, therefore also in the kernels of both $\Cc$ and $\Cc'$, and
hence is the zero ideal by Lemma \ref{lemintersectkernels}.
So $\Psi$ is an isomorphism from $A_1 \tstar B_1$
to $A_1 \star B_1$.

We repeat the above analysis for the inclusions of $H_2$ into $H$
and $K_2$ into $K$ to obtain a unital *-homomorphism
$\Xi: A_1 \tstar B_1 \to A_2 \star B_2$ such that
\begin{itemize}
    \item $\Xi \circ \dss{\psi}{L} = \dss{\psi}{L}^{(2)} \circ f$
    \item $\Xi \circ \dss{\psi}{R} = \dss{\psi}{R}^{(2)} \circ g$
    \item $\varpi_2 \circ \Xi = \varpi$
    \item $\theta_2 \circ \Xi = g \circ \theta$
\end{itemize}

We can now define $f \star g = \Xi \circ \Psi\inv:
A_1 \star B_1 \to A_2 \star B_2$.  Then we
obtain the enumerated properties of $f \star g$
by combining the lists of properties for $\Psi$
and $\Xi$, as
\[
(f \star g) \circ \dss{\psi}{L}^{(1)} = \Xi \circ \Psi\inv
\circ \dss{\psi}{L}^{(1)} = \Xi \circ \dss{\psi}{L} = \dss{\psi}{L}^{(2)} \circ f
\]
and similarly.

The uniqueness of $f \star g$ follows from the fact that it is contractive (resp.\ normal)
and is determined on the dense subalgebra of $A_1 \star B_1$
generated by $\dss{\psi}{L}^{(1)}(A_1)$ and $\dss{\psi}{R}^{(1)}(B_1)$.

Finally, if $f$ and $g$ are both injective and
$(H_2, \dots, \dss{\pi}{L}^{(2)})$ is faithful,
we can prove the additional property
$\theta_2' \circ \Xi = \Xi \circ f$,
 after which we prove $\Xi$ to be injective exactly as we did with
 $\Psi$ was.  Hence $f \star g$ is a composition
of injective maps.
\end{proof}

\begin{corollary} \label{corunique}
Let $(A, B, \phi, \omega)$ be a CP-tuple.  Then the realizations
of the Sauvageot product by any two faithful representations
are isomorphic.
\end{corollary}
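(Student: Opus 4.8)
The plan is to obtain this as a direct application of Theorem \ref{thmSPhoms}, taking the connecting morphisms to be the identities $f = \text{id}_A$ and $g = \text{id}_B$. First I would fix two faithful representations of the single tuple $(A, B, \phi, \omega)$, realizing Sauvageot products which I label $(A \star B)_1$ and $(A \star B)_2$, together with their associated data $(\dss{\psi}{L}^{(i)}, \dss{\psi}{R}^{(i)}, \varpi_i, \theta_i)$. The compatibility hypotheses of the theorem hold trivially, since $\phi \circ \text{id}_A = \text{id}_B \circ \phi$ and $\omega \circ \text{id}_B = \omega$. Because each representation is faithful (hence both right-faithful and faithfully decomposable), the theorem applies in either direction: assigning representation $1$ the role of the faithful ``former'' tuple and representation $2$ that of the right-faithful ``latter'' yields a unital (normal) \mbox{*-homomorphism} $\Phi := \text{id}_A \star \text{id}_B : (A \star B)_1 \to (A \star B)_2$, and reversing the roles yields $\Psi : (A \star B)_2 \to (A \star B)_1$.

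Next I would show that $\Phi$ and $\Psi$ are mutually inverse by invoking the uniqueness clause of Theorem \ref{thmSPhoms}. Consider the composition $\Psi \circ \Phi : (A\star B)_1 \to (A\star B)_1$, which is again a unital \mbox{*-homomorphism}. Chaining the intertwining identities for $\Phi$ and $\Psi$ shows that it satisfies the same two defining relations that characterize the induced map for the pair (representation $1$, representation $1$): explicitly $(\Psi \circ \Phi) \circ \dss{\psi}{L}^{(1)} = \Psi \circ \dss{\psi}{L}^{(2)} = \dss{\psi}{L}^{(1)}$, and analogously $(\Psi \circ \Phi) \circ \dss{\psi}{R}^{(1)} = \dss{\psi}{R}^{(1)}$. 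Since these relations already determine such a homomorphism on the dense subalgebra generated by $\dss{\psi}{L}^{(1)}(A)$ and $\dss{\psi}{R}^{(1)}(B)$, and since the identity map $\text{id}_{(A\star B)_1}$ manifestly satisfies them too, the uniqueness assertion forces $\Psi \circ \Phi = \text{id}_{(A\star B)_1}$. The symmetric computation gives $\Phi \circ \Psi = \text{id}_{(A\star B)_2}$. Hence $\Phi$ is the desired isomorphism; one may further check $\varpi_2 \circ \Phi = \varpi_1$ and $\theta_2 \circ \Phi = \theta_1$, so the isomorphism respects the retraction and the vacuum state as well.

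The only point requiring attention—rather than a genuine obstacle—is the bookkeeping of hypotheses: one must confirm that each representation can serve simultaneously as the faithful ``former'' and the right-faithful ``latter'' when the theorem is invoked in opposite directions. This is automatic here, since faithfulness subsumes both conditions. I note that the mutual-inverse argument renders the explicit injectivity clause of the theorem unnecessary for establishing the isomorphism, although that clause does independently confirm that each of $\Phi$ and $\Psi$ is injective. No new constructions or estimates are needed; the entire content is already packaged in Theorem \ref{thmSPhoms}.
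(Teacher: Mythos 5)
Your proposal is correct and follows essentially the same route as the paper, which simply takes the map $\text{id}_A \star \text{id}_B$ furnished by Theorem \ref{thmSPhoms}. Your additional step of deriving bijectivity from the uniqueness clause via the mutual-inverse argument (rather than leaning on the theorem's injectivity clause) is a sound and arguably more complete way of filling in the detail the paper leaves implicit.
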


For clarity, ``isomorphic'' here refers to an isomorphism which
intertwines the appropriate maps.  The proof is
simply to take the map $\dss{\text{id}}{A} \star
\dss{\text{id}}{B}$ constructed in the theorem.  Based on this corollary, we may now speak of \emph{the}
Sauvageot product of a CP-tuple.

Another special case of interest occurs when one, but
not both, of the initial maps is the identity.  The
results are summarized as follows.

\begin{corollary} \label{corcommutingsquare}
Let $A, B$ be unital C$^*$-algebras (resp.\ W$^*$-algebras),
$A \sa{f} B$ a unital (normal) *-homomorphism
and $C$ another unital C$^*$-algebra (resp.\ W$^*$-algebra).

\begin{enumerate}
    \item Let $B \sa{\phi} C$ be a (normal) unital CP map
    and $\omega$ a (normal) state on $C$.  Then,
    for the CP-tuples $(A, C, \phi \circ f, \omega)$
    and $(B, C, \phi, \omega)$ with Sauvageot retractions
    $A \star C \sa{\theta} C$ and $B \star C \sa{\eta} C$, the diagrams
    \[ \xymatrix{
    A \ar[r]^f \ar[d] & B \ar[d]\\
    A \star C \ar[r]_{f \star \text{id}} & B \star C
    } \qquad \qquad \qquad \xymatrix{
    A \star C \ar[r]^{f \star \text{id}} \ar[rd]_{\theta}
    & B \star C \ar[d]^{\eta}\\
    & C
    }\]
    commute.

    \item Let $C \sa{\phi} A$ be a (normal) unital CP
    map and $\omega$ a (normal) state on $B$.  Then, for
    the CP-tuples $(C, A, \phi, \omega \circ f)$ and
    $(C, B, f \circ \phi, \omega)$, the square
    \[ \xymatrix{
    A \ar[r]^f \ar[d] & B \ar[d]\\
    C \star A \ar[r]_{\text{id} \star f} & C \star B
    } \]
    commutes.
\end{enumerate}
\end{corollary}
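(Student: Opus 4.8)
Both parts are immediate specializations of Theorem \ref{thmSPhoms}, obtained by setting one of its two homomorphisms equal to an identity map; no new construction is required, so the plan is purely to match notation and read off the claimed diagrams. Before invoking the theorem I would choose, for each tuple appearing below, a faithful representation---these exist by Proposition \ref{propexistsfaithfulrep}---and recall that by Corollary \ref{corunique} the resulting Sauvageot products and retractions are canonical, so the choices are harmless. In each case the theorem's hypotheses $\phi_2 \circ f = g \circ \phi_1$ and $\omega_2 \circ g = \omega_1$ reduce to trivial identities once the identity map is substituted.

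For part (1) I would apply the theorem with first tuple $(A, C, \phi \circ f, \omega)$ and second tuple $(B, C, \phi, \omega)$, taking the theorem's left homomorphism to be the given $f \colon A \to B$ and its right homomorphism to be $\text{id}_C$. The compatibility conditions become $\phi \circ f = \text{id}_C \circ (\phi \circ f)$ and $\omega \circ \text{id}_C = \omega$, both automatic, and the theorem furnishes the morphism $f \star \text{id} \colon A \star C \to B \star C$. Its property (1), namely $(f \star \text{id}) \circ \dss{\psi}{L}^{(1)} = \dss{\psi}{L}^{(2)} \circ f$, is exactly the commutativity of the first (square) diagram once the vertical arrows are read as the left embeddings $\dss{\psi}{L}$. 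Its property (4), $\theta_2 \circ (f \star \text{id}) = \text{id}_C \circ \theta_1$, collapses---because the right homomorphism is the identity---to $\eta \circ (f \star \text{id}) = \theta$, which is the second (triangle) diagram.

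For part (2) I would instead use first tuple $(C, A, \phi, \omega \circ f)$ and second tuple $(C, B, f \circ \phi, \omega)$, now letting the theorem's left homomorphism be $\text{id}_C$ and its right homomorphism be the given $f \colon A \to B$. The hypotheses reduce to $(f \circ \phi) \circ \text{id}_C = f \circ \phi$ and $\omega \circ f = \omega \circ f$, and the theorem gives $\text{id} \star f \colon C \star A \to C \star B$. This time the relevant fact is property (2), $(\text{id} \star f) \circ \dss{\psi}{R}^{(1)} = \dss{\psi}{R}^{(2)} \circ f$, which is precisely the asserted square with the vertical arrows read as the right embeddings $\dss{\psi}{R}$ of $A$ and $B$ into $C \star A$ and $C \star B$.

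There is no real mathematical obstacle, since everything is subsumed by Theorem \ref{thmSPhoms}; the only point that requires care is the bookkeeping of roles. In each part exactly one of $A, B, C$ must be assigned to the ``left'' slot and the others to the ``right'' slot of the relevant tuples, and one must verify that the particular property (1)--(4) of the induced morphism lines up with the specific arrow in the stated diagram---most notably recognizing that substituting $g = \text{id}_C$ in part (1) is what turns the general retraction-intertwining property (4) into the triangle relating $\theta$ and $\eta$.
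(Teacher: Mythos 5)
Your proposal is correct and is exactly the argument the paper intends: the corollary is stated as an immediate specialization of Theorem \ref{thmSPhoms} (the paper gives no separate proof), and your bookkeeping of which tuple goes in which slot, which hypothesis becomes trivial, and which of properties (1)--(4) yields each arrow of the diagrams is accurate. The only detail worth retaining explicitly is the one you already noted: choosing faithful representations via Proposition \ref{propexistsfaithfulrep} so that Theorem \ref{thmSPhoms} applies and Corollary \ref{corunique} makes the resulting products canonical.
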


The composition of Sauvageot products of maps obeys
the obvious functorial property:

\begin{proposition} \label{propfunctorial}
For $i = 1,2,3$ let $(A_i, B_i, \phi_i, \omega_i)$ be
CP-tuples, and for $i = 1,2$ let $A_i \sa{f_i} A_{i+1}$
and $B_i \sa{g_i} B_{i+1}$ be (normal) unital *-homomorphisms,
such that the diagram
\[ \xymatrix{
A_1 \ar[r]^{f_1} \ar[d]_{\phi_1} &A_2 \ar[r]^{f_2} \ar[d]_{\phi_2} &A_3 \ar[d]_{\phi_3} \\
B_1 \ar[r]^{g_1} \ar[rd]_{\omega_1} &B_2 \ar[r]^{g_2} \ar[d]_{\omega_2} &B_3 \ar[ld]^{\omega_3} \\
& \com &
} \]
commutes.  Then
\[
(f_2 \circ f_1) \star (g_2 \circ g_1)
= (f_2 \star g_2) \circ (f_1 \star g_1).
\]
\end{proposition}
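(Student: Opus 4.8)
The plan is to invoke the uniqueness clause of Theorem \ref{thmSPhoms}: I will exhibit the composite $(f_2 \star g_2) \circ (f_1 \star g_1)$ as a unital *-homomorphism satisfying the four properties that characterize $(f_2 \circ f_1) \star (g_2 \circ g_1)$, and conclude that the two maps coincide. First I would fix the setup by choosing, via Proposition \ref{propexistsfaithfulrep}, a faithful representation of each tuple $(A_i, B_i, \phi_i, \omega_i)$, thereby realizing the products $A_i \star B_i$ together with their structure maps $\dss{\psi}{L}^{(i)}, \dss{\psi}{R}^{(i)}, \theta_i, \varpi_i$ for $i = 1,2,3$. Since a faithful representation is in particular right-faithful, the middle product $A_2 \star B_2$ serves simultaneously as a faithful source for $f_2 \star g_2$ and as a right-faithful target for $f_1 \star g_1$, so all three star-morphisms are defined with respect to these same fixed realizations. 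I would also record that the composites satisfy the hypotheses of Theorem \ref{thmSPhoms}: the commuting diagram yields $\phi_3 \circ (f_2 \circ f_1) = g_2 \circ \phi_2 \circ f_1 = (g_2 \circ g_1) \circ \phi_1$ and $\omega_3 \circ (g_2 \circ g_1) = \omega_2 \circ g_1 = \omega_1$, so $(f_2 \circ f_1) \star (g_2 \circ g_1)$ exists.

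Next I would check the four properties for $F := (f_2 \star g_2) \circ (f_1 \star g_1)$, which is a unital *-homomorphism (normal in the W$^*$ case) as a composite of such. Each verification is a two-step chase through the properties of the two factors. For the left intertwining,
\[
F \circ \dss{\psi}{L}^{(1)} = (f_2 \star g_2) \circ \dss{\psi}{L}^{(2)} \circ f_1 = \dss{\psi}{L}^{(3)} \circ f_2 \circ f_1,
\]
using property (1) for $f_1 \star g_1$ and then for $f_2 \star g_2$; the right intertwining is identical with $R$ and $g$ in place of $L$ and $f$. For the state,
\[
\varpi_3 \circ F = \varpi_2 \circ (f_1 \star g_1) = \varpi_1,
\]
using property (3) of each factor; and for the retraction,
\[
\theta_3 \circ F = g_2 \circ \theta_2 \circ (f_1 \star g_1) = (g_2 \circ g_1) \circ \theta_1,
\]
using property (4) of each factor. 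These are precisely the four properties that Theorem \ref{thmSPhoms} attaches to $(f_2 \circ f_1) \star (g_2 \circ g_1)$.

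Finally I would appeal to uniqueness. Both $F$ and $(f_2 \circ f_1) \star (g_2 \circ g_1)$ are unital *-homomorphisms agreeing on $\dss{\psi}{L}^{(1)}(A_1)$ and $\dss{\psi}{R}^{(1)}(B_1)$ by properties (1)--(2); since these images generate a dense (norm- resp.\ ultraweakly) subalgebra of $A_1 \star B_1$ and both maps are contractive (resp.\ normal), the uniqueness assertion of Theorem \ref{thmSPhoms} forces $F = (f_2 \circ f_1) \star (g_2 \circ g_1)$, which is the claim. I do not anticipate any genuine difficulty in this argument: it is the standard ``uniqueness implies functoriality'' pattern, and the only point requiring care is the consistency of the chosen realizations, which is why I insist on faithful---rather than merely right-faithful---representations of all three tuples so that $A_2 \star B_2$ can play both roles above.
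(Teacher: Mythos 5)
Your argument is correct and is precisely the ``uniqueness implies functoriality'' argument that the paper intends: the thesis states Proposition \ref{propfunctorial} without proof, treating it as an immediate consequence of the uniqueness clause of Theorem \ref{thmSPhoms}, and your verification of the four characterizing properties for the composite (together with your care in fixing a single faithful realization of each tuple so that $A_2 \star B_2$ serves as both target and source) supplies exactly the missing details.
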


Next, we note that the Sauvageot retraction possesses a certain
universal property.

\begin{proposition} \label{propuniqueretraction}
Let $(A, B, \phi, \omega)$ be a CP-tuple, with Sauvageot product
$A \star B$ and retraction $\theta: A \star B \to B$.
Suppose $\hat{\theta}: A \star B \to B$ is another (normal) retraction
with respect to $\dss{\psi}{R}$ such that
$(A \star B, \dss{\psi}{L}, \dss{\psi}{R},
\hat{\theta} \circ \dss{\psi}{R}, \varpi)$ is a right-liberating representation for $(A, B, \phi, \omega)$.  Then
$\hat{\theta} = \theta$.
\end{proposition}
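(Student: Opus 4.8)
The plan is to exploit the moment formula of Theorem \ref{thmrightlibrepmoments}, which shows that a right-liberating representation determines its liberating map on the dense subalgebra generated by the two embedded copies, up to the single datum $\eE[\one]$. Once this is in hand the proof is essentially bookkeeping: I need only observe that the relevant moment function is the same for both retractions and that any $\dss{\psi}{R}$-retraction sends $\one$ to $\one$.

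First I would record the two right-liberating representations in play. By Corollary \ref{corthetaliberates}, $(A \star B, \dss{\psi}{L}, \dss{\psi}{R}, \dss{\psi}{R} \circ \theta, \varpi)$ is a right-liberating representation of $(A, B, \phi)$; by hypothesis, so is $(A \star B, \dss{\psi}{L}, \dss{\psi}{R}, \dss{\psi}{R} \circ \hat{\theta}, \varpi)$. These share the same embeddings $\dss{\psi}{L}, \dss{\psi}{R}$ and the same functional $\varpi$, differing only in the liberating map, which is $\dss{\psi}{R} \circ \theta$ in one case and $\dss{\psi}{R} \circ \hat{\theta}$ in the other. Applying Theorem \ref{thmrightlibrepmoments} to each, for every $\vec{x} = (b_0, a_1, b_1, \dots, a_\ell, b_\ell) \in \WW_I$, writing $w = (\dss{\psi}{L} \times \dss{\psi}{R})(\vec{x})$, I obtain
\[
(\dss{\psi}{R} \circ \theta)(w) = \LM_r(\vec{x}) \, (\dss{\psi}{R} \circ \theta)(\one), \qquad (\dss{\psi}{R} \circ \hat{\theta})(w) = \LM_r(\vec{x}) \, (\dss{\psi}{R} \circ \hat{\theta})(\one).
\]
The crucial point is that the moment function $\LM_r$ is assembled only from $\phi$ and $\varpi \circ \dss{\psi}{R}$, data independent of the choice of retraction, so the same factor $\LM_r(\vec{x})$ appears on both right-hand sides; it therefore suffices to match the two maps at $\one$.

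To match them at $\one$, I use that $\theta$ and $\hat{\theta}$ are both retractions with respect to $\dss{\psi}{R}$, i.e. $\theta \circ \dss{\psi}{R} = \hat{\theta} \circ \dss{\psi}{R} = \dss{\text{id}}{B}$; since $\dss{\psi}{R}$ is unital we have $\one = \dss{\psi}{R}(\one_B)$, so $\theta(\one) = \hat{\theta}(\one) = \one_B$ and hence $(\dss{\psi}{R} \circ \theta)(\one) = (\dss{\psi}{R} \circ \hat{\theta})(\one) = \one$. Substituting into the displayed formulas gives $(\dss{\psi}{R} \circ \theta)(w) = (\dss{\psi}{R} \circ \hat{\theta})(w)$ for every word $w = (\dss{\psi}{L} \times \dss{\psi}{R})(\vec{x})$. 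These words span the $*$-subalgebra generated by $\dss{\psi}{L}(A)$ and $\dss{\psi}{R}(B)$, which is dense in $A \star B$; since both maps are contractive (norm-continuous) in the C$^*$ case and normal (ultraweakly continuous) in the W$^*$ case, they agree on all of $A \star B$. Finally $\dss{\psi}{R}$ is injective, by the observation preceding Definition \ref{defSPretraction} that right-faithfulness makes $b \mapsto \dss{P}{H} \dss{\psi}{R}(b) \dss{P}{H}$ injective, so $\dss{\psi}{R} \circ \theta = \dss{\psi}{R} \circ \hat{\theta}$ forces $\theta = \hat{\theta}$.

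I do not anticipate a serious obstacle: the substance lies in the two observations that $\LM_r$ cannot see the choice of retraction and that any $\dss{\psi}{R}$-retraction is automatically unital. The only step needing minor care is the passage from the generating words to all of $A \star B$, which is precisely where the hypothesis that $\hat{\theta}$ be normal (in the W$^*$ setting) is used, ensuring that agreement on the ultraweakly dense generating subalgebra propagates to the whole algebra.
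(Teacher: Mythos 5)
Your proof is correct and follows essentially the same route as the paper's: the paper likewise applies the moment formula (Theorem \ref{thmrightlibmoments}) to the two conditional expectations $\dss{\psi}{R} \circ \theta$ and $\dss{\psi}{R} \circ \hat{\theta}$, concludes agreement on the dense generated subalgebra, extends by continuity/normality, and then cancels the injective $\dss{\psi}{R}$. You have merely made explicit the two points the paper leaves tacit, namely that $\LM_r$ depends only on $\phi$ and $\varpi \circ \dss{\psi}{R}$ and that both retractions send $\one$ to $\one$.
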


\begin{proof}
Applying Theorem \ref{thmrightlibmoments} to
the conditional expectations $\dss{\psi}{R} \circ \theta$
and $\dss{\psi}{R} \circ \hat{\theta}$, we see that
they agree on a dense *-subalgebra of $A \star B$, hence
on the whole by continuity.  Since $\dss{\psi}{R}$ is
injective, this implies $\theta = \hat{\theta}$.
\end{proof}
\section{Trivial Cases of the Sauvageot Product}
Tensor products have the property that $A \otimes \com
\simeq A \simeq \com \otimes A$ for any commutative unital
C$^*$-algebra $A$; similarly, unital free products have
the property that $A * \com \simeq A \simeq \com * A$
for any unital C$^*$-algebra $A$.  Moreover, amalgamated
free products satisfy $A *_A A \simeq A$.  We
now consider analogues of these properties for
the Sauvageot product.  These are of interest not only
for their own sake, but also as the base cases in the inductive
system of the next chapter.

\begin{proposition}[$\com \star \AAA \simeq \AAA$] \label{propcomstarA}
Let $\AAA$ be any unital C$^*$-algebra (resp.\ W$^*$-algebra),
$\upsilon: \com \to \AAA$ the embedding of $\com$,
and $\omega$ any state (resp. normal state) on $\AAA$.
Then the Sauvageot product $\com \star \AAA$ of the CP-tuple
$(\com, \AAA, \upsilon, \omega)$ is isomorphic
to $\AAA$; modulo this identification,
the embedding $\dss{\psi}{L}:
\com \to \com \star \AAA$ is $\upsilon$,
and $\dss{\psi}{R}: \AAA \to \com \star \AAA$
and $\E: \com \star \AAA \to \AAA$ are both the
identity map.
\end{proposition}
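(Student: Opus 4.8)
The plan is to exploit that $A = \com$ collapses the left representation to scalars, so that the whole product degenerates. Fix any right-faithful representation $(H, \Omega, \dss{\pi}{R}, K, V, \dss{\pi}{L})$ of the tuple $(\com, \AAA, \upsilon, \omega)$ and form the corresponding realization $(\com \star \AAA, \dss{\psi}{L}, \dss{\psi}{R}, \theta)$. Since $\dss{\psi}{L}$ is a unital homomorphism out of $\com$, its image is exactly the scalars $\com \one \subseteq B(\Hh)$, which lie inside every unital C$^*$- (resp.\ von Neumann) subalgebra. Consequently the algebra generated by $\dss{\psi}{L}(\com)$ and $\dss{\psi}{R}(\AAA)$ is simply $\dss{\psi}{R}(\AAA)$; that is, $\com \star \AAA = \dss{\psi}{R}(\AAA)$.

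Next I would show that $\dss{\psi}{R}$ is injective, so that it is an isomorphism onto $\com \star \AAA$. Recall $\dss{\psi}{R} = \Phi \circ \dss{\pi}{R}$, where $\Phi$ is the representation of Proposition \ref{prop3UEreps}. By right-faithfulness $\dss{\pi}{R}$ is injective, and $\Phi$ is faithful because the summand $\HH^+ = (H^-)^+$ of $\Hh$ is $\Phi$-invariant (Corollary \ref{corinvariant}) and $\Phi(b)$ acts there as $b$ (the first line of Proposition \ref{prop3UEreps}); hence $\Phi(b) = 0$ forces $b = 0$. Therefore $\dss{\psi}{R}: \AAA \to \com \star \AAA$ is the desired isomorphism.

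Finally I read off the three identifications, using $\dss{\psi}{R}$ as the isomorphism $\AAA \simeq \com \star \AAA$. Under this identification $\dss{\psi}{R}$ corresponds to $\dss{\text{id}}{\AAA}$. The retraction $\theta$ is a retraction with respect to $\dss{\psi}{R}$, so $\theta \circ \dss{\psi}{R} = \dss{\text{id}}{\AAA}$; since $\dss{\psi}{R}$ is onto $\com \star \AAA$, this forces $\theta = \dss{\psi}{R}\inv$, which is again the identity after identification. And $\dss{\psi}{L}(\lambda) = \lambda \one$, so $\dss{\psi}{R}\inv(\dss{\psi}{L}(\lambda)) = \lambda \one_\AAA = \upsilon(\lambda)$, i.e.\ $\dss{\psi}{L}$ corresponds to $\upsilon$.

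The only real content is the faithfulness of $\Phi$ in the second step; everything else is formal bookkeeping. An even more transparent route, which I would mention as a remark, is to observe that $\dss{\pi}{R} \circ \upsilon$ is the unital $*$-homomorphism $\lambda \mapsto \lambda \one$ on $\com$, whose minimal Stinespring dilation is trivial, giving $K = H$, $V = \dss{\text{id}}{H}$, and $L = K \ominus VH = \{0\}$. Then $\Hh = H^- \star \{0\} \simeq (H^-)^+ \simeq H$ by Proposition \ref{propsauvageotproductsHS}, the conditional expectation $\Cc$ reduces to the identity (all $q_n$ vanish), and $\dss{\psi}{R} = \dss{\pi}{R}$, $\dss{\psi}{L}(\lambda) = \lambda \one$, $\theta = \dss{\psi}{R}\inv$ become immediate, recovering the same conclusion. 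Note that no left retraction is needed for this statement.
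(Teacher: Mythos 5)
Your proposal is correct and follows essentially the same route as the paper's (first) argument: observe that $\dss{\psi}{L}(\com)$ consists of scalars and hence lies inside $\dss{\psi}{R}(\AAA)$, so the generated algebra collapses to $\dss{\psi}{R}(\AAA) \simeq \AAA$, with the identifications of $\dss{\psi}{L}$, $\dss{\psi}{R}$, and $\theta$ then being formal. Your added details (injectivity of $\dss{\psi}{R}$ via right-faithfulness and the faithfulness of $\Phi$ on the summand $H$, and the remark that the minimal Stinespring dilation gives $L=\{0\}$) are sound and merely flesh out what the paper leaves as a sketch; the paper's alternative observation that right-liberation trivializes so that $\E$ is multiplicative is not needed for your argument.
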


\begin{proof}
One can prove this by constructing a representation of
this CP-tuple; on the space $\Hh$, one
has $\dss{\psi}{L}$ mapping into $\dss{\psi}{R}(\AAA)$,
so that the algebra generated by both the images together
is isomorphic to $\AAA$.  Alternatively, right-liberation becomes trivial when one of the algebras involved is $\com$, so that $\E$ is multiplicative and hence is a *-homomorphic inverse for
$\dss{\psi}{R}$.
\end{proof}

\begin{remark} \label{remproductwithsubalgebra}
One might conjecture that, more generally, the Sauvageot product with respect to an embedding is trivial; that is, if
$A \sa{\iota} B$ is an embedding, or equivalently
if $A \subset B$ is an embedding, that $A \star B \simeq B$.

This turns out not to be the case.  We are interested in
whether $\dss{\psi}{L} = \dss{\psi}{R} \circ \iota$;
but on the subspace $L'$ in a faithful decomposition,
$\dss{\psi}{L}$ acts faithfully, whereas $\dss{\psi}{R}
\circ \iota$ acts in a trivial fashion (in particular,
the component in $L'$ of $\dss{\psi}{R}(\iota(a)) \xi$
for $\xi \in L'$ must be a scalar multiple of $\xi$).

This illustrates an important feature of the Sauvageot
product.  If we were to start by representing
$B$ on some $H$ through the GNS construction, then
use Stinespring dilation to obtain a representation of
$A$ on $K$, then
in the special case that the map from $A$ to $B$ is an embedding
(indeed, any homomorphism) one would have $K = H$ and therefore
$L = \{0\}$, from which it would follow that $\Hh \simeq H$
as well, and $A \star B \simeq B$.  But the Sauvageot product
is defined with respect to a \emph{faithful} representation, which involves taking direct sums at various points in the process so as
to avoid collapsing into triviality.
\end{remark}

\begin{proposition}[$\AAA \star \com \simeq \com$]
Let $\AAA$ be any unital C$^*$-algebra (resp.\ W$^*$-algebra),
and $\omega$ any state (resp.\ normal state) on $\AAA$.
Then the Sauvageot product $\AAA \star \com$ of the
CP-tuple $(\AAA, \com, \omega, \text{id}{\com})$ is
isomorphic to $\AAA$; modulo this identification,
the left embedding $\dss{\psi}{L}: \AAA \to \AAA \star \com$
is the identity map, the right embedding
$\dss{\psi}{R}: \com \to \AAA \star \com$ is $\upsilon$,
and the retraction $\E: \AAA \star \com \to \com$
is $\omega$.
\end{proposition}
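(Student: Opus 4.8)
The plan is to mirror the proof of Proposition \ref{propcomstarA} with the two algebras interchanged. The decisive structural feature is that $B = \com$, so the right representation $\dss{\pi}{R}\colon \com \to B(H)$ is merely $\lambda \mapsto \lambda\one_H$; composing with the unital representation $\Phi$ of Proposition \ref{prop3UEreps} yields $\dss{\psi}{R}(\lambda) = \lambda\one_{\Hh}$, whence $\dss{\psi}{R}(\com) = \com\,\one_{\Hh}$. Because $\dss{\psi}{L}$ is unital these scalars already lie in $\dss{\psi}{L}(\AAA)$, so the C$^*$-algebra (resp.\ von Neumann algebra) $\AAA\star\com$ generated by the images of $\dss{\psi}{L}$ and $\dss{\psi}{R}$ coincides with $\dss{\psi}{L}(\AAA)$.

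To turn this into an isomorphism $\AAA\star\com \simeq \AAA$, I would first invoke Proposition \ref{propexistsfaithfulrep} to realize the product by a faithful representation, which is legitimate since by Corollary \ref{corunique} the product does not depend on this choice. Faithfulness entails left-faithfulness, i.e.\ injectivity of $\dss{\pi}{L}$; and since the representation $\Psi$ of Proposition \ref{prop3UEreps} acts by $\Psi(a)k = ak$ on the copy of $\KK$ sitting inside $\Hh$, it is itself faithful, so $\dss{\psi}{L} = \Psi\circ\dss{\pi}{L}$ is injective as well. As the image of a C$^*$-algebra (resp.\ W$^*$-algebra) under a (normal) $*$-homomorphism, $\dss{\psi}{L}(\AAA)$ is norm-closed (resp.\ ultraweakly closed), so $\dss{\psi}{L}\colon\AAA \to \AAA\star\com$ is an isomorphism onto the entire product. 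This is the desired identification, under which $\dss{\psi}{L}$ is by construction the identity map.

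Identifying the remaining two maps is then immediate. Under the isomorphism the right embedding becomes $\dss{\psi}{L}\inv\circ\dss{\psi}{R}$, and since $\dss{\psi}{R}(\lambda) = \lambda\one_{\Hh} = \dss{\psi}{L}(\lambda\one_{\AAA})$ this composite is $\lambda \mapsto \lambda\one_{\AAA} = \upsilon(\lambda)$, as claimed. For the retraction $\E$, which is the Sauvageot retraction $\theta$ of Definition \ref{defSPretraction}, Equation (\ref{eqnthetafactorsphi}) records that $\theta\circ\dss{\psi}{L} = \phi = \omega$ for every realization; transported through $\dss{\psi}{L}$, the retraction $\AAA\star\com \to \com$ therefore becomes precisely $\omega$.

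I expect the only step needing genuine care to be the injectivity of $\dss{\psi}{L}$: without passing to a faithful representation the left embedding can fail to be injective and the product can collapse (compare Remark \ref{remproductwithsubalgebra}), so the real content is the combination of Proposition \ref{propexistsfaithfulrep} with the faithfulness of $\Psi$. In the W$^*$ setting I would additionally verify that $\dss{\psi}{L}$ is normal with ultraweakly closed range, so that the identification is an isomorphism of W$^*$-algebras and not merely of C$^*$-algebras.
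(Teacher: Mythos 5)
Your proposal is correct and follows the same route the paper takes: the paper's proof is just ``as with the previous proposition,'' whose argument is precisely that one embedding maps into the image of the other (here $\dss{\psi}{R}(\com) = \com\,\one_{\Hh} \subset \dss{\psi}{L}(\AAA)$ by unitality), so the generated algebra is $\dss{\psi}{L}(\AAA) \simeq \AAA$, with the identifications of $\dss{\psi}{R}$ and $\E$ then read off from Equation (\ref{eqnthetafactorsphi}). Your added care about injectivity of $\dss{\psi}{L}$ --- combining faithfulness of $\Psi$ on $B(\KK)$ with the left-faithfulness guaranteed by a faithful representation --- fills in a step the paper leaves implicit, and is exactly right.
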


\begin{proof}
As with the previous proposition.
\end{proof}

\begin{remark} \label{remstarwithcom}
Now given a CP-tuple $(A, B, \phi, \omega)$, one
can identify $A$ with $\com \star A$ (resp.\ $A \star \com$)
and $B$ with $\com \star B$ (resp.\ $B \star \com$); it
is then natural to ask whether $\phi$ is thereby identified
with $\dss{\text{id}}{\com} \star \phi$ (resp.\
$\phi \star \dss{\text{id}}{\com}$).  The answer is yes;
indeed, this is a special case of Corollary \ref{corcommutingsquare}.
\end{remark}

% Thesis Chapter 4 by Dave Gaebler

\chapter{Algebraic C$^*$-Dilations through Iterated Products} \label{chapiteratedproducts}

\section{Introduction}
Having shown how to construct the Sauvageot product of a CP-tuple,
we now broach the question of how to iterate this product in order to construct dilations.  For motivation, we return again to the Daniell-Kolmogorov construction as viewed through the lens of the tensor product (Example \ref{exmarkovdilation}).

Recall that we begin with a compact Hausdorff space $S$ (the state
space of a Markov process), with corresponding path space
$\SSSS = S^{[0,\infty)}$; we use $\AAA$ to denote $C(S)$ and
$\Aa$ to denote $C(\SSSS)$, though we seek here to
construct $\Aa$ only through C$^*$-algebraic means, without
reference to $\SSSS$ except as a guide to understanding.  For
each finite subset $\gamma \subset [0,\infty)$, we let
$\AAA_\gamma$ denote a tensor product of $|\gamma|$ copies
of $C(S)$ with itself.  When we have constructed $\Aa$,
we will embed such an $\AAA_\gamma$ into it,
representing those functions on the path space which only depend on times in $\gamma$.

For $\beta \leq \gamma$ we can embed $\AAA_\beta$ into
$\AAA_\gamma$ by tensoring with $\one$'s in all the missing coordinates. It is difficult to find notation which makes this more precise while maintaining the basic simplicity of the concept, but here are two attempts.  First, an example.  If $\gamma = \{t_1, \dots, t_7\}$ with
the times listed in increasing order, and $\beta = \{t_2, t_5, t_6\}$, then one embeds $\AAA_\beta$ into $\AAA_\gamma$ via
\[
f \otimes g \otimes h \longmapsto \one \otimes f \otimes \one
\otimes \one \otimes g \otimes h \otimes \one.
\]
Second, a general observation: Such an embedding can be built from repeated embeddings corresponding to adding a single time, so we
may reduce to the case $\beta = \{t_1, \dots, t_n\}$ and
$\gamma = \{t_1, \dots, t_k, \tau, t_{k+1}, \dots, t_n\}$ where
again we assume the times are in increasing order.  In this
case the embedding is
\[
f_1 \otimes \dots \otimes f_n
\longmapsto f_1 \otimes \dots \otimes f_k
\otimes \one \otimes f_{k+1} \otimes \dots \otimes f_n.
\]
It is easy to see that the family of embeddings under consideration form an inductive system, so that we may take the limit to obtain a
C$^*$-algebra $\Aa$ generated by copies of each $\AAA_\gamma$.

We note in passing that the limit construction becomes even
simpler when viewed through the lens of the Gelfand functor.
Since $\AAA_\gamma$ may be identified with $C(S^\gamma)$, one
can consider the inverse system of compact Hausdorff spaces
$\{S^\gamma\}$ equipped with the canonical projections.  The
projective (aka inverse) limit is the path space $\SSSS$,
so applying the
contravariant equivalence of categories, the inductive
(aka direct) limit
of the corresponding embeddings is isomorphic to $C(\SSSS)$.  While elegant, however, this point of view will be of little use in our noncommutative generalizations, since there is no underlying path space to work with.

Having constructed the limit algebra $\Aa$, with the embedding
$\AAA \hookrightarrow \Aa$ corresponding to the identification of $\AAA$ with $\AAA_{\{0\}}$, we are left with the task of constructing the retraction $\E: \Aa \to \AAA$.  We do this by first constructing a consistent family of retractions $\AAA_\gamma \to \AAA_\beta$
for $\beta \leq \gamma$, then showing how to use a limiting process
to induce the retraction $\Aa \to \AAA$.  First, we reduce as
before to the case where $\gamma$ contains one more point
than $\beta$, then retract
\[
f_1 \otimes \dots \otimes f_k \otimes g
\otimes f_{k+1} \otimes \dots \otimes f_n
\longmapsto f_1 \otimes \dots \otimes (f_k P_{\tau-t_k} g)
\otimes f_{k+1} \otimes \dots \otimes f_n.
\]
Note that in particular, when $\gamma$ contains 0 and
one identifies $\AAA$ with $\AAA_{\{0\}}$, repeated application
of this rule yields the retraction $\AAA_\gamma \to \AAA$ given
on simple tensors by
\[
f_1 \otimes \dots \otimes f_n
\longmapsto f_1 P_{t_2-t_1} \Big( f_2 P_{t_3-t_2}
\big(f_3 \cdots P_{t_n-t_{n-1}}(f_n)\big) \cdots \Big).
\]
Again, one can check that this family of retractions is consistent with the inductive system, so that it yields a well-defined and contractive map onto $\AAA$ from the dense subalgebra of $\Aa$ generated by the images of all the $\AAA_\gamma$; as this map is contractive, it
extends to a retraction on all of $\Aa$.

When seeking to carry this method across to the Sauvageot product,
one runs into several hurdles.  First, one does not form the Sauvageot product merely of two C$^*$-algebras, but rather of a CP-tuple; hence, one cannot begin by defining $\AAA_\gamma = \AAA \star \dots \star \AAA$
without specifying what maps are used between the various copies of $\AAA$.  A related and deeper problem is the failure of associativity; even when the relevant maps have been selected to make the notation well-defined, in general one does not have $(A \star A) \star (A \star A)$ isomorphic to $((A \star A) \star A) \star A$.
Hence, we are led to adopt a more laborious inductive construction, though we follow the same high-level strategy as in the commutative case.

For the remainder of the chapter, we fix a unital C$^*$-algebra (resp. W$^*$-algebra) $\AAA$, a faithful state (resp. faithful normal state)
$\omega$ on $\AAA$, and a cp$_0$-semigroup $\{\phi_t\}$ on $\AAA$.
We use $\FF$ to denote the set of finite subsets of $[0,\infty)$.
Throughout, we assume unless otherwise indicated that times within
such sets are listed in increasing order; hence, writing
$\gamma = \{t_1, \dots, t_n\}$ implies $t_1 < \dots < t_n$.

\section{Construction of the Inductive System and Limit}
\subsection{Objects and Immediate-Tail Morphisms}
\begin{definition} \label{deftail}
Let $\beta, \gamma \in \FF$ with
$\gamma = \{t_1,  \dots, t_n\}$.  We call $\beta$ an
\textbf{initial segment} of $\gamma$ if
$\beta = \{t_1, \dots, t_m\}$ for some $1 \leq m \leq n$,
and a \textbf{tail}
of $\gamma$ if $\beta = \{t_\ell, \dots t_n\}$
for some $1 \leq \ell \leq n$.
If $\ell = 2$ we call $\beta$ an \textbf{immediate tail}
with \textbf{distance} $t_2 - t_1$.
\end{definition}

We are now able to define the objects of our inductive system,
as well as some of the morphisms.

\begin{definition} \label{definductiveobjects}
For nonempty $\gamma \in \FF$ we define inductively
\begin{enumerate}
    \item a unital C$^*$-algebra (resp. W$^*$-algebra)
    $\AAA_\gamma$
    \item a unital embedding $\iota_\gamma: \AAA \to \AAA_\gamma$
    \item a retraction $\epsilon_\gamma: \AAA_\gamma \to \AAA$
\end{enumerate}
as follows:
\begin{itemize}
    \item If $\gamma$ is a singleton,
    then $\AAA_\gamma = \AAA$ and both $\iota_\gamma$
     and $\epsilon_\gamma$ are the identity.
    \item If $\beta$ is an immediate tail of $\gamma$
    with distance $\tau$, let $\Phi
    = \phi_\tau \circ \epsilon_\beta: \AAA_\beta \to \AAA$, and
    form the CP-tuple $(\AAA_\beta, \AAA, \Phi, \omega)$.
    Then $\AAA_\gamma$ is the Sauvageot product
    $\AAA_\beta \star \AAA$, $\iota_\gamma$ is the
    embedding of $\AAA$ into $\AAA_\beta \star \AAA$ (denoted
    $\dss{\psi}{R}$ in the previous chapter), and
    $\epsilon_\gamma$ is the Sauvageot retraction from
    $\AAA_\beta \star \AAA$ onto $\AAA$.
\end{itemize}
We also define $\AAA_\emptyset = \com$.
\end{definition}

Note that this definition also implicitly gives us
embeddings $\AAA_\beta \hookrightarrow \AAA_\gamma$ in the special
case where $\beta$ is an immediate tail of $\gamma$; this is
just the canonical embedding of $\AAA_\beta$ into $\AAA_\beta
\star \AAA$, the map denoted in the previous chapter
by $\dss{\psi}{L}$.

We turn
next to the question of how to embed $\AAA_\beta$ into
$\AAA_\gamma$ when $\beta \leq \gamma$ more generally.

\subsection{General Morphisms}
Consider now any inclusion $\beta \leq \gamma$
of nonempty elements of $\FF$.
Let $\gamma = \{t_1, \dots, t_n\}$ and for each $\ell \in \{1,
\dots, n\}$ define subsets $\gamma(\ell) \leq \gamma$
and $\beta(\ell) \leq \beta$ by
\[
\gamma(\ell) = \gamma \cap \{t_\ell, \dots, t_n\},
\quad \beta(\ell) = \beta \cap \{t_\ell, \dots, t_n\}.
\]
Then each $\gamma(\ell)$ is a tail of $\gamma$, with
$\gamma(1) = \gamma$, and similarly for $\beta$.
(Note that some of the $\beta(\ell)$ may be empty, if
$t_n \notin \beta$.)

\begin{definition} \label{definductivemorphisms}
For $\beta, \gamma$ as above, we define an embedding
$\AAA_\beta \sa{f} \AAA_\gamma$ by recursively
defining embeddings $\AAA_{\beta(\ell)} \sa{f_\ell} \AAA_{\gamma(\ell)}$
and letting $f = f_1$.  The embeddings are as follows:
\begin{itemize}
    \item In the base case $\ell = n$, the embedding $f_n$ is
    the identity map in case $t_n \in \beta$, or the canonical
    embedding $\com \hookrightarrow \AAA$ otherwise.
    \item Given $f_{\ell+1}$,
    let $\BB$ denote either $\AAA$ in the case that
    $t_\ell \in \beta$, or $\com$ otherwise;
    more succinctly, $\BB = \AAA_{\beta \cap \{t_\ell\}}$.  Let
    $\BB \sa{\psi} \AAA$ be either the identity map
    or the embedding of $\com$, accordingly.  Then
    \[
    f_{\ell} = f_{\ell+1} \star \psi.
    \]
\end{itemize}
\end{definition}

\begin{proposition} \label{propinductivesystem}
The family of embeddings $\AAA_\beta \hookrightarrow \AAA_\gamma$
in Definition \ref{definductivemorphisms} is an inductive system.
\end{proposition}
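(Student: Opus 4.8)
The plan is to verify the two defining properties of an inductive system for the embeddings of Definition \ref{definductivemorphisms}, which I will write as $f_{\beta,\gamma}\colon\AAA_\beta\to\AAA_\gamma$: first that $f_{\gamma,\gamma}=\text{id}_{\AAA_\gamma}$, and second that $f_{\gamma,\delta}\circ f_{\beta,\gamma}=f_{\beta,\delta}$ whenever $\beta\leq\gamma\leq\delta$. The identity relation is the easy one. When $\beta=\gamma$, every time of $\gamma$ lies in $\beta$, so each stage of the recursion has $\psi=\text{id}_{\AAA}$ and $f_\ell=f_{\ell+1}\star\text{id}_{\AAA}$; since $\text{id}\star\text{id}$ visibly satisfies the four intertwining properties characterizing the identity map in Theorem \ref{thmSPhoms}, the uniqueness clause there forces $f_\ell$ to be the identity at every stage, and hence $f_{\gamma,\gamma}=\text{id}_{\AAA_\gamma}$.

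Before attacking the composition law I would isolate the key technical device, a coherence lemma relating the embeddings to the retractions,
\[
\epsilon_\gamma\circ f_{\beta,\gamma}=\phi_{(\min\beta)-(\min\gamma)}\circ\epsilon_\beta,
\]
which is the point at which the semigroup enters. I would prove it by induction on $|\gamma|$, peeling off the smallest time $t_1$ of $\gamma$. When $t_1\in\beta$ the peeled stage is a genuine Sauvageot product $f_{\beta(2),\gamma(2)}\star\text{id}_{\AAA}$, and the factorization $\epsilon_\gamma\circ\dss{\psi}{L}=\phi_\tau\circ\epsilon_{\gamma(2)}$ from Equation \ref{eqnthetafactorsphi} combines with the inductive hypothesis on $\gamma(2)$ and telescopes, via $\phi_s\circ\phi_t=\phi_{s+t}$, into the claimed formula (here $\min\beta=\min\gamma$, so the prefactor is $\phi_0$). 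When $t_1\notin\beta$ the stage is instead the left embedding $\dss{\psi}{L}$ filling in $\one$ at $t_1$ (this is how I read $f_{\ell+1}\star\upsilon$: under the identification $\AAA\star\com\simeq\AAA$ of Proposition \ref{propcomstarA}, $\text{id}\star\upsilon$ is exactly $\dss{\psi}{L}$), and applying $\epsilon_\gamma\circ\dss{\psi}{L}=\phi_\tau\circ\epsilon_{\gamma(2)}$ with the inductive hypothesis produces the extra factor $\phi_{\min\beta-\min\gamma}$ coming from the skipped initial segment. The unitality $\phi_t(\one)=\one$ of the cp$_0$-semigroup is what lets the inserted $\one$'s be absorbed, and the semigroup law is what recombines the distances.

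With the coherence lemma in hand I would prove the composition law by induction on $|\delta|$, peeling the smallest time $s_1$ of $\delta$ and splitting into cases according to whether $s_1$ lies in $\beta$ and in $\gamma$. In the aligned case $s_1\in\beta$ (hence in $\gamma,\delta$), all three maps peel as genuine products $(-)\star\text{id}_{\AAA}$, and functoriality of the Sauvageot product (Proposition \ref{propfunctorial}) gives
\[
f_{\gamma,\delta}\circ f_{\beta,\gamma}=(f_{\gamma(2),\delta(2)}\star\text{id}_{\AAA})\circ(f_{\beta(2),\gamma(2)}\star\text{id}_{\AAA})=(f_{\gamma(2),\delta(2)}\circ f_{\beta(2),\gamma(2)})\star\text{id}_{\AAA},
\]
which by the inductive hypothesis on $\delta(2)$ equals $f_{\beta(2),\delta(2)}\star\text{id}_{\AAA}=f_{\beta,\delta}$. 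The hypotheses of Proposition \ref{propfunctorial} require the structure maps $\Phi=\phi_\tau\circ\epsilon$ of the three peeled CP-tuples to commute with the left maps; this is exactly where the coherence lemma is used, the verification $\Phi_\gamma\circ f_{\beta(2),\gamma(2)}=\Phi_\beta$ reducing, after substituting the lemma, to the numerical identity $\tau_\gamma+(\min\beta(2)-\min\gamma(2))=\tau_\beta$ together with the semigroup law. In the remaining cases, where $s_1$ is absent from $\beta$ or from $\gamma$, one of the peeled right maps is $\upsilon$ rather than $\text{id}_{\AAA}$; here I would replace the appeal to Proposition \ref{propfunctorial} by the left-embedding relation above and the commuting squares of Corollary \ref{corcommutingsquare}, using the bookkeeping identity $\psi_\gamma\circ\psi=\psi_\beta$ for the right maps, to reach the same conclusion.

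I expect the coherence lemma --- specifically the reconciliation of the mismatched immediate-tail distances on the $\beta$-, $\gamma$-, and $\delta$-sides --- to be the main obstacle: it is the only place where the argument is more than formal functoriality, and it is precisely the incarnation here of the Chapman--Kolmogorov/semigroup property that made the classical construction of Example \ref{exmarkovdilation} consistent. Once the distances are shown to recombine correctly through $\phi_s\circ\phi_t=\phi_{s+t}$, the inductive-system axioms follow from the uniqueness and functoriality already established for the Sauvageot product.
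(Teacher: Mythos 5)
Your proof is correct, and its skeleton is the same as the paper's: the identity law follows from the uniqueness of $\dss{\text{id}}{}\star\dss{\text{id}}{}$ (Corollary \ref{corunique}), and the composition law is proved by an induction that peels off the smallest time and invokes the functoriality of the Sauvageot product (Proposition \ref{propfunctorial}). Where you genuinely depart from the paper is in isolating the coherence lemma $\epsilon_\gamma\circ\dss{f}{\gamma,\beta}=\phi_{\min\beta-\min\gamma}\circ\epsilon_\beta$ and proving it first. The paper's proof of this proposition simply cites Proposition \ref{propfunctorial} without checking its commuting-square hypotheses; the verification you supply is exactly what is needed there, and your lemma is precisely the combination of Lemma \ref{lemtailretractions} and Proposition \ref{propconsistentwith0}, which the paper establishes only \emph{afterward}, for the separate purpose of building the limit retraction $\E$. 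Your ordering is arguably the more careful one, since the same commuting squares are already required for Definition \ref{definductivemorphisms} to produce well-defined maps $f_{\ell+1}\star\psi$ via Theorem \ref{thmSPhoms} in the first place, and your check that $\tau_\gamma+\bigl(\min\beta(2)-\min\gamma(2)\bigr)=\tau_\beta$ is exactly the point where the semigroup law enters. Two small remarks: Proposition \ref{propfunctorial} is stated for arbitrary unital *-homomorphisms between the right-hand algebras, so it already covers the cases $\BB=\com$ or $\CC=\com$ uniformly (with $\psi$ and $\eta$ taken to be $\upsilon$ or $\dss{\text{id}}{\com}$ as appropriate), and the paper exploits this to avoid your case split; and the identification $\AAA\star\com\simeq\AAA$ you invoke is the second triviality proposition, not Proposition \ref{propcomstarA}, which treats $\com\star\AAA$.
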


\begin{proof}
Let $\beta \leq \gamma \leq \delta$ be nonempty sets in $\FF$.
Write $\delta = \{t_1, \dots, t_n\}$.  We first prove
 that the embedding $\AAA_\delta \hookrightarrow \AAA_\delta$
 is the identity map.  We prove this for the embeddings
 $\AAA_{\delta(\ell)} \hookrightarrow \AAA_{\delta(\ell)}$
 by reverse induction; the base case $\ell=n$ is trivial,
 and the inductive step is just Corollary \ref{corunique}.

Now for each $\ell = 1, \dots, n$ let
\begin{align*}
\AAA_{\beta(\ell)} &\longsa{g_\ell} \AAA_{\gamma(\ell)}\\
\AAA_{\gamma(\ell)} &\longsa{f_\ell} \AAA_{\delta(\ell)}\\
\AAA_{\beta(\ell)} &\longsa{h_\ell} \AAA_{\delta(\ell)}
\end{align*}
be the embeddings from Definition \ref{definductivemorphisms}.
We will prove by reverse induction for $\ell = n, \dots, 1$
that $f_\ell \circ g_\ell = h_\ell$.  The base case $\ell=n$
is trivial, as each of the three maps
in question is either the identity map or the embedding
$\com \hookrightarrow \AAA$.  Supposing now the result
to be established for $\ell+1$, let $\BB = \AAA_{\beta \cap \{t_\ell\}}$
and $\CC = \AAA_{\gamma \cap \{t_\ell\}}$, and let
$\BB \sa{\psi} \CC \sa{\eta} \AAA$ be the corresponding
embeddings.  Then  by Proposition \ref{propfunctorial},
\[
    h_\ell = h_{\ell+1} \star (\eta \circ \psi)
    = (f_{\ell+1} \circ g_{\ell+1}) \star(\eta \circ \psi)
    = (f_{\ell+1} \star \eta)
    \circ (g_{\ell+1} \star \psi)
    = f_\ell \circ g_\ell.
    \]
\end{proof}

\section{Endomorphisms of the Limit Algebra}
We have now constructed unital C$^*$-algebras (resp.\
W$^*$-algebras) $\AAA_\gamma$ for each $\gamma \in \FF$, together
with (normal) embeddings $\AAA_\beta \hookrightarrow \AAA_\gamma$
for $\beta \leq \gamma$, which we now denote $\dss{f}{\gamma, \beta}$,
satisfying the inductive properties
\begin{align*}
\dss{f}{\gamma, \gamma} &= \text{id}_{\AAA_\gamma}\\
\dss{f}{\delta, \beta} &= \dss{f}{\delta,\gamma} \circ
\dss{f}{\gamma, \beta} \qquad \text{ for }
\beta \leq \gamma \leq \delta.
\end{align*}
By a standard construction (see for instance section 1.23 of \cite{Sakai}, Proposition 11.4.1 of \cite{KadisonRingrose2},
or section II.8.2 of \cite{Blackadar}) we obtain an inductive
limit, that is, a unital C$^*$-algebra $\Aa$ and
embeddings $\dss{f}{\infty,\gamma}: \AAA_\gamma \to \Aa$
such that $\dss{f}{\infty,\gamma} \circ \dss{f}{\gamma, \beta}
= \dss{f}{\infty,\beta}$ for all $\beta \leq \gamma$, and
with the universal property that, given any other unital C$^*$-algebra
$\Bb$ and *-homomorphisms (not necessarily embeddings)  $\dss{g}{\infty,\gamma}: \AAA_\gamma \to \Bb$ satisfying $\dss{g}{\infty,\gamma}
\circ \dss{f}{\gamma, \beta} = \dss{g}{\infty,\beta}$, there
is a unique unital *-homomorphism $\Phi: \Aa \to \Bb$
satisfying $\dss{g}{\infty,\gamma} = \Phi \circ \dss{f}{\infty,\gamma}$
for all $\gamma$.

We note that inductive limits do not always exist in the category
of W$^*$-algebras and normal *-homomorphisms; hence, $\Aa$
will not in general be a W$^*$-algebra even when $\AAA$ is.  There are
a couple of standard ``fixes'' we could apply to replace $\Aa$
by a suitable W$^*$-algebra: First, we could represent $\Aa$ on
some Hilbert space (for instance, by taking an inductive limit
of appropriately intertwined representations of the
$\AAA_\gamma$ as in \cite{KadisonRingrose2} Exercise 11.5.28) and take the weak closure.  However, the limit
representation will in general be on a non-separable Hilbert space,
which creates problems further on.  Second, we could
apply the double dual functor (\cite{Blackadar} III.5.2,
\cite{Sakai} 1.17) to obtain a W$^*$-algebra $\Aa^{**}$
and normal embeddings $\dss{f}{\gamma}^{**} \circ \dss{\iota}{\gamma}: \AAA_\gamma
\to \Aa^{**}$, where $\AAA_\gamma \sa{\iota_\gamma}
\AAA_\gamma^{**}$ is the canonical embedding.
However, this also creates problems,
as $\Aa^{**}$ will in general have non-separable predual
even if each of the $\AAA_\gamma$ has separable predual.
We postpone until the next chapter the question of how to
adapt our construction to the W$^*$-category, and continue
for the time being with a purely C$^*$-construction.

Our next task is to define a semigroup of unital *-endomorphisms
of $\Aa$.  For this we note that for any $\gamma \in \FF$
and any $\tau \geq 0$, if we let $\gamma + \tau$ denote
the set $\{t + \tau \mid t \in \gamma\}$, then
$\AAA_{\gamma+\tau} = \AAA_\gamma$.  (Note that this
is an \emph{equality}, not just an isomorphism.)  This
is immediate from Definition \ref{definductiveobjects} by induction
on the size of $\gamma$.  Similarly,
$\dss{f}{\gamma+t, \beta+t} = \dss{f}{\gamma, \beta}$.  But
this latter equation implies that $\dss{f}{\infty, \gamma+t}
\circ \dss{f}{\gamma, \beta} = \dss{f}{\infty, \beta+t}$ for
any $\beta \leq \gamma$, allowing us to make the following definition.

\begin{definition} \label{defsigmat}
For each $t \geq 0$ let $\sigma_t: \Aa \to \Aa$ denote the unique
unital *-endomorphism obtained through the inductive limit
as the unique map for which all the diagrams
\[ \xymatrix{
\AAA_\gamma \ar[rd]_{f_{\infty,\gamma+t}} \ar[r]^{f_{\infty,\gamma}} &\Aa \ar[d]^{\sigma_t}\\
& \Aa
} \]
commute.
\end{definition}

The universal property of the inductive limit then immediately
implies the following.
\begin{proposition} \label{prope0semigroup}
The maps $\{\sigma_t\}_{t \geq 0}$ form an e$_0$-semigroup on $\Aa$.
That is, $\sigma_0 = \dss{\text{id}}{\Aa}$, and for all
$s,t \geq 0$,
\[
\sigma_t \circ \sigma_s = \sigma_{s+t}.
\]
\end{proposition}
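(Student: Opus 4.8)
The plan is to derive both assertions purely formally from the uniqueness clause of the universal property of the inductive limit $\Aa$; no analytic input is needed beyond the density of $\bigcup_\gamma f_{\infty,\gamma}(\AAA_\gamma)$ in $\Aa$ that underlies that universal property.

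First I would dispose of the identity $\sigma_0 = \text{id}_{\Aa}$. By Definition \ref{defsigmat}, $\sigma_0$ is characterized as the unique unital $*$-endomorphism satisfying $\sigma_0 \circ f_{\infty,\gamma} = f_{\infty,\gamma+0} = f_{\infty,\gamma}$ for every $\gamma \in \FF$. But $\text{id}_{\Aa}$ is a unital $*$-endomorphism with exactly this property, so uniqueness forces $\sigma_0 = \text{id}_{\Aa}$.

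For the semigroup law, fix $s,t \geq 0$ and $\gamma \in \FF$, and compute using the defining relation of $\sigma_s$ followed by that of $\sigma_t$ instantiated at the shifted set $\gamma + s \in \FF$:
\[
(\sigma_t \circ \sigma_s) \circ f_{\infty,\gamma} = \sigma_t \circ (\sigma_s \circ f_{\infty,\gamma}) = \sigma_t \circ f_{\infty,\gamma+s} = f_{\infty,(\gamma+s)+t} = f_{\infty,\gamma+(s+t)}.
\]
The crucial middle step $\sigma_t \circ f_{\infty,\gamma+s} = f_{\infty,(\gamma+s)+t}$ is merely the defining relation of $\sigma_t$ applied to $\gamma+s$ in place of $\gamma$, which is legitimate because that relation holds for \emph{every} finite subset and because $\AAA_{\gamma+s}=\AAA_\gamma$ is an honest equality (not just an isomorphism), so that $f_{\infty,\gamma+s}$ is a genuine map out of $\AAA_{\gamma+s}$ to which $\sigma_t$ may be applied.

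Finally I would observe that $\sigma_t \circ \sigma_s$ is a unital $*$-endomorphism of $\Aa$, being a composite of two such, and that the display shows its composite with each $f_{\infty,\gamma}$ equals $f_{\infty,\gamma+(s+t)}$ --- which is precisely the property defining $\sigma_{s+t}$. The uniqueness clause of the universal property then yields $\sigma_t \circ \sigma_s = \sigma_{s+t}$. I do not expect any genuine obstacle here: the entire content of the argument is the bookkeeping fact that shifting a finite time-set by $s$ and then by $t$ agrees with shifting it by $s+t$, and Definition \ref{defsigmat} together with the equality $\AAA_{\gamma+\tau}=\AAA_\gamma$ is engineered precisely to translate this triviality into the semigroup identity.
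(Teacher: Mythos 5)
Your argument is correct and is precisely the one the paper intends: the paper simply asserts that the proposition follows immediately from the universal property of the inductive limit, and your proposal spells out that bookkeeping (the consistency of the shifted families $f_{\infty,\gamma+u}$ and the uniqueness clause). No differences of substance.
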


\section{The Limit Retraction}
We now turn to the construction of our retraction.
In the commutative analogue, for a set $\gamma$ with
minimum time $\tau$, the retraction $\epsilon_\gamma$
would (when composed with the embedding $\AAA \hookrightarrow
\Aa$) correspond to a conditional expectation onto the
subalgebra of $\Aa$ consisting of functions which depend only
on the location of a path at time $\tau$.  This does not
form a consistent system with respect to the embeddings
$\dss{f}{\gamma, \beta}$, because for $\beta \leq \gamma$
one could have times in $\gamma$ earlier than any in $\beta$.
However, the restriction to time sets which contain 0 is
consistent, which we now show in the noncommutative case.
We first consider how to relate the retraction for a given
set to the retractions for its tails.

\begin{lemma} \label{lemtailretractions}
Let $\gamma = \{t_1, \dots, t_n\} \in \FF$ and
$1 \leq \ell \leq n$.  Then
\[
\epsilon_\gamma \circ \dss{f}{\gamma, \gamma(\ell)}
= \phi_{t_\ell-t_1} \circ \epsilon_{\gamma(\ell)}.
\]
\end{lemma}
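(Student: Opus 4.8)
The plan is to prove the identity $\epsilon_\gamma \circ f_{\gamma, \gamma(\ell)} = \phi_{t_\ell - t_1} \circ \epsilon_{\gamma(\ell)}$ by induction on $\ell$, exploiting the recursive structure built into Definitions \ref{definductiveobjects} and \ref{definductivemorphisms}. The base case is $\ell = 1$: here $\gamma(1) = \gamma$, the map $f_{\gamma, \gamma(1)}$ is the identity by Proposition \ref{propinductivesystem}, and $\phi_{t_1 - t_1} = \phi_0 = \dss{\text{id}}{\AAA}$, so both sides reduce to $\epsilon_\gamma$. The real content is the inductive step, which I expect to turn on how the Sauvageot retraction $\epsilon_\gamma$ was defined by peeling off the earliest time.

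First I would unwind the definitions. Write $\beta = \gamma(2) = \{t_2, \dots, t_n\}$, the immediate tail of $\gamma$ with distance $\tau = t_2 - t_1$. By Definition \ref{definductiveobjects}, $\AAA_\gamma = \AAA_\beta \star \AAA$ is the Sauvageot product of the CP-tuple $(\AAA_\beta, \AAA, \Phi, \omega)$ with $\Phi = \phi_\tau \circ \epsilon_\beta$, and $\epsilon_\gamma$ is the Sauvageot retraction. The key structural fact I want to use is Equation (\ref{eqnthetafactorsphi}), namely $\theta \circ \dss{\psi}{L} = \Phi$; since $f_{\gamma, \beta}$ is exactly the left embedding $\dss{\psi}{L}$ of $\AAA_\beta$ into $\AAA_\beta \star \AAA$, this gives immediately the relation
\[
\epsilon_\gamma \circ f_{\gamma, \beta} = \Phi = \phi_{t_2 - t_1} \circ \epsilon_\beta,
\]
which is precisely the claimed identity at level $\ell = 2$. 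This is the load-bearing step: it says that retracting after the single-time left-embedding recovers $\phi_\tau$ composed with the tail retraction.

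To complete the induction I would compose this with the inductive hypothesis applied to the tail $\beta$. For $\ell \geq 2$, note that $\gamma(\ell) = \beta(\ell)$ is a tail of $\beta$, and the inductive system property (Proposition \ref{propinductivesystem}) lets me factor the embedding as $f_{\gamma, \gamma(\ell)} = f_{\gamma, \beta} \circ f_{\beta, \gamma(\ell)}$. Applying the $\ell = 2$ relation and then the inductive hypothesis for $\beta$ (whose times are $t_2, \dots, t_n$, so the relevant shift is $t_\ell - t_2$), I compute
\[
\epsilon_\gamma \circ f_{\gamma, \gamma(\ell)}
= \epsilon_\gamma \circ f_{\gamma, \beta} \circ f_{\beta, \gamma(\ell)}
= \phi_{t_2 - t_1} \circ \epsilon_\beta \circ f_{\beta, \gamma(\ell)}
= \phi_{t_2 - t_1} \circ \phi_{t_\ell - t_2} \circ \epsilon_{\gamma(\ell)}.
\]
The semigroup property $\phi_{t_2 - t_1} \circ \phi_{t_\ell - t_2} = \phi_{t_\ell - t_1}$ then yields the result. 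The main obstacle I anticipate is purely bookkeeping: one must be careful that the induction is carried out on the tails of a \emph{fixed} $\gamma$ (so that the distances $t_\ell - t_1$ telescope correctly through repeated applications of the semigroup law) rather than inducting on the size of $\gamma$ independently, and one must confirm that $f_{\gamma, \beta}$ is genuinely the canonical $\dss{\psi}{L}$ so that Equation (\ref{eqnthetafactorsphi}) applies verbatim.
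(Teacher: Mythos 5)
Your proof is correct and matches the paper's argument in all essentials: both hinge on Equation (\ref{eqnthetafactorsphi}) applied to an immediate-tail step, together with the consistency of the embeddings and the semigroup law $\phi_{t_2-t_1} \circ \phi_{t_\ell - t_2} = \phi_{t_\ell - t_1}$. The only difference is cosmetic --- the paper runs a forward induction on $\ell$ within the fixed $\gamma$, applying Equation (\ref{eqnthetafactorsphi}) to the step from $\gamma(\ell)$ to $\gamma(\ell+1)$, whereas you peel off $t_1$ once and recurse on the tail $\gamma(2)$; it is the same telescoping computation read in the opposite order.
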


\begin{proof}

We proceed by (forward!) induction on $\ell$.
The base case $\ell=1$ is trivial.  Now supposing
the result is true for $\ell$, recall that $\AAA_{\gamma(\ell)}$
is the product $\AAA_{\gamma(\ell+1)} \star \AAA$
with respect to the map $\phi_{t_{\ell+1}-t_\ell}
\circ \epsilon_{\gamma(\ell+1)}: \AAA_{\gamma(\ell+1)} \to \AAA$, that $\dss{f}{\gamma(\ell),\gamma(\ell+1)}$ is the
embedding of $\AAA_{\gamma(\ell+1)}$ into this product, and that $\epsilon_{\gamma(\ell)}$ is the Sauvageot retraction.  By Equation \ref{eqnthetafactorsphi} we therefore have
\[
\epsilon_{\gamma(\ell)} \circ \dss{f}{\gamma(\ell),\gamma(\ell+1)}
= \phi_{t_{\ell+1}-t_\ell} \circ \epsilon_{\gamma(\ell+1)}
\]
so that
\begin{align*}
\epsilon_\gamma \circ \dss{f}{\gamma,\gamma(\ell+1)}
&= \epsilon_\gamma \circ \dss{f}{\gamma,\gamma(\ell)}
\circ \dss{f}{\gamma(\ell),\gamma(\ell+1)}\\
&= \phi_{t_\ell-t_1} \circ \epsilon_{\gamma(\ell)}
\circ \dss{f}{\gamma(\ell),\gamma(\ell+1)}\\
&= \phi_{t_\ell-t_1} \circ \phi_{t_{\ell+1}-t_\ell} \circ
\epsilon_{\gamma(\ell+1)}\\
&= \phi_{t_{\ell+1}-t_1} \circ \epsilon_{\gamma(\ell+1)}.
\end{align*}
\end{proof}

\begin{proposition} \label{propconsistentwith0}
Let $\beta \leq \gamma \in \FF$ such that the
minimum time in $\gamma$ is also in $\beta$.  Then
\[
\epsilon_\gamma \circ \dss{f}{\gamma,\beta} = \epsilon_\beta.
\]
\end{proposition}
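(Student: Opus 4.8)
The plan is to induct on $n = |\gamma|$, stripping off the least time $t_1 = \min\gamma$ (which by hypothesis lies in $\beta$, so that $t_1 = \min\beta$ as well) at the outermost level of the construction. At that level $\AAA_\gamma = \AAA_{\gamma(2)} \star \AAA$ and $\AAA_\beta = \AAA_{\beta(2)} \star \AAA$, where the right-hand factor $\AAA$ is the copy attached at time $t_1$ and $\epsilon_\gamma$, $\epsilon_\beta$ are the Sauvageot retractions onto those factors. Because $t_1 \in \beta$, Definition \ref{definductivemorphisms} gives $\dss{f}{\gamma,\beta} = \dss{f}{\gamma(2),\beta(2)} \star \dss{\text{id}}{\AAA}$. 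First I would apply Corollary \ref{corcommutingsquare}(1) (equivalently property (4) of Theorem \ref{thmSPhoms} with the second map the identity): this produces $\epsilon_\gamma \circ \dss{f}{\gamma,\beta} = \theta$, where $\theta$ is the Sauvageot retraction attached to the tuple $(\AAA_{\beta(2)}, \AAA, \Phi_\gamma \circ \dss{f}{\gamma(2),\beta(2)}, \omega)$ and $\Phi_\gamma = \phi_{t_2 - t_1} \circ \epsilon_{\gamma(2)}$. Since $\epsilon_\beta$ is the retraction attached to $(\AAA_{\beta(2)}, \AAA, \Phi_\beta, \omega)$ with $\Phi_\beta = \phi_{t_m - t_1} \circ \epsilon_{\beta(2)}$ (here $t_m = \min\beta(2)$ is the second-least time of $\beta$), the whole proposition reduces to the single commuting-square identity
\[
\Phi_\gamma \circ \dss{f}{\gamma(2),\beta(2)} = \Phi_\beta.
\]

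The base cases are immediate: if $\gamma = \beta$ the morphism is the identity, and if $\beta = \{t_1\}$ then $\beta(2) = \emptyset$, $\AAA_\beta = \AAA$, $\epsilon_\beta = \dss{\text{id}}{\AAA}$, and $\dss{f}{\gamma,\beta}$ is the embedding $\iota_\gamma = \dss{\psi}{R}$, so $\epsilon_\gamma \circ \dss{f}{\gamma,\beta} = \dss{\text{id}}{\AAA}$ because $\epsilon_\gamma$ is a retraction with respect to $\dss{\psi}{R}$. So I may assume $|\beta| \geq 2$ and that the proposition holds for all time-sets of size $< n$.

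To verify the displayed identity I would route $\dss{f}{\gamma(2),\beta(2)}$ through the intermediate tail $\gamma(m) = \gamma \cap \{t_m, \dots, t_n\}$. No time strictly between $t_1$ and $t_m$ belongs to $\beta$, so $\beta(2) = \beta(m)$, and $\gamma(m)$ is a tail of $\gamma(2)$ containing $\beta(2)$; Proposition \ref{propinductivesystem} then factors $\dss{f}{\gamma(2),\beta(2)} = \dss{f}{\gamma(2),\gamma(m)} \circ \dss{f}{\gamma(m),\beta(m)}$. Lemma \ref{lemtailretractions}, applied to $\gamma(2)$ and its tail $\gamma(m)$, absorbs the shift as $\epsilon_{\gamma(2)} \circ \dss{f}{\gamma(2),\gamma(m)} = \phi_{t_m - t_2} \circ \epsilon_{\gamma(m)}$, while the inductive hypothesis applies to the pair $(\beta(m), \gamma(m))$ --- legitimate because $t_m = \min\gamma(m) \in \beta(m)$ and $|\gamma(m)| < n$ --- to give $\epsilon_{\gamma(m)} \circ \dss{f}{\gamma(m),\beta(m)} = \epsilon_{\beta(m)} = \epsilon_{\beta(2)}$. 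Composing these and prepending $\phi_{t_2 - t_1}$,
\[
\Phi_\gamma \circ \dss{f}{\gamma(2),\beta(2)} = \phi_{t_2 - t_1} \circ \phi_{t_m - t_2} \circ \epsilon_{\beta(2)} = \phi_{t_m - t_1} \circ \epsilon_{\beta(2)} = \Phi_\beta,
\]
the middle equality being the semigroup law. This closes the induction.

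The main obstacle is exactly this bookkeeping of distances: the outer product building $\AAA_\gamma$ uses the gap $t_2 - t_1$ to the very next time of $\gamma$, whereas the outer product building $\AAA_\beta$ uses the possibly larger gap $t_m - t_1$ to the next time of $\beta$, and the two can be reconciled only by inserting the tail $\gamma(m)$ and letting Lemma \ref{lemtailretractions} account for precisely the missing shift $\phi_{t_m - t_2}$. Correspondingly, the substantive content is not the invocation of Corollary \ref{corcommutingsquare} itself but the verification that the retraction $\theta$ it yields is genuinely $\epsilon_\beta$ --- that is, that the commuting-square identity holds, so that the tuple produced by the corollary coincides with the tuple defining $\AAA_\beta$.
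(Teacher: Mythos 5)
Your proof is correct and follows essentially the same route as the paper's: both reduce the claim to the identity $\phi_{t_2-t_1}\circ\epsilon_{\gamma(2)}\circ f_{\gamma(2),\beta(2)}=\phi_{t_m-t_1}\circ\epsilon_{\beta(2)}$, established by factoring the embedding through the tail $\gamma(m)$, absorbing the shift via Lemma \ref{lemtailretractions} and the semigroup law, and concluding with Corollary \ref{corcommutingsquare}. The only difference is bookkeeping---the paper runs a single downward induction over the indices $\ell$ with $t_\ell\in\beta$ inside a fixed $\gamma$, whereas you run a strong induction on $|\gamma|$---but the inductive steps are identical.
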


\begin{proof}
Let $\gamma = \{t_1, \dots, t_n\}$.
We will prove
that
\[
\epsilon_{\gamma(\ell)}
\circ \dss{f}{\gamma(\ell),\beta(\ell)} = \epsilon_{\beta(\ell)}
\]
for all $\ell$ such that $t_\ell \in \beta$.
For the base case with the maximal such $\ell$,
 $\dss{f}{\gamma(\ell), \beta(\ell)}$ is equal
to $\iota_{\gamma(\ell)}$, and since $\epsilon_{\gamma(\ell)}$
is a corresponding retraction, their composition is
$\dss{\text{id}}{\AAA} = \epsilon_{\beta(\ell)}$.
Inductively, suppose $t_\ell \in \beta$ and $t_{\ell+k}$ is
the next time in $\beta$, so that $\beta(\ell+1) =
\beta(\ell+k)$; then
\begin{align*}
\phi_{t_{\ell+1}-t_\ell} \circ \epsilon_{\gamma(\ell+1)}
\circ \dss{f}{\gamma(\ell+1),\beta(\ell+1)} &=
\phi_{t_{\ell+1}-t_\ell} \circ \epsilon_{\gamma(\ell+1)}
\circ \dss{f}{\gamma(\ell+1),\beta(\ell+k)} && \quad (\beta(\ell+1) = \beta(\ell+k)) \\ &=\phi_{t_{\ell+1}-t_\ell} \circ \epsilon_{\gamma(\ell+1)}
\circ \dss{f}{\gamma(\ell+1),\gamma(\ell+k)} \circ
\dss{f}{\gamma(\ell+k),\beta(\ell+k)} && \quad (\text{$f$'s consistent})\\
&= \phi_{t_{\ell+1}-t_\ell} \circ \phi_{t_{\ell+k}-t_{\ell+1}}
\circ \epsilon_{\gamma(\ell+k)} \circ \dss{f}{\gamma(\ell+k),\beta(\ell+k)} && \quad (\text{Lemma } \ref{lemtailretractions})\\
&= \phi_{t_{\ell+k}-t_\ell} \circ \epsilon_{\beta(\ell+k)} && \quad (\text{induction}).
\end{align*}
It then follows from Corollary \ref{corcommutingsquare} that
\[
\epsilon_{\gamma(\ell)} \circ \dss{f}{\gamma(\ell),
\beta(\ell)} = \epsilon_{\gamma(\ell)} \circ
(\dss{f}{\gamma(\ell+1),\beta(\ell+1)} \star \text{id})
= \epsilon_{\beta(\ell)}
\]
as desired.  The case $\ell=1$ gives us the result.
\end{proof}

\begin{corollary} \label{corconsistentretractions}
The restriction of the family of retractions $\{\epsilon_\gamma\}$ to the subset $\FF_0 \subset \FF$ of sets containing 0
is consistent.
\end{corollary}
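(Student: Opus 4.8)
The plan is to obtain this as an immediate consequence of Proposition \ref{propconsistentwith0}. Unpacking the definition, consistency of the family $\{\epsilon_\gamma\}$ restricted to $\FF_0$ means exactly that $\epsilon_\gamma \circ \dss{f}{\gamma,\beta} = \epsilon_\beta$ for every pair $\beta \leq \gamma$ with $\beta, \gamma \in \FF_0$.

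First I would note that any $\gamma \in \FF_0$ contains $0$ by definition, and since $0$ is the least element of $[0,\infty)$, the minimum time occurring in $\gamma$ is necessarily $0$ itself. Then, given $\beta \leq \gamma$ with both sets in $\FF_0$, this minimum time of $\gamma$ (namely $0$) also belongs to $\beta$, since $\beta$ too contains $0$. This is precisely the hypothesis demanded by Proposition \ref{propconsistentwith0}.

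Applying that proposition therefore yields $\epsilon_\gamma \circ \dss{f}{\gamma,\beta} = \epsilon_\beta$ for every such pair, which is exactly the asserted consistency. I do not anticipate any substantive obstacle: all of the genuine content resides in Proposition \ref{propconsistentwith0}, and the role of this corollary is merely to record that insisting every time-set contain $0$ automatically supplies the ``minimum time lies in $\beta$'' condition of that proposition. In particular, the maps $\{\epsilon_\gamma\}_{\gamma \in \FF_0}$ then form a cone compatible with the inductive system $\{\AAA_\gamma, \dss{f}{\gamma,\beta}\}$, which is what will subsequently permit them to induce a single retraction on the limit algebra $\Aa$.
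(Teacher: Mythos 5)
Your proof is correct and matches the paper's intent exactly: the paper states this as an immediate corollary of Proposition \ref{propconsistentwith0} with no written proof, and your observation that membership of $0$ in both $\beta$ and $\gamma$ forces the minimum time of $\gamma$ to lie in $\beta$ is precisely the (implicit) argument. Nothing is missing.
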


Since $\FF_0$ is a tail of $\FF$, the limit $\Aa$ is generated
by images of $\AAA_\gamma$ with $\gamma \in \FF_0$.
Hence, Corollary \ref{corconsistentretractions} implies
the existence of a retraction $\E: \Aa \to \AAA$
with the property that $\E \circ \dss{f}{\infty,\gamma}
= \epsilon_\gamma$ for all $\gamma \in \FF_0$.

\begin{definition} \label{defdilationretraction}
The \textbf{Sauvageot dilation retraction} for
$(\AAA, \{\phi_t\}, \omega)$ is the map
$\E: \Aa \to \AAA$ characterized by
\[
\E \circ \dss{f}{\infty, \gamma} = \epsilon_\gamma
\qquad \text{ for all } 0 \in \gamma \in \FF.
\]
\end{definition}

We now prove that $(\E, \{\sigma_t\})$ provides a strong dilation
of the semigroup $\{\phi_t\}$.

\begin{theorem} \label{thmstrongdilation}
For all $t \geq 0$,
\[
\E \circ \sigma_t = \phi_t \circ \E.
\]
\end{theorem}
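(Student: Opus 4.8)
The plan is to verify the identity on the generating subalgebra and then extend by continuity. Since $\Aa$ is the inductive limit of the $\AAA_\gamma$ and is generated by the images $\dss{f}{\infty,\gamma}(\AAA_\gamma)$ with $0 \in \gamma \in \FF$ (Corollary \ref{corconsistentretractions}), and since both $\E \circ \sigma_t$ and $\phi_t \circ \E$ are contractive, it suffices to show that these two maps agree after precomposition with $\dss{f}{\infty,\gamma}$ for each $\gamma \in \FF_0$. Using Definitions \ref{defsigmat} and \ref{defdilationretraction}, the two sides become
\begin{align*}
\E \circ \sigma_t \circ \dss{f}{\infty,\gamma} &= \E \circ \dss{f}{\infty,\gamma+t}, \\
\phi_t \circ \E \circ \dss{f}{\infty,\gamma} &= \phi_t \circ \epsilon_\gamma,
\end{align*}
so the task reduces to showing $\E \circ \dss{f}{\infty,\gamma+t} = \phi_t \circ \epsilon_\gamma$.

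The difficulty is that $\gamma + t$ does not contain $0$ when $t > 0$, so $\E$ is not directly computed on $\dss{f}{\infty,\gamma+t}$ by Definition \ref{defdilationretraction}. To circumvent this I would enlarge the time set: put $\delta = \{0\} \cup (\gamma + t)$, which lies in $\FF_0$ (its minimum is $0$, since $\min(\gamma+t) = t > 0$) and which has $\gamma+t$ as the tail $\delta(2)$ obtained by deleting its minimum. Factoring $\dss{f}{\infty,\gamma+t} = \dss{f}{\infty,\delta} \circ \dss{f}{\delta,\gamma+t}$ and applying Definition \ref{defdilationretraction} gives $\E \circ \dss{f}{\infty,\gamma+t} = \epsilon_\delta \circ \dss{f}{\delta,\gamma+t}$. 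Now Lemma \ref{lemtailretractions}, applied to $\delta$ with $\ell = 2$ (so that $t_1 = 0$ and $t_2 = t$ in the notation there), yields exactly
\[
\epsilon_\delta \circ \dss{f}{\delta,\delta(2)} = \phi_{t} \circ \epsilon_{\gamma+t}.
\]
This is the crux of the argument: the single application of Lemma \ref{lemtailretractions} across the gap from $0$ to $t$ is precisely what produces the factor $\phi_t$.

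It remains only to identify $\epsilon_{\gamma+t}$ with $\epsilon_\gamma$. Since the construction of Definition \ref{definductiveobjects} forms Sauvageot products using only the \emph{distances} between consecutive times (via the maps $\phi_\tau \circ \epsilon_\beta$), and these distances are unchanged under the translation $\gamma \mapsto \gamma + t$, a straightforward induction on $|\gamma|$ shows that $\AAA_{\gamma+t} = \AAA_\gamma$ (already noted before Definition \ref{defsigmat}) and likewise $\epsilon_{\gamma+t} = \epsilon_\gamma$ as maps on this common algebra. Combining the displays gives $\E \circ \dss{f}{\infty,\gamma+t} = \phi_t \circ \epsilon_\gamma = \phi_t \circ \E \circ \dss{f}{\infty,\gamma}$, so the two maps agree on each generating summand and hence, by continuity on the dense subalgebra they generate, everywhere. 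The case $t = 0$ is trivial since $\gamma + 0 = \gamma \in \FF_0$. I expect the only real obstacle to be the bookkeeping of the shifted time set, namely correctly matching $\gamma + t$ against a set containing $0$ so that Lemma \ref{lemtailretractions} applies; once that matching is set up, the result is immediate.
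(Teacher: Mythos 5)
Your proposal is correct and follows essentially the same route as the paper: the same auxiliary set $\delta = \{0\} \cup (\gamma+t)$, the same factoring $\dss{f}{\infty,\gamma+t} = \dss{f}{\infty,\delta} \circ \dss{f}{\delta,\gamma+t}$, and the same key identity (your invocation of Lemma~\ref{lemtailretractions} at $\ell=2$ is exactly one application of Equation~\ref{eqnthetafactorsphi}, which is what the paper cites directly), finished off by density and contractivity. If anything, your explicit restriction to $\gamma \in \FF_0$ and explicit note that $\epsilon_{\gamma+t} = \epsilon_\gamma$ are slightly more careful than the paper's write-up.
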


\begin{proof}
The case $t = 0$ is trivial.  Now
let $\gamma \in \FF$ be nonempty and $t > 0$.  Let
$\delta = (\gamma+t) \cup \{0\}$; then $\AAA_\delta$ is
the Sauvageot product $\AAA_{\gamma+t} \star \AAA$ with
respect to the map $\phi_t \circ \epsilon_\gamma$.  By
Equation \ref{eqnthetafactorsphi}, it follows
that
\[
\epsilon_\delta \circ \dss{f}{\delta, \gamma+t}
= \phi_t \circ \epsilon_\gamma.
\]
Then
\begin{align*}
\E \circ \sigma_t \circ \dss{f}{\infty,\gamma}
&= \E \circ \dss{f}{\infty,\gamma+t}\\
&= \E \circ \dss{f}{\infty,\delta} \circ \dss{f}{\delta,\gamma+t}\\
&= \epsilon_\delta \circ \dss{f}{\delta,\gamma+t}\\
&= \phi_t \circ \epsilon_\gamma\\
&= \phi_t \circ \E \circ \dss{f}{\infty,\gamma}.
\end{align*}
So $\E \circ \sigma_t$ and $\phi_t \circ \E$ agree on
the dense subalgebra of $\Aa$ consisting of the images of
all the $\dss{f}{\infty,\gamma}$; as both are
contractive, they are equal.
\end{proof}

This concludes our construction of unital e$_0$-dilations
for cp$_0$-semigroups on C$^*$-algebras.  We summarize the
result in the following theorem.

\begin{theorem}
Let $\AAA$ be a unital C$^*$-algebra on which there exists
a faithful state.  Then every cp$_0$-semigroup on $\AAA$
has a strong unital e$_0$-dilation.
\end{theorem}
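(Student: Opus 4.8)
The plan is to assemble the tuple $(\Aa, i, \E, \{\sigma_t\})$ constructed over the course of this chapter and verify directly that it satisfies every clause in the definition of a strong unital e$_0$-dilation. First I would invoke the hypothesis to select a faithful state $\omega$ on $\AAA$; with $\omega$ and the given cp$_0$-semigroup $\{\phi_t\}$ in hand, the entire inductive machinery of this chapter applies verbatim. I take $\Aa$ to be the inductive-limit C$^*$-algebra, $\{\sigma_t\}$ the endomorphism semigroup of Definition \ref{defsigmat}, $\E$ the Sauvageot dilation retraction of Definition \ref{defdilationretraction}, and the embedding $i := \dss{f}{\infty,\{0\}}: \AAA \to \Aa$ obtained by identifying $\AAA$ with $\AAA_{\{0\}}$.

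Next I would check the structural requirements one by one. That $\Aa$ is a unital C$^*$-algebra and $i$ a unital embedding is immediate from the inductive-limit construction, since every $\dss{f}{\gamma,\beta}$ is a unital embedding. That $\{\sigma_t\}$ is an e$_0$-semigroup is exactly Proposition \ref{prope0semigroup}. The retraction condition $\E \circ i = \dss{\text{id}}{\AAA}$ reduces, via the characterization $\E \circ \dss{f}{\infty,\gamma} = \epsilon_\gamma$ for $0 \in \gamma$, to the fact that $\epsilon_{\{0\}} = \dss{\text{id}}{\AAA}$, which holds because $\{0\}$ is a singleton (Definition \ref{definductiveobjects}); hence $\E$ is a retraction with respect to $i$, and $i(\one) = \one$ supplies the unitality clause.

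The heart of the matter---the strong intertwining relation $\E \circ \sigma_t = \phi_t \circ \E$---is precisely Theorem \ref{thmstrongdilation}, so no new work is required there. From it the ordinary dilation identity follows formally: composing on the right with $i$ and using $\E \circ i = \dss{\text{id}}{\AAA}$ yields
\[
\E \circ \sigma_t \circ i = \phi_t \circ \E \circ i = \phi_t,
\]
so $(\Aa, i, \E, \{\sigma_t\})$ is in fact a strong e$_0$-dilation, and it is unital by the previous paragraph.

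I do not anticipate a genuine obstacle in this theorem itself, since all the substantive content has already been discharged by the preceding construction; the only points demanding care are bookkeeping ones---confirming that the inductive limit is taken in the C$^*$-category (not the W$^*$-category, where such limits may fail to exist, as remarked earlier), and that the identification $i = \dss{f}{\infty,\{0\}}$ is the correct one to make $\epsilon_{\{0\}}$ collapse to the identity so that $\E$ becomes a genuine retraction onto $\AAA$. The real difficulty of the whole development lies upstream, in establishing the consistency of the iterated Sauvageot retractions on time-sets containing $0$ (Proposition \ref{propconsistentwith0} and Corollary \ref{corconsistentretractions}) and in the strong-dilation computation of Theorem \ref{thmstrongdilation}.
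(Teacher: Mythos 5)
Your proposal is correct and matches the paper's intent exactly: the theorem is stated as a summary of the chapter's construction, with the substantive content already discharged in Proposition \ref{prope0semigroup}, Definition \ref{defdilationretraction}, and Theorem \ref{thmstrongdilation}, and your assembly of $(\Aa, i, \E, \{\sigma_t\})$ with $i = \dss{f}{\infty,\{0\}}$ is precisely the intended verification. The bookkeeping points you flag (C$^*$- versus W$^*$-category for the inductive limit, and $\epsilon_{\{0\}} = \dss{\text{id}}{\AAA}$) are the right ones.
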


% Thesis Chapter 5 by Dave Gaebler
\chapter{Continuous W$^*$-Dilations}\label{chapcontinuous}
In the previous chapter we saw how to construct a unital e$_0$-dilation of a cp$_0$-semigroup.  It remains to investigate whether such a construction dilates a continuous semigroup to a continuous semigroup (that is, whether it produces a unital E$_0$-dilation of a CP$_0$-semigroup), or, failing that, whether the construction can be modified to achieve this result.  Additionally, we have not yet resolved the question of how to adapt our C$^*$ construction to the W$^*$ setting.  To these issues we now turn our attention.

\section{Introduction: The Problem of Continuity}
\subsection{Establishing the Problem: Discontinuity of the Existing Dilation}
The first question to consider is whether the existing dilation
may already be continuous.  It turns out that this is
\emph{never} the case unless $\AAA = \com$.  Consider a
nontrivial $\AAA$ with faithful state $\omega$, and
let $a$ be any nonzero element of $\ker \omega$.
Fixing some faithful representation $(H, \Omega, \dss{\pi}{R})$
of $(\AAA, \omega)$, let $h = \dss{\pi}{R}(a) \Omega$, which
is orthogonal to $\Omega$.  For each $t > 0$ there
is a faithful representation $(H, \Omega, \dss{\pi}{R},
K^{(t)}, V^{(t)}, \dss{\pi}{L}^{(t)})$ of $(A, A, \phi_t, \omega)$.
Form the Sauvageot product $\Hh^{(t)} = H^-
\star L^{(t)}$, and let $\xi$ be any unit vector in $L^{(t)}$.  By
Proposition \ref{prop3UEreps} we see that $\dss{\psi}{L}^{(t)}(a) \xi$ is a vector in $L^{(t)}$, whereas
$\dss{\psi}{R}^{(t)}(a) \xi = h \otimes \xi$ is
in $H^- \otimes L^{(t)}$.  Since these are orthogonal subspaces
of $\Hh^{(t)}$,
\[
\|\dss{\psi}{L}^{(t)}(a) \xi - \dss{\psi}{R}^{(t)}(a) \xi\|
\geq \|h \otimes \xi\| = \|h\| \|\xi\|
\]
which implies
\[
\|\dss{\psi}{L}^{(t)}(a) - \dss{\psi}{R}^{(t)}(a) \| \geq
\|h\|.
\]
Now letting $\gamma = \{0,t\}$, we have $\AAA_\gamma$ as the
Sauvageot product $\AAA \star \AAA$ with respect to $\phi_t$,
so that $\dss{\psi}{L}^{(t)}(a) - \dss{\psi}{R}^{(t)}(a)$
is the element $\dss{f}{\gamma, \{t\}}(a) -
\dss{f}{\gamma, \{0\}}(a)$ of $\AAA_\gamma$.  By
the above, this element has norm at least $\|h\|$.
Now because $\dss{f}{\infty, \gamma}$ is isometric,
\begin{align*}
\| \sigma_t(\iota(a)) - \iota(a) \|
&= \|\dss{f}{\infty, \{t\}}(a) - \dss{f}{\infty, \{0\}}(a)\|\\
&= \left\|\dss{f}{\infty,\gamma} \Big(
\dss{f}{\gamma, \{t\}}(a) - \dss{f}{\gamma, \{0\}}(a)\Big) \right\|\\
&= \|\dss{f}{\gamma, \{t\}}(a) - \dss{f}{\gamma, \{0\}}(a)\|
\geq \|h\|.
\end{align*}
It follows that $\|\sigma_t(\iota(a)) - \iota(a)\| \not \to 0$
as $t \to 0^+$.

Upon further reflection, the discontinuity of $\{\sigma_t\}$ is
not surprising, because it appears in the commutative dilation that the Sauvageot construction mimics.  Considering again the case
$\AAA = C(S)$, $\Aa = C(\mathscr{S})$ of Example
\ref{exmarkovdilation}. Given a regular Borel
probability measure $\mu_0$ on $S$, we obtain via Riesz representation
a regular Borel probability measure $\mu$ on $\SSSS$ characterized by
\[
\forall f \in \Aa: \qquad \int_\mathscr{S} f \, d\mu = \int_S (\E f) \, d\mu_0.
\]
Now consider the strong, aka point-norm, continuity of the shift semigroup.  For a function $f \in \Aa$, we want to know whether $\lim_{t \to 0^+} \|\sigma_t f - f\| = 0$.  We will show that in fact a less stringent form of continuity,
fails, viz.\ the ``point-pointwise'' continuity defined
by the
property that for any fixed path $\pP \in \SSSS$ and any
$f \in \Aa$,
$(\sigma_t f - f)(\pP) \to 0$.  The failure of point-pointwise
continuity certainly implies the failure of point-norm continuity.  Now
let $\pP$ be any path not continuous at time 0, let
$\phi: S \to [0,1]$ be a continuous function such that
$\phi(\pP(t)) \not \to \phi(\pP(0))$ as $t \to 0^+$ (which
exists by Urysohn's lemma),
and let $f \in \Aa$ be defined by $f(p) = \phi(p(0))$.
Then
\[
\lim_{t \to 0^+} (\sigma_t f - f)(\pP)
= \lim_{t \to 0^+} \phi(\lambda_t \pP) - \phi(\pP)
= \lim_{t \to 0^+} \phi(\pP(t)) - \phi(\pP(0)) \neq 0.
\]

\subsection{Fixing the Problem: Skorohod Space?}
This section is not used in the rest of the thesis, but
is mentioned merely for the sake of interest.

The preceding considerations show that, in the commutative setting,  continuity of $\{\sigma_t\}$ breaks down because the path space
$\SSSS$ contains discontinuous paths.  One could try to ``repair'' the
construction by working instead with something like $C(\Cc)$,
where $\Cc \subset \SSSS$ is the subspace of continuous paths.
This runs into problems, however, because $\Cc$ is not
closed in $\SSSS$ (the pointwise limit of continuous functions
need not be continuous), hence not compact, so that one cannot
form the commutative unital C$^*$-algebra $C(\Cc)$.  One
could instead endow $\Cc$ with the topology of uniform convergence;
although it is complete in the corresponding metric, however,
it is not compact nor even locally compact, so that attempts
to form commutative C$^*$-algebras such as $BC(\Cc)$ run
into trouble as well.

One way out is to consider instead the Skorohod space
$D$ of \cad paths, that is, paths which are continuous from
the right and have limits from the left.  Convergence
 in $D$ can be defined as follows: For convenience we consider paths parametrized by
$[0,1)$ rather than $[0,\infty)$.  Let $\Lambda$ denote the set of all continuous strictly increasing self-maps of $[0,1]$, and
define $\gamma_n \sa{D} \gamma$ if there exists a sequence
$\{\lambda_n\} \subset \Lambda$ such that $\lambda_n \to \text{id}$ uniformly and $\gamma_n\circ \lambda_n \to \gamma$ uniformly.  Heuristically, this may be contrasted with uniform convergence as follows: Identifying paths with their graphs in $[0,1) \times S$, two paths are uniformly close if one may be obtained from the other by a small perturbation of the $S$ coordinates, whereas two paths are Skorohod-close if
one may be obtained from the other by a simultaneous small perturbation of both the $S$ \emph{and} the $[0,1)$ coordinates.

It turns out that there is a metric on $D$ which induces the aforementioned convergence, and that $D$ is separable and complete with respect to this metric  (\cite{Billingsley} chapter 14, cf.\ \cite{Kallenberg} Theorem A2.2).
Furthermore, it is easy to check that
translation is Skorohod continuous, that is, $\lambda_t \pP\sa{D} \pP$
as $t \to 0^+$ for any $\pP \in D$.  It follows that
one may define the semigroup $\{\widehat{\sigma}_t\}$ on the commutative
unital C$^*$-algebra $C(D)$ by $(\widehat{\sigma}_t f)(\pP)
= f(\lambda_t(\pP))$, and that this semigroup will be point-pointwise continuous.  The embedding $\widehat{i}: C(S) \to C(D)$ is defined
as always by $(\, \widehat{i} f)(\pP) = f(\pP(0))$.  It remains to
define a retraction $\widehat{\E}: C(D) \to C(S)$.  For this we
invoke the theorem that every Feller process has a \cad version (\cite{Kallenberg} Theorem 17.15);
that is, given a probability measure $\nu$ on $S$, one obtains a measure
$\mu_\nu$ on $D$ such that the coordinate-projection process
has $\{P_t\}$ as its transition semigroup.  The assignment
$\nu \mapsto \mu_\nu$ is a positive linear map from $M(S)$
to $M(D)$ which, one can verify, is weak-* continuous,
and
has the property that $\int_D \widehat{i}(f) \, d\mu_\nu = \int_S f \, d\nu$
for all $\nu \in M(S)$ and $f \in C(S)$;
it is therefore the adjoint of a positive linear
map $\E: C(D) \to C(S)$ with the property
$\widehat{\E}(\, \widehat{i}(f)) = f$.
Furthermore, $\widehat{\E} \circ \widehat{\sigma}_t \circ \widehat{i} = P_t$,
so that we have obtained a point-pointwise continuous dilation.

In attempting to adapt this fix to the noncommutative setting,
several obstacles present themselves:
\begin{enumerate}
    \item What is the right notion of a ``noncommutative Skorohod
    space''?  That is, given a noncommutative C$^*$-algebra $\AAA$,
    what C$^*$-algebra bears the same relation to $\AAA$ that
    $C(D)$ does to $C(S)$?
    \item What is the analogue of point-pointwise continuity in
    the noncommutative setting, when the elements of $\AAA$ (and
    of whatever algebra we dilate to) may not be functions on
    some state space or path space?
    \item How might we obtain a theorem corresponding to
    the existence of \cad versions of Feller processes?
\end{enumerate}
I do not know how to address these questions; fortunately,
another approach proved successful, so that the answers to
these questions are not needed in the rest of this thesis.

\subsection{Fixing the Problem: Dilations on $L^\infty$}
Another way to resolve the issue of continuity is to move
from the C$^*$ to the W$^*$ category, by considering maps on $L^\infty(S)$ and $L^\infty(\SSSS)$ instead of $C(S)$ and $C(\SSSS)$.  Given $\AAA = C(S)$ and $\Aa = C(\SSSS)$
as before, we now select some regular Borel probability
measure $\mu_0$ on $S$, and let $\widehat{\AAA}
= L^\infty(S, \mu_0)$.  Let $\mu$ be the corresponding
measure on $\SSSS$ as in Example \ref{exmarkovdilation},
and $\widehat{\Aa} = L^\infty(\SSSS, \mu)$.
We define the completely positive semigroup
$\{\widehat{P}_t\}$ on $\AAA$ by
\[
(\widehat{P}_t f)(x) = \int_S f(y) \, dp_{t,x}(y)
\]
which has the additional property that each map $\widehat{P}_t$
is normal: If $f_\nu \uparrow f$ are positive elements of $\AAA$, then
\[
(\widehat{P}_t f_\nu)(x) = \int_S f_\nu(y) dp_{t,x}(y)
\to \int_S f(y) \, dp_{t,x}(y) = (\widehat{P}_t f)(x)
\]
by Dominated Convergence.

 We define the semigroup $\{\widehat{\sigma}_t\}$
of normal endomorphisms of $\widehat{\Aa}$ by
$(\widehat{\sigma}_t f)(\pP)
= f (\lambda_t(\pP))$,
and the normal embedding $\hat{i}:
\AAA \to \Aa$ by $(\hat{i} f)(\gamma) = f(\gamma(0))$.  The image
of $\hat{i}$ corresponds to $L^\infty(\SSSS, \Ff_0, \mu)$, where
$\Ff$ is the Borel $\sigma$-algebra on $\SSSS$ and
$\Ff_0 \subset \Ff$ is the $\sigma$-subalgebra of sets
of the form $E \times [0,\infty)$ for a Borel subset $E \subset S$.
We therefore obtain a normal retraction $\widehat{E}: \widehat{\Aa}
\to \widehat{\AAA}$ through the Radon-Nikodym derivative, and since
$\widehat{\E} \circ \widehat{\sigma}_t \circ \widehat{i}$
and $\widehat{P}_t$ are both normal and agree on
the weak-* dense subspace $C(S) \subset L^\infty(S, \mu_0)$,
where they equal $\E \circ \sigma_t \circ i$ and $P_t$
respectively, they are equal.

We have not yet shown that $t \mapsto \widehat{\sigma}_t$
is weak-* continuous on $\widehat{\Aa}$.  Since we are interested
in the commutative case purely for heuristic purposes at this
point, we shall set aside the question of what conditions
on $S$, $\mu_0$, and $\{P_t\}$ are necessary for some
of our subsequent assumptions to hold.  Let
$\pi$ be the multiplication representation of $\AAA$
on $H = L^2(S, \mu_0)$, and assume that the weak closure
$\pi(\AAA)''$ is isomorphic to $\widehat{\AAA}$.
By Stinespring dilation of $\pi \circ \E$ we obtain
a representation $\psi$ of $\Aa$ on some $K$ (which we
could imagine to be the multiplication representation
on $L^2(\SSSS, \mu)$, but we won't use that hypothesis).
We assume that the weak closure $\psi(\AAA)''$ is isomorphic
to $\widehat{\AAA}$.  The semigroup $\widehat{\sigma}_t$
is related to $\{\sigma_t\}$ through the covariance
relation $\widehat{\sigma}_t \circ \psi = \psi \circ \sigma_t$.
Since vectors of the form $\psi(y)V\xi$ for $y \in \Aa$, $\xi \in H$ are dense in $K$, the question of WOT-continuity of $\widehat{\sigma}_t(a)$ reduces to the continuity of
the expression
\[
\la \widehat{\sigma}_t(a) \psi(y) V\xi, \psi(z) V \eta \ra
= \la V^* \psi(z)^* \widehat{\sigma}_t(a) \psi(y) V \xi, \eta \ra.
\]
When we restrict to the weakly dense subset of
$a \in \widehat{\Aa}$ of
the form $\psi(x)$ for $x \in \Aa$, this expression becomes (using the covariance relation)
\[
\la V^* \psi(z^* \sigma_t(x) y) V \xi, \eta \ra
= \la \pi \circ \E[z^* \sigma_t(x) y] \xi, \eta \ra.
\]
We are thus led to consider the WOT-continuity properties
of how the retraction $\E$ interactions with the translation
semigroup $\sigma_t$.  In particular, if we could find a way
to reduce expressions of the form $\E[z^* \sigma_t(x) y]$
to expressions involving the semigroup $\widehat{P}_t$,
we could use
the assumed continuity properties of the latter.
The search for such a reduction leads to the
concept of \textbf{moment polynomials}, which we now take up.

\section{Moment Polynomials}
In the Sauvageot C$^*$-dilation of chapter \ref{chapiteratedproducts}, the inductive limit algebra $\Aa$
is norm-generated as an algebra by elements $\sigma_t(i(a))$
for $t \geq 0$ and $a \in \AAA$.  In studying the retraction
$\E$, therefore, one is naturally led to
consider expressions of the form
\begin{equation} \label{eqnkeyexpression}
\E \Big[ \sigma_{t_1}i((a_1)) \sigma_{t_2}(i(a_2))
\dots \sigma_{t_n}(i(a_n)) \Big], \quad
t_1, \dots, t_n \geq 0; \quad a_1, \dots, a_n \in \AAA.
\end{equation}
In particular, it would be desirable to have a formula
for the value of (\ref{eqnkeyexpression}) in terms of the
original semigroup $\{\phi_t\}$ and the state $\omega$
chosen for the dilation procedure.  From the construction
of $\E$ in previous chapters, we
see that (\ref{eqnkeyexpression}) can be evaluated as
follows:
\begin{itemize}
    \item If all the $t_i$ are strictly positive, let
    $\tau$ denote the minimum; then, by Theorem
    \ref{thmstrongdilation},
    \[
    \E \Big[ \sigma_{t_1}i((a_1)) \sigma_{t_2}(i(a_2))
    \cdots \sigma_{t_n}(i(a_n)) \Big]
    = \phi_\tau \bigg( \E \big[ \sigma_{t_1-\tau}i((a_1)) \sigma_{t_2-\tau}(i(a_2))
\cdots \sigma_{t_n-\tau}(i(a_n)) \big] \bigg).
    \]

    \item If some of the $t_i$ are zero, let $\tau$ be the
    minimum of the nonzero values, and let $\gamma+\tau$
    be the set of nonzero values, where $\gamma$ is
    a finite subset of $[0,\infty)$.  Then
    $\sigma_{t_1}(i(a_1)) \cdots \sigma_{t_n}(i(a_n))$
    may be viewed as an element of $\AAA_{\{0\} \cup (\gamma+\tau)}$, which we further break down
    as a word in $\AAA$ and $\AAA_\gamma$, related
     through the map $\AAA_\gamma \sa{\phi_\tau \circ \epsilon_\gamma}
    \AAA$;, we apply the right-liberation
    property to calculate the value of $\E$ on this word.
\end{itemize}

As in the second chapter, we now formalize this strategy
in terms of recursively defined functions.

\begin{notation}
\
\begin{itemize}
    \item For $n \geq 1$ let $[0,\infty)^n_0$
    denote the subset
    \[
    \{(t_1, \dots, t_n)
    \in [0,\infty)^n \mid \min (t_1, \dots, t_n) = 0 \}.
    \]

    \item For $n \geq 1$ let
    $\psi_n: [0,\infty)^n \to [0,\infty)
    \times [0,\infty)^n_0$ denote the homeomorphism
    \[
    \psi_n(t_1, \dots, t_n) = (\min t_i,
    t_1 - \min t_i, \dots, t_n - \min t_i)
    \]
    with inverse
    \[
    \psi_n\inv(\tau, s_1, \dots, s_n)
    = (s_1+\tau, \dots, s_n+\tau).
    \]

    \item For $n \geq 1$, $\vec{s} \in [0,\infty)^n_0$,
    and $\vec{a} \in \AAA^n$, the \textbf{standard decomposition} of $(\vec{s}, \vec{a})$ is as
    follows: Write $\vec{s} = \vec{\nN}_0 \vee \vec{\sS}_0
    \vee \vec{\nN}_1 \vee \dots \vee \vec{\sS}_{m-1}
    \vee \vec{\nN}_m$, where
    each entry in each $\vec{\nN}_k$ is zero, each entry
    in each $\vec{\sS}_k$ is nonzero, and
    some of $\vec{\nN}_0$, $\vec{\sS}_0$,
     and $\vec{\nN}_m$ may be empty; here $\vee$
     denotes concatenation.
    Write $\vec{a} = \vec{z}_0 \vee \vec{w}_0
    \vee \vec{z}_1 \vee \dots \vee \vec{w}_{m-1}
    \vee \vec{z}_m$, where each $\vec{z}_i$
    has the same length as $\vec{\nN}_i$
    and each $\vec{w}_i$ the same length as $\vec{\sS}_i$.
    We refer to $(\vec{\nN}_0,
    \vec{\sS}_0, \dots, \vec{\nN}_m)$
    as the standard decomposition of $(\vec{s})$,
    and $(\vec{z}_0, \vec{w}_0, \dots, \vec{z}_m)$
    as the standard decomposition of $\vec{a}$ with
    respect to $\vec{s}$.  The \textbf{alternation number}
    of $\vec{s}$, denoted $\text{alt}(\vec{s})$,
    is the number $m$ appearing in
    the standard decomposition; the alternation number of
    an element $\vec{t} \in [0,\infty)^n$ is the
    alternation number of $\psi_n(\vec{t} \,)$.

    \item By $([0,\infty)_0 \times \AAA)^\sharp_L$
    we denote the set of tuples $(\vec{s}, \vec{a},
    \vec{\iota})$ such that, for some $n \geq 1$,
    $\vec{s} \in [0,\infty)^n_0$, $\vec{a} \in \AAA^n$,
    and $\vec{\iota} \in [\text{alt}(\vec{s})-1]$, with
    the convention $[-1] = \emptyset$.  By
    $([0,\infty)_0 \times \AAA)^\sharp_R$ we denote the
    same set except with $[\text{alt}(\vec{s})]$ in
    place of $[\text{alt}(\vec{s})-1]$.
\end{itemize}
\end{notation}

We next introduce ``diachronic'' versions of the
collapse and moment functions from chapter 2.

\begin{definition} \label{defmomentpolynomials}
We recursively define functions
\begin{align*}
\Ss: &\big([0,\infty) \times \AAA \big)^\sharp \to \AAA\\
\DRM: &\big( [0,\infty) \times \AAA \big)^\sharp \to \AAA\\
\DUM: &\big( [0,\infty) \times \AAA \big)^\sharp \to \AAA\\
\DRC: &\big( [0,\infty) \times \AAA \big)^\sharp_R \to
\big([0,\infty) \times \AAA \big)^\sharp\\
\DLC: &\big( [0,\infty) \times\AAA \big)^\sharp_L
\to \big( [0,\infty) \times \AAA \big)^\sharp\\
\DUC: &\big( [0,\infty) \times \AAA \big)^\sharp_L \to
\big( [0,\infty) \times \AAA \big)^\sharp
\end{align*}
as follows:
\begin{enumerate}
    \item $\Ss(t; a) = \DRM(t; a) = \DUM(t; a) = \phi_t(a)$.
    \item For $n \geq 2$,
    $\vec{t} \in [0,\infty)^n$, and $\vec{a} \in \AAA^n$,
    let $m = \text{alt}(\vec{t} \,)$; then
    \begin{align*}
    \Ss(\vec{t}; \vec{a}) &= \sum_{\vec{\iota} \subset [m-1]} \DRM \big( \DLC (\vec{t}; \vec{a}; \vec{\iota})\big)
    \prod_{j \in [m-1] \setminus \vec{\iota}}
    \omega \left( \Pi(\vec{z}_{2j+1}) \right)\\
    \DRM(\vec{t}; \vec{a}) &= \sum_{\vec{\iota}
    \subsetneq [m]}
    \DUM(\DRC(\vec{t}; \vec{a}; \vec{\iota}))\\
    \DUM(\vec{t}; \vec{a}) &=
    \sum_{\vec{\iota}
    \subset [m]}
    \Ss(\DUC(\vec{t}; \vec{a}; \vec{\iota}))
    \end{align*}

    \item For $\vec{t}, \vec{a}$ as above, let
    $(\tau, \vec{s}) = \psi_n(\vec{t} \,)$,
    $(\vec{\nN}_0, \dots, \vec{\nN}_m)$ be the
    standard decomposition of $\vec{s}$,
    and $(\vec{z}_0, \dots, \vec{z}_m)$ the corresponding standard decomposition of $\vec{a}$.  Given also
    $\vec{\iota} \subset [m-1]$, let
    $\vec{\iota} = (i_1, \dots, i_\ell)$. For
    each $k = 1, \dots, \ell+1$ define $\vec{\alpha}_k
    = \vec{w}_{i_{k-1}+1} \vee
    \dots \vee \vec{w}_{i_k}$, with the conventions
    $i_0 = 0$ and $i_{\ell+1} = m$, and corresponding
    time vectors $\vec{u}_k =
    \vec{\sS}_{i_{k-1}+1} \vee \dots \vee \vec{\sS}_{i_k}$.  Let
    \[
    \vec{\uU} = \vec{\nN}_0
    \vee \vec{u}_1 \vee \{0\} \vee \vec{u}_2
    \vee \{0\} \vee \dots \vee \{0\} \vee \vec{u}_{\ell+1}
    \vee \vec{\nN}_{m+1}
    \]
    and
    \[
    \vec{b} = \vec{z}_0 \vee \vec{\alpha}_1
    \vee \left\{ \Pi(\vec{z}_{i_1})
    - \omega \left(\Pi(\vec{z}_{i_1})\right) \right\}
    \vee \dots \vee
    \left\{ \Pi(\vec{z}_{i_\ell})
    - \omega \left(\Pi(\vec{z}_{i_\ell})\right) \right\} \vee \vec{\alpha}_{\ell+1} \vee
    \vec{z}_{m+1}.
    \]
    Then $\DLC(\vec{t}; \vec{a}) = (\vec{\uU}; \vec{b})$.

    \item For $\vec{t}, \vec{a}$ as above
    and $\vec{\iota}= (i_1, \dots, i_\ell) \subset [m]$,
    with $i_0 = 0$ and $i_{\ell+1} = m+1$,
    define for each $k = 0, \dots, \ell$ the
    elements
    \begin{align*}
    \beta_k &= \Pi \left(\vec{z}_{i_k} \right)
    \prod_{j=i_k+1}^{i_{k+1}-1} \Ss(\vec{\sS}_j;
    \vec{w}_j) \Pi \left(\vec{z}_j \right)\\
    \gamma_k &= \Pi \left(\vec{z}_{i_k} \right)
    \prod_{j=i_k+1}^{i_{k+1}-1} -\Ss(\vec{\sS}_j;
    \vec{w}_j) \Pi \left(\vec{z}_j \right)
    \end{align*}
    Let
    \begin{align*}
    \vec{\uU} &= \{0\}
    \vee \vec{\sS}_{i_1} \vee \{0\} \vee
    \vec{\sS}_{i_2} \vee \dots \vee \vec{\sS}_{i_m}
    \vee \{0\}\\
    \vec{b} &= \{\beta_0\} \vee \vec{w}_{i_1}
    \vee \{\beta_1\} \vee \dots \vee \vec{w}_{i_\ell}
    \vee \{\beta_\ell\}\\
    \vec{c} &= \{\gamma_0\} \vee \vec{w}_{i_1}
    \vee \{\gamma_1\} \vee \dots \vee \vec{w}_{i_\ell}
    \vee \{\gamma_\ell\}
    \end{align*}
    Then
    \begin{align*}
    \DRC(\vec{t}; \vec{a}; \vec{\iota})
    &= (\vec{\uU}; \vec{b})\\
    \DUC(\vec{t}; \vec{a}; \vec{\iota}) &=
    (\vec{\uU}; \vec{c}).
    \end{align*}
\end{enumerate}
\end{definition}
We note that the $\DLC$, $\DRC$, and $\DUC$ functions
output vector pairs at least as short as the input
vector pairs; this together with the strict subset
inclusion in the definition of $\DRM$ yield a well-defined recursion from the above formulas.

The reason for defining these functions is the
following proposition:

\begin{proposition} \label{propmomentpolyasLM}
Let $\AAA$ be a unital C$^*$-algebra, $\{\phi_t\}$
a CP$_0$-semigroup on $\AAA$, $\omega$ a faithful
state on $\AAA$, and $(\Aa, i, \E, \{\sigma_t\})$
the Sauvageot dilation.  Then
for every $t_1, \dots, t_n \geq 0$
and $a_1, \dots, a_n \in \AAA$,
\[
\E \Big[ \sigma_{t_1}(i(a_1))
\cdots \sigma_{t_n}(i(a_n)) \Big]
= \Ss(\vec{t}; \vec{a}).
\]
\end{proposition}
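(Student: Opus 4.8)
The plan is to induct on the word length $n$, proving that the left-hand side obeys the same recursion (and the same base case) that defines $\Ss$ in Definition~\ref{defmomentpolynomials}; since such a recursion has a unique solution, this forces the claimed equality. For $n=1$ the assertion is immediate: by the strong-dilation identity $\E\circ\sigma_t=\phi_t\circ\E$ of Theorem~\ref{thmstrongdilation} together with $\E\circ i=\text{id}_\AAA$, we get $\E[\sigma_{t_1}(i(a_1))]=\phi_{t_1}(a_1)=\Ss(t_1;a_1)$. For $n\ge 2$ I would follow the two-case reduction sketched informally just before Definition~\ref{defmomentpolynomials}.

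First suppose every $t_i>0$ and put $\tau=\min_i t_i$. The semigroup law $\sigma_{t_i}=\sigma_\tau\circ\sigma_{t_i-\tau}$ lets me pull the common factor $\sigma_\tau$ out through $\E$ by Theorem~\ref{thmstrongdilation}, giving
\[
\E\big[\sigma_{t_1}(i(a_1))\cdots\sigma_{t_n}(i(a_n))\big]=\phi_\tau\big(\E\big[\sigma_{t_1-\tau}(i(a_1))\cdots\sigma_{t_n-\tau}(i(a_n))\big]\big).
\]
This is exactly the normalization that $\Ss$ performs through the homeomorphism $\psi_n$, so here I need only record the matching auxiliary identity $\Ss(\vec{t};\vec{a})=\phi_\tau(\Ss(\vec{t}-\tau;\vec{a}))$, which follows from the base case by a parallel induction and reduces this case to the one below.

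Now suppose some $t_i=0$. Let $\tau$ be the least nonzero time and write the set of nonzero times as $\gamma+\tau$, so that $\delta=\{0\}\cup(\gamma+\tau)$ contains its minimum and, by Definition~\ref{definductiveobjects}, $\AAA_\delta=\AAA_\gamma\star\AAA$ with centering map $\Phi=\phi_\tau\circ\epsilon_\gamma$. Grouping each maximal run of consecutive nonzero-time generators into a single letter of $A=\AAA_\gamma$ (the image of $\psi_L$), keeping the time-$0$ generators as letters of $B=\AAA$ (the image of $\psi_R$), and padding with $\one$ where necessary, I realize the word as one type-I word $\Pi(\vec{x})$ in $\AAA_\delta$. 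Since $\E$ agrees with $\epsilon_\delta$ on the image of $\AAA_\delta$ (Definition~\ref{defdilationretraction}) and $\epsilon_\delta$ is the Sauvageot retraction, Corollary~\ref{corthetaliberates} exhibits a right-liberating representation with $\varpi\circ\psi_R=\omega$; Theorem~\ref{thmrightlibrepmoments}, the injectivity of $\psi_R$, and $\epsilon_\delta(\one)=\one$ then identify $\E[\Pi(\vec{x})]$ with the liberation moment $\LM(\vec{x};\Phi)$.

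It remains to match $\LM(\vec{x};\Phi)$ against $\Ss(\vec{t};\vec{a})$ term by term. The dictionary is forced: centering the $B$-letters against $\omega$ is $\DLC$; the occurrences of $\rho=\Phi=\phi_\tau\circ\epsilon_\gamma$ in $\RC$ become the $\Ss(\vec{\sS}_j;\vec{w}_j)$ factors of $\DRC$; and the sign-reversed copies of $\rho$ in $\UC$ become the $-\Ss(\vec{\sS}_j;\vec{w}_j)$ factors in $\DUC$. The engine of the matching is the inductive hypothesis applied inside each nonzero block: each $A$-letter is a strictly shorter product of shifted generators in $\AAA_\gamma$, so $\epsilon_\gamma$ of it equals $\Ss$ of the relative times, and composing with $\phi_\tau$ via the covariance identity above reproduces the required $\Ss(\vec{\sS}_j;\vec{w}_j)$. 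The main obstacle I expect is precisely this bookkeeping: verifying that the three diachronic collapses $\DLC,\DRC,\DUC$ of Definition~\ref{defmomentpolynomials} reduce block-by-block to the $\LC,\RC,\UC$ of Chapter~\ref{chapliberation} under the substitution $\rho\mapsto\phi_\tau\circ\epsilon_\gamma$, and that the times threaded into the recursive $\Ss$-calls respect the strict length decrease that makes the recursion well-founded. Once this dictionary is pinned down, both sides obey the same recurrence with the same base case, and the equality follows by induction.
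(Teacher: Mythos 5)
Your proposal is correct and follows essentially the same route as the paper's proof: reduce $\E$ of the word to $\epsilon_\delta$ on $\AAA_\delta=\AAA_\gamma\star\AAA$ via the inductive-limit characterization of $\E$ and the strong-dilation identity, identify the result with the liberation moment $\LM(\vec{x};\phi_\tau\circ\epsilon_{\gamma(2)})$ using Corollary \ref{corthetaliberates} and the right-liberation moment theorem, and then check that both sides satisfy the recurrence defining $\Ss$. The paper compresses the block-by-block matching of $\DLC,\DRC,\DUC$ with $\LC,\RC,\UC$ into a single sentence, whereas you spell it out, but the argument is the same.
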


\begin{proof}
Recall from Definition \ref{defdilationretraction}
that $\E \circ \dss{f}{\infty,\gamma}= \epsilon_\gamma(a)$
for any $\gamma \in \FF$.  Here, letting
$\gamma$ be the union of $\vec{t}$,
$\gamma(2)$ the immediate tail of $\gamma$ with
distance $\tau$,
and defining the elements $\aA_i \in \AAA_{\gamma(2)}$
corresponding to times $\vec{\sS}_i$ and tuples
$\vec{w}_i$,
the above expectation is, by Definition \ref{definductiveobjects},
Corollary \ref{corthetaliberates}, and
Theorem \ref{thmrightlibmoments}, equal to
\[
\LM(\Pi(\vec{z}_0),
\aA_1, \Pi(\vec{z}_1), \dots, \aA_m, \Pi(\vec{z}_{m+1});
\phi_\tau \circ \epsilon_{\gamma(2)}).
\]
Further consideration of what happens when
$\epsilon_{\gamma(2)}$ is applied to the elements $\aA_i$,
together with the recurrence that defines the $\LM$
function, shows that
$\E[\sigma_{t_1}(i(a_1)) \cdots \sigma_{t_n}(i(a_n))]$
and $\Ss(\vec{t}; \vec{a})$ satisfy the same
recurrence and initial conditions, so they are
equal.
\end{proof}

\section{Continuity Properties of Moment Polynomials}
The continuity properties of $\Ss(\vec{t}; \vec{a})$ in
the case where $\AAA$ is a W$^*$-algebra
will be important in what follows.
There are three types of continuity properties
to consider: continuity in $a_1, \dots, a_n$ with respect
to both the weak and the strong topologies, and
continuity in $t_1, \dots, t_n$.  It turns out
that weak continuity holds with respect
to $a_1, \dots, a_n$ separately (which is the best
we could hope for, as multiplication is not jointly
weakly continuous), whereas
strong continuity holds jointly in $a_1, \dots, a_n$,
and a restricted form of joint continuity
in $t_1, \dots, t_n$ holds as well.

\begin{proposition}
Let $\AAA$ be a W$^*$-algebra, $\{\phi_t\}$ a CP$_0$-semigroup on $\AAA$, $\omega$ a faithful normal state on $\AAA$.
Fix $n \geq 1$, $t_1, \dots, t_n \geq 0$, $j \in \{1, \dots,n\}$, and $a_k$ for $k \in \{1, \dots, n\} \setminus \{j\}$.  Then $\Ss(\vec{t}; \vec{a})$, viewed as a
function of $a_j$, is a normal linear map from $\AAA$
to itself.
\end{proposition}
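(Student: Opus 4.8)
The plan is to prove a slightly stronger statement by induction on the length $n$ of the tuple: that each of the three functions $\Ss$, $\DRM$, and $\DUM$ of Definition \ref{defmomentpolynomials}, viewed as a function of any single one of its entries $a_j$ with the remaining entries and all the times held fixed, is a \emph{normal linear} map from $\AAA$ to itself. This is the exact diachronic analogue of Proposition \ref{propnormalmoments}(1), and I would follow the same template, keeping in mind that here all three functions take values in $\AAA$, so every map in sight is an honest map $\AAA \to \AAA$ with no module complications. For the base case $n=1$ one has $\Ss(t;a)=\DRM(t;a)=\DUM(t;a)=\phi_t(a)$, which is normal and linear because, by Definition \ref{defcpsemigroup}, each $\phi_t$ of a cp-semigroup on a W$^*$-algebra is a normal completely positive linear map.

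For the inductive step I would first check that the mutual recursion is well-founded in a way that lets the argument proceed in a fixed order. The collapse maps $\DLC$, $\DRC$, $\DUC$ never lengthen their input, and the strict inclusion $\vec\iota \subsetneq [m]$ in the clause defining $\DRM$ forces $\DRC$ to return a strictly shorter tuple. Hence at length $n$ one establishes the claim in the order $\DRM$, then $\Ss$, then $\DUM$: the sum defining $\DRM$ involves only $\DUM$ on strictly shorter tuples (covered by the inductive hypothesis), the sum defining $\Ss$ involves $\DRM$ on tuples of length at most $n$ (now in hand), and the sum defining $\DUM$ involves $\Ss$ on strictly shorter tuples. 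The building blocks to assemble are standard: $\phi_t$ and $\omega$ are normal, multiplication in $\AAA$ is separately ultraweakly continuous, and normality is preserved under finite sums and under pre- or post-composition with normal maps.

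The crux — and the step I expect to be the main obstacle — is verifying that each individual summand in each defining sum is \emph{linear} in the distinguished entry $a_j$, since it is precisely linearity that upgrades the otherwise merely multiplicative combinations to normal ones. The essential combinatorial observation is that the collapse maps preserve the left-to-right order of the entries and neither split nor duplicate any single entry, so that within a fixed summand $a_j$ occurs in exactly one factor. Concretely, the standard decomposition assigns $a_j$ to a unique block, and the subset $\vec\iota$ then routes that block to exactly one destination: either into a single entry of the collapsed word (after multiplication by fixed elements, and, when the time of $a_j$ is nonzero, after an application of $\Ss$ to a strictly shorter tuple) that is fed to a shorter moment function, or else into exactly one scalar factor of the form $\omega(\Pi(\vec z))$. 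In the first case the summand is a fixed scalar times a shorter moment function evaluated at a word whose entries depend normally and linearly on $a_j$, hence is normal and linear by the inductive hypothesis and separate normality of multiplication; in the second case it is a fixed element of $\AAA$ times the scalar $\omega(\Pi(\vec z))$, which is normal and linear in $a_j$ because $\omega$ is normal and multiplication is separately normal. Summing over the finitely many subsets $\vec\iota$ preserves both properties, closing the induction. (Linearity alone also drops out immediately from Proposition \ref{propmomentpolyasLM}, which expresses $\Ss(\vec t;\vec a)$ as $\E$ applied to the product $\sigma_{t_1}(i(a_1))\cdots\sigma_{t_n}(i(a_n))$: fixing all but one factor makes this product linear in the remaining factor, and $\E$, $i$, and each $\sigma_t$ are linear.)
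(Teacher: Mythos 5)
Your proposal is correct and follows essentially the same route as the paper, which also proves the claim by a simultaneous induction over the recursively defined functions of Definition \ref{defmomentpolynomials}, relying on the normality of $\omega$ and the $\phi_t$ and on separate normality of multiplication by fixed elements. The additional care you take with the well-foundedness of the mutual recursion and with tracking that $a_j$ lands in exactly one factor of each summand is a useful elaboration of what the paper compresses into ``a straightforward induction,'' but it is not a different argument.
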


\begin{proof}
This is a straightforward induction from Definition
(\ref{defmomentpolynomials}); we show simultaneously
that the six functions $\Ss, \DRM, \DUM, \DLC, \DRC, \DUC$
are normal functions of $a_j$ when the other $a_i$
and all the $t_i$ are fixed.  This follows from
the normality of the state $\omega$ and the maps
$\phi_t$, as well as the normality of multiplication by
a fixed element of $\AAA$.
\end{proof}

\begin{definition} \label{defnoncrossingly}
For $n \geq 1$ and elements $\{\vec{s}_k\}$ and
$\vec{t}$ of $[0,\infty)^n$, we say that $\vec{s}_k$
\textbf{converges non-crossingly to } $\vec{t}$ if
$\vec{s}_k \to \vec{t}$ and, for all $k$, the
order relations among the entries of $\vec{s}_k$ are the
same as those in $\vec{t}$; that is, if
\[
\forall k: \ \forall i,j = 1, \dots, n: \
(s_k)_i \leq (s_k)_j \LRA t_i \leq t_j.
\]
\end{definition}

\begin{proposition} \label{propjointcontinuitymoments}
Let $\AAA$ be a separable W$^*$-algebra, $\{\phi_t\}$
a CP$_0$-semigroup on $\AAA$, $\omega$ a faithful
normal state on $\AAA$.  Let $n \geq 1$.  Let
$\vec{t}_k \to \vec{t}$ converge non-crossingly,
and let $\vec{a}_k \to \vec{a}$ be a strongly convergent
sequence of tuples in $(\AAA_1)^n$.  Then
$\Ss(\vec{t}_k; \vec{a}_k) \to \Ss(\vec{t}; \vec{a})$
strongly.  That is, $\Ss(\vec{t}; \vec{a})$ is jointly
strongly continuous in $\vec{t}$ and $\vec{a}$, subject
to the non-crossing restriction on $\vec{t}$.
\end{proposition}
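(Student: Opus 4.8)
The plan is to prove the statement by induction on $n$, establishing simultaneously the analogous joint strong continuity for all six functions $\Ss, \DRM, \DUM, \DLC, \DRC, \DUC$ of Definition \ref{defmomentpolynomials}. This parallels the inductive arguments of Propositions \ref{propnormalmoments} and \ref{propmomentwrtsemigroup}, but now tracks continuity in the time variables and the algebra variables at once. Separability of $\AAA$ is what makes this formulation in terms of sequences legitimate, since $\AAA_1$ is then strongly metrizable. The base case $n=1$ is exactly the assertion that $(t,a) \mapsto \phi_t(a) = \Ss(t;a)$ is jointly strongly continuous on $[0,\infty) \times \AAA_1$, which is Theorem \ref{thmjointcontinuityCP}(2) (and accounts for the joint continuity at $t=0$ that we specifically built into that result).

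First I would isolate the role of the non-crossing hypothesis. Since taking the minimum is continuous, $\min \vec{t}_k \to \min \vec{t}$; and because the order relations among the entries are preserved along the sequence, the set of indices attaining the minimum is constant. Consequently the zero/nonzero pattern of the vector obtained from $\vec{t}_k$ by subtracting its minimum is the same for every $k$ as in the limit, so the standard decomposition and the alternation number $m$ do not change along the sequence. This is the crucial point: with the combinatorial skeleton frozen, the recursion formulas for $\Ss, \DRM, \DUM$ and the collapse maps $\DLC, \DRC, \DUC$ retain a fixed shape — the same index sets $\vec{\iota}$ and the same block structure — along the whole sequence, so that the problem reduces to checking that each individual ingredient converges strongly.

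For the inductive step I would then verify convergence of these ingredients on bounded sets. The scalar coefficients of the form $\omega(\Pi(\vec z))$ converge because $\Pi(\vec z)$ is a product of unit-ball components of $\vec{a}_k$ — which converges strongly by the joint strong continuity of multiplication on $\AAA_1$ — and $\omega$, being normal and completely positive, is strongly continuous (\cite{Blackadar} III.2.2.2). The entries produced by the collapse maps are of three kinds: products $\Pi(\vec z)$ of algebra variables, centered differences $\Pi(\vec z) - \omega(\Pi(\vec z))\one$, and shorter moment polynomials of the nonzero time-blocks; the first two converge by the continuity of multiplication and of $\omega$, while the third is handled by the inductive hypothesis, since each nonzero time-block is a sub-tuple of the subtracted-minimum vector and the non-crossing property is plainly inherited by sub-tuples. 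All these quantities remain in a fixed bounded set, as one checks inductively: each $\Ss$-value is contractive on $\AAA_1$ by Proposition \ref{propmomentpolyasLM} together with the contractivity of $\E, i$ and the $\sigma_t$, and the collapse entries are bounded products of such. Hence the outer operations — finite sums, multiplication of bounded strongly convergent factors, and the applications of $\DRM$ and $\DUM$ to the strictly shorter collapsed tuples — all preserve strong convergence, and assembling these observations gives $\Ss(\vec{t}_k; \vec{a}_k) \to \Ss(\vec{t}; \vec{a})$ strongly.

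The main obstacle is the bookkeeping around this combinatorial stability: the entire argument hinges on the non-crossing hypothesis guaranteeing that the standard decomposition does not jump, and I would take care to make precise that the collapse maps depend continuously on their inputs only once their discrete shape is fixed (indeed a crossing sequence of times would generically change the alternation number in the limit and destroy continuity, which is exactly why the restriction is imposed). A secondary technical point is the mutual recursion among $\Ss, \DRM, \DUM$ within a fixed $n$; this is well-founded by the same termination argument — the strict subset inclusion $\vec{\iota} \subsetneq [m]$ in the definition of $\DRM$, as noted after Definition \ref{defmomentpolynomials} — that justified the definition in the first place, so the simultaneous induction closes without circularity.
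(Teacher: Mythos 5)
Your proof is correct and follows essentially the same route as the paper's: induction on $n$ through the mutual recursion of Definition \ref{defmomentpolynomials}, with the non-crossing hypothesis used precisely to freeze the zero pattern, standard decomposition, and alternation number along the sequence (and to see that sub-tuples and the collapsed time vectors again converge non-crossingly), after which one invokes the strong continuity of $\omega$, joint strong continuity of multiplication on bounded sets, and Theorem \ref{thmjointcontinuityCP}. Your added remarks on boundedness of the intermediate quantities and on the well-foundedness of the recursion are sound elaborations of details the paper leaves implicit.
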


\begin{proof}
We induct on $n$, using definition (\ref{defmomentpolynomials})
and the following observations:
\begin{itemize}
    \item If $\vec{t}_k \to \vec{t}$ non-crossingly, then $\vec{s}_k \to \vec{s}$ non-crossingly, where $\vec{s}_k$, $\vec{s}$ are the sub-tuples of $\vec{t}_k, \vec{t}$ corresponding to a fixed subset of indices
        from $\{1, \dots, n\}$.
    \item If $\vec{t}_k \to \vec{t}$ non-crossingly
    and $\vec{\iota}$ is given, let $\vec{\uU}$ be
    defined from $\vec{t}$ and $\vec{\uU}_k$
    from $\vec{t}_k$ as in the $\DRC$ and $\DUC$ functions;
    then $\vec{\uU}_k \to \vec{\uU}$ non-crossingly.

    \item If $\vec{t}_k \to \vec{t}$ non-crossingly,
    then $\psi_n(\vec{t}_k)$ and $\psi_n(\vec{t} \,)$
    are zero at the same entries; moreover, the
    corresponding parts $\vec{\nN}_i, \vec{\sS}_i$ of the standard
    decompositions of $\vec{t}_k$ and of $\vec{t}$
    are all the same length.
\end{itemize}
These considerations plus the strong continuity
of the state $\omega$ and the joint strong continuity
of the semigroup $\phi$ (Theorem \ref{thmjointcontinuityCP})
imply the result.
\end{proof}

\section{The Continuous Theorem}
We now return to the question of how to obtain a
continuous W$^*$-dilation from an algebraic C$^*$-dilation.
The technique in this section is adapted
from the eighth chapter of \cite{ArvesonDynamics}.
Throughout, we let $\AAA$ denote a separable W$^*$-algebra,
$\{\phi_t\}$ a CP$_0$-semigroup
on $\AAA$, $(\Aa, i, \E, \{\sigma_t\})$ the Sauvageot
dilation from the previous chapter,
$\PP \subset \Aa$ the subset
\[
\PP = \{\sigma_{t_1}(i(a_1)) \dots \sigma_{t_k}(i(a_k))
\mid t_1, \dots, t_k \geq 0; \, a_1, \dots, a_k \in \AAA\},
\]
$\Aa_0 \subseteq \Aa$ the norm-dense linear span of $\PP$,
$(H, \pi)$ a faithful
normal representation of $\AAA$ on a separable Hilbert
space, $(\Hh, V, \psi)$ a minimal Stinespring
dilation of $\pi \circ \E$, $\widetilde{\Aa} = \psi(\Aa)''$,
and $\widetilde{\E}: \widetilde{\Aa} \to \AAA$ the
map $\widetilde{\E}[T] = \pi\inv(V^*T V)$, which
is well-defined because $T \mapsto V^* T V$ is normal and maps
the weakly dense subspace $\psi(\Aa) \subset \widetilde{\Aa}$ into the weakly closed set $\pi(\AAA)$, and
because $\pi$ is faithful; it satisfies
$\widetilde{\E} \circ \psi = \E$ and therefore is
a normal retraction with respect to $\psi \circ i$.

We begin with the observation that weak-operator continuity
of families of contractions can be checked
on a dense subset of Hilbert space.

\begin{lemma} \label{lemWOTcontinuitydense}
Let $\HH$ be a Hilbert space and $\{T_t\}_{t \geq 0}$
a family (not necessarily a semigroup) of contractions on $\HH$.  Let $\HH_0 \subseteq \HH$ be a dense linear
subspace such that for all $\xi, \eta \in \HH_0$,
the map $t \mapsto \la T_t \xi, \eta \ra$ is
continuous.  Then $t \mapsto T_t$ is WOT-continuous.
\end{lemma}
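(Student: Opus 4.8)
The plan is to prove that a family of contractions $\{T_t\}$ which is weakly continuous on a dense subspace is automatically weakly continuous on the whole Hilbert space, via a standard $\frac{\epsilon}{3}$-argument that exploits the uniform boundedness coming from contractivity. First I would fix $t_0 \geq 0$ and arbitrary vectors $\xi, \eta \in \HH$, and aim to show that $t \mapsto \la T_t \xi, \eta \ra$ is continuous at $t_0$. Given $\epsilon > 0$, I would use density of $\HH_0$ to choose $\xi_0, \eta_0 \in \HH_0$ with $\|\xi - \xi_0\| < \epsilon$ and $\|\eta - \eta_0\| < \epsilon$.

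\textbf{The three-term estimate.}
The key step is the decomposition
\[
\la T_t \xi, \eta \ra - \la T_{t_0} \xi, \eta \ra
= \big( \la T_t \xi, \eta \ra - \la T_t \xi_0, \eta_0 \ra \big)
+ \big( \la T_t \xi_0, \eta_0 \ra - \la T_{t_0} \xi_0, \eta_0 \ra \big)
+ \big( \la T_{t_0} \xi_0, \eta_0 \ra - \la T_{t_0} \xi, \eta \ra \big).
\]
The middle term tends to $0$ as $t \to t_0$ by the hypothesized continuity on $\HH_0$, so it can be made smaller than $\epsilon$ for $t$ near $t_0$. For the outer two terms I would split each further, e.g.\ $\la T_t \xi, \eta \ra - \la T_t \xi_0, \eta_0 \ra = \la T_t(\xi - \xi_0), \eta \ra + \la T_t \xi_0, \eta - \eta_0 \ra$, and then invoke $\|T_t\| \leq 1$ together with Cauchy–Schwarz to bound these by $\|\xi - \xi_0\|\,\|\eta\| + \|\xi_0\|\,\|\eta - \eta_0\|$, uniformly in $t$. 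Since $\|\xi_0\|$ and $\|\eta\|$ are fixed finite quantities, each outer term is $O(\epsilon)$ independently of $t$.

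\textbf{Conclusion and obstacle.}
Combining the three bounds gives $|\la T_t \xi, \eta \ra - \la T_{t_0} \xi, \eta \ra| = O(\epsilon)$ for all $t$ sufficiently close to $t_0$, and since $\epsilon$ was arbitrary this establishes continuity of $t \mapsto \la T_t \xi, \eta \ra$ at $t_0$; as $t_0, \xi, \eta$ were arbitrary, $t \mapsto T_t$ is WOT-continuous. The only real subtlety — and the reason the contractivity hypothesis is essential — is that the approximation error in the outer terms must be controlled \emph{uniformly in $t$}; without the uniform bound $\|T_t\| \leq 1$ the terms $\la T_t(\xi - \xi_0), \eta \ra$ could blow up as $t$ varies, and the argument would collapse. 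Thus the main point to flag is simply that contractivity supplies exactly the equicontinuity needed to pass from a dense subspace to the whole space, and there is no deeper obstacle.
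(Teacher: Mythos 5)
Your argument is correct and is essentially identical to the proof in the paper: the same three-term decomposition through $\xi_0, \eta_0$ in the dense subspace, with the middle term handled by the hypothesis and the outer terms bounded uniformly in $t$ via contractivity and Cauchy--Schwarz. Your closing remark about contractivity supplying the uniform control is exactly the point the paper's estimate relies on.
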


\begin{proof}
Let $\xi, \eta \in \HH$ and $t_0 \geq 0$.  Given
$\epsilon > 0$, choose $\xi_0, \eta_0 \in \HH_0$
with $\|\xi - \xi_0\| < \max(1,\epsilon)$ and
$\|\eta - \eta_0\| < \epsilon$.  Then
for any $t\geq 0$,
\begin{align*}
\la (T_t - T_{t_0}) \xi, \eta \ra &= \la (T_t - T_{t_0})
\xi_0, \eta_0 \ra \\
&+ \la (T_t - T_{t_0}) \xi_0, \eta - \eta_0 \ra\\
&+ \la (T_t - T_{t_0}) (\xi - \xi_0), \eta \ra.
\end{align*}
The first term tends to zero as $t \to t_0$ by hypothesis, so
that in particular it is less than $\epsilon$
for $t$ sufficiently close to $t_0$.  The second term
is at most $2 \|\xi_0\| \epsilon \leq
2 (\|\xi\|+1) \epsilon$ by Cauchy-Schwarz, and
the third term at most $2 \|\eta\| \epsilon$.
Hence
\[
|\la (T_t - T_{t_0}) \xi, \eta \ra|
\leq  \Big(3 + 2\|\eta\| + 2 \|\xi\| \Big) \epsilon
\]
for $t$ sufficiently near $t_0$.
\end{proof}

The next lemma is rather technical, but it
advances our study of how $\E$ interacts with time
translations, and in particular with translation
of the middle term of a threefold product.

\begin{lemma} \label{lemexistsQ}
Let $y,z \in \PP$ and $t \geq 0$.
There exist $y_0,z_0 \in \PP$ and a normal
linear map $Q: \AAA \to \AAA$ such
that, for every $x \in \Aa$,
\begin{equation} \label{eqnQmap}
\E \big[ y \sigma_t(x) z \big] = Q \Big(
\E \big[ y_0 x z_0 \big] \Big).
\end{equation}
\end{lemma}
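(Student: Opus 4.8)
The plan is to reduce the identity, which is linear and norm-continuous in $x$, to the case $x \in \PP$, and then to compute both sides explicitly as moment polynomials. Since $\E$ and $\sigma_t$ are contractions and left/right multiplication by the fixed elements $y,z$ (resp.\ $y_0, z_0$) is bounded, both sides of (\ref{eqnQmap}) are norm-continuous linear functions of $x$; as $\Aa_0 = \operatorname{span}\PP$ is norm-dense in $\Aa$, it suffices to verify (\ref{eqnQmap}) for a single word $x = \sigma_{u_1}(i(d_1)) \cdots \sigma_{u_r}(i(d_r)) \in \PP$. For such an $x$, the fact that $\sigma_t$ is a $*$-endomorphism gives $\sigma_t(x) = \sigma_{t+u_1}(i(d_1)) \cdots \sigma_{t+u_r}(i(d_r))$, so that $y \sigma_t(x) z$ is again a word in $\PP$ whose time data is the concatenation of the fixed times of $y$, the shifted times $t+u_j$ of the middle block, and the fixed times of $z$.

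First I would invoke Proposition \ref{propmomentpolyasLM} to rewrite $\E[y\sigma_t(x)z]$ and each candidate $\E[y_0 x z_0]$ as values of the moment polynomial $\Ss$, reducing the lemma to an identity among these polynomials which I would prove by induction on the combined length of $y$ and $z$. The engine of the induction is the interaction between $\E$ and time translation already isolated earlier: the covariance $\E \circ \sigma_s = \phi_s \circ \E$ of Theorem \ref{thmstrongdilation} lets me factor out a $\phi_s$ whenever every time in a word exceeds $s$, while the $\dss{\psi}{R}(\AAA)$-bimodule property of the Sauvageot retraction (Proposition \ref{propcornerliberation}, carried through Corollary \ref{corthetaliberates}) lets me extract an earliest-time factor as a left or right multiplication when that factor sits at a leftmost or rightmost position.

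The bookkeeping rests on a dichotomy among the factors of $y$ and $z$ according to whether their times lie below $t$ or at/above $t$. A factor at time $\ge t$ can be translated down by $t$ and kept; the survivors, reassembled in order, furnish $y_0$ and $z_0$, which are genuine elements of $\PP$ since the surviving $y,z$-times drop to values $\ge 0$ and all of $x$'s times are $\ge 0$. A factor at time $< t$ cannot be moved past the middle block and must instead be absorbed into $Q$: when it is extremal it contributes a fixed left/right multiplication conjugated by semigroup maps, and when it is interior it is eliminated by the center--expand--simplify step underlying the recursion, contributing scalars $\omega(\cdot)$ and elements $\phi_s(\cdot)$ built from fixed data. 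Each operation so produced is normal, so the normality of $Q$ will follow from that of $\omega$ and of each $\phi_s$.

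The hard part will be the \emph{interior} low-time factors: a factor of $y$ (or $z$) whose time is below $t$ but which is dominated on its own side by a later, higher-time factor, so that after translating the minimum time to zero it sits in the middle of a product rather than at an end. Here the clean bimodule argument fails and one is forced into the full liberation computation of Theorem \ref{thmrightlibmoments}, whose centering step a priori splits the expression into several summands with differently shaped cores (for instance a genuine two-point term alongside products of one-point terms). The crux of the proof is therefore to organize the recursion --- precisely as encoded by the diachronic collapse and moment functions $\DLC, \DRC, \DUC, \DRM, \DUM$ of Definition \ref{defmomentpolynomials} --- so that, once all low-time factors have been absorbed, the entire residual $x$-dependence is funneled through the single expression $\E[y_0 x z_0]$ while the accumulated coefficients consolidate into one normal map $Q$. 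Establishing this consolidation, and confirming that $y_0$, $z_0$, and $Q$ are independent of $x$, is the main technical obstacle; once it is in hand, linearity, density, and the normality of the constituents close the argument.
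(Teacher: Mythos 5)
Your skeleton is the paper's: reduce to $x \in \PP$ by linearity and norm-density at the very end, convert both sides to moment polynomials via Proposition \ref{propmomentpolyasLM}, and run a strong induction on the combined length of $y$ and $z$, powered by $\E \circ \sigma_s = \phi_s \circ \E$ (Theorem \ref{thmstrongdilation}) and the bimodule property of the retraction. The problem is that you stop exactly where the lemma begins to have content. You acknowledge that the liberation recursion scatters the $x$-block across summands with differently shaped cores, and then write that funneling all of the residual $x$-dependence through the single expression $\E[y_0 x z_0]$ ``is the main technical obstacle; once it is in hand \ldots close the argument.'' That consolidation \emph{is} the assertion being proved; a proposal that names it as the remaining obstacle has not proved the lemma. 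Nothing in your outline supplies the mechanism that regathers the scattered cores into one.

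The gap is compounded by your choice of $y_0, z_0$: taking every factor of $y$ and $z$ whose time is $\geq t$, translating down by $t$, and ``reassembling in order'' is not what the recursion supports, and reassembling survivors across a deleted interior low-time factor is already suspect. For $y = \sigma_{2t}(i(a_1))\,\sigma_0(i(a_2))\,\sigma_{3t}(i(a_3))$ and $z = \one$ your recipe gives $y_0 = \sigma_t(i(a_1))\,\sigma_{2t}(i(a_3))$, yet the expansion of $\E[y\,\sigma_t(x)]$ contains the summand $\phi_{2t}(a_1)\,\bigl(a_2 - \omega(a_2)\one\bigr)\,\phi_t\bigl(\E[\sigma_{2t}(i(a_3))\,x]\bigr)$, whose $x$-dependence runs through $\E[\sigma_{2t}(i(a_3))\,x]$ and is not in general a normal image of $\E[y_0 x z_0]$. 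The paper's construction is different: after using covariance to normalize the minimum $y,z$-time to $0$ (the case $t \leq \tau$ being dispatched outright with $Q = \phi_t$ and a uniform shift), it takes for $y_0$ only the \emph{final} consecutive run of nonzero-time factors of $y$ and for $z_0$ the \emph{initial} such run of $z$ --- the maximal positive-time block flanking the middle, left untranslated --- observes that in the standard decomposition the shifted times $\vec{u}+t$ sit inside that single block, and feeds the resulting strictly shorter instance back into the strong induction. Everything discarded is either a time-zero factor (absorbed by the bimodule property) or separated from the middle by one (absorbed into $Q$ through $\omega$ and $\phi$'s of fixed elements), which is what makes $Q$ a single normal map independent of $x$. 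To complete your argument you would need to replace the ``translate and reassemble'' recipe by this induction-driven one and actually carry out the consolidation you have postponed.
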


\begin{proof}
Let $y = \sigma_{s_1}(a_1) \dots \sigma_{s_m}(a_m)$
and $z = \sigma_{t_1}(b_1) \dots \sigma_{t_n}(b_n)$.  We
proceed by strong induction on $m+n$.  In the base case $m+n=0$
(meaning that $y = z = \one$) the requisite map is
$Q = \phi_t$, by Theorem \ref{thmstrongdilation}.
Inductively, letting
$\tau = \min(s_1, \dots, s_m, t_1, \dots, t_n)$, the result
is again trivial in case $t \leq \tau$, as then
one can use $Q = \phi_t$, $y_0 = \sigma_{s_1-t}(a_1)
\cdots \sigma_{s_m-t}(a_m)$, and $z_0 = \sigma_{t_1-t}(b_1) \cdots \sigma_{t_n-t}(b_n)$.  Hence we assume $t > \tau$.
We further assume $\tau = 0$, as the case $\tau > 0$ reduces to
this by Theorem \ref{thmstrongdilation} again.

Let $(s_1', \dots, s_q')$ be the (possibly empty) final segment of nonzero entries from $(s_1, \dots, s_m)$,
and $(a_1', \dots, a_q')$ the corresponding entries
from $(a_1, \dots ,a_m)$.  Similarly, let $(t_1', \dots,
t_p')$ be the initial segment of nonzero entries
from $(t_1, \dots, t_n)$, and $(b_1', \dots, b_p')$
the corresponding entries from $(b_1, \dots, b_n)$.
Let $y_0 = \sigma_{s_1'}(a_1') \dots \sigma_{s_q'}(a_q')$
and $z_0 = \sigma_{t_1'}(b_1') \dots \sigma_{t_p'}(b_p')$.

For any $x \in \PP$, write $x = \sigma_{u_1}(c_1) \cdots \sigma_{u_\ell}(c_\ell)$, so that $\sigma_t(x)
= \sigma_{u_1+t}(c_1) \cdots \sigma_{u_\ell+t}(c_\ell)$.  Now $\E[y \sigma_t(x) z]
= \Ss(\vec{s} \vee (\vec{u}+t) \vee \vec{t};
\vec{a} \vee \vec{c} \vee \vec{b})$ by Proposition \ref{propmomentpolyasLM}.
In the standard decomposition $\vec{s} \vee (\vec{u}+t)
\vee \vec{t}
= \vec{\nN}_1 \vee \vec{\sS}_1
\vee \dots \vee \vec{\nN}_{m+1}$, we must have $\vec{u}+t$
contained in a single one of the $\vec{\sS}_i$;
more specifically, for some $i$ we have
$\vec{\sS}_i = (s_1', \dots, s_q')
\vee (\vec{u}+t) \vee (t_1', \dots, t_p')$ and
$\vec{w}_i = (a_1', \dots, a_q') \vee
\vec{c} \vee (b_1', \dots, b_p')$.
Then Propositions (\ref{propmomentpolyasLM}) and (\ref{propnormalmoments}) imply that $\E[y \sigma_t(x) z]$
is the composition of $\E[y_0 x z_0]$ with some normal map $Q$, which is independent of $x$.  This gives us equation (\ref{eqnQmap})
for all $x \in \PP$, and since both sides are linear and
norm-continuous in $x$, it follows that (\ref{eqnQmap})
holds for all $x \in \Aa$.
\end{proof}

\begin{theorem} \label{thmexistsnormalextension}
There exists a (necessarily unique) semigroup of normal unital *-endomorphisms $\{\widetilde{\sigma}_t\}_{t \geq 0}$ of $\widetilde{\Aa}$
such that
\begin{equation} \label{eqncovarianceofextension}
\forall t \geq 0: \qquad \widetilde{\sigma}_t \circ \psi = \psi \circ \sigma_t.
\end{equation}
\end{theorem}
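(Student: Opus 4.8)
The plan is to construct $\{\widetilde{\sigma}_t\}$ by extending the covariance relation from the weakly dense subalgebra $\psi(\Aa)$, defining $\widetilde{\sigma}_t(S)$ as a weak limit and using Lemma \ref{lemexistsQ} to guarantee the limit exists and depends only on $S$. First I would record that, by the minimality of the Stinespring dilation $(\Hh, V, \psi)$ and the norm-density of $\Aa_0 = \operatorname{span}\PP$ in $\Aa$, the vectors $\psi(y)V\xi$ with $y \in \PP$, $\xi \in H$ span a dense subspace of $\Hh$; so any candidate extension is determined by its matrix coefficients against such vectors. For $x \in \Aa$ and $y, z \in \PP$, using $V^*\psi(\cdot)V = \pi(\E(\cdot))$ and the fact that $\PP$ is closed under adjoints,
\[
\la \psi(\sigma_t(x)) \psi(y)V\xi, \psi(z)V\eta \ra = \la \pi(\E[z^*\sigma_t(x)y])\xi, \eta \ra.
\]
Applying Lemma \ref{lemexistsQ} to $z^*\sigma_t(x)y$ produces $y_0, z_0 \in \PP$ and a normal map $Q$, independent of $x$, with $\E[z^*\sigma_t(x)y] = Q(\E[y_0 x z_0])$; since $\E[y_0 x z_0] = \widetilde{\E}[\psi(y_0 x z_0)] = \pi\inv(V^*\psi(y_0)\psi(x)\psi(z_0)V)$, the coefficient equals $\la \pi(Q(\pi\inv(V^*\psi(y_0)\psi(x)\psi(z_0)V)))\xi, \eta\ra$. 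The decisive point is that this depends on $x$ only through $\psi(x)$, and weakly continuously on bounded sets, because $\psi(x) \mapsto V^*\psi(y_0)\psi(x)\psi(z_0)V$ is weakly continuous (separate weak continuity of multiplication) and $\pi, Q, \pi\inv$ are all normal.

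Next I would use this to define the extension. Given $S \in \widetilde{\Aa}$, Kaplansky's density theorem furnishes a bounded net $x_\nu \in \Aa$ with $\psi(x_\nu) \to S$ weakly. The formula above shows each coefficient $\la \psi(\sigma_t(x_\nu))\psi(y)V\xi, \psi(z)V\eta\ra$ converges to a number depending only on $S$ (and $y,z,\xi,\eta,t$), not on the net. As $\{\psi(\sigma_t(x_\nu))\}$ is uniformly bounded ($\psi$ and $\sigma_t$ are contractive) and the vectors $\psi(y)V\xi$ are dense, the $\varepsilon$-argument of Lemma \ref{lemWOTcontinuitydense} shows the net converges weakly to an operator $\widetilde{\sigma}_t(S) \in \widetilde{\Aa}$, independent of the approximating net. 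This defines $\widetilde{\sigma}_t \colon \widetilde{\Aa} \to \widetilde{\Aa}$ with $\widetilde{\sigma}_t \circ \psi = \psi \circ \sigma_t$ (take constant nets), and the same formula, now valid for every $S \in \widetilde{\Aa}$, exhibits $\widetilde{\sigma}_t$ as weakly continuous on bounded sets, hence normal.

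It then remains to verify the algebraic and semigroup properties, all of which reduce to checking them on the weakly dense subalgebra $\psi(\Aa)$ and extending by continuity. Multiplicativity and unitality hold on $\psi(\Aa)$ since $\sigma_t$ is a unital endomorphism and $\psi$ a homomorphism, and they extend to $\widetilde{\Aa}$ using the normality of $\widetilde{\sigma}_t$ together with the separate weak continuity of multiplication, fixing one argument at a time; adjoint-preservation extends using the ultraweak continuity of the adjoint map. The semigroup law $\widetilde{\sigma}_s \circ \widetilde{\sigma}_t = \widetilde{\sigma}_{s+t}$ and $\widetilde{\sigma}_0 = \operatorname{id}$ follow from $\sigma_s \circ \sigma_t = \sigma_{s+t}$ and normality, since both sides agree on $\psi(\Aa)$, and uniqueness is immediate because any normal map obeying the covariance relation is determined on the weakly dense set $\psi(\Aa)$. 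I expect the main obstacle to be precisely the well-definedness of the extension—showing that $\psi(\sigma_t(x_\nu))$ admits a weak limit depending only on $\lim \psi(x_\nu)$—and this is exactly what Lemma \ref{lemexistsQ} is built to overcome, by collapsing the translated middle factor $\sigma_t(x)$ into a configuration in which $x$ sits between fixed flanking terms inside $\E$, so that weak convergence passes through unobstructed.
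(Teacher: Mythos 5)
Your proposal is correct and follows essentially the same route as the paper's proof: both hinge on Lemma \ref{lemexistsQ} to produce normal functionals implementing the matrix coefficients of $\psi(\sigma_t(\cdot))$ in terms of $\psi(\cdot)$, and both pass to $\widetilde{\Aa}$ via Kaplansky density before verifying contractivity, the *-endomorphism properties, and the semigroup law on the weakly dense subalgebra $\psi(\Aa)$. The only difference is packaging: the paper first defines the normal functionals $\rho_{t,\xi,\eta}$ on all of $\widetilde{\Aa}$ and assembles $\widetilde{\sigma}_t(z)$ from the resulting bounded sesquilinear form, whereas you realize $\widetilde{\sigma}_t(S)$ directly as a WOT-limit of $\psi(\sigma_t(x_\nu))$ along a bounded approximating net --- the same extension in slightly different clothing.
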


\begin{proof}
We construct $\{\widetilde{\sigma}_t\}$ and verify
its properties in the following sequence of steps.
\begin{enumerate}
    \item For each $t \geq 0$ and $\xi, \eta \in \psi(\PP) V H$,
    we construct a normal linear functional $\rho_{t, \xi, \eta}$
    on $\widetilde{\Aa}$ as  follows.
    Let $\xi = \psi(y) V \xi'$ and $\eta = \psi(z) V \eta'$
    for $y,z \in \PP$ and $\xi', \eta' \in H$.
    By Lemma (\ref{lemexistsQ}), there exists a normal
    linear map $Q: \AAA \to \AAA$ and elements $y_0, z_0 \in \PP$
    such that $\E[z^* \sigma_t(x) y]
    = Q(\E[z_0^* x y_0])$ for all $x \in \Aa$.  We thus have
\[
\forall x \in \Aa: \qquad \la \psi(\sigma_t(x)) \xi, \eta \ra =
\la \pi \circ Q \circ \E[z_0^* x y_0] \xi', \eta' \ra.
\]
We now define $\rho_{t, \xi, \eta}$ by
\[
\rho_{t, \xi, \eta}(T) = \la \pi \circ Q \circ \widetilde{\E}
[\psi(z_0)^* T \psi(y_0)] \xi', \eta'\ra, \qquad T \in \widetilde{\Aa}.
\]
Then that the restriction to $\psi(\Aa)$ satisfies
\begin{align}
\forall x \in \Aa: \qquad \rho_{t, \xi, \eta}(\psi(x))
&= \la \pi \circ Q \circ \widetilde{\E} \circ
\psi(z_0^* x y_0) \xi', \eta' \ra \nonumber\\
&= \la \pi \circ Q \circ \E[z_0^* x y_0] \xi', \eta' \ra \nonumber\\
&= \la \pi \circ \E[z^* \sigma_t(x) y] \xi', \eta' \ra \nonumber\\
&= \la V^* \psi(z^* \sigma_t(x) y) V \xi', \eta' \ra \nonumber\\
&= \la \psi(\sigma_t(x)) \xi, \eta \ra. \label{eqnrhofunctional}
\end{align}

    \item We extend the definition to $\xi, \eta$
    in the linear span of $\psi(\PP) V H$ in the natural way; for
$\xi = \sum_i c_i \xi_i$ and $\eta = \sum_j d_j \eta_j$ with
$\xi_i, \eta_j \in \psi(\PP) VH$, we define
$\rho_{t, \xi, \eta} = \sum_{i,j} c_i \overline{d}_j \rho_{t, \xi_i, \eta_j}$.  This is well-defined because, if
$\sum_i c_i \xi_i = \sum_k \tilde{c}_k \tilde{\xi}_k$
and $\sum_j d_j \eta_j = \sum_\ell \tilde{d}_\ell \tilde{\eta}_\ell$
then equation (\ref{eqnrhofunctional}) implies that, for $x$ in
the ultraweakly dense subspace $\psi(\Aa)$ of $\widetilde{\Aa}$,
\begin{align*}
\dss{\rho}{t, \sum c_i \xi_i, \sum d_j \eta_j}
(\psi(x)) &= \left\la \psi(\sigma_t(x))\sum c_i \xi_i,
\sum d_j \eta_j \right\ra\\
&= \left\la \psi(\sigma_t(x)) \sum \tilde{c}_k \tilde{\xi}_k,
\sum \tilde{d}_\ell \tilde{\eta}_\ell \right\ra
= \dss{\rho}{t, \sum \tilde{c}_k \tilde{\xi}_k,
\sum \tilde{d}_\ell \tilde{\eta}_\ell}(\psi(x)).
\end{align*}

    \item Next, we note that equation (\ref{eqnrhofunctional}) also
    implies that $\|\rho_{t, \xi, \eta}\| \leq \|\xi\| \|\eta\|$.
    This allows us to extend the definition to $\xi, \eta$ in
    the norm closure of the linear span of $\psi(\PP) VH$,
    which is all of $\Hh$.

    \item Having defined the family of functionals $\{\rho_{t, \xi, \eta}\}$, we now use them to define the family
        of endomorphisms $\{\widetilde{\sigma}_t\}$.
        Equation (\ref{eqnrhofunctional})
      implies that, for fixed $t \geq 0$ and $x \in \Aa$, $\rho_{\xi,\eta}(\psi(x))$
        is a bounded sesquilinear function of $\xi$ and $\eta$, so that it
corresponds to a unique operator in $B(\Hh)$, which we call
$S_t(\psi(x))$, characterized by the property
\begin{equation} \label{eqndefSt}
\forall \xi, \eta \in \Hh: \quad
\rho_{t,\xi, \eta}(\psi(x)) = \la S_t(\psi(x))
\xi, \eta \ra.
\end{equation}

    \item Equations (\ref{eqnrhofunctional}) and (\ref{eqndefSt})
together imply that
\begin{equation} \label{eqnStcovariance}
\forall x \in \Aa: \quad S_t(\psi(x)) = \psi(\sigma_t(x)).
\end{equation}

    \item Because $\psi$ and $\sigma_t$
    are unital *-homomorphisms, equation (\ref{eqnStcovariance})
    implies that $S_t$ is as well.

    \item Because $S_t$ is a unital *-homomorphism of a C$^*$-algebra, it is contractive.  This implies
    \begin{equation} \label{eqncontractivity}
    \forall x \in \Aa: \quad \|\psi(\sigma_t(x))\|
    \leq \|\psi(x)\|.
    \end{equation}

    \item Given any $z \in \widetilde{\Aa}$, we can now
    show that $\rho_{t,\xi, \eta}(z)$ is a bounded sesquilinear
    function of $\xi$ and $\eta$.  For boundedness, we
    will show more precisely that
    \begin{equation} \label{eqnboundedforfixedz}
    |\rho_{t,\xi, \eta}(z)| \leq \|z\| \|\rho\|
    \|\eta\|.
    \end{equation}
    Let $z, \xi, \eta$ be given, and choose
    $\epsilon > 0$.  By the Kaplansky density theorem and
    the normality of $\rho_{t,\xi, \eta}$, there exists
    $x \in \Aa$ such that $\|\psi(x)\| \leq \|z\|$ and
    $|\rho_{t,\xi, \eta}(z - \psi(x))|  < \epsilon$.  Then
    \begin{align*}
    |\rho_{t,\xi, \eta}(z)| &\leq
    |\rho_{t,\xi, \eta}(\psi(x))| + |\rho_{\xi, \eta}(z - \psi(x))|\\
    &\leq \epsilon + |\la \psi(\sigma_t(x)) \xi, \eta \ra|\\
    &\leq \epsilon + \|\psi(\sigma_t(x))\| \|\xi\| \|\eta\|\\
    &\leq \epsilon + \|\psi(x)\| \|\xi\| \|\eta\| \\
    &\leq \epsilon + \|z\| \|\xi\| \|\eta\|.
    \end{align*}
    Letting $\epsilon \to 0$,
    we have \ref{eqnboundedforfixedz}.

    To show linearity in $\xi$, let $c_1, c_2 \in \com$
    and $\xi_1, \xi_2, \eta \in \Hh$ be given, and choose
    $\epsilon > 0$.  By Kaplansky density and the normality
    of $\rho_{t, \xi_1, \eta}$, $\rho_{t, \xi_2, \eta}$,
    and $\rho_{t, c_1 \xi_1 + c_2 \xi_2, \eta}$, there
    exists $x \in \Aa$ such that $\|\psi(x)\| \leq \|z\|$
    and the three inequalities
    \begin{align*}
    \left| \rho_{t,c_1 \xi_1 + c_2 \xi_2,\eta}(z - \psi(x)) \right|
    &< \epsilon \\
    |c_1| \left|\rho_{t,\xi_1, \eta}(z - \psi(x)) \right| &<
    \epsilon \\
    |c_2| \left| \rho_{t, \xi_2, \eta}(z - \psi(x))\right|
    &< \epsilon
    \end{align*}
    all hold.  Then
    \begin{align*}
    &\left|\rho_{t, c_1 \xi_1 + c_2 \xi_2, \eta}(z)
    - c_1 \rho_{t, \xi_1, \eta}(z) - c_2 \rho_{t, \xi_2,
    \eta}(z) \right| \\
    \leq &\left|\rho_{t, c_1 \xi_1 + c_2 \xi_2, \eta}(z - \psi(x))\right|
    + |c_1| \left|\rho_{t, \xi_1, \eta}(z - \psi(x))\right|
    + |c_2| \left|\rho_{t, \xi_2, \eta}(z - \psi(x))\right|\\
    &+ \left|\rho_{t, c_1 \xi_1 + c_2 \xi_2, \eta}(\psi(x))
    - c_1 \rho_{t, \xi_1, \eta}(\psi(x)) - c_2 \rho_{t, \xi_2,
    \eta}(\psi(x)) \right| \\
    \leq &3\epsilon +
    \left| \la \psi(\sigma_t(x)) (c_1 \xi_1 + c_2 \xi_2),
    \eta \ra - c_1 \la \psi(\sigma_t(x)) \xi_1, \eta \ra
    - c_2 \la \psi(\sigma_t(x)) \xi_2, \eta \ra \right|
    = &3\epsilon
    \end{align*}
    and as this is true for all $\epsilon > 0$,
    we conclude that
    \[
    \rho_{t, c_1 \xi_1 + c_2 \xi_2, \eta}(z)
    = c_1 \rho_{t, \xi_1, \eta}(z) +
    c_2 \rho_{t, \xi_2, \eta}(z).
    \]
    Conjugate-linearity in $\eta$ is, of course, established
    in the same way.

    \item We therefore obtain an operator in $B(\Hh)$,
    which we call $\widetilde{\sigma}_t(z)$, characterized by
    the property
    \begin{equation} \label{eqndefsigma}
    \forall \xi, \eta \in \Hh: \quad
    \rho_{t,\xi, \eta}(z) = \la \widetilde{\sigma}_t(z) \xi, \eta \ra.
    \end{equation}
    We now have a function (not yet known to be linear,
    continuous, multiplicative, or self-adjoint)
    $\widetilde{\sigma}_t: \widetilde{\Aa}
    \to B(\Hh)$ which extends the unital *-endomorphism
    $S_t: \psi(\Aa) \to \psi(\Aa)$.

    \item The function $\widetilde{\sigma}_t$ is contractive, because
    \[
    \|\widetilde{\sigma}_t(z)\| = \sup_{\xi, \eta \in \Hh_1}
    |\la \widetilde{\sigma}_t(z) \xi, \eta \ra |
    = \sup_{\xi, \eta \in \Hh_1} |\rho_{t, \xi, \eta}(z)|
    \leq \sup_{\xi, \eta \in \Hh_1} \|z\| \|\xi\| \|\eta\|
    = \|z\|
    \]
    by equation (\ref{eqnboundedforfixedz}).

    \item Weak and strong-* continuity of
    $\widetilde{\sigma}_t$ are
     straightforward consequence of the normality of the
    $\rho_{t, \xi, \eta}$.  Indeed, if $z_\nu \to z$
    weakly in the unit ball $\widetilde{\Aa}_1$, then for all
    $\xi, \eta$ it follows that
    \[
    \la \widetilde{\sigma}_t(z_\nu) \xi, \eta \ra
    = \rho_{t, \xi, \eta}(z_\nu) \to \rho_{t, \xi, \eta}(z)
    = \la \widetilde{\sigma}_t(z) \xi, \eta \ra
    \]
    so that $\widetilde{\sigma}_t(z_\nu) \to \widetilde{\sigma}_t(z)$
    in the weak operator topology, which agrees
    with the weak topology of $\widetilde{\Aa}$
    on bounded subsets.  If $z_\nu \to z$ strong-*,
    then $(z_\nu - z)^* (z_\nu - z) \to 0$ weakly
    and $(z_\nu - z)(z_\nu-z)^* \to 0$ weakly, and
    we repeat the analysis.

    \item Since $\widetilde{\sigma}_t$ maps the unit ball
    of $\psi(\Aa)$ into $\psi(\Aa)$, it follows
    from the previous step and the Kaplansky density
    theorem that it maps the unit ball of $\widetilde{\Aa}$
    into $\widetilde{\Aa}$.  Hence $\widetilde{\sigma}_t$,
    initially defined as a map from $\widetilde{\Aa}$
    into $B(\Hh)$, is actually a self-map of $\widetilde{\Aa}$.

    \item Next, we prove that $\widetilde{\sigma}_t$ is
    a *-endomorphism of $\widetilde{\Aa}$.  For multiplicativity,
    let $z,w \in \widetilde{\Aa}$.  Using Kaplansky density,
    choose nets $\{x_\nu\}, \{y_\nu\} \subset \Aa$ with
    $\|\psi(x_\nu)\| \leq \|z\|$ and
    $\|\psi(y_\nu)\| \leq \|w\|$ for all $\nu$, and
    $\psi(x_\nu) \to z$ and $\psi(y_\nu) \to w$ strongly.
    Then $\|\psi(x_\nu) \psi(y_\nu)\| \leq \|z\| \|w\|$
    for all $\nu$ and since multiplication is jointly strongly
    continuous, we have
    $\psi(x_\nu) \psi(y_\nu) \to z w$ strongly.  Then
    since $\widetilde{\sigma}_t$ is strongly continuous and is multiplicative on $\psi(\Aa)$,
    \begin{align*}
    \widetilde{\sigma}_t(z w) &= \widetilde{\sigma}_t(\lim_{\mu, \nu}
    \psi(x_\nu) \psi(y_\mu))\\
    &= \lim_{\mu, \nu} \widetilde{\sigma}_t(\psi(x_\nu) \psi(y_\mu))\\
    &= \lim_{\mu, \nu} \widetilde{\sigma}_t(\psi(x_\nu))
    \widetilde{\sigma}_t(\psi(y_\mu)) \\
    &= \widetilde{\sigma}_t(z) \widetilde{\sigma}_t(w).
    \end{align*}

    For linearity, let $c_1, c_2 \in \com$ and $z, w
    \in \widetilde{\Aa}$ be given.  Choose $\{x_\nu\}$
    and $\{y_\mu\}$ as before; then for all $\mu, \nu$ we
    have $\|c_1 x_\nu + c_2 y_\mu\| \leq |c_1| \|z\|
    + |c_2| \|w\|$, so that $\{c_1 x_\nu + c_2 y_\mu\}$
    is contained in a bounded subset of $\widetilde{\Aa}$.
    The calculation then proceeds as for multiplicativity.
    Self-adjointness is proved similarly.

    \item Finally, it is clear that
    $\widetilde{\sigma}_0 = id$, and for all $s,t \geq 0$
    and all $x \in \Aa$,
    \[
    \widetilde{\sigma}_{s+t}(\psi(x))
    = \psi(\sigma_{s+t}(x))
    = \psi(\sigma_s (\sigma_t(x)))
    = \widetilde{\sigma}_s(\psi(\sigma_t(x)))
    = \widetilde{\sigma}_s(\widetilde{\sigma}_t(\psi(x)))
    \]
    so that $\widetilde{\sigma}_{s+t}$ and $\widetilde{\sigma}_s
    \circ \widetilde{\sigma}_t$ agree on the ultraweakly
    dense subset $\psi(\Aa) \subset \widetilde{\Aa}$;
    as both are normal, they are equal.
\end{enumerate}
\end{proof}

As one corollary, we can now find many dense
subspaces of $\Hh$.  Recall that $\psi(\PP) VH$ is
dense by the standard properties of the minimal Stinespring
dilation plus the fact that $\PP$ is norm-dense
in $\Aa$.

\begin{lemma} \label{lemfamilydense}
For any finite set $F \subset [0,\infty)$ let
$\PP^{(F)}$ denote those elements of $\PP$ which
do not use any time indices from $F$.  Then for
all finite $F \subset [0,\infty)$, $\psi(P^{(F)} VH$
is dense in $\Hh$.
\end{lemma}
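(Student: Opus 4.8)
The plan is to show that the closed subspace $M = \overline{\psi(\PP^{(F)}) VH}$ equals $\Hh$ by proving that it already contains the dense set $\psi(\PP)VH$. Since $M$ is a closed subspace it is weakly closed, so it suffices to exhibit, for each $w = \sigma_{t_1}(i(a_1)) \cdots \sigma_{t_k}(i(a_k)) \in \PP$ and each $\xi' \in H$, a net in $\psi(\PP^{(F)})VH$ converging weakly to $\psi(w) V \xi'$.

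First I would perturb the time-indices. Because $F$ is finite, one can choose $\vec{t}^{\,(n)} \to \vec{t}$ with every coordinate $t_j^{(n)} \notin F$, with equal coordinates of $\vec{t}$ kept equal and strict inequalities kept strict (so the internal order type of $w$ is preserved), taking the approach from one side, say $t_j^{(n)} \downarrow t_j$. Writing $w^{(n)} = \sigma_{t_1^{(n)}}(i(a_1)) \cdots \sigma_{t_k^{(n)}}(i(a_k))$, each $w^{(n)}$ lies in $\PP^{(F)}$, so $\psi(w^{(n)}) V\xi' \in M$, and $\|\psi(w^{(n)}) V \xi'\| \le \|a_1\| \cdots \|a_k\| \, \|\xi'\|$ uniformly in $n$.

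Next I would verify the weak convergence $\psi(w^{(n)}) V\xi' \to \psi(w) V\xi'$. Since the net is bounded, it is enough to test against the total set $\psi(\PP) VH$. For $\zeta = \psi(z) V \zeta'$ with $z \in \PP$, using that $\psi$ is a homomorphism and that $V^* \psi(x) V = \pi(\E[x])$ (the defining property of $\widetilde{\E}$), one obtains
\[
\la \psi(w^{(n)}) V\xi',\, \psi(z) V\zeta' \ra = \la \pi\big(\E[z^* w^{(n)}]\big) \xi',\, \zeta' \ra.
\]
By Proposition \ref{propmomentpolyasLM} the quantity $\E[z^* w^{(n)}]$ equals the moment polynomial $\Ss$ evaluated on the time/element data of the word $z^* w^{(n)}$; as $n \to \infty$ the time data converge to those of $z^* w$ while the element data stay fixed. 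Thus the whole question reduces to continuity of $\Ss$ in its time arguments.

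The main obstacle is that this convergence is not covered verbatim by Proposition \ref{propjointcontinuitymoments}: when a time of $z$ coincides with a time of $w$, the limiting configuration acquires a new equality, so the approach is a \emph{degeneration} rather than a non-crossing one. I would handle this by extending the continuity statement to allow such merges. The point is that a merge corresponds to adjacent factors combining through $\sigma_t(i(a))\sigma_t(i(b)) = \sigma_t(i(ab))$, and the only place a vanishing time-difference $u \to 0^+$ enters the recursion of Definition \ref{defmomentpolynomials} is through a factor $\phi_u$; since $\{\phi_t\}$ is a CP$_0$-semigroup it is strongly continuous at $0$ with $\phi_0 = \mathrm{id}$ (Theorem \ref{thmjointcontinuityCP}), so $\phi_u \to \mathrm{id}$ strongly. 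Combined with the joint strong continuity of multiplication on bounded sets and the normality of $\omega$, an induction on word length identical to that of Proposition \ref{propjointcontinuitymoments} then shows $\Ss$ is strongly continuous under degenerate approaches as well. Granting this, $\E[z^* w^{(n)}] \to \E[z^* w]$ strongly, so the inner products above converge; this yields the weak convergence and hence $\psi(w) V\xi' \in M$. As $w$ and $\xi'$ were arbitrary and $\psi(\PP) VH$ is dense, we conclude $M = \Hh$.
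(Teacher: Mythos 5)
Your setup is fine through the reduction to $\la \psi(w^{(n)})V\xi',\psi(z)V\zeta'\ra = \la \pi(\E[z^*w^{(n)}])\xi',\zeta'\ra$ and to continuity of $\Ss$ in its time arguments, but the step you flag as "the main obstacle" is exactly where the argument breaks, and your proposed repair is false. The moment polynomials are \emph{not} continuous under degenerate approaches in which a perturbed time merges with a fixed one: the appendix of this thesis computes an explicit counterexample, namely that for generic $a_1,\dots,a_5$ and $0<t_1<t_2<t_3$,
\begin{align*}
\Ss(t_1, 0, t_3, 0, t_2) - \lim_{\tau \to 0^+}\Ss(t_1, \tau, t_3, 0, t_2)
&= \Big[\omega(a_2)\omega(a_4)\omega(a_3) - \omega(a_2)\,\omega\big(\phi_{t_3}(a_3)a_4\big) - \omega\big(a_2\phi_{t_3}(a_3)\big)\omega(a_4)\\
&\qquad + \omega\big(a_2\phi_{t_3}(a_3)a_4\big)\Big]\Big[\phi_{t_1}\big(a_1\phi_{t_2-t_1}(a_5)\big) - \phi_{t_1}(a_1)\phi_{t_3}(a_3)\Big],
\end{align*}
which need not vanish. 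Your heuristic---that a merge only enters through a factor $\phi_u$ with $u\to 0^+$---ignores that the recursion in Definition \ref{defmomentpolynomials} is driven by the \emph{standard decomposition} of the time vector: which entries are minimal, the alternation number $m$, and hence the index sets $\vec{\iota}$ being summed over all jump discontinuously at the instant two times coincide. The limit of the nondegenerate formula therefore contains extra covariance-type terms (the first bracket above) absent from the degenerate formula. Since in your argument $z$ ranges over all of $\PP$ and the limits $t_j$ of your perturbed times are forced to equal the original times of $w$, such merges between a time of $w^{(n)}$ and a time of $z$ are unavoidable for some test vectors, and the asserted weak convergence fails to follow.

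The paper avoids this by working in norm and perturbing one time at a time, inducting on the word length: one first approximates the tail $\sigma_{t_2}(i(a_2))\cdots\sigma_{t_n}(i(a_n))Vh$ by a vector $\psi(p)Vh'$ with $p\in\PP^{(F)}$, then replaces only $t_1$ by a nearby $\tilde t_1\notin F$ and expands $\|(\psi(\sigma_{t_1}(i(a_1)))-\psi(\sigma_{\tilde t_1}(i(a_1))))\psi(p)Vh'\|^2$ into four moment polynomials. The only degeneration occurring there is the merge of $\tilde t_1$ with the \emph{adjacent} copy of $t_1$ in the word (the times of $p$ stay away from $t_1$ when $t_1\in F$), and for that restricted configuration continuity does hold. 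If you want to keep your one-shot weak-convergence scheme, you would need to prove continuity of $\Ss$ for precisely the merging patterns that actually arise in $z^*w^{(n)}$, which the counterexample shows cannot be done in the generality you invoke.
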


\begin{proof}
Consider a general vector of the form
$\sigma_{t_1}(i(a_1)) \cdots \sigma_{t_n}(i(a_n)) V h$,
which we already know to be total in $\Hh$.  We proceed
by induction on $n$.  In the case $n = 1$ we have for
any $\tilde{t}$ that
\[
\|(\sigma_t(i(a)) - \sigma_{\tilde{t}}(i(a)) ) Vh\|^2
= \Ss(t; a^*a) - \Ss(\tilde{t}, t; a^*, a)
- \Ss(t, \tilde{t}; a^*, a) + \Ss(\tilde{t}; a^* a).
\]
As $\tilde{t} \to t$, this approaches zero by
the continuity properties of $\Ss$.  Inductively,
we can approximate $\sigma_{t_2}(i(a_2))
\cdots \sigma_{t_n}(i(a_n)) Vh$ by a vector in
$\psi(\PP^{(F)}) VH$, which we then use as our
$h$ and proceed as before.
\end{proof}

Before establishing our main continuity result, one more preliminary is needed.

\begin{proposition} \label{propseparabilityofHh}
The Hilbert space $\Hh$ is separable.
\end{proposition}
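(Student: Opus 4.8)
The plan is to exhibit an explicit countable subset of $\Hh$ and prove it dense. Since $(\Hh, V, \psi)$ is a minimal Stinespring dilation, $\overline{\psi(\Aa) VH} = \Hh$, and because $\PP$ spans the norm-dense subalgebra $\Aa_0$, the vectors $\psi(p) Vh$ with $p \in \PP$ and $h \in H$ are total in $\Hh$. It therefore suffices to approximate each such vector, in norm, by vectors from a countable family. To that end I fix a countable dense set $H_0 \subseteq H$ (available since $H$ is separable) and a countable set $\AAA_0 \subseteq \AAA$ such that every $a \in \AAA$ is the strong limit of a bounded sequence from $\AAA_0$; such an $\AAA_0$ exists because separability of $\AAA$ gives strong separability of its unit ball, and one scales and takes a union over radii. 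Writing $p(\vec s) = \sigma_{s_1}(i(b_1)) \cdots \sigma_{s_n}(i(b_n))$ and $\Theta(\vec s, \vec b, h) = \psi(p(\vec s)) Vh$, the candidate set is
\[
\mathcal D = \bigcup_{n \ge 0} \big\{ \Theta(\vec t, \vec a, h) \;:\; t_j \in \mathbb Q_{\ge 0},\ a_j \in \AAA_0,\ h \in H_0 \big\},
\]
which is countable.

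The approximation proceeds in three stages. First, with $\vec t$ held fixed I replace $\vec a$ by a bounded strong approximant $\vec a' \in \AAA_0^n$: expanding $\|\Theta(\vec t,\vec a,h) - \Theta(\vec t,\vec a',h)\|^2$ and using $V^* \psi(\cdot) V = \pi(\E[\cdot])$ together with Proposition \ref{propmomentpolyasLM}, this is a combination of moment polynomials $\Ss$ whose time arguments are constant while the algebra arguments vary strongly; joint strong continuity of $\Ss$ in its algebra entries (Proposition \ref{propjointcontinuitymoments}, with the constant and hence trivially non-crossing time vector $(\vec t \text{ reversed}, \vec t\,)$) together with strong continuity of $\pi$ sends it to zero. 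Second, I replace $h$ by $h_0 \in H_0$, the error being at most $\|p(\vec t)\|\,\|h - h_0\| \le \big(\prod_j \|a_j'\|\big)\|h-h_0\|$.

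The third stage, replacing the real times by rationals, is the crux. With $\vec a' \in \AAA_0^n$ and $h_0 \in H_0$ now fixed, I choose rational $\vec t^{(k)} \to \vec t$ \emph{non-crossingly}, which is possible by approximating the finitely many distinct values of $\vec t$ by order-preserving rationals and giving equal times equal rationals. A direct estimate of $\|\Theta(\vec t^{(k)}) - \Theta(\vec t\,)\|^2$ fails, because the cross term $\la \pi \E[p(\vec t\,)^* p(\vec t^{(k)})] h_0, h_0\ra$ pairs the exact entries of $\vec t$ against the rational entries of $\vec t^{(k)}$, which cross at each near-coincidence $t_j^{(k)} \approx t_j$, so Proposition \ref{propjointcontinuitymoments} does not apply. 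I would instead combine convergence of norms with weak convergence. The norm $\|\Theta(\vec t^{(k)})\|^2 = \la \pi \E[p(\vec t^{(k)})^* p(\vec t^{(k)})] h_0, h_0\ra$ depends only on the single vector $\vec t^{(k)}$, whose reflected concatenation converges non-crossingly to that of $\vec t$, so Proposition \ref{propjointcontinuitymoments} gives $\|\Theta(\vec t^{(k)})\| \to \|\Theta(\vec t\,)\|$. For weak convergence I test against the total set $\psi(\PP^{(F)}) VH$ furnished by Lemma \ref{lemfamilydense}, choosing $F$ to be a $\delta$-neighborhood of the values of $\vec t$: for $\psi(q)Vh'$ with $q \in \PP^{(F)}$, every time in $q$ stays bounded away from every entry of $\vec t^{(k)}$ once $k$ is large, so the concatenated time vector of $q^* p(\vec t^{(k)})$ converges non-crossingly and Proposition \ref{propjointcontinuitymoments} yields $\la \Theta(\vec t^{(k)}), \psi(q)Vh'\ra \to \la \Theta(\vec t\,), \psi(q)Vh'\ra$. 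As the $\Theta(\vec t^{(k)})$ are uniformly bounded, this weak convergence extends to all of $\Hh$ exactly as in Lemma \ref{lemWOTcontinuitydense}; combined with $\|\Theta(\vec t^{(k)})\| \to \|\Theta(\vec t\,)\|$ it forces $\Theta(\vec t^{(k)}) \to \Theta(\vec t\,)$ in norm. Chaining the three stages shows $\mathcal D$ is dense, so $\Hh$ is separable. The main obstacle is precisely this third stage: the moment polynomials are genuinely discontinuous across time-coincidences, and it is the crossing-avoidance supplied by Lemma \ref{lemfamilydense}, fed into a weak-plus-norm argument, that legitimizes the rational approximation of the time parameters.
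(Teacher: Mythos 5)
Your argument is correct in substance and shares the paper's overall architecture --- the same countable candidate set (rational times, entries from a countable strongly dense subset of $\AAA$, vectors from a countable dense subset of $H$) and the same three-stage triangle-inequality reduction --- but it handles the critical third stage, rational approximation of the time parameters, by a genuinely different route. The paper argues by totality: it takes a vector $\psi(\sigma_{s_1}(i(z_1))\cdots \sigma_{s_m}(i(z_m)))V\eta$ orthogonal to every rational-time vector and uses a non-crossing rational approximation $\vec{s}^{\,*}\vee\vec{t}_k \to \vec{s}^{\,*}\vee\vec{t}$ to conclude it is orthogonal to the real-time total set, hence zero; it never needs, nor obtains, norm convergence of the rational-time approximants. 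You instead prove genuine norm convergence $\Theta(\vec{t}^{(k)}) \to \Theta(\vec{t}\,)$ by combining weak convergence tested on the total set $\psi(\PP^{(F)})VH$ supplied by Lemma \ref{lemfamilydense} with convergence of norms, the latter coming from the non-crossingly convergent reflected concatenation $(\vec{t}^{(k)})^{*}\vee\vec{t}^{(k)}$. Your route buys something real: restricting the test vectors to $\PP^{(F)}$ sidesteps the coincidence problem (a time of the test vector equal to an irrational entry of $\vec{t}$, which would obstruct any non-crossing rational approximation) that the paper's choice of test vectors from all of $\PP$ quietly glosses over; the paper only deploys $\PP^{(F)}$ for this purpose later, in Theorem \ref{thmultraweakcontinuity}. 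Two small repairs are needed. First, Lemma \ref{lemfamilydense} requires $F$ to be finite, so take $F$ to be the set of values of $\vec{t}$ itself rather than a $\delta$-neighborhood; for each fixed $q \in \PP^{(F)}$ its finitely many times are then at positive distance from $F$, which is all your argument uses. Second, in your first stage the term $\|\Theta(\vec{t},\vec{a}\,',h)\|^2$ involves the adjoints of the approximants, and strong convergence does not pass to adjoints; you need the approximants to converge strong-$*$, which the Kaplansky density theorem supplies --- or you can avoid adjoints entirely by estimating $\|(\psi(p(\vec{t},\vec{a})) - \psi(p(\vec{t},\vec{a}\,')))Vh\|$ directly from joint strong continuity of multiplication on bounded sets, as the paper does.
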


\begin{proof}
Let $H_0$ be a countable dense subset of $H$,
and $\AAA_0$ a countable ultraweakly dense subset
of $\AAA$.  We may assume WLOG that $\AAA_0$ is
a self-adjoint $\Q$-subalgebra, so that its unit ball
is strongly dense in the unit ball of $\AAA$
by Kaplansky's theorem.

We will show that the countable set
\[
\Big\{\psi\big(\sigma_{t_1}(i(x_1)) \dots \sigma_{t_n}(i(x_n))\big)Vh \mid 0 \leq t_1, \dots, t_n \in \Q;
\, x_1, \dots, x_n \in \AAA_0; \, h \in H_0 \Big\}
\]
spans a dense subset of $\Hh$.  We already know that $\psi(\PP) VH$ has dense span, so it suffices to show that vectors in
$\psi(\PP) VH$ can be norm-approximated by vectors
of the prescribed form.  Let $\tau_1, \dots, \tau_n \geq
0$, $y_1, \dots, y_n \in \AAA$, and
$k \in H$.  By the triangle inequality, we have for any $h \in H_0$, any $t_1, \dots, t_n \in \Q_+$, and any $x_1, \dots, x_n \in \AAA_0$ that
\begin{align*}
&\Big\| \psi\big(\sigma_{\tau_1}(i(y_1)) \dots \sigma_{\tau_n}(i(y_n))
\big) Vk - \psi\big(\sigma_{t_1}(i(x_1)) \dots \sigma_{t_n}(i(x_n))\big) Vh \Big\|\\
&\leq \Big\|\psi\big(\sigma_{\tau_1}(i(y_1))\dots \sigma_{\tau_n}(i(y_n))\big)\Big\| \|h-k\|\\
&\qquad + \Big\| \psi \Big[ \sigma_{\tau_1}(i(y_1)) \cdots
\sigma_{\tau_n}(i(y_n)) - \sigma_{\tau_1}(i(x_1)) \cdots
\sigma_{\tau_n}(i(x_n)) \Big] V h \Big\|\\
&\qquad + \Big\| \psi \Big[ \sigma_{\tau_1}(i(x_1)) \cdots
\sigma_{\tau_n}(i(x_n)) - \sigma_{t_1}(i(x_1)) \cdots
\sigma_{t_n}(i(x_n)) \Big] V h \Big\|.
\end{align*}
The first term can be made small by choosing $h$
sufficiently close to $k$.  For the second,
 note that each composition $\psi \circ \sigma_t \circ i$ is normal, since it equals the composition
$\widetilde{\sigma}_t \circ \psi \circ i$; hence
$\psi(\sigma_t(i(\AAA_0)))$ is weakly
dense in $\psi(\sigma_t(i(\AAA)))$.  By Kaplansky's theorem,
it follows that the unit ball of $\psi(\sigma_t(i(\AAA_0)))$
is strongly dense in the unit ball of $\psi(\sigma_t(i(\AAA)))$; this plus the
joint strong continuity of multiplication implies that \\
${\Big\{\psi\big(\sigma_{t_1}(i(x_1))
\dots \sigma_{s_n}(i(x_n)) \big) \mid  s_1, \dots, s_n \geq 0; \ x_1, \dots, x_n \in \AAA_0\Big\}}$ is
strongly dense in \\ ${\Big\{\psi\big(\sigma_{t_1}(i(y_1))
\dots \sigma_{t_n}(i(y_n)) \big) \mid  t_1, \dots, t_n \geq 0; \ y_1, \dots, y_n \in \AAA\Big\}}$.  Hence, once $h$ has
  been fixed, an appropriate choice of $x_1, \dots, x_n$ makes the second term arbitrarily small.  So far we have shown that vectors of the form
\begin{equation} \label{eqnvectorform}
\psi \big(\sigma_{\tau_1}(i(x_1)) \cdots \sigma_{\tau_n}(i(x_n))
\big) Vh \qquad \tau_1, \dots, \tau_n \geq 0; \,
x_1, \dots, x_n \in \AAA_0; \, h \in H_0
\end{equation}
are total in $\Hh$.  It remains to prove
that such vectors remain total under the added restriction
that the $\tau_i$ be rational.  Let $\xi \in \psi(\PP) VH$
be orthogonal to all vectors of the form (\ref{eqnvectorform}).
That is, we let $z_1, \dots, z_m \in \AAA$,
$\eta \in H$, and $s_1, \dots, s_m \geq 0$ such that,
for all $x_1, \dots, x_n \in \AAA_0$, all
$0 \leq t_1, \dots, t_n \in \Q$, and all $h \in H_0$,
\begin{align*}
0 &= \left\la
\psi \big(\sigma_{t_1}(i(x_1)) \cdots \sigma_{t_n}(i(x_n))\big)
Vh, \psi \big(\sigma_{s_1}(i(z_1)) \cdots \sigma_{s_m}(i(z_m))
\big) V \eta \right\ra\\
&= \left\la V^* \psi \big( \sigma_{s_m}(i(z_m^*))
\dots \sigma_{s_1}(i(z_1^*)) \sigma_{t_1}(i(x_1)) \cdots \sigma_{t_n}(i(x_n)) \big) Vh, \eta \right\ra\\
&= \la \Ss(\vec{s}^{\, *} \vee \vec{t}; \vec{z}^{\, *} \vee
\vec{x}) \xi, \eta \ra
\end{align*}
where we introduce the
notation $(s_1, \dots, s_m)^* = (s_m, \dots, s_1)$
for $s_1, \dots, s_m \geq 0$
and $(z_1, \dots, z_m)^* = (z_m^*, \dots, z_1^*)$
for $z_1, \dots, z_m \in \AAA$.  Now for any $\vec{t} \in [0,\infty)^n$, let $\{\vec{t}_k \} \subset \Q_+^n$
such that $\vec{s}^* \vee \vec{t}_k \to \vec{s}^* \vee \vec{t}$ non-crossingly; then
by Proposition
(\ref{propjointcontinuitymoments}),
\[
\la \Ss(\vec{s}^* \vee \vec{t}; \vec{z}^* \vee
\vec{x}) \xi, \eta \ra
= \lim_{k \to \infty}
\la \Ss(\vec{s}^* \vee \vec{t}_k; \vec{z}^* \vee
\vec{x}) \xi, \eta \ra = 0.
\]

We thus see that $\xi$ must be orthogonal to a known total
set and hence zero.
\end{proof}

\begin{theorem}  \label{thmultraweakcontinuity}
For any $a \in \widetilde{\Aa}$,
$t \mapsto \widetilde{\sigma}_t(a)$  is ultraweakly continuous
for all $t > 0$.
\end{theorem}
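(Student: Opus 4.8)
The plan is to prove the weaker assertion that the uniformly bounded family $\{\widetilde{\sigma}_t(a)\}_{t\geq 0}$ is weak-operator continuous at each $t_0>0$; this suffices, since on the ball of radius $\|a\|$ the ultraweak and weak topologies coincide. Fix $a\in\widetilde{\Aa}$ and $t_0>0$. Because $\widetilde{\sigma}_t$ is contractive (Theorem \ref{thmexistsnormalextension}), every $\widetilde{\sigma}_t(a)$ has norm at most $\|a\|$, so by the pointwise-at-$t_0$ form of the argument in Lemma \ref{lemWOTcontinuitydense} it is enough to verify that $t\mapsto\la\widetilde{\sigma}_t(a)\xi,\eta\ra$ is continuous at $t_0$ for $\xi,\eta$ ranging over a single dense subspace of $\Hh$; the uniform bound then propagates the continuity to all $\xi,\eta$.

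For the dense subspace I would take the linear span $\Hh_0$ of $\psi(\PP^{(\{t_0\})})VH$, which is dense by Lemma \ref{lemfamilydense}. The reason for excluding the time index $t_0$ is the explicit description of $\rho_{t,\xi,\eta}$ from the construction of $\widetilde{\sigma}_t$: writing $\xi=\psi(y)V\xi'$ and $\eta=\psi(z)V\eta'$ with $y,z\in\PP^{(\{t_0\})}$, Lemma \ref{lemexistsQ} produces elements $y_0,z_0\in\PP$, depending only on $y,z$ and not on $t$, and a normal map $Q_t\colon\AAA\to\AAA$, such that
\[
\la\widetilde{\sigma}_t(a)\xi,\eta\ra=\rho_{t,\xi,\eta}(a)=\la\pi\bigl(Q_t(\widetilde{\E}[\psi(z_0)^*\,a\,\psi(y_0)])\bigr)\xi',\eta'\ra .
\]
Since $y_0,z_0$ are independent of $t$, the element $w:=\widetilde{\E}[\psi(z_0)^*\,a\,\psi(y_0)]\in\AAA$ is a fixed element, so the entire $t$-dependence is concentrated in $t\mapsto Q_t(w)$; and because this formula is valid for all of $\widetilde{\Aa}$ (not merely for $\psi(\Aa)$), the reduction handles an arbitrary $a\in\widetilde{\Aa}$ at once.

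It then remains to show $t\mapsto Q_t(w)$ is continuous at $t_0$. By Proposition \ref{propmomentpolyasLM} the quantity $Q_t(w)$ is a moment polynomial $\Ss(\vec{\tau}(t);\vec{c})$ whose element-tuple $\vec{c}$ (one slot of which is $w$) is fixed, and in which $t$ enters only through the time-coordinate introduced by the translation $\sigma_t$, all remaining coordinates being fixed times coming from $y$ and $z$. As $y,z\in\PP^{(\{t_0\})}$, none of those fixed coordinates equals $t_0$, and for $t$ near $t_0$ the moving coordinate stays strictly positive; hence $\vec{\tau}(t)\to\vec{\tau}(t_0)$ non-crossingly in the sense of Definition \ref{defnoncrossingly}. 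Proposition \ref{propjointcontinuitymoments}, which applies because $\AAA$ is separable and $\{\phi_t\}$ is jointly strongly continuous, then gives $Q_t(w)\to Q_{t_0}(w)$ strongly, whence $\la\pi(Q_t(w))\xi',\eta'\ra\to\la\pi(Q_{t_0}(w))\xi',\eta'\ra$, establishing the required continuity on $\Hh_0$.

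The main obstacle is precisely the non-crossing hypothesis of Proposition \ref{propjointcontinuitymoments}: the moment polynomial $\Ss$ can fail to be continuous in $\vec{\tau}$ at the instant a translated time coincides with a fixed one, so continuity of $Q_t(w)$ would break down as $t$ sweeps past such a coincidence. The whole purpose of choosing $\xi,\eta$ with time indices dodging $t_0$ (via Lemma \ref{lemfamilydense}) is to keep $t_0$ away from these finitely many bad values. This is also the reason the theorem is confined to $t>0$: as $t\to 0^+$ the translated coordinate crosses the zero threshold, a nonzero time degenerating into a zero time, which alters the standard decomposition no matter how $\xi,\eta$ are chosen—consistent with the genuine discontinuity of $\{\sigma_t\}$ at the origin exhibited earlier in this chapter.
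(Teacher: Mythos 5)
Your strategy is genuinely different from the paper's, and it contains a real gap. The paper never proves pointwise continuity of $t \mapsto \la \widetilde{\sigma}_t(a)\xi,\eta\ra$ directly for general $a \in \widetilde{\Aa}$: it proves continuity only for $a$ in the dense subalgebra $\psi(\Aa_0)$ (where the matrix coefficients literally \emph{are} moment polynomials, so Proposition \ref{propjointcontinuitymoments} applies), and for general $a$ it obtains only \emph{measurability} of $t \mapsto \widetilde{\sigma}_t(a)$, as a pointwise limit of continuous functions along a Kaplansky-approximating sequence. The passage from measurability to continuity is then made via the Hille--Phillips theory of Section \ref{secC0semigroups} (a weakly measurable contraction semigroup on a space with separable predual is continuous at positive times), and \emph{that} is where the restriction to $t>0$ comes from --- not, as you suggest, from a time coordinate crossing zero.

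The gap in your direct argument is the assertion that $t \mapsto Q_t(w)$ is strongly continuous because ``$Q_t(w)$ is a moment polynomial $\Ss(\vec{\tau}(t);\vec{c})$ whose element-tuple $\vec{c}$ (one slot of which is $w$) is fixed.'' Lemma \ref{lemexistsQ} is a fixed-$t$ existence statement: $Q$ is produced implicitly from the normality of the moment functions in the entry corresponding to the block containing $\sigma_t(x)$, and no formula for $Q_t$, let alone any control on its dependence on $t$, is established. Nor is $Q_t(w)$ of the claimed form: the slots of $\Ss$ are occupied by the letters $a_i, c_j, b_k$ of $y$, $x$, $z$, whereas $w=\widetilde{\E}[\psi(z_0)^*a\psi(y_0)]$ is not a letter but is itself (for $a\in\psi(\Aa_0)$) a moment polynomial in the letters of $x$. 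In the simplest case $y=i(a_1)$, $z=i(a_3)$ one does get $Q_t(w)=a_1\phi_t(w)a_3=\Ss(0,t,0;a_1,w,a_3)$, but already when $z$ begins with positive-time factors the map $Q_t$ is pinned down only on the range of $x\mapsto\E[y_0xz_0]$ and its $t$-dependence runs through the entire block structure, not through a single time coordinate of a fixed polynomial. To make your route work you would need to prove a strengthened, $t$-uniform version of Lemma \ref{lemexistsQ} asserting that $Q_t$ can be chosen with $t\mapsto Q_t(w)$ strongly continuous near $t_0$ for each fixed $w$; that is plausible but is precisely the missing content. (Two smaller points: $y_0,z_0$ are independent of $t$ only in the case $t>\tau$ of the lemma's proof, where $\tau$ is the minimum time occurring in $y,z$ --- fixable by inserting a $\sigma_0(i(\one))$ factor, but it should be said; and your dense subspace excludes only the time $t_0$ from $\xi,\eta$, whereas the paper's step at this point excludes the whole translated set $\vec{\tau}+t_0$, which is what the non-crossing hypothesis actually requires when $a$ is a word with several time indices.)
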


\begin{proof}
We establish this in a series of steps.
\begin{enumerate}
    \item For any $a \in \AAA_0$ and $\xi, \eta \in \psi(\PP) VH$,
    the value of $\la \widetilde{\sigma}_t(\psi(a))
    \xi, \eta \ra = \la \psi(\sigma_t(a)) \xi, \eta \ra$ is
    given by a certain Sauvageot moment polynomial; explicitly, if\\ $a = \sigma_{\tau_1}(i(x_1))
    \cdots \sigma_{\tau_n}(i(x_n))$,
    $\xi = \sigma_{s_1}(i(y_1)) \cdots \sigma_{s_m}
    (i(y_n)) V \xi_0$, and\\
    $\eta = \sigma_{u_1}(i(z_1)) \cdots \sigma_{u_\ell}
    (i(z_\ell)) V \eta_0$, then
    \[
    \la \widetilde{\sigma}_t(\psi(a)) \xi, \eta \ra
    = \left\la \pi\Big(\Ss(\vec{u}^{\, *} \vee (\vec{\tau} +t)
    \vee \vec{s}; \vec{z}^{\, *} \vee
    \vec{x} \vee \vec{y}) \Big) \xi_0, \eta_0 \right \ra.
    \]

    \item Given $\vec{\tau}$ and a time $t_0\geq 0$, let
    $F$ be the set of times in $\vec{\tau} + t_0$.
    Taking any $\xi_0, \eta_0 \in \psi(P^{(F)}) V H$,
    which is dense by lemma (\ref{lemfamilydense}),
    we see by proposition (\ref{propjointcontinuitymoments})
    that the above expression is continuous at $t_0$,
    since if $t \to t_0$ within a sufficiently small
    neighborhood of $t_0$ then $\vec{u}^* \vee (\vec{\tau}
    + t) \vee \vec{s} \to \vec{u}^* \vee (\vec{\tau} + t_0)
    \vee \vec{s}$ non-crossingly.
    We therefore have that $t \mapsto \la \widetilde{\sigma}_t(\psi(a))
    \xi, \eta \ra$ is continuous at $t_0$
    for all $\xi, \eta \in
    \psi(\PP^{(F)}) VH$ and all $a \in \AAA_0$.

    \item By Lemma (\ref{lemWOTcontinuitydense}),
    this implies that $t \mapsto \la \widetilde{\sigma}_t(\psi(a))
    \xi, \eta \ra$ is continuous at $t_0$ for all
    $\xi, \eta \in \Hh$ and all $a \in \AAA_0$.

    \item Now let $a \in \widetilde{\Aa}$.  By Kaplansky
    density, there is a sequence $\{a_n\} \subset \Aa_0$
    such that $\psi(a_n) \to a$ in SOT.  We can use
    a sequence rather than a net because the separability
    of $\Hh$, established
     in Proposition (\ref{propseparabilityofHh}), implies the SOT-metrizability of $B(\Hh)$ (\cite{Blackadar} III.2.2.27).  Then for any $\xi, \eta \in \Hh$,
     \[
     \la \widetilde{\sigma}_t(a) \xi, \eta
     \ra = \lim_n \la \widetilde{\sigma}_t(\psi(a_n))
     \xi, \eta \ra
     \]
     so that the left-hand side, as a function
      of $t$, is a pointwise limit of
     a sequence of continuous functions, hence
     measurable.  That is,
     $t \mapsto \widetilde{\sigma}_t(a)$ is WOT-measurable;
     as the $\widetilde{\sigma}$ are contractions and the WOT
     agrees with the ultraweak topology on bounded subsets,
     $t \mapsto \widetilde{\sigma}_t(a)$ is ultraweakly
     measurable at all $t \geq 0$.

     \item Since each $\widetilde{\sigma}_t$ is normal,
     there is a corresponding preadjoint semigroup
     $\{\rho_t\}$ on $\widetilde{\Aa}_*$ given
     by $\rho_t f = f \circ \widetilde{\sigma}_t$,
     as discussed in section \ref{secC0semigroups},
     such that for each $f \in \widetilde{\Aa}_*$,
     $t \mapsto \rho_t(f)$ is weakly measurable
     at all $t \geq 0$.
     
     \item Since $\Hh$ is separable and
     $\widetilde{\Aa} \subset B(\Hh)$, it follows
     that $\widetilde{\Aa}_*$ is a separable Banach
     space.  By section \ref{secC0semigroups},
     the weak measurability
     of $\{\rho_t\}$ is therefore equivalent to its
     weak continuity at times $t > 0$.
     This is then equivalent to the
     ultraweak continuity of $t \mapsto \widetilde{\sigma}_t$.
 \end{enumerate}
\end{proof}

\begin{theorem} \label{thmextensionstrongdilation}
$(\widetilde{\Aa}, \psi \circ i, \widetilde{\E},
\{\widetilde{\sigma}_t\})$ is a strong dilation
of $(\AAA, \{\phi_t\})$.
\end{theorem}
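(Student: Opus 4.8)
The plan is to deduce everything from the already-constructed $C^*$-level strong dilation $(\Aa, i, \E, \{\sigma_t\})$ of Theorem \ref{thmstrongdilation} by transporting its two defining identities across the representation $\psi$ and the compression $\widetilde{\E}$, and then extending from the ultraweakly dense subalgebra $\psi(\Aa)$ to all of $\widetilde{\Aa}$ by normality. Two of the structural ingredients are already in hand: $\{\widetilde{\sigma}_t\}$ is a semigroup of normal unital $*$-endomorphisms of $\widetilde{\Aa}$ by Theorem \ref{thmexistsnormalextension}, hence an e$_0$-semigroup, and it is point-ultraweakly continuous at every $t > 0$ by Theorem \ref{thmultraweakcontinuity}. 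Thus only the retraction and (strong) dilation identities remain to be checked.

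First I would verify that $\widetilde{\E}$ is a retraction with respect to $\psi \circ i$. Using the identity $\widetilde{\E} \circ \psi = \E$ recorded in the construction of $\widetilde{\E}$, together with the fact that $\E$ is a retraction with respect to $i$ in the $C^*$-dilation, one has
\[
\widetilde{\E} \circ (\psi \circ i) = (\widetilde{\E} \circ \psi) \circ i = \E \circ i = \text{id}_\AAA.
\]
In particular $\psi \circ i$ is a unital $*$-homomorphism admitting $\widetilde{\E}$ as a left inverse, hence is injective and so an embedding (and is normal, as is used in the setup of $\widetilde{\E}$).

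The heart of the matter is the strong-dilation identity $\widetilde{\E} \circ \widetilde{\sigma}_t = \phi_t \circ \widetilde{\E}$. On the ultraweakly dense subalgebra $\psi(\Aa) \subset \widetilde{\Aa}$ I would compute, using the covariance relation \ref{eqncovarianceofextension}, the identity $\widetilde{\E}\circ\psi = \E$, and the $C^*$-level strong dilation of Theorem \ref{thmstrongdilation},
\[
\widetilde{\E} \circ \widetilde{\sigma}_t \circ \psi = \widetilde{\E} \circ \psi \circ \sigma_t = \E \circ \sigma_t = \phi_t \circ \E = \phi_t \circ \widetilde{\E} \circ \psi,
\]
so that $\widetilde{\E} \circ \widetilde{\sigma}_t$ and $\phi_t \circ \widetilde{\E}$ agree on $\psi(\Aa)$. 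Both maps are normal --- $\widetilde{\E}$ and $\widetilde{\sigma}_t$ are normal, $\phi_t$ is normal because $\AAA$ is a $W^*$-algebra and $\{\phi_t\}$ a CP$_0$-semigroup, and compositions of normal maps are normal --- and $\psi(\Aa)$ is ultraweakly dense, so the two agree on all of $\widetilde{\Aa}$. Composing on the right with $\psi \circ i$ and invoking the retraction identity then yields the ordinary dilation relation $\phi_t = \widetilde{\E} \circ \widetilde{\sigma}_t \circ (\psi \circ i)$, completing the verification that $(\widetilde{\Aa}, \psi \circ i, \widetilde{\E}, \{\widetilde{\sigma}_t\})$ is a strong dilation.

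I expect no serious obstacle: all the analytic difficulty was absorbed into Theorems \ref{thmexistsnormalextension} and \ref{thmultraweakcontinuity}, and what remains is a density-plus-normality bookkeeping argument. The one point demanding care is to confirm that every map appearing in the displayed chain is genuinely normal, so that agreement on the weakly dense subalgebra $\psi(\Aa)$ propagates to the whole of $\widetilde{\Aa}$; combined with the continuity furnished by Theorem \ref{thmultraweakcontinuity}, this upgrades the conclusion from a bare strong e$_0$-dilation to the sought-after continuous (unital E$_0$-) strong dilation.
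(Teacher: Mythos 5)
Your proposal is correct and follows essentially the same route as the paper: the displayed chain $\widetilde{\E} \circ \widetilde{\sigma}_t \circ \psi = \widetilde{\E} \circ \psi \circ \sigma_t = \E \circ \sigma_t = \phi_t \circ \E = \phi_t \circ \widetilde{\E} \circ \psi$, followed by normality and ultraweak density of $\psi(\Aa)$, is exactly the paper's argument. The retraction identity $\widetilde{\E} \circ (\psi \circ i) = \text{id}_\AAA$ that you verify explicitly is handled in the paper as part of the setup preceding the theorem, so nothing is missing on either side.
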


\begin{proof}
By the definition of
$\widetilde{\E}$, equation \ref{eqncovarianceofextension},
 and theorem \ref{thmstrongdilation},
\begin{align*}
\widetilde{\E} \circ \widetilde{\sigma}_t \circ \psi
&=\widetilde{\E} \circ \psi \circ \sigma_t\\
&= \E \circ \sigma_t \\
&= \phi_t \circ \E\\
&= \phi_t \circ \widetilde{\E} \circ \psi.
\end{align*}
Since both $\phi_t \circ \widetilde{\E}$ and
$\widetilde{\E} \circ \widetilde{\sigma}_t$ are normal, and
since they are equal on the ultraweakly dense subset $\psi(\Aa)
\subset \widetilde{\Aa}$, they must be equal.
\end{proof}

So far, theorem \ref{thmultraweakcontinuity}
leaves open the question whether
$\{\widetilde{\sigma}_t\}$ is point-weakly continuous
at $t = 0$.  I do not know when, if ever, that
would fail to be the case; however, in case it does,
we can remedy the situation by taking a suitable quotient.

\begin{lemma} \label{lemcontinuityat0}
Let $A$ be a separable W$^*$-algebra and $\{\alpha_t\}$
an e$_0$-semigroup on $A$ which is point-weakly
continuous at all $t > 0$.  Then $\alpha_t$
is point-weakly continuous at 0 iff
\[
\bigcap_{t > 0} \ker \alpha_t = \{0\}.
\]
\end{lemma}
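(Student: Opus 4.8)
The plan is to prove the two implications separately, with the forward implication being essentially immediate and the reverse one carrying all the content. For the ``only if'' direction, suppose $\{\alpha_t\}$ is point-weakly continuous at $0$ and let $a \in \bigcap_{t>0}\ker\alpha_t$. Then $\alpha_t(a) = 0$ for every $t > 0$, so letting $t \to 0^+$ and invoking weak continuity at $0$ gives $a = \alpha_0(a) = \text{weak-}\lim_{t\to0^+}\alpha_t(a) = 0$; hence the intersection is trivial.

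For the ``if'' direction I would pass to the preadjoint semigroup. Since each $\alpha_t$ is normal it has a preadjoint $\alpha_{t,*}\colon A_* \to A_*$, $\alpha_{t,*}(f) = f\circ\alpha_t$, and because the $\alpha_t$ are unital $*$-endomorphisms these are contractions forming a (commutative) one-parameter semigroup on the separable Banach space $A_*$, with $\alpha_{(s+t),*} = \alpha_{t,*}\circ\alpha_{s,*}$. The key translation is that, for any fixed $t_0 \geq 0$, point-weak continuity of $\{\alpha_t\}$ at $t_0$ on $A$ is \emph{literally the same statement} as weak continuity of $\{\alpha_{t,*}\}$ at $t_0$ on $A_*$: both unwind to the assertion that $(\alpha_{t,*}f)(a) = f(\alpha_t(a)) \to f(\alpha_{t_0}(a))$ for every $a \in A$ and every $f \in A_*$. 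Thus the hypothesis furnishes weak continuity of $\{\alpha_{t,*}\}$ at every $t > 0$; since $A_*$ is separable and $\{\alpha_{t,*}\}$ is a contraction semigroup, the equivalences recalled in Section \ref{secC0semigroups} upgrade this to strong continuity at every $t > 0$ (weak continuity $\Rightarrow$ weak measurability $\Rightarrow$ strong measurability by separability $\Rightarrow$ strong continuity at positive times).

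It then remains to obtain continuity at $0$. Here I would introduce the closed subspace $X_0 = \{f\in A_* : \|\alpha_{t,*}f - f\| \to 0 \text{ as } t\to 0^+\}$, which is closed because the $\alpha_{t,*}$ are uniformly bounded. For any $s>0$ and $g\in A_*$, strong continuity at $s$ gives $\|\alpha_{t,*}(\alpha_{s,*}g) - \alpha_{s,*}g\| = \|\alpha_{(s+t),*}g - \alpha_{s,*}g\|\to 0$, so $\alpha_{s,*}(A_*)\subseteq X_0$; hence the closed span $M := \overline{\operatorname{span}}\bigcup_{s>0}\alpha_{s,*}(A_*)$ is contained in $X_0$. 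The crux is to show $M = A_*$, which I would establish by an annihilator computation: an element $a\in A=(A_*)^*$ annihilates $\bigcup_{s>0}\alpha_{s,*}(A_*)$ iff $f(\alpha_s(a))=0$ for all $f\in A_*$ and all $s>0$, i.e.\ iff $\alpha_s(a)=0$ for all $s>0$, i.e.\ iff $a\in\bigcap_{s>0}\ker\alpha_s$. Thus $M^\perp = \bigcap_{s>0}\ker\alpha_s = \{0\}$ by hypothesis, so $M$ is dense and $M = A_*$. Consequently $X_0 = A_*$, meaning $\{\alpha_{t,*}\}$ is strongly, hence weakly, continuous at $0$; by the translation above this is exactly point-weak continuity of $\{\alpha_t\}$ at $0$.

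The main obstacle is really the bookkeeping of topologies in the passage to the preadjoint --- making sure that ``point-weak'' on $A$ (weak-$*$, i.e.\ ultraweak) corresponds correctly to ``weak'' on $A_*$, and that separability is genuinely used to cross from measurability to continuity at positive times. Once that dictionary is fixed, the identification $M^\perp = \bigcap_{t>0}\ker\alpha_t$ makes the density hypothesis of the boundary-continuity theorem (\cite{HillePhillips} Theorem 10.5.5), which the $X_0$-argument above reproves in this setting, \emph{exactly} equivalent to the stated kernel condition, and both implications drop out.
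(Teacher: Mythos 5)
Your proposal is correct and follows essentially the same route as the paper's proof: pass to the preadjoint semigroup on the separable predual, reduce continuity at $0$ to density of $\bigcup_{t>0}\alpha_{t,*}(A_*)$, and identify the annihilator of that union with $\bigcap_{t>0}\ker\alpha_t$. The only difference is that you reprove the boundary-continuity criterion (the $X_0$ argument) rather than citing \cite{HillePhillips} Theorem 10.5.5 as the paper does, and you handle the easy ``only if'' direction directly instead of through the chain of equivalences.
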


\begin{proof}
The point-weak continuity of $\alpha_t$ at $t=0$ is equivalent
to the weak (equivalently, strong) continuity at $t=0$ of
the adjoint semigroup $\{\rho_t\}$ on $A_*$ defined
by $(\rho_t f) = f \circ \alpha_t$.  As mentioned
in section \ref{secC0semigroups}, this is equivalent to
the condition
\[
\overline{ \bigcup_{t > 0} \rho_t A_*} = A_*
\]
since $A_*$ is assumed separable.  Now
the annihilator of the left-hand side is
\begin{align*}
\overline{ \bigcup_{t > 0} \rho_t A_*}^\perp
&= \{a \in A \mid \forall t > 0: \ \forall f \in A_*:\
(\rho_t f)(a) = 0\}\\
&= \{a \in A \mid \forall t > 0: \ \forall f \in A_*: \
f(\alpha_t(a)) = 0\}\\
&= \{a \in A \mid \forall t > 0: \alpha_t(a) = 0\}\\
&= \bigcap_{t > 0 } \ker \alpha_t
\end{align*}
because $A_*$ separates points on $A$.
\end{proof}

\begin{theorem} \label{thmkahuna}
Let $\AAA$ be a separable W$^*$-algebra and $\{\phi_t\}$
a CP$_0$-semigroup on $\AAA$.  Then there exists a
unital strong dilation of $\{\phi_t\}$ to an E$_0$-semigroup on a
separable W$^*$-algebra.
\end{theorem}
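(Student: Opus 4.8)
The plan is to assemble the results of this chapter into a W$^*$-dilation that is already continuous away from zero, and then to repair continuity at $t=0$ by passing to a quotient. First I would observe that a separable W$^*$-algebra is countably decomposable and hence carries a faithful normal state $\omega$ (Preface); fixing such an $\omega$, I apply the Sauvageot dilation of Chapter \ref{chapiteratedproducts} to $\{\phi_t\}$, viewed as a cp$_0$-semigroup, to obtain the unital C$^*$-algebraic strong e$_0$-dilation $(\Aa, i, \E, \{\sigma_t\})$. Feeding this into the normal-extension machinery, Theorem \ref{thmexistsnormalextension} produces the normal e$_0$-semigroup $\{\widetilde{\sigma}_t\}$ on $\widetilde{\Aa} = \psi(\Aa)''$, Theorem \ref{thmextensionstrongdilation} shows that $(\widetilde{\Aa}, \psi \circ i, \widetilde{\E}, \{\widetilde{\sigma}_t\})$ is a strong dilation of $(\AAA, \{\phi_t\})$, and Theorem \ref{thmultraweakcontinuity} gives ultraweak continuity of $t \mapsto \widetilde{\sigma}_t$ at all $t > 0$. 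Moreover $\widetilde{\Aa}$ is separable by Proposition \ref{propseparabilityofHh}, and the dilation is unital since $i(\one) = \one$ while $\psi$ and each $\widetilde{\sigma}_t$ are unital. Thus the only property still missing is point-weak continuity at $t=0$.

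To supply it, set $J = \bigcap_{t>0} \ker \widetilde{\sigma}_t$, a weak-* closed two-sided ideal of $\widetilde{\Aa}$. I would first note that $J$ is invariant under each $\widetilde{\sigma}_t$: if $a \in J$ then $\widetilde{\sigma}_s(\widetilde{\sigma}_t(a)) = \widetilde{\sigma}_{s+t}(a) = 0$ for every $s > 0$, so $\widetilde{\sigma}_t(a) \in J$. Consequently each $\widetilde{\sigma}_t$ descends to a normal unital $*$-endomorphism $\overline{\sigma}_t$ of the quotient W$^*$-algebra $\widetilde{\Aa}/J$ (itself separable, being the quotient of a separable W$^*$-algebra by a weak-* closed ideal), and the $\{\overline{\sigma}_t\}$ again form an e$_0$-semigroup, point-weakly continuous at every $t > 0$. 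The point of the quotient is that now $\bigcap_{t>0} \ker \overline{\sigma}_t = \{0\}$: if $[a]$ lies in every $\ker \overline{\sigma}_t$ then $\widetilde{\sigma}_{s+t}(a) = 0$ for all $s,t > 0$, i.e.\ $\widetilde{\sigma}_u(a) = 0$ for all $u > 0$, so $a \in J$ and $[a] = 0$. Lemma \ref{lemcontinuityat0} then upgrades $\{\overline{\sigma}_t\}$ to an E$_0$-semigroup.

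It remains to transport the dilation structure to the quotient, which is where the main care is needed. Writing $q : \widetilde{\Aa} \to \widetilde{\Aa}/J$ for the (normal) quotient map, I set $\overline{i} = q \circ \psi \circ i$ and define $\overline{\E}$ by the requirement $\overline{\E} \circ q = \widetilde{\E}$; the latter is consistent precisely when $J \subseteq \ker \widetilde{\E}$. This inclusion follows from the strong dilation identity $\widetilde{\E} \circ \widetilde{\sigma}_t = \phi_t \circ \widetilde{\E}$: for $a \in J$ one gets $\phi_t(\widetilde{\E}(a)) = \widetilde{\E}(\widetilde{\sigma}_t(a)) = 0$ for all $t > 0$, and letting $t \to 0^+$ (using point-weak continuity of the CP$_0$-semigroup $\{\phi_t\}$ with $\phi_0 = \text{id}$) yields $\widetilde{\E}(a) = 0$. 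Hence $\overline{\E}$ is a well-defined normal retraction. The same continuity-at-zero trick shows $\overline{i}$ is injective: if $\widetilde{\sigma}_t(\psi(i(a))) = 0$ for all $t > 0$, then applying $\widetilde{\E}$ gives $\phi_t(a) = 0$ for all $t > 0$, whence $a = 0$.

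Finally $\overline{\E} \circ \overline{i} = \widetilde{\E} \circ \psi \circ i = \E \circ i = \text{id}_\AAA$ and $\overline{\E} \circ \overline{\sigma}_t = \phi_t \circ \overline{\E}$ descend directly from the corresponding identities on $\widetilde{\Aa}$, and $\overline{i}(\one) = \one$, so $(\widetilde{\Aa}/J, \overline{i}, \overline{\E}, \{\overline{\sigma}_t\})$ is the desired unital strong E$_0$-dilation on a separable W$^*$-algebra. I expect the principal obstacle to be exactly this last bundle of checks --- ensuring that dividing by $J$ does not destroy injectivity of the embedding or the existence of the retraction --- and it is resolved uniformly by the two observations that $J$ is annihilated by $\widetilde{\E}$ and that $\{\phi_t\}$ is continuous at $0$.
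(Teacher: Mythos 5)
Your proposal is correct and follows essentially the same route as the paper: form $J=\bigcap_{t>0}\ker\widetilde{\sigma}_t$, pass to the quotient, invoke Lemma \ref{lemcontinuityat0}, and verify that the embedding survives (via $\phi_t(a)\to a$) and that $J\subseteq\ker\widetilde{\E}$ (via the strong dilation identity and $t\to 0^+$) so the retraction descends. The only additions beyond the paper's own argument are small gap-fills it leaves implicit --- the existence of a faithful normal state from separability, and the explicit verification that $\bigcap_{t>0}\ker\overline{\sigma}_t=\{0\}$ --- both of which are handled correctly.
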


\begin{proof}
The dilation $(\widetilde{\Aa}, \psi \circ i,
\widetilde{\E}, \{\widetilde{\sigma}_t\})$ constructed
in this chapter satisfies all the requirements
except possibly point-ultraweak continuity at $t=0$.

We now let
\[
\RR = \bigcap_{t > 0} \ker \widetilde{\sigma}_t.
\]
This is an ultraweakly closed ideal in $\widetilde{\Aa}$;
we use $\widehat{\Aa}$ for the quotient
$\widetilde{\Aa}/\RR$, which is another
separable W$^*$-algebra.  Because
$\widetilde{\sigma}_t(\RR) \subset
\RR$ for each $t > 0$, we obtain for each
$t > 0$ a map $\widehat{\sigma}_t: \widehat{\Aa} \to\widehat{\Aa}$
characterized by the commutative diagram
\[ \xymatrix{
\widetilde{\Aa} \ar[d] \ar[r]^{\widetilde{\sigma}_t} &
\widetilde{\Aa} \ar[d] \\
\widehat{\Aa} \ar[r]_{\widehat{\sigma}_t} &
\widehat{\Aa}
}\]
Defining also $\widehat{\sigma}_0 = \dss{\text{id}}{\widehat{\Aa}}$, we see that $\{\widehat{\sigma}_t\}$ inherits from
$\{\widetilde{\sigma}_t\}$ the properties of being an
e$_0$-semigroup and of point-ultraweak continuity
at $t > 0$.  Furthermore, \\
$\displaystyle\bigcap_{t > 0}
\ker \widehat{\sigma}_t = \{0\}$ by construction,
so that $\{\widehat{\sigma}_t\}$ is point-weakly continuous at $t=0$ and hence is an E$_0$-semigroup.  Our embedding
of $\AAA$ into $\widehat{\Aa}$ is given by
$q \circ \psi \circ i$, where $\widetilde{\Aa} \sa{q}
\widehat{\Aa}$ is the quotient map; this is injective because,
if $a \in \AAA$ is such that $q(\psi(i(a)))=0$, then
$\psi(i(a)) \in \RR$, so that for all $t > 0$ one has
\begin{align*}
\widetilde{\sigma}_t(\psi(i(a))) &= 0\\
\psi(\sigma_t(i(a))) &= 0\\
\sigma_t(i(a)) &= 0\\
\E[ \sigma_t(i(a))] &= 0\\
\phi_t(\E[i(a)]) &= 0\\
\phi_t(a) &= 0
\end{align*}
and since $\phi_t(a) \to a$ as $t \to 0^+$ this implies
$a = 0$.  To construct our retraction, we first note that
$\RR \subset \ker \widetilde{\E}$; indeed, if
$a \in \RR$ then for all $t > 0$ we have
\begin{align*}
\widetilde{\sigma}_t(a) &= 0\\
\widetilde{\E} \circ \widetilde{\sigma}_t(a) &= 0\\
\phi_t \circ \widetilde{\E}(a) &= 0
\end{align*}
and by letting $t \to 0^+$ we conclude $\widetilde{\E}(a) = 0$.
Hence, $\ker q \subset \ker \widetilde{\E}$, so there is
a unique map $\widehat{\E}: \widehat{\Aa} \to \AAA$
with $\widetilde{\E} = \widehat{\E} \circ q$.
This map satisfies $\widehat{\E} \circ q \circ \psi \circ i
= \widetilde{\E} \circ \psi \circ i = \dss{\text{id}}{\AAA}$,
so it is a retraction with respect to the given embedding.
Finally,
\[
\widehat{\E} \circ \widehat{\sigma}_t \circ
q = \widehat{\E} \circ q \circ \widetilde{\sigma}_t
= \widetilde{\E} \circ \widetilde{\sigma}_t
= \phi_t \circ \widetilde{\E}
= \phi_t \circ \widehat{\E} \circ q,
\]
and since the image of $q$ generates $\widehat{\Aa}$ this
implies $\widehat{\E} \circ \widehat{\sigma}_t =
\phi_t \circ \widehat{\E}$.  We therefore have
a strong dilation of the original semigroup.
\end{proof}

% Thesis Chapter 6 by Dave Gaebler

\chapter{Covariant Filtrations for Sauvageot Dilations} \label{covariantfiltrations}
\section{Introduction}
In the commutative Daniell-Kolmogorov construction, the
retraction $\E: \Aa \to \AAA$ can be interpreted as follows:
Given a function $f$ on the path space $\SSSS$,
$\E f$ is the function on the state space $S$
with the property that $(\E f)(x)$ is the best
guess at the value of $f(p)$ if the only information we
know about path $p$ is that it starts at the point $x$.  One can generalize this: For any time $t \geq 0$ one can
define a retraction $\E_t$ from $\Aa$ to the functions
on \textbf{stopped path space} $S^{[0,t]}$.  The value of
$\E_t f$ at a stopped path $q$ is the best guess at
the value of $f(p)$ if the only information known about
path $p$ is that its history up to time $t$ is given
by $q$.  The conditional expectations $\{E_t\}$
on $\Aa$ corresponding to the retractions $\{\E_t\}$
satisfy the \textbf{filtration property} $E_t E_s
= E_s E_t = E_s$ for $s \leq t$, and the fact that the
process is Markov implies the \textbf{covariance property}
$\sigma_s E_t = E_{t+s} \sigma_s$ for $s,t \geq 0$.

If one is interested in dilations of cp$_0$-semigroups in the context of a theory of noncommutative Markov processes, it may be desirable to construct not only a dilation of the given cp$_0$-semigroup, but also a covariant filtration on the dilation algebra.  This was done
in the paper \cite{Sauvageot}, in a manner that we now relate.

\section{Strong Right-Liberation}
As we shall see shortly, our filtration will depend upon a method of constructing the Sauvageot product $A \star B$ which takes into account the additional information of a conditional expectation on $A$.  In preparation for this we develop a modification of the liberation properties from chapter \ref{chapliberation}.

\begin{definition}
Let $C, \nu, \eE, A, B, \rho$ be as in Definition \ref{defliberated}.
Let $R: A \to A$ be a linear map
such that $\eE \circ R = \eE$,
and let $A_0 \subset A$ denote the range of $R$.  We say
that we say
that $(A, B, \rho, R)$ is
\textbf{strongly right-liberated}
    if $\eE$ is a left $\la A_0, B\ra$-module map
    and for every $n \geq 1$, $a_2, \dots, a_n \in A$, and $b_1, \dots, b_n \in B$
    satisfying $\nu(b_1) = \dots = \nu(b_{n-1}) = 0$
    and every $x \in A_0$,
    \[
    \eE \Big[ x b_1 \mathring{a}_2
    b_2 \dots \mathring{a}_n b_n
    \Big] = 0.
    \]
A \textbf{strongly right-liberating representation} is
the corresponding analogue of Definition \ref{defliberatingrep}.
\end{definition}

As with right and left liberation, strong right liberation
implies an algorithm for calculating $\eE$ on $\la A, B \ra$.
Accordingly, we introduce three types of words in $\la A, B \ra$:
A word of the \textbf{first type} is, as usual, of
the form $b_0 a_1 b_1 \dots a_\ell b_\ell$ for
some $a_1, \dots, a_\ell \in A$ and $b_0, \dots b_\ell \in B$.  A word of the \textbf{second type} is
of the form $b_0 \tilde{a}_1 b_1 \mathring{a}_2
b_2 \mathring{a}_3 b_3 \dots \mathring{a}_\ell b_\ell$,
where we retain the notation $\mathring{a} =
a - \rho(a)$, and introduce the notation
$\tilde{a} = a - R(a)$.  A word of the \textbf{third type} is of the form $b_0 R(a_1)
b_1 \mathring{a}_2 b_2 \mathring{a}_3 b_3 \dots \mathring{a}_\ell b_\ell$.  As previously, words of all
types are said to be in \textbf{standard form} if
$\nu(b_1) = \dots = \nu(b_{\ell-1}) = 0$.

The relevant center-expand-simplify strategy for calculating $\eE$ on words of the first type is
as follows:
\begin{itemize}
    \item Center the $b_i$ for $0 < i < \ell$,
    expand, and collapse.  The result is a sum
    of standard-form words of the first type, each
    with length at most $\ell$.

    \item Center $a_1$ with respect to $R$
    (that is, write $a_1 = \tilde{a}_1
    + R(a_1)$) and the $a_i$ for
    $i > 1$ with respect to $\rho$.
    Expand and collapse.  The result is a sum
    of words of the second and third types,
    with length at most $\ell$, and with
    length equal to $\ell$ only when in standard
    form.

    \item For words not in standard form, un-center
    the $a_i$, expand, and simplify, thereby
    obtaining a sum of words of the first
    type with length strictly less than $\ell$.
    The procedure can then be recursively
    applied to these.

    \item On standard-form words of
    the second type, $\eE$ vanishes by
    the hypothesis of strong right liberation.

    \item On standard-form words of the third
    type, $\eE$ can be calculated
    using the left $\la A_0, B \ra$-module property:
    \[
    \eE [ b_0 R(a_1) w]
    = b_0 R(a_1) \eE[w]
    \]
    where $w$ is a strictly shorter word,
    on which $\eE$ can be calculated recursively.
\end{itemize}

We now formalize this strategy into a recursive definition.
\begin{itemize}
    \item We modify our definitions of right
    collapse and un-collapse to account for the
    fact that $a_1$ is now centered by $R$
    rather than $\rho$.  Given $\ell \geq 1$,
    $\vec{x} \in \WW_\ell$,
    and a subset $\vec{\iota} \subset [\ell-1]$,
    write $\vec{\iota} = (i_1, \dots, i_m)$.  For
    $k = 1, \dots, m$ we define
    $\beta_k = b_{i_k} \prod\limits_{j=i_k+1}^{i_{k+1}-1} \rho(a_j) b_j$
    as before.  However, $\beta_0$ is now
    always equal to $b_0$.
    In the case $1 \in \vec{\iota}$ we define
    the modified right collapse
    $\RC_s(\vec{x}; \vec{\iota})
    = (\beta_0, a_1, \beta_1, \dots, a_{i_m}, \beta_m)$ as before, but in the case
    $1 \notin \vec{\iota}$ we define
    $\RC_s(\vec{x}; \vec{\iota}) =
    (\beta_0, R(a_1), \beta_1, a_{i_1}, \dots, a_{i_m}, \beta_m)$.  Similar remarks apply to
    the modified un-collapse $\UC_s(\vec{x}; \vec{\iota})$.

    \item We also define, for $\ell > 0$ and
     $\vec{x} \in \WW_\ell$, the \textbf{tail}
    $\vec{x}_+$ to be
    the tuple obtained by truncating the
    first two entries.  That is,
    $(b_0, a_1, b_1\dots, a_\ell, b_\ell)_+ = (b_1, a_2, b_2,
    \dots, a_\ell, b_\ell)$.

    \item We now recursively define
    \begin{align}
    \LM_s(b_0) &=
    \RM_s(b_0) = \UM_s(b_0) = b_0\\
    \LM_s(\vec{x}) &= \sum_{\vec{\iota} \,
    \subseteq [\ell]} \RM_s(\LC_s(\vec{x}; \vec{\iota})) \prod_{j \in [\ell+1] \setminus \vec{\iota}} \nu(x_{2j+1})\\
    \RM_s(\vec{x}) &= \sum_{1 \in \vec{\iota} \, \subsetneq [\ell+1]} \UM_s(\RC_s(\vec{x};\vec{\iota})) + \sum_{1 \notin \vec{\iota} \, \subset [\ell+1]} x_1
    R(x_2) \UM_s(\vec{x}_+; \vec{i}-1)\\
    \UM_s(\vec{x}) &= \sum_{\vec{\iota} \,
    \subseteq [\ell+1]} \LM_s(\UC_s(\vec{x};\vec{\iota}))
    \end{align}
    for $\vec{x} \in \WW_{\ell+1}$.
\end{itemize}

We then arrive at the following theorems,
which we state without proof.

\begin{theorem}
Let $(A, B, \rho, R)$ be strongly right-liberated
in $\AAA$ with respect to $\eE, \nu$.  Then
for any $\vec{x} \in \WW_I$,
\[
\eE \Big[ \Pi( \vec{x}) \Big] = \LM_s(\vec{x})
\eE[\one].
\]
\end{theorem}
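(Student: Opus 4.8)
The plan is to follow the proof of Theorem \ref{thmrightlibmoments} almost verbatim, merely replacing its two-stage center-expand-simplify with the three-stage version appropriate to strong right liberation. I would argue by induction on the length $\ell$ of $\vec{x} \in \WW_\ell$, showing that $\eE[\Pi(\vec{x})]$ obeys exactly the recursion that defines $\LM_s(\vec{x})\,\eE[\one]$, so that the two agree once the shared initial condition is checked. For the base case $\ell = 0$ a word is simply $b_0 \in B \subseteq \la A_0, B \ra$, and the left $\la A_0, B\ra$-module property of $\eE$ gives $\eE[b_0] = \eE[b_0\,\one] = b_0\,\eE[\one]$, which matches $\LM_s(b_0) = b_0$.

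For the inductive step I would trace the three-stage procedure already described and verify that each stage reproduces one layer of the recursion. Centering the interior $b_i$ with respect to $\nu$, expanding, and collapsing produces the outer sum defining $\LM_s$, with the scalar factors $\prod \nu(x_{2j+1})$ accounting for the uncentered $b_i$. Centering $a_1$ with respect to $R$ and the remaining $a_i$ with respect to $\rho$ then yields the $\RM_s$ layer: terms in which $a_1$ is replaced by $\tilde{a}_1 = a_1 - R(a_1)$ give words of the second type and correspond to the sum over $1 \in \vec{\iota} \subsetneq [\ell+1]$, handled by the modified collapse $\RC_s$, while terms in which $a_1$ is replaced by $R(a_1)$ give words of the third type. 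On the latter the module property yields $b_0 R(a_1)\,\eE[\,\cdot\,]$, which is precisely the term $x_1 R(x_2)\,\UM_s(\vec{x}_+; \vec{\iota} - 1)$ in the definition of $\RM_s$. Finally, un-centering the $\rho$-centered $a_i$ appearing in the non-standard words produces the $\UM_s$ layer as a sum of strictly shorter words of the first type, to which the induction hypothesis applies.

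The one place where strong right liberation itself enters is the strict inclusion $\vec{\iota} \subsetneq [\ell+1]$ in the $\RM_s$ recursion: the omitted term $\vec{\iota} = [\ell+1]$ corresponds to the fully centered standard-form word of the second type $b_0 \tilde{a}_1 b_1 \mathring{a}_2 b_2 \cdots \mathring{a}_\ell b_\ell$, on which $\eE$ vanishes by the defining hypothesis of strong right liberation. This is exactly parallel to how the strict inclusion in the original $\RM$ recursion encodes the vanishing of $\eE$ on centered words.

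The hard part — really the only point requiring care beyond the earlier proof — is the asymmetric bookkeeping around the leading factor $a_1$. I would need to verify that $a_1$ is consistently split as $\tilde{a}_1 + R(a_1)$ rather than $\mathring{a}_1 + \rho(a_1)$; that the resulting third-type term survives the left-module reduction as $b_0 R(a_1)$ times a moment of the truncated tuple $\vec{x}_+$; and that the modified collapses $\RC_s$ and $\UC_s$ reindex that tail correctly, including the index shift $\vec{\iota} \mapsto \vec{\iota} - 1$. Once this termwise matching is confirmed, the recursions for $\eE[\Pi(\vec{x})]$ and $\LM_s(\vec{x})\,\eE[\one]$ coincide, and with the base case they are equal for all $\ell$.
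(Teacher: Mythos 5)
Your proof is correct and is essentially the argument the paper intends: the theorem is stated there without proof, but the three-stage center-expand-simplify outline immediately preceding it, together with the model proof of Theorem \ref{thmrightlibmoments}, is exactly the induction on word length that you carry out. Your identification of where the two new hypotheses enter --- the vanishing of $\eE$ on standard-form words of the second type accounting for the strict inclusion $\vec{\iota} \subsetneq [\ell+1]$ in the $\RM_s$ recursion, and the left $\la A_0, B \ra$-module property producing the third-type terms $x_1 R(x_2)\,\UM_s(\vec{x}_+; \vec{\iota}-1)$ and the base case $\eE[b_0] = b_0\,\eE[\one]$ --- matches the paper's intended bookkeeping.
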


\begin{corollary} \label{corstronglyrightlibgenerated}
Let $(A, B, \rho, R)$ be strongly right-liberated
in $\AAA$ with respect to $\eE, \nu$.  Then
\[
\eE \Big[ \la A, B \ra \Big] = \eE \Big[
\la A_0, B \ra \Big].
\]
\end{corollary}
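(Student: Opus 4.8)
The plan is to derive this corollary from the moment formula of the preceding theorem in exactly the way that Corollary~\ref{corrightlibgenerated} was derived from Theorem~\ref{thmrightlibmoments}, the only new feature being that the relevant moments now take values in $\la A_0, B \ra$ rather than in $B$, and that the hypothesis on $\eE$ is correspondingly a left $\la A_0, B \ra$-module property rather than a $B$-bimodule property.

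One inclusion is immediate: since $A_0 \subseteq A$ we have $\la A_0, B \ra \subseteq \la A, B \ra$, whence $\eE\big[\la A_0, B \ra\big] \subseteq \eE\big[\la A, B \ra\big]$. For the reverse inclusion I would argue as follows. The words $\Pi(\vec{x})$ of the first type, with $\vec{x} \in \WW_I$, linearly span $\la A, B \ra$ (using that $A$ and $B$ are unital subalgebras, so that adjacent same-type letters may be multiplied together and $b_0, b_\ell$ padded with $\one$), so by linearity of $\eE$ it suffices to show $\eE[\Pi(\vec{x})] \in \eE\big[\la A_0, B \ra\big]$ for each such word. The theorem above gives $\eE[\Pi(\vec{x})] = \LM_s(\vec{x}) \, \eE[\one]$, so the whole matter reduces to two points: that $\LM_s(\vec{x}) \in \la A_0, B \ra$, and that this membership lets us reabsorb the factor $\eE[\one]$. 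The second point is the easy half: if $\LM_s(\vec{x}) \in \la A_0, B \ra$, then because $\eE$ is a left $\la A_0, B \ra$-module map we may write
\[
\LM_s(\vec{x}) \, \eE[\one] = \eE\big[\LM_s(\vec{x}) \cdot \one\big] = \eE\big[\LM_s(\vec{x})\big] \in \eE\big[\la A_0, B \ra\big],
\]
which is exactly what is needed.

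I expect the valued-in-$\la A_0, B \ra$ claim to be the main obstacle, and I would establish it by a simultaneous induction on word length showing that all three functions $\LM_s, \RM_s, \UM_s$ land in $\la A_0, B \ra$. The base case $b_0 \in B$ is clear. In the inductive step the only entries not manifestly belonging to $\la A_0, B \ra$ are the leading $A$-entries produced by the collapse operations; but the modified collapses $\RC_s$ and $\UC_s$ were defined precisely so that the leading letter is centered by $R$, so that it appears either already in the form $R(a_1) \in A_0$, or untouched inside an $\RM_s$/$\UM_s$ evaluation that will itself eventually apply $R$ to it, or paired with a $\tilde{a}_1$-type factor belonging to a standard-form word of the second type on which $\eE$ vanishes. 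Tracking this bookkeeping — confirming that no raw element of $A \setminus A_0$ ever survives into the final moment — is the delicate part. It mirrors, term by term, the center-expand-simplify strategy described before the theorem, in which each first-type word is reduced to standard-form words of the second type (killed by $\eE$) together with standard-form words of the third type $b_0 R(a_1) w$, on which the left module property pulls out the leading factor $b_0 R(a_1) \in \la A_0, B \ra$ and recurses on the strictly shorter word $w$. Once the values-in-$\la A_0, B \ra$ claim is in hand, the corollary follows immediately from the displayed computation above.
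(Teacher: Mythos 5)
Your proposal is correct and follows exactly the route the paper intends: the paper states this corollary (and the preceding moment theorem) without proof, deriving it implicitly from $\eE[\Pi(\vec{x})] = \LM_s(\vec{x})\,\eE[\one]$ in precisely the way Corollary~\ref{corrightlibgenerated} follows from Theorem~\ref{thmrightlibmoments}, and your two key observations --- that the recursion forces $\LM_s(\vec{x})$ to be a polynomial in elements of $B$ and elements $R(a) \in A_0$, hence to lie in $\la A_0, B\ra$, and that the left $\la A_0, B\ra$-module property of $\eE$ then reabsorbs the factor $\eE[\one]$ --- are exactly the content being suppressed. The only caveat is bookkeeping: your inductive claim should be stated for all three functions $\LM_s, \RM_s, \UM_s$ simultaneously (as you indicate), since a raw $a_1 \in A$ can survive one level of the $\RC_s$ collapse when $1 \in \vec{\iota}$ and is only converted to $R(a_1)$ or killed at a deeper stage of the recursion.
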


\begin{corollary} \label{corSRLunique}
Let $\AAA$ be a unital algebra, $A, B \subset \AAA$ unital subalgebras, $\rho: A \to B$ a unital
linear map, $R$ a linear transformation on $A$,
 $\nu: \AAA \to \com$ a unital linear
functional.  Suppose $\eE_1, \eE_2: \AAA \to \AAA$
are linear maps satisfying
\begin{itemize}
    \item $\eE_i \circ \rho = \eE_i$
    \item $\eE_i \circ R =\eE_i$
    \item $(A, B, \rho, R)$ is strongly
    right-liberated with respect to $\eE_i, \nu$
\end{itemize}
for $i = 1,2$.

If $\eE_1[\one] = \eE_2[\one]$,
then $\eE_1 = \eE_2$ on
$\la A, B \ra$.
\end{corollary}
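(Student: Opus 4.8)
The plan is to reduce the statement to the strong right-liberation moment theorem proved just above, exploiting the fact that the moment function $\LM_s$ carries no dependence on $\eE$.

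First I would note that the hypotheses package $(A, B, \rho, R)$ as a strongly right-liberated configuration twice over---with respect to $(\eE_1, \nu)$ and with respect to $(\eE_2, \nu)$---so the preceding moment theorem applies to each, giving for every $\vec{x} \in \WW_I$,
\[
\eE_1[\Pi(\vec{x})] = \LM_s(\vec{x}) \, \eE_1[\one], \qquad
\eE_2[\Pi(\vec{x})] = \LM_s(\vec{x}) \, \eE_2[\one].
\]

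The key point is that the two occurrences of $\LM_s(\vec{x})$ are the same element of $B$. I would verify this by inspecting the recursive definitions of $\LM_s$, $\RM_s$, $\UM_s$ and of the collapse maps $\LC_s$, $\RC_s$, $\UC_s$: each of these is assembled purely from $\rho$, $R$, and $\nu$, with $\eE$ appearing nowhere. Since $\rho$, $R$, and $\nu$ are common to both setups, a single $\LM_s(\vec{x}) \in B$ serves in both identities. Feeding in the hypothesis $\eE_1[\one] = \eE_2[\one]$ then forces $\eE_1[\Pi(\vec{x})] = \eE_2[\Pi(\vec{x})]$ for every $\vec{x} \in \WW_I$.

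To finish, I would recall that the words of the first type $\Pi(\vec{x})$ span $\la A, B \ra$, so that two linear maps agreeing on all such words must agree on the whole subalgebra; hence $\eE_1 = \eE_2$ on $\la A, B \ra$. There is no substantive obstacle in this argument, since the combinatorial content has already been discharged into the moment theorem; the only step demanding any care is the bookkeeping confirmation that $\LM_s$ is a function of $(\rho, R, \nu)$ alone, which lets the two moment identities be compared term by term.
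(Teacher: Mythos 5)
Your argument is correct and is precisely the derivation the paper intends: the corollary is stated immediately after the strong right-liberation moment theorem (which the paper gives without proof), and it follows from that theorem exactly as you describe, since $\LM_s$ is built only from $\rho$, $R$, and $\nu$, and the type-I words $\Pi(\vec{x})$ span $\la A, B \ra$. No gaps; your one point "demanding care" (that $\LM_s$ does not depend on $\eE$) is indeed the whole content of the deduction.
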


\section{Lifting a Retraction to the Sauvageot Product}
We begin this section with a lemma which is of independent
interest.

\begin{proposition} \label{propstinespringretraction}
Let $A_0$ and $A$ be unital C$^*$-algebras, $\iota: A_0 \to A$ a unital embedding, and $\epsilon: A \to A_0$
a unital retraction with respect to $\iota$.  Let $\pi: A_0 \to B(H)$ be a representation.  Then
the minimal Stinespring triple $(K, \psi, V)$ for the map $\pi \circ \epsilon: A \to B(H)$ has the property
that the image of $V$ is invariant under $\psi \circ \iota(A_0)$; that is, $V V^* \psi(\iota(\cdot)) V = \psi(\iota(\cdot)) V$.
\end{proposition}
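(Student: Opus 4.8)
The plan is to recognize that the asserted invariance is exactly the statement that $\iota(A_0)$ lies in the multiplicative domain of the unital completely positive map $\Phi := \pi \circ \epsilon \colon A \to B(H)$, and to exploit the standard description of that domain through the minimal Stinespring dilation. The whole argument then reduces to a single Schwarz-equality computation.

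First I would record the setup. Since $\iota$ is unital we have $\epsilon(\one) = \epsilon(\iota(\one)) = \one$, so $\Phi$ is a unital completely positive map; by the Stinespring theorem recalled in the excerpt, its minimal isometry $V$ is genuinely an isometry, whence $P := VV^*$ is the orthogonal projection of $K$ onto $VH$. The key algebraic identity, valid for every $a \in A$, is
\[
\Phi(a^*a) - \Phi(a)^*\Phi(a) = V^*\psi(a)^*(\one - P)\psi(a)V = \big[(\one-P)\psi(a)V\big]^*\big[(\one-P)\psi(a)V\big],
\]
which follows from $\Phi(a) = V^*\psi(a)V$, the fact that $\psi$ is a $*$-homomorphism (so $\psi(a^*a) = \psi(a)^*\psi(a)$), and the fact that $\one - P$ is a projection. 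Consequently the Schwarz equality $\Phi(a^*a) = \Phi(a)^*\Phi(a)$ holds if and only if $(\one-P)\psi(a)V = 0$, i.e.\ if and only if $\psi(a)VH \subseteq VH$, i.e.\ if and only if $VV^*\psi(a)V = \psi(a)V$.

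The remaining step is to verify the Schwarz equality for $a = \iota(a_0)$ with $a_0 \in A_0$. Using that $\iota$ and $\pi$ are $*$-homomorphisms and that $\epsilon \circ \iota = \text{id}_{A_0}$, I would compute
\[
\Phi(\iota(a_0)^*\iota(a_0)) = \pi\big(\epsilon(\iota(a_0^*a_0))\big) = \pi(a_0^*a_0) = \pi(a_0)^*\pi(a_0),
\]
while $\Phi(\iota(a_0)) = \pi(\epsilon(\iota(a_0))) = \pi(a_0)$ gives $\Phi(\iota(a_0))^*\Phi(\iota(a_0)) = \pi(a_0)^*\pi(a_0)$ as well. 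The two sides coincide, so the equivalence of the previous paragraph yields $VV^*\psi(\iota(a_0))V = \psi(\iota(a_0))V$ for every $a_0 \in A_0$, which is the claim.

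I do not anticipate a serious obstacle here: once the problem is reframed as a multiplicative-domain statement the computation is forced. The only point requiring care is the first display, where one must keep track of the projection $P = VV^*$ and use that $\psi$ respects both products and adjoints. It is worth noting that \emph{minimality} of the Stinespring triple plays no role in this direction; only unitality of $\Phi$ is used, and that solely to guarantee $V$ is an isometry so that $P$ is a projection.
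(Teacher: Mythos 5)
Your proposal is correct and is essentially the paper's own argument: the paper likewise expands $\|(\one - VV^*)\psi(\iota(a_0))Vh\|^2$, uses $V^*\psi(\cdot)V = \pi\circ\epsilon(\cdot)$ and $\epsilon\circ\iota = \mathrm{id}_{A_0}$, and finds the result is $\langle \pi(a_0^*a_0)h,h\rangle - \|\pi(a_0)h\|^2 = 0$. Your multiplicative-domain packaging of the same Schwarz-gap computation, and your observation that only the isometry property of $V$ (not minimality) is used, are accurate but do not change the substance.
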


\begin{proof}
For any $a_0 \in A_0$ and $h \in H$,
\begin{align*}
\| \psi(\iota(a_0)) V h - V V^* \psi(\iota(a_0)) V h\|^2
&= \la \psi(\iota(a_0)) Vh, \psi(\iota(a_0)) Vh \ra\\
&\quad - 2 \realp \la \psi(\iota(a_0)) Vh, V V^* \psi(\iota(a_0)) Vh \ra\\
&\quad+ \la V V^* \psi(\iota(a_0)) Vh, VV^* \psi(\iota(a_0)) Vh \ra\\
&= \la V^* \psi(\iota(a_0^* a_0)) V h, h \ra
- \la V^* \psi(\iota(a_0)) Vh, V^* \psi(\iota(a_0)) Vh \ra\\
&= \la \pi(\epsilon(\iota(a_0^* a_0))) h, h \ra
- \la \pi(\epsilon(\iota(a_0))) h, \pi(\epsilon(\iota(a_0)))h \ra\\
&= \la \pi(a_0^* a_0) h, h \ra - \la \pi(a_0) h, \pi(a_0) h \ra = 0.
\end{align*}
\end{proof}

We now apply this in the context of lifting a Sauvageot retraction.

\begin{theorem} \label{thmliftingretraction}
Let $(A, B, \phi, \omega)$ be a CPC$^*$-tuple (resp.\ CPW$^*$-tuple),
 $A_0$ another unital C$^*$-algebra (resp.\ W$^*$-algebra),
 $A_0 \sa{\iota} A$ a (normal) unital embedding, $A \sa{\epsilon} A_0$
 a corresponding (normal) retraction such that
 $\phi \circ \iota \circ \epsilon = \phi$.
 Forming the Sauvageot
 products of the CP-tuples $(A, B, \phi, \omega)$ and
 $(A_0, B, \phi \circ \iota, \omega)$ with Sauvageot retractions
 $\theta$ and $\theta_0$ respectively, and letting
 $\iota^*$ denote the map $\iota \star \dss{\text{id}}{B}: A_0 \star B \to A \star B$,
 there exists a unique retraction $\epsilon^*: A \star B \to A_0 \star B$ such that the diagrams
 \[ \xymatrix{
 A \ar[r] \ar[d]^\epsilon & A \star B \ar[d]_{\epsilon^*} \\
 A_0 \ar[r] & A_0 \star B
 } \qquad \qquad \xymatrix{
 A \star B \ar[rd]^{\theta} \ar[d]_{\epsilon^*} \\
 A_0 \star B \ar[r]_{\theta_0} &B
 }\]
 commute, and such that $(A \star B, \dss{\psi}{L},
 \dss{\psi}{R}, \theta \circ \dss{\psi}{R},
 \varpi, \epsilon^* \circ \iota^*)$ is a strongly right-liberating
 representation of $(A, B, \phi, \epsilon \circ \iota)$.
\end{theorem}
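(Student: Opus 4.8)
The plan is to realize $A\star B$ and $A_0\star B$ on a common Hilbert space, so that $A_0\star B$ becomes a subalgebra of $A\star B$ and $\iota^*$ the inclusion, to produce $\epsilon^*$ as a compression, and to identify its essential property as an instance of strong right-liberation. First I would fix a right-faithful representation $(H,\Omega,\dss{\pi}{R})$ of $(B,\omega)$ and let $(K,V,\dss{\pi}{L})$ be the minimal Stinespring dilation of $\dss{\pi}{R}\circ\phi$, giving a representation of $(A,B,\phi,\omega)$ on $\Hh=H^-\star L$ with $L=K\ominus VH$. Because $\phi=\phi\circ\iota\circ\epsilon$, this is also the minimal dilation of $(\dss{\pi}{R}\circ\phi\circ\iota)\circ\epsilon$, so Proposition \ref{propstinespringretraction} makes $VH$, and hence $L$, invariant under $\dss{\pi}{L}(\iota(A_0))$; thus $\dss{\pi}{L}\circ\iota$ is block diagonal for $K=VH\oplus L$. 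Setting $K_0=\overline{\dss{\pi}{L}(\iota(A_0))VH}\subseteq K$ and $L_0=K_0\ominus VH\subseteq L$ gives the minimal dilation of $\dss{\pi}{R}\circ\phi\circ\iota$, hence a representation of $(A_0,B,\phi\circ\iota,\omega)$ on $\Hh_0=H^-\star L_0$. The key computation is that $\iota\circ\epsilon$ is a conditional expectation onto $\iota(A_0)$ (Tomiyama), so for $a\in\ker\epsilon$ and $a_0,a_0'\in A_0$ one has $\phi(\iota(a_0')^*a\,\iota(a_0))=\phi(\iota(a_0'^*\epsilon(a)a_0))=0$, which yields $P_{K_0}\dss{\pi}{L}(a)\big|_{K_0}=0$ for every $a\in\ker\epsilon$. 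The inclusion $L_0\hookrightarrow L$ induces, via Remark \ref{remSPsubspaces}, a summand-preserving isometry $W:\Hh_0\to\Hh$ whose range is invariant under the generators of $A_0\star B$. To make both representations faithful I would augment $K$ and $K_0$ by faithful representations in the orthocomplements of $VH$, arranged to preserve $L_0\subseteq L$ and the block structure, and invoke Corollary \ref{corunique} to pass to the canonical products.

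Using the $\hstar$- and $\star$-functoriality of Proposition \ref{propSPopproperties} one computes $W^*\Psi(T)W=\Psi_0(P_{K_0}T|_{K_0})$ (with $\Psi$ as in Proposition \ref{prop3UEreps}), whence, by the annihilation above, $W^*\dss{\psi}{L}(a)W=\dss{\psi}{L}^{0}(\epsilon(a))$ and $W^*\dss{\psi}{R}(b)W=\dss{\psi}{R}^{0}(b)$, where the superscript $0$ denotes the corresponding maps on $\Hh_0$. Since $W\Hh_0$ is invariant under $A_0\star B$, the map $Y\mapsto W^*YW$ is a $*$-isomorphism of $A_0\star B$ (on $\Hh$) onto $A_0\star B$ (on $\Hh_0$), and I would define $\epsilon^*(T)$ to be the unique element of $A_0\star B$ whose $W$-compression equals $W^*TW$. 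Granting that $W^*(A\star B)W\subseteq A_0\star B$ (on $\Hh_0$), this $\epsilon^*$ is a normal unital completely positive retraction with respect to $\iota^*$, and the first square commutes because $\epsilon^*(\dss{\psi}{L}(a))$ corresponds to $W^*\dss{\psi}{L}(a)W=\dss{\psi}{L}^{0}(\epsilon(a))$, i.e. $\epsilon^*\circ\dss{\psi}{L}=(\dss{\psi}{L}\circ\iota)\circ\epsilon$. The one nontrivial point is the containment $W^*(A\star B)W\subseteq A_0\star B$, which is the heart of the theorem.

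To establish that containment I would show $(A,B,\phi,\iota\circ\epsilon)$ is strongly right-liberated with respect to the corner map $T\mapsto WW^*TWW^*$ (or a suitable refinement adapted to the block decomposition) and the vector state $\varpi$. The left $\langle A_0,B\rangle$-module property is immediate from the invariance of $W\Hh_0$, and absorption of $\iota\circ\epsilon$ follows from the covariance just recorded. The remaining axiom — vanishing on standard words of the second type — is a strong-liberation analogue of Proposition \ref{propcornerliberation}, proved by the summand-chasing of Lemma \ref{lemhorriblyunmotivated}: the alternating product of centered factors $\big(\dss{\psi}{L}(a_i)-\dss{\psi}{R}(\phi(a_i))\big)\dss{\psi}{R}(b_i)$ raises the $L$-grading, the $\nu$-centered $\dss{\psi}{R}(b_i)$ push the result out of $W\Hh_0$, and the leading factor from $A_0$ only permutes summands by block-diagonality. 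The representation analogue of Corollary \ref{corstronglyrightlibgenerated} then gives $\eE(A\star B)=\eE(\langle A_0,B\rangle)=\eE(A_0\star B)$, which is exactly the containment. For the triangular diagram, $\theta_0\circ\epsilon^*$ is a retraction of $A\star B$ onto $B$ with respect to $\dss{\psi}{R}$, and the strong right-liberation of $\epsilon^*$ forces $(A\star B,\dss{\psi}{L},\dss{\psi}{R},\dss{\psi}{R}\circ\theta_0\circ\epsilon^*,\varpi)$ to be a right-liberating representation of $(A,B,\phi)$; by the uniqueness of the Sauvageot retraction (Proposition \ref{propuniqueretraction}) it coincides with $\theta$, so $\theta_0\circ\epsilon^*=\theta$.

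Finally, the strongly-right-liberating structure of the resulting sextuple is read off from the construction: $\iota^*\circ\epsilon^*$ is a conditional expectation onto $\langle A_0,B\rangle=A_0\star B$ (hence a module map over it), it absorbs $\iota\circ\epsilon$ by the first diagram, and it annihilates standard second-type words by the vanishing just proved. Uniqueness of $\epsilon^*$ is then Corollary \ref{corSRLunique}: any retraction with the stated properties renders $\iota^*\circ\epsilon^*$ a strongly-right-liberated conditional expectation agreeing with ours at $\one$, so the two agree on the dense subalgebra $\langle A,B\rangle$ and, by normality and continuity, everywhere, whereupon the injectivity of $\iota^*$ pins down $\epsilon^*$. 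I expect the single hardest step to be the verification of strong right-liberation in the third paragraph — pinning down the right corner map and carrying the Lemma \ref{lemhorriblyunmotivated} bookkeeping through the extra block-diagonal left factor contributed by $A_0$; by contrast the covariance of the first diagram, which might at first seem to demand that $\epsilon^*$ depend on $a$ only through $\epsilon(a)$, is rendered automatic by the annihilation $P_{K_0}\dss{\pi}{L}(\ker\epsilon)|_{K_0}=0$ obtained in the first paragraph.
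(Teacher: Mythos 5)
Your overall route is the paper's: realize both products on a common Hilbert space via Stinespring constructions, exhibit $\Hh_0=H^-\star L_0$ as an invariant subspace of $\Hh=H^-\star L$, define $\epsilon^*$ by compression, verify strong right-liberation by summand-chasing in the manner of Lemma \ref{lemhorriblyunmotivated}, and obtain uniqueness from Corollary \ref{corSRLunique}. Your Tomiyama computation $\phi(\iota(a_0')^*a\,\iota(a_0))=\phi(\iota(a_0'^*\epsilon(a)a_0))$, giving $P_{K_0}\dss{\pi}{L}(a)\big|_{K_0}=\dss{\pi}{L}(\iota(\epsilon(a)))\big|_{K_0}$, is exactly the mechanism that makes the first square commute.

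There is, however, a step in your first paragraph that fails. Proposition \ref{propstinespringretraction} concerns the minimal Stinespring triple of $\pi\circ\epsilon$ where $\pi$ is a \emph{representation} of $A_0$; its proof needs $\la\pi(a_0^*a_0)h,h\ra=\|\pi(a_0)h\|^2$ in the last line. You invoke it with the unital completely positive map $\dss{\pi}{R}\circ\phi\circ\iota$ in the role of $\pi$ and conclude that $VH$ is invariant under $\dss{\pi}{L}(\iota(A_0))$. That conclusion is false in general---it amounts to equality in the Schwarz inequality for $\phi\circ\iota$---and it contradicts your very next sentence: if $VH$ were invariant, then $K_0=\overline{\dss{\pi}{L}(\iota(A_0))VH}=VH$ and $L_0=\{0\}$, collapsing $\Hh_0$ to $H$. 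The claimed block-diagonality of $\dss{\pi}{L}\circ\iota$ for $K=VH\oplus L$ is therefore unavailable, and it is load-bearing where you use it in your third paragraph to control the leading $A_0$-factor of a second-type word. The paper's repair is a two-stage Stinespring: first dilate $\dss{\pi}{R}\circ\phi\circ\iota$ to an honest representation $\dss{\pi}{L}^{(0)}$ of $A_0$ on a space $K_0\supseteq VH$, then dilate $\dss{\pi}{L}^{(0)}\circ\epsilon$ to obtain $K\supseteq K_0$; Proposition \ref{propstinespringretraction} now applies legitimately (with $\pi=\dss{\pi}{L}^{(0)}$) and yields invariance of $K_0$---not of $VH$---under $\dss{\pi}{L}(\iota(A_0))$. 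That is the invariance your summand-chasing actually needs (centered factors from $\ker\epsilon$ push $K_0$ into $L\ominus L_0$, while $\dss{\pi}{L}(\iota(A_0))$ preserves $K_0=VH\oplus L_0$), and it also settles the order of the faithfulness augmentations, which your ``arranged to preserve $L_0\subseteq L$'' leaves unresolved. With that substitution the remainder of your argument goes through essentially as in the paper.
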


\begin{proof}
First we show that there exists a faithful representation
$(H, \Omega,\dss{\pi}{R},  K, V, \dss{\pi}{L})$ of
$(A, B, \phi, \omega)$ and an $\dss{\pi}{L}(\iota(A_0))$-invariant subspace $VH \subset K_0 \subset K$ with the property that
$(H, \Omega, \dss{\pi}{R}, K_0, V, \dss{\pi}{L} \circ \iota)$
is a faithful representation of $(A_0, B, \phi \circ \iota, \omega)$.
For this, we begin with a faithful representation $(H, \Omega, \dss{\pi}{R})$ of $(B, \omega)$.  Then, applying Stinespring's
theorem to the completely positive map $\dss{\pi}{R} \circ \phi \circ \iota: A_0 \to B(H)$, we obtain a triple $(K_0, V, \dss{\pi}{L}^{(0)})$
such that $V^* \dss{\pi}{L}^{(0)}(a_0) V = \dss{\pi}{R}(\phi(\iota(a_0)))$.  We may assume WLOG that
$(H, \Omega, \dss{\pi}{R}, K_0, V, \dss{\pi}{L}^{(0)})$ is
faithfully decomposable, as
otherwise we take its direct sum with some faithful representation of $A_0$.  Applying
Stinespring again to the completely positive map
$\dss{\pi}{L}^{(0)} \circ \epsilon: A \to B(K_0)$, we obtain
another triple $(K, W, \dss{\pi}{L})$ such that
$W^* \dss{\pi}{L}(a) W = \dss{\pi}{L}^{(0)}(\epsilon(a))$.  For simplicity, we suppress the notation and regard $K_0$ as a subspace
of $K$; by Proposition \ref{propstinespringretraction},
$(K_0, \dss{\pi}{L}^{(0)})$ is a subrepresentation of $(K,\dss{\pi}{L} \circ \iota)$.  Again we assume without loss of generality
that $(H, \Omega, \dss{\pi}{R}, K, V, \dss{\pi}{L})$ is
faithfully decomposable, as otherwise we replace $(K, \dss{\pi}{L})$
by its direct sum with some faithful representation of $A$.

Having completed this task, we now let $A \star B$ be the Sauvageot product realized by this representation on the Hilbert space
$\Hh = H^- \star L$; by \ref{corunique}, the C$^*$-subalgebra
(resp.\ von Neumann subalgebra) generated by $A_0$ and $B$ acting
on $\Hh$ is $A_0 \star B$.  Letting $\Hh_0 = H^- \star L_0$,
which by Remark \ref{remSPsubspaces} may be regarded as
a subspace of $\Hh$, we see from Proposition \ref{prop3UEreps}
that $\Hh_0$ is $A_0 \star B$-invariant.  It follows that this and the
faithfulness of $(H, \Omega, \dss{\pi}{R}, K_0, V, \dss{\pi}{L}^{(0)})$
that, letting $C: B(\Hh) \to B(\Hh_0)$ be the natural compression,
then the image $C(A_0 \star B)$ is also isomorphic to
$A_0 \star B$, and (modulo this isomorphism) $C$ restricts
to the identity map on $A_0 \star B$.  Then the restriction
of $C$ to $A \star B$, which we denote $\epsilon^*$, is a retraction
satisfying the commuting diagrams in the statement of the theorem.
It remains to check strong right-liberation.  Beginning with
a vector in any summand $L_0^{+ \otimes n} \otimes L_0$
or $H \otimes L_0^{+ \otimes n} \otimes L_0$ and applying
a word of the form
\[
\left[\dss{\psi}{L}(a_1) - \dss{\psi}{R}(\phi(a_1))\right]
\dss{\psi}{R}(b_1) \dots
\left[\dss{\psi}{L}(a_n) - \dss{\psi}{R}(\phi(a_n))\right]
\dss{\psi}{R}(b_n)
\]
produces a vector in a sum of subspaces of the same form.
But then applying an element $\dss{\psi}{R}(a)
- \dss{\psi}{R}(\epsilon(\iota(a)))$ yields a sum of vectors
in subspaces $(L \ominus L_0) \otimes L^{+ \otimes n} \otimes L$,
so that projection onto $\Hh_0 \subset \Hh$ again returns zero.
\end{proof}

The next two results are more easily stated in terms of
conditional expectations, but may be translated into the
language of retractions if desired.

\begin{proposition} \label{propepsilonstarsubalg}
Let $(A, B, \phi, \omega)$ be a CP-tuple, $E$ a conditional
expectation on $A$, $A_1 \subset A$ a C$^*$-subalgebra
(resp.\ W$^*$-subalgebra) such that $E(A_1) \subset A_1$.
Then viewing $A_1 \star B$ as a subalgebra of $A \star B$,
we have $E^*(A_1 \star B) \subset A_1 \star B$.
\end{proposition}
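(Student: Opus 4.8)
The plan is to compute $E^{*}$ on words and then pass to the closure. Write $E=\iota\circ\epsilon$ with range $A_{0}$, so that $E^{*}=\iota^{*}\circ\epsilon^{*}$ is the (normal) conditional expectation onto $A_{0}\star B$ furnished by Theorem~\ref{thmliftingretraction}. I would use three of its features: that $E^{*}\circ\dss{\psi}{L}=\dss{\psi}{L}\circ E$ and $E^{*}\circ\dss{\psi}{R}=\dss{\psi}{R}$; that $E^{*}$ is a left $\langle\dss{\psi}{L}(A_{0}),\dss{\psi}{R}(B)\rangle$-module map, being a conditional expectation onto $A_{0}\star B$ by Tomiyama's theorem; and, crucially, that $E^{*}$ annihilates every standard-form word of the second type
\[
\dss{\psi}{L}(\tilde a_{1})\,\dss{\psi}{R}(b_{1})\bigl[\dss{\psi}{L}(a_{2})-\dss{\psi}{R}(\phi(a_{2}))\bigr]\cdots\bigl[\dss{\psi}{L}(a_{\ell})-\dss{\psi}{R}(\phi(a_{\ell}))\bigr]\dss{\psi}{R}(b_{\ell}),
\]
where $\tilde a_{1}=a_{1}-E(a_{1})$ and $\omega(b_{1})=\dots=\omega(b_{\ell-1})=0$. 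This last fact is exactly what the final paragraph of the proof of Theorem~\ref{thmliftingretraction} establishes: such a word sends a vector of $\Hh_{0}$ into $(L\ominus L_{0})\otimes L^{+\otimes n}\otimes L$, which is orthogonal to $\Hh_{0}$, so the compression $\epsilon^{*}$ to $\Hh_{0}$, and hence $E^{*}$, returns zero.

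Since $A_{1}\star B$ is, under the identification of the statement, the closed subalgebra of $A\star B$ generated by $\dss{\psi}{L}(A_{1})$ and $\dss{\psi}{R}(B)$, and since $E^{*}$ is continuous (and, in the W$^{*}$ case, normal), it suffices to prove that $E^{*}(w)\in A_{1}\star B$ for every word $w=\dss{\psi}{L}(a_{1})\dss{\psi}{R}(b_{1})\cdots\dss{\psi}{L}(a_{n})\dss{\psi}{R}(b_{n})$ with $a_{i}\in A_{1}$ and $b_{i}\in B$; the general assertion then follows by density of the span of such words. I would argue this by induction on $n$ using the center--expand--simplify algorithm for strong right liberation: center the interior $b_{i}$ with respect to $\omega$, center $a_{1}$ with respect to $E$ and each $a_{i}$ $(i\ge 2)$ with respect to $\phi$, then expand and collapse. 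The base case $n=0$ is immediate, since $E^{*}(\dss{\psi}{R}(b))=\dss{\psi}{R}(b)\in A_{1}\star B$.

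The decisive bookkeeping observation is that, because $A_{1}$ is a subalgebra with $E(A_{1})\subseteq A_{1}$, every $A$-slot produced by the algorithm remains in $A_{1}$: adjacent $A_{1}$-entries collapse to products that are again in $A_{1}$, the centered term $\tilde a_{1}=a_{1}-E(a_{1})$ lies in $A_{1}$, and the element $E(a_{1})$ heading a third-type word lies in $A_{0}\cap A_{1}$; likewise every $B$-slot, including the collapsed factors $\dss{\psi}{R}(b_{i-1}\phi(a_{i})b_{i})$, stays in $B$. The three kinds of resulting words are then handled as follows. On standard-form second-type words $E^{*}$ vanishes, by the property recalled above. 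On a standard-form third-type word, the leading factor $\dss{\psi}{R}(b_{0})\dss{\psi}{L}(E(a_{1}))$ lies both in $A_{0}\star B$ and in $A_{1}\star B$, so the module property gives $E^{*}\bigl[\dss{\psi}{R}(b_{0})\dss{\psi}{L}(E(a_{1}))\,y\bigr]=\dss{\psi}{R}(b_{0})\dss{\psi}{L}(E(a_{1}))\,E^{*}[y]$ for the strictly shorter word $y$, to which induction applies. Words not in standard form are re-expanded by uncentering the $\mathring{\phantom{a}}$-terms into strictly shorter first-type words whose entries again lie in $A_{1}$ and $B$, and induction applies once more.

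The only step genuinely deserving attention is the second-type vanishing, which must be asserted for $E^{*}$ itself rather than merely for the Sauvageot retraction $\theta$ (the strong-right-liberating datum of Theorem~\ref{thmliftingretraction} is phrased through $\dss{\psi}{R}\circ\theta$). I would resolve this by reading it off the compression description of $\epsilon^{*}$ in the proof of that theorem, as indicated above. Everything else is the routine propagation of subalgebra membership through the recursion, for which the hypothesis $E(A_{1})\subseteq A_{1}$ together with the multiplicative closure of $A_{1}$ is precisely what is required.
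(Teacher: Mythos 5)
Your argument is correct, but it takes a genuinely different and more hands-on route than the paper does. The paper's proof is a two-line appeal to uniqueness: since $E(A_1) \subset A_1$, the restriction $E|_{A_1}$ is itself a conditional expectation on $A_1$, so Theorem \ref{thmliftingretraction} applied to the tuple $(A_1, B, \phi|_{A_1}, \omega)$ produces a lift $(E|_{A_1})^*$ on $A_1 \star B$, and the uniqueness of maps satisfying the strong-right-liberation characterization (Corollary \ref{corSRLunique}) forces $E^*$ to agree with $(E|_{A_1})^*$ on $A_1 \star B$ --- a map whose range visibly lies in $A_1 \star B$. What you do instead is unwind the center--expand--simplify recursion on words with entries in $A_1$ and $B$ and check that membership in $A_1 \star B$ propagates through every slot; in effect you re-prove the relevant instance of Corollary \ref{corstronglyrightlibgenerated} and of the uniqueness corollary by hand. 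Your version is longer but has two virtues: it isolates exactly where the hypothesis $E(A_1) \subset A_1$ enters (the centered term $a_1 - E(a_1)$ and the head $E(a_1)$ of a third-type word must remain in $A_1$, while the collapsed $B$-slots involving $\phi(a_i)$ need nothing), and it flags a genuine subtlety that the paper's one-liner glosses over, namely that the vanishing on standard-form words of the second type must be asserted for $E^*$ itself rather than for the expectation onto the right copy of $B$, which you correctly recover from the compression description of $\epsilon^*$ in the proof of Theorem \ref{thmliftingretraction}. The paper's route is shorter only because the uniqueness corollary has already absorbed exactly this combinatorics.
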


\begin{proof}
This follows from the uniqueness of Sauvageot products
and of the lifted retractions provided by Theorem \ref{thmliftingretraction}, because the restriction of $E$
is a conditional expectation on $A_1$.
\end{proof}

\begin{corollary} \label{corfunctoriallifts}
Let $(A, B, \phi, \omega)$ be a CP-tuple and
$E_1, E_2$ conditional expectations on $A$ such
that $E_1 \circ E_2 = E_1$ (resp.\ $E_1 \circ E_2 = E_2$).  Then
$E_1^* \circ E_2^* = E_1^*$ (resp.\
$E_1^* \circ E_2^* = E_2^*$).
\end{corollary}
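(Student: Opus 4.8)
The plan is to realize each lifted expectation $E_i^*$ concretely as a compression to an invariant subspace, arrange the relevant subalgebras so that their ranges are nested, and then read off the composition law from the geometry; the one genuinely new verification is that the composite inherits the strong right-liberation structure. Throughout, write $A_i = E_i(A)$ for the subalgebra onto which $E_i$ projects, so that by Tomiyama's theorem $E_i$ is encoded by the inclusion $A_i \hookrightarrow A$ together with the retraction $E_i \colon A \to A_i$, and $E_i^*$ is the conditional expectation of $A \star B$ onto $A_i \star B$ furnished by Theorem \ref{thmliftingretraction}. The first step is to translate each hypothesis into a containment of ranges. If $E_1 \circ E_2 = E_2$, then for every $T$ one has $E_1(E_2 T) = E_2 T$, so $E_2 T$ is fixed by $E_1$ and hence $A_2 \subseteq \mathrm{Fix}(E_1) = A_1$. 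Dually, $E_1 \circ E_2 = E_1$ gives $E_1(I - E_2) = 0$, i.e.\ $\ker E_2 \subseteq \ker E_1$; in the faithful setting relevant to the filtration application one upgrades this (via a Kadison--Schwarz equality argument applied to $E_2 a - a$ for $a \in A_1$) to the range containment $A_1 \subseteq A_2$. By Proposition \ref{propfunctorial} a containment $A_i \subseteq A_j$ lifts to an inclusion $A_i \star B \subseteq A_j \star B$ of subalgebras of $A \star B$.

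The case $E_1 \circ E_2 = E_2$ is then immediate: the image of $E_2^*$ is $A_2 \star B \subseteq A_1 \star B$, and $E_1^*$ restricts to the identity on $A_1 \star B$ because it is a conditional expectation onto that subalgebra. Hence $E_1^* \circ E_2^* = E_2^*$, with no further input.

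The case $E_1 \circ E_2 = E_1$ is the substantive one, and here I would invoke the uniqueness clause of Theorem \ref{thmliftingretraction}: I will show that $E_1^* \circ E_2^*$ is \emph{a} lift of the conditional expectation $E_1 = E_1 E_2$ and therefore coincides with $E_1^*$. That the composite lifts $E_1$ on generators is formal, since $E_2^* \circ \dss{\psi}{L} = \dss{\psi}{L} \circ E_2$ and $E_1^* \circ \dss{\psi}{L} = \dss{\psi}{L} \circ E_1$ give $E_1^* E_2^* \dss{\psi}{L} = \dss{\psi}{L}\, E_1 E_2 = \dss{\psi}{L}\, E_1$, while both factors fix $\dss{\psi}{R}(B)$. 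Using Proposition \ref{propepsilonstarsubalg} with the $E_1$-invariant subalgebra $A_2$ (note $E_1(A_2) \subseteq A_1 \subseteq A_2$), one sees that $E_1^*$ preserves $A_2 \star B$ and restricts there to the lift of $E_1|_{A_2}$; with the idempotence of $E_2^*$ this makes $E_1^* \circ E_2^*$ an idempotent onto $A_1 \star B$. It then remains to check that $E_1^* \circ E_2^*$ satisfies the strong right-liberation property of Corollary \ref{corSRLunique} for the data $(\rho, R) = (\phi, E_1)$, whereupon uniqueness forces $E_1^* \circ E_2^* = E_1^*$.

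The hard part will be exactly this verification of strong right-liberation for the composite. The mechanism I expect to succeed is that, because $A_1 \subseteq A_2$, the leading factor of each characteristic alternating word, with $x \in \dss{\psi}{L}(A_1) \subseteq \dss{\psi}{L}(A_2)$, is annihilated by $E_2^*$ through its \emph{own} strong right-liberation for the data $(\phi, E_2)$, while the left module-map property passes to the composite from that of the two factors. A cleaner, if more laborious, alternative bypasses Corollary \ref{corSRLunique} entirely: by the representation-independence of the product and its lifts (Corollary \ref{corunique}) I may compute in a single faithful representation built from a nested tower of Stinespring dilations adapted to $A_1 \subseteq A_2 \subseteq A$, as in the proof of Theorem \ref{thmliftingretraction}. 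This produces invariant subspaces $\Hh_1 \subseteq \Hh_2 \subseteq \Hh$ (via Remark \ref{remSPsubspaces}) on which $E_i^*$ is realized as compression, and the identity $E_1^* \circ E_2^* = E_1^*$ becomes the fact that compressing to $\Hh_2$ and then to the smaller invariant subspace $\Hh_1$ equals compressing directly to $\Hh_1$. The real obstacle in this route is constructing the simultaneous three-level tower realizing both lifts at once and matching the direction of nesting to the hypothesis.
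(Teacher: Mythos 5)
Your treatment of the case $E_1 \circ E_2 = E_2$ is complete and is in fact more elementary than the paper's: from $E_1(E_2 a) = E_2 a$ you get $A_2 \subseteq \mathrm{Fix}(E_1) = A_1$, hence $A_2 \star B \subseteq A_1 \star B$, and since $E_1^*$ is a conditional expectation onto $A_1 \star B$ it fixes the range of $E_2^*$ pointwise; no appeal to uniqueness or to liberation is needed there. The paper disposes of both cases with a single sentence invoking the uniqueness clause of Theorem \ref{thmliftingretraction} (``$E_1^* \circ E_2^*$ satisfies the same properties''), so for the remaining case your strategy coincides with the paper's.

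For the case $E_1 \circ E_2 = E_1$, however, two things are left hanging. First, the containment $A_1 \subseteq A_2$ is not a formal consequence of $E_1 \circ E_2 = E_1$: for self-adjoint $a \in A_1$ the Schwarz inequality and the $A_1$-bimodule property of $E_1$ give only $E_1\bigl((E_2(a)-a)^2\bigr) = 0$, and to conclude $E_2(a)=a$ you must assume $E_1$ faithful --- a hypothesis absent from the statement of the corollary. This containment is load-bearing in your argument: without it $E_2^*$ (an $A_2 \star B$-bimodule map) need not act as the identity on words generated by $\psi_L(A_1)$ and $\psi_R(B)$, so the composite is not visibly a retraction onto $A_1 \star B$, and your appeal to Proposition \ref{propepsilonstarsubalg} also fails. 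Second, the verification you defer --- that the composite satisfies strong right-liberation for $(A, B, \phi, E_1)$ --- is the entire content of the uniqueness argument, and ``the mechanism I expect to succeed'' is not a proof. It is actually more direct than you anticipate and does not route through $E_2^*$'s own liberation: writing $\eE$ for the conditional expectation onto $\psi_R(B)$, one has $\eE \circ E_1^* \circ E_2^* = \eE \circ E_2^* = \eE$ (each factor satisfies $\eE \circ E_i^* = \eE$ by Theorem \ref{thmliftingretraction}), and the range of $E_1^* \circ E_2^*$ is contained in the range of $E_1^*$, so the left-module property and the vanishing of $\eE$ on the characteristic alternating words with leading factor in that range are inherited verbatim from the strong right-liberation already established for $E_1^*$. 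If you supply that inheritance argument and either add the faithfulness hypothesis or otherwise justify the containment, the proof closes; as written it is a plan with its decisive step asserted rather than proved.
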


\begin{proof}
This follows from the uniqueness of $E_1^*$ (resp.\ $E_2^*$)
in Theorem \ref{thmliftingretraction}, as $E_1^* \circ E_2^*$
satisfies the same properties.
\end{proof}

\section{The Inductive System: Retractions Onto Initial Segments}
In the construction of our inductive system in chapter \ref{chapiteratedproducts}, we produce retractions $\epsilon_\gamma: \AAA_\gamma \to \AAA$ for each $\gamma \in \FF$.  We have now
the tools to produce a consistent family of retractions
$\epsilon_{\beta,\gamma}: \AAA_\gamma \to \AAA_\beta$ whenever
$\beta \leq \gamma$ is an initial segment.

\begin{definition} \label{defretractioninitialsegment}
Let $\gamma = \{t_1, \dots, t_n\} \in \FF$ and
$\beta = \{t_1, \dots, t_m\}$ for some $1 \leq m \leq n$.
We define a retraction $\epsilon_{\beta,\gamma}:
\AAA_\gamma \to \AAA_\beta$ as follows:
\begin{itemize}
    \item We proceed by reverse induction to define
    retractions $\epsilon_{\beta(\ell),\gamma(\ell)}$
    for $\ell = m, \dots, 1$.
    \item In the base
    case $\ell=m$ we define $\epsilon_{\beta(m),\gamma(m)}$
    to be $\epsilon_{\gamma(m)}$ as defined in chapter \ref{chapiteratedproducts}.
    \item Inductively, we define
    \[
    \epsilon_{\beta(\ell),\gamma(\ell)}
    = \epsilon_{\beta(\ell+1),\gamma(\ell+1)}^*
    \]
    in the sense of Theorem \ref{thmliftingretraction}.
\end{itemize}
\end{definition}

\begin{proposition} \label{propconsistentretractionsinitialsegments}
\
\begin{enumerate}
    \item Each map $\epsilon_{\beta,\gamma}$ is a retraction with
    respect to $\dss{f}{\gamma,\beta}$.
    \item If $\beta \leq \gamma
    \leq \delta$ are initial segments, then $\epsilon_{\beta,\gamma} \circ
    \epsilon_{\gamma,\delta} = \epsilon_{\beta,\delta}$.
\end{enumerate}
\end{proposition}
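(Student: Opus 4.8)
The plan is to prove both parts by reverse induction on the index $\ell$, mirroring the inductive definition of $\epsilon_{\beta,\gamma}$ in Definition \ref{defretractioninitialsegment} and leaning on the two universal properties that the lifted retraction $\epsilon^*$ enjoys in Theorem \ref{thmliftingretraction}: namely that it makes the left square commute (so $\epsilon^* \circ \dss{\psi}{L} = \dss{\psi}{L} \circ \epsilon$ on the generating copies of the algebras) and that it is the \emph{unique} retraction producing a strongly right-liberating representation with the prescribed action on $\one$. These uniqueness statements, together with Corollary \ref{corSRLunique}, will be the workhorses.

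For part (1), I would first record that the base case $\ell=m$ holds because $\epsilon_{\gamma(m)}$ is by construction the Sauvageot retraction, hence a retraction with respect to $\iota_{\gamma(m)} = \dss{f}{\gamma(m),\beta(m)}$. For the inductive step, recall that $\dss{f}{\gamma(\ell),\beta(\ell)}$ is built as a Sauvageot product $\dss{f}{\gamma(\ell+1),\beta(\ell+1)} \star \dss{\text{id}}{\AAA}$ (Definition \ref{definductivemorphisms}), and that $\epsilon_{\beta(\ell),\gamma(\ell)}$ is the lift $\epsilon_{\beta(\ell+1),\gamma(\ell+1)}^*$. The retraction property $\epsilon^* \circ \iota^* = \dss{\text{id}}{}$ then follows from the commuting left square in Theorem \ref{thmliftingretraction}: since $\epsilon^*$ restricts to $\epsilon_{\beta(\ell+1),\gamma(\ell+1)}$ on the image of $\AAA_{\gamma(\ell+1)}$ (which is a retraction with respect to $\dss{f}{\gamma(\ell+1),\beta(\ell+1)}$ by induction) and to the identity on the rightmost copy of $\AAA$, the composite $\epsilon_{\beta(\ell),\gamma(\ell)} \circ \dss{f}{\gamma(\ell),\beta(\ell)}$ agrees with the identity on both generating subalgebras $\dss{\psi}{L}(\AAA_{\beta(\ell+1)})$ and $\dss{\psi}{R}(\AAA)$, hence on the algebra they generate by continuity and normality.

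For part (2), the consistency $\epsilon_{\beta,\gamma} \circ \epsilon_{\gamma,\delta} = \epsilon_{\beta,\delta}$ is where I expect the real work. The cleanest route is to reformulate everything in terms of the associated conditional expectations $E_{\gamma} = \dss{f}{\delta,\gamma} \circ \epsilon_{\gamma,\delta}$ on $\AAA_\delta$ and invoke Corollary \ref{corfunctoriallifts}, which says precisely that the lifting operation $E \mapsto E^*$ is functorial with respect to composition of conditional expectations satisfying the filtration identity $E_1 \circ E_2 = E_1$. Concretely, I would verify at the base level that the conditional expectations onto $\AAA_\beta$, $\AAA_\gamma$ inside $\AAA_\delta$ satisfy the filtration relation for nested initial segments, then push this relation up the inductive tower: at each stage $\ell$, the relevant conditional expectations on $\AAA_{\delta(\ell)}$ are the lifts of those on $\AAA_{\delta(\ell+1)}$, and Corollary \ref{corfunctoriallifts} guarantees the filtration identity is preserved under lifting. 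The inductive step thus reduces to checking that the base-case conditional expectations (provided by the Sauvageot retractions $\epsilon_{\gamma(\ell)}$ and Lemma \ref{lemtailretractions}) already satisfy $E_\beta \circ E_\gamma = E_\beta$.

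The main obstacle will be confirming that the hypotheses of Theorem \ref{thmliftingretraction} are genuinely met at each inductive stage — in particular that the compatibility condition $\phi \circ \iota \circ \epsilon = \phi$ holds for the maps arising from $\phi_\tau \circ \epsilon_{\gamma(\ell+1)}$, so that the lift $\epsilon_{\beta(\ell+1),\gamma(\ell+1)}^*$ is well-defined in the first place. This should follow from the fact that $\epsilon_{\beta(\ell+1),\gamma(\ell+1)}$ is itself a retraction composed appropriately with the semigroup maps, but it requires carefully tracking which CP-tuple each lifted retraction is defined over and checking that the factorization-through-$\phi$ condition survives. Once that bookkeeping is in place, both assertions reduce to the uniqueness and functoriality results already established, and no further hard analysis is needed.
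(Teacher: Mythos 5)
Your proposal is correct and follows essentially the same route as the paper: both parts are proved by reverse induction on $\ell$ mirroring Definition \ref{defretractioninitialsegment}, with part (1) reducing to the retraction identity $\epsilon^* \circ \iota^* = \mathrm{id}$ from Theorem \ref{thmliftingretraction} and part (2) reducing via Corollary \ref{corfunctoriallifts} to a base case governed by the Sauvageot retractions. The only place the paper does more explicit work than you sketch is that base case: it is handled by a preliminary reverse induction establishing $\epsilon_{\gamma(\ell)} \circ \epsilon_{\gamma(\ell),\delta(\ell)} = \epsilon_{\delta(\ell)}$, which simultaneously supplies the compatibility condition $\phi \circ \iota \circ \epsilon = \phi$ that you rightly flag as the main bookkeeping obstacle, and the relevant ingredient there is Proposition \ref{propconsistentwith0} (initial segments containing the minimum time) rather than Lemma \ref{lemtailretractions}.
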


\begin{proof} \
\begin{enumerate}
    \item Given $\beta, \gamma$ as in Definition
    \ref{defretractioninitialsegment}, we
    inductively prove that $\epsilon_{\beta(\ell),
    \beta(\ell)} \circ \dss{f}{\gamma(\ell),\beta(\ell)} =
    \text{id}$.  The base case $\epsilon_\gamma \circ
    \iota_\gamma$ was established in Chapter \ref{chapiteratedproducts}, while the inductive
    step is just the property $\epsilon^* \circ \iota^* =
    \text{id}$ from Theorem \ref{thmliftingretraction}.

    \item Suppose $\beta = \{t_1, \dots, t_m\}$,
    $\gamma = \{t_1, \dots, t_{m+n}\}$, and
    $\delta = \{t_1, \dots, t_{m+n+k}\}$.  We first prove inductively
    that $\epsilon_{\gamma(\ell)} \circ \epsilon_{\gamma(\ell),
    \delta(\ell)} = \epsilon_{\delta(\ell)}$ for
    $\ell = m+n, \dots, 1$.  The base case $\ell=m+n$ is
    trivial, since $\gamma(\ell)$ is a singleton
    and hence $\epsilon_{\gamma(\ell),\delta(\ell)} =
    \epsilon_{\delta(\ell)}$ and $\epsilon_{\gamma(\ell)}
    = \dss{\text{id}}{\AAA}$.  Inductively, recall
    that $\AAA_{\gamma(\ell)}$ is the Sauvageot
    product $\AAA_{\gamma(\ell+1)} \star \AAA$ with
    respect to the map $\phi_{t_{\ell+1}-t_\ell}
    \circ \epsilon_{\gamma(\ell+1)}$, and similarly
    $\AAA_{\delta(\ell)} = \AAA_{\delta(\ell+1)}
    \star \AAA_{\delta(\ell)}$ with respect
    to $\phi_{t_{\ell+1}-t_\ell} \circ \epsilon_{\delta(\ell)}$.
    We also have $\epsilon_{\delta(\ell+1)}
    \circ \dss{f}{\delta(\ell+1),\gamma(\ell+1)}
    = \epsilon_{\gamma(\ell+1)}$ by Proposition
    \ref{propconsistentwith0}.  It follows that
    \begin{align*}
    \phi_{t_{\ell+1}-t_\ell} \circ \epsilon_{\delta(\ell+1)}
    \circ \dss{f}{\delta(\ell+1),\gamma(\ell+1)}
    \circ \epsilon_{\gamma(\ell+1),\delta(\ell+1)}
    &= \phi_{t_{\ell+1}-t_\ell} \circ
    \epsilon_{\gamma(\ell+1)} \circ \epsilon_{\gamma(\ell+1),
    \delta(\ell+1)} \\
    &= \phi_{t_{\ell+1}-t_\ell} \circ \epsilon_{\delta(\ell+1)}
    \end{align*}
    and an application of Corollary \ref{corcommutingsquare}
    implies that $\epsilon_{\gamma(\ell)} \circ \epsilon_{\gamma(\ell),\delta(\ell)} = \epsilon_{\delta(\ell)}$.

    As a particular case we obtain $\epsilon_{\gamma(m)}
    \circ \epsilon_{\gamma(m),\delta(m)} = \epsilon_{\delta(m)}$,
    which then becomes the base case of a new induction,
    proving that $\epsilon_{\beta(\ell), \gamma(\ell)}
    \circ \epsilon_{\gamma(\ell),\delta(\ell)}
    = \epsilon_{\beta(\ell),\delta(\ell)}$ for
    $\ell = m, \dots, 1$.  In the case $\ell=m$
    we have $\beta(\ell)$ a singleton, so that
    $\epsilon_{\beta(\ell),\gamma(\ell)} = \epsilon_{\gamma(\ell)}$ and we reduce to the result just established.
    The induction on $\ell$ follows by Corollary \ref{corfunctoriallifts}.
\end{enumerate}
\end{proof}

Fix now any $\tau \geq 0$.  Let $\FF_\tau \subset \FF$
denote the finite subsets of $[0,\tau]$, and for
$\gamma \in \FF$ let $\gamma \la \tau \ra$ denote
$\gamma \cap [0,\tau]$.  Now $\{\AAA_\gamma \mid \gamma \in \FF_\tau\}$
with the same embeddings $\dss{f}{\gamma, \beta}$ from before
is an inductive system; let $\Aa_\tau$ denote its limit,
with embeddings $\dss{g}{\infty,\gamma}: \AAA_\gamma \to \Aa_\tau$.
(We note that we can equivalently obtain $\Aa_\tau$ as the limit of a system indexed by $\FF$, with the object $\AAA_{\gamma \la \tau \ra}$ corresponding to the set $\gamma$ and the morphism
$\dss{f}{\gamma \la \tau \ra, \beta \la \tau \ra}$ corresponding to the inclusion $\beta \leq \gamma$.)
Through the universal property of the limit we obtain an
embedding $h_\tau: \Aa_\tau \to \Aa$ characterized by
\begin{equation} \label{eqndefhtau}
\dss{f}{\infty,\gamma} = h_\tau \circ \dss{g}{\infty,\gamma}
\qquad \text{ for all } \gamma \in \FF_\tau.
\end{equation}

Now a slight adaptation of the proof of Proposition \ref{propconsistentwith0} shows that for any inclusion
$\beta \leq \gamma$ in $\FF$ and any $\ell$ such
that $t_\ell \in \beta$, one has
\[
\epsilon_{\gamma(\ell),\gamma} \circ \dss{f}{\gamma,\beta}
= \dss{f}{\gamma(\ell),\beta(\ell)} \circ \epsilon_{\beta(\ell),
\beta}.
\]
It follows that if $\beta \leq \gamma$ are sets both of which
contain $\tau$, then
\[
\epsilon_{\gamma \la \tau \ra, \gamma} \circ \dss{f}{\gamma, \beta}
= \dss{f}{\gamma \la \tau \ra, \beta \la \tau \ra} \circ
\epsilon_{\beta \la \tau \ra, \beta}.
\]
Holding $\beta$ fixed and taking a limit in $\gamma$, this implies
the existence of a retraction $\E_\tau: \Aa \to \Aa_\tau$
characterized by
\begin{equation} \label{eqndefEtau}
\E_\tau \circ \dss{f}{\infty,\beta} = \dss{g}{\infty, \beta \la \tau \ra} \circ \epsilon_{\beta \la \tau \ra, \beta} \qquad \text{ for all }
\beta \in \FF.
\end{equation}
We let $E_\tau = h_\tau \circ \E_\tau$ denote the corresponding
conditional expectation on $\Aa$, which is characterized by the
property
\begin{equation} \label{eqndefEtauexpectation}
E_\tau \circ \dss{f}{\infty,\beta} = \dss{f}{\infty, \beta \la \tau \ra} \circ \epsilon_{\beta \la \tau \ra, \beta} \qquad \text{ for all }
\beta \in \FF.
\end{equation}

\newpage
\begin{proposition}[Filtration] \label{propfiltration}
For all $s \leq t$, $E_s E_t = E_s = E_t E_s$.
\end{proposition}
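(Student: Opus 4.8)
The plan is to verify both equalities on the dense $*$-subalgebra $\Aa_0 = \bigcup_{\beta \in \FF} \dss{f}{\infty,\beta}(\AAA_\beta)$ and then pass to the closure, using that each $E_\tau$, being a conditional expectation, is contractive. Because the inductive system is directed (any $\beta_1,\beta_2 \in \FF$ embed into $\beta_1 \cup \beta_2$), every element of $\Aa_0$ lies in a single image $\dss{f}{\infty,\beta}(\AAA_\beta)$; thus it suffices to evaluate both compositions on an arbitrary $\dss{f}{\infty,\beta}(a)$ with $a \in \AAA_\beta$. Throughout I will use that $\beta\la\tau\ra = \beta \cap [0,\tau]$ is always an \emph{initial segment} of $\beta$, so that the retractions $\epsilon_{\beta\la\tau\ra,\beta}$ of Definition \ref{defretractioninitialsegment} are defined and the characterization (\ref{eqndefEtauexpectation}) applies.

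First I would compute $E_s E_t$. Applying (\ref{eqndefEtauexpectation}) twice gives
\[
E_s E_t \, \dss{f}{\infty,\beta}(a)
= \dss{f}{\infty,(\beta\la t\ra)\la s\ra}\Big(\epsilon_{(\beta\la t\ra)\la s\ra,\,\beta\la t\ra}\big(\epsilon_{\beta\la t\ra,\beta}(a)\big)\Big).
\]
The key observations are that $s \le t$ forces $(\beta\la t\ra)\la s\ra = \beta \cap [0,t] \cap [0,s] = \beta\la s\ra$, and that $\beta\la s\ra \le \beta\la t\ra \le \beta$ are nested initial segments. Proposition \ref{propconsistentretractionsinitialsegments}(2) then collapses the two retractions,
\[
\epsilon_{\beta\la s\ra,\,\beta\la t\ra} \circ \epsilon_{\beta\la t\ra,\beta} = \epsilon_{\beta\la s\ra,\beta},
\]
so that $E_s E_t \, \dss{f}{\infty,\beta}(a) = \dss{f}{\infty,\beta\la s\ra}\big(\epsilon_{\beta\la s\ra,\beta}(a)\big) = E_s \, \dss{f}{\infty,\beta}(a)$. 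This yields $E_s E_t = E_s$ on $\Aa_0$, hence everywhere by contractivity.

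For $E_t E_s$, the same two applications of (\ref{eqndefEtauexpectation}) produce the inner index $(\beta\la s\ra)\la t\ra$, which again equals $\beta\la s\ra$ since $s \le t$; the retraction that appears is therefore $\epsilon_{\beta\la s\ra,\beta\la s\ra}$, the identity (a retraction of a set onto itself). Hence $E_t E_s \, \dss{f}{\infty,\beta}(a) = \dss{f}{\infty,\beta\la s\ra}\big(\epsilon_{\beta\la s\ra,\beta}(a)\big) = E_s \, \dss{f}{\infty,\beta}(a)$, giving $E_t E_s = E_s$ as well. I expect the only genuine content to be the transitivity of the initial-segment retractions supplied by Proposition \ref{propconsistentretractionsinitialsegments}(2), combined with the elementary fact that intersecting with $[0,s]$ and $[0,t]$ in either order returns $\beta\la s\ra$ when $s \le t$; everything else is bookkeeping with the defining property (\ref{eqndefEtauexpectation}) and the standard reduction to a dense subalgebra.
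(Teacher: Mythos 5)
Your proposal is correct and is essentially the paper's own argument: both rest on the set identities $\beta\la s\ra\la t\ra = \beta\la s\ra = \beta\la t\ra\la s\ra$ for $s \leq t$, the characterization of $E_\tau$ on the images $\dss{f}{\infty,\beta}(\AAA_\beta)$, and the transitivity $\epsilon_{\beta\la s\ra,\beta\la t\ra} \circ \epsilon_{\beta\la t\ra,\beta} = \epsilon_{\beta\la s\ra,\beta}$ from Proposition \ref{propconsistentretractionsinitialsegments}, followed by the density/contractivity extension. The only difference is cosmetic: you evaluate pointwise on elements while the paper writes the same computation as equalities of composed maps.
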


\begin{proof}
Since $s \leq t$, we have $\beta \la s \ra \la t \ra
= \beta \la s \ra = \beta \la t \ra \la s \ra$ for all
$\beta \in \FF$.  Then
\begin{align*}
E_s \circ E_t \circ \dss{f}{\infty,\beta} &=
E_s \circ \dss{f}{\infty,\beta \la t \ra} \circ \epsilon_{\beta \la t \ra, \beta}\\
&= \dss{f}{\infty, \beta \la s \ra} \circ \epsilon_{\beta \la s \ra, \beta \la t \ra} \circ \epsilon_{\beta \la t \ra, \beta}\\
&= \dss{f}{\infty, \beta \la s \ra} \circ \epsilon_{\beta\la s \ra, \beta} \\
&= E_s \circ \dss{f}{\infty, \beta}
\end{align*}
and
\begin{align*}
E_t \circ E_s \circ \dss{f}{\infty,\beta} &=
E_t \circ \dss{f}{\infty, \beta \la s \ra} \circ
\epsilon_{\beta \la s \ra, \beta} \\
&= \dss{f}{\infty, \beta \la s \ra} \circ \epsilon_{\beta \la s \ra, \beta}\\
&= E_s \circ \dss{f}{\infty,\beta}
\end{align*}
and as the images of the $\dss{f}{\infty,\beta}$ generate
$\Aa$, this implies $E_t \circ E_s = E_s = E_s \circ E_t$.
\end{proof}

\begin{proposition}[Covariance] \label{propcovariance}
For any $s,t \geq 0$,
\[
\sigma_s E_t = E_{t+s} \sigma_s.
\]
\end{proposition}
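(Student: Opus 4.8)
The plan is to verify the identity on the norm-dense subalgebra of $\Aa$ generated by the images of the embeddings $\dss{f}{\infty,\beta}$, $\beta \in \FF$. Since $\sigma_s E_t$ and $E_{t+s}\sigma_s$ are both bounded (indeed contractive) linear maps, agreement on this generating set forces $\sigma_s E_t = E_{t+s}\sigma_s$, exactly as in the proof of Proposition \ref{propfiltration}. So it suffices to show, for each $\beta \in \FF$, that $\sigma_s \circ E_t \circ \dss{f}{\infty,\beta} = E_{t+s} \circ \sigma_s \circ \dss{f}{\infty,\beta}$.

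First I would record the two translation facts I need. The combinatorial one is that $(\beta+s)\la t+s\ra = \beta\la t\ra + s$: an element $x \in \beta$ contributes $x+s$ to $\beta+s$, and $x+s \leq t+s$ precisely when $x \leq t$, so intersecting with $[0,t+s]$ and shifting commute. The algebraic one is the translation invariance of the retractions onto initial segments, $\epsilon_{\beta+s,\gamma+s} = \epsilon_{\beta,\gamma}$, which I would prove by the same reverse induction used in Definition \ref{defretractioninitialsegment}: the base case is the invariance $\epsilon_{\gamma+s} = \epsilon_\gamma$ (a consequence of $\AAA_{\gamma+s}=\AAA_\gamma$ together with the fact that the Sauvageot products defining $\epsilon_\gamma$ depend only on the gaps of $\gamma$), and the inductive step uses that the lifted retraction of Theorem \ref{thmliftingretraction} is uniquely determined by data that is itself translation invariant. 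Combined with the already-noted relations $\dss{f}{\gamma+s,\beta+s}=\dss{f}{\gamma,\beta}$ and $\dss{f}{\infty,\gamma+s} = \sigma_s\circ\dss{f}{\infty,\gamma}$, these give exactly the invariances the computation consumes.

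The computation is then immediate. For the left side, applying Equation \ref{eqndefEtauexpectation} and then Definition \ref{defsigmat},
\[
\sigma_s \circ E_t \circ \dss{f}{\infty,\beta}
= \sigma_s \circ \dss{f}{\infty, \beta\la t\ra} \circ \epsilon_{\beta\la t\ra,\beta}
= \dss{f}{\infty, \beta\la t\ra + s} \circ \epsilon_{\beta\la t\ra,\beta}.
\]
For the right side, applying Definition \ref{defsigmat} first and then Equation \ref{eqndefEtauexpectation},
\[
E_{t+s} \circ \sigma_s \circ \dss{f}{\infty,\beta}
= E_{t+s} \circ \dss{f}{\infty, \beta+s}
= \dss{f}{\infty, (\beta+s)\la t+s\ra} \circ \epsilon_{(\beta+s)\la t+s\ra,\, \beta+s}.
\]
Substituting $(\beta+s)\la t+s\ra = \beta\la t\ra + s$ and then $\epsilon_{\beta\la t\ra + s,\, \beta + s} = \epsilon_{\beta\la t\ra,\beta}$ turns the right side into $\dss{f}{\infty, \beta\la t\ra + s} \circ \epsilon_{\beta\la t\ra,\beta}$, which matches the left side.

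I expect the only genuine work to be the translation invariance $\epsilon_{\beta+s,\gamma+s} = \epsilon_{\beta,\gamma}$; everything else is bookkeeping with the defining equations. This invariance is intuitively clear, since the whole inductive construction sees only the successive gaps $t_{k+1}-t_k$ and never the absolute times, but making it rigorous requires tracking it through both the Sauvageot product (for $\epsilon_\gamma$) and the lifting of Theorem \ref{thmliftingretraction} (for the general $\epsilon_{\beta,\gamma}$), invoking the respective uniqueness clauses to identify the shifted retraction with the unshifted one.
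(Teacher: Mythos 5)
Your proof is correct and takes essentially the same route as the paper's: both reduce to checking the identity on the images of the $\dss{f}{\infty,\beta}$, invoke the two translation facts $(\beta+s)\la t+s\ra = \beta\la t\ra + s$ and $\epsilon_{\beta+s,\gamma+s} = \epsilon_{\beta,\gamma}$, and then carry out the same bookkeeping computation with Equation \ref{eqndefEtauexpectation} and Definition \ref{defsigmat}. The only difference is that the paper simply asserts the translation invariance of the retractions because the construction depends only on time differences, whereas you sketch the fuller inductive justification through Theorem \ref{thmliftingretraction}; that is a reasonable elaboration, not a different approach.
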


\begin{proof}
First, we note that for any initial segment $\beta \leq \gamma$ in $\FF$
and any $s \geq 0$, $\epsilon_{\beta + s, \gamma+s}
= \epsilon_{\beta, \gamma}$, since the definition of $\epsilon_{\beta,\gamma}$ depends only on the time differences in $\gamma$.  We also have $\gamma \la t \ra +s
= (\gamma + s) \la t+s \ra$ for any $s,t \geq 0$ and
$\gamma \in \FF$.  Then
\begin{align*}
E_{t+s} \circ \sigma_s \circ \dss{f}{\infty,\gamma}
&= E_{t+s} \circ \dss{f}{\infty,\gamma+s}\\
&= \dss{f}{\infty, (\gamma+s) \la t+s \ra} \circ \epsilon_{(\gamma+s) \la t+s \ra, \gamma +s}\\
&= \dss{f}{\infty, \gamma \la t \ra +s} \circ
\epsilon_{\gamma \la t \ra+s, \gamma +s}\\
&= \dss{f}{\infty, \gamma \la t \ra + s} \circ
\epsilon_{\gamma \la t \ra, \gamma}\\
&= \sigma_s \circ \dss{f}{\infty, \gamma \la t \ra}
\circ \epsilon_{\gamma \la t \ra, \gamma}\\
&= \sigma_s \circ E_t \circ \dss{f}{\infty,\gamma}
\end{align*}
which implies the result.
\end{proof}

\section{Covariant Filtrations on W$^*$-Dilations?}
So far we have produced a filtration of conditional
expectations $\{E_t\}_{t \geq 0}$ on the C$^*$-dilation algebra
$\Aa$, which is covariant with respect to the semigroup $\{\sigma_t\}$.  In chapter \ref{chapcontinuous}, we showed how, when our initial semigroup acts on a W$^*$-algebra, we can modify the dilation to achieve a W$^*$-algebra $\widetilde{\Aa}$ and a continuous semigroup of normal
endomorphisms $\{\tilde{\sigma}_t\}$ of $\widetilde{\Aa}$.  It is
natural, then, to seek a filtration $\{\widetilde{E}_t\}$ of
normal conditional expectations on $\widetilde{\Aa}$ which is
covariant with respect to $\{\tilde{\sigma}_t\}$, which is continuous
 in the sense that $t \mapsto E_t(a)$ is strongly continuous for fixed $a \in \widetilde{\Aa}$, and which
is also related to our C$^*$-filtration by the diagram
\[ \xymatrix{
\Aa \ar[r]^{E_t} \ar[d]_\psi & \Aa \ar[d]^\psi \\
\widetilde{\Aa} \ar[r]_{\widetilde{E}_t} & \widetilde{\Aa}
}\]
This is very similar in spirit to the question of how to define
the maps $\tilde{\sigma}_t$, which was addressed in Theorem \ref{thmexistsnormalextension}.  A strategy for answering it through similar methods would be as follows:
\begin{itemize}
    \item For each $\tau \geq 0$, define ``$\tau$-moment polynomials''
    $\Ss_\tau(\vec{t}; \vec{a})$ by modifying the recursion
    in Definition \ref{defmomentpolynomials}.  The point of this
    would be to show that
    \[
    \Ss_\tau(\vec{t}; \vec{a})
    = \E_\tau \big[ \sigma_{t_1}(i(a_1)) \cdots
    \sigma_{t_n}(i(a_n)) \big].
    \]
    These $\tau$-moment polynomials should also be weakly
    continuous in each entry of $\vec{a}$, and
    jointly strongly continuous in $\vec{t}$ and
    $\vec{a}$ subject to the non-crossing
    restriction (indeed, this restriction
    may only be needed among those times greater than
    or equal to $\tau$) and possibly with an additional
    restriction that times not cross $\tau$.  Finally,
    for any fixed $\vec{t}$ and $\vec{a}$,
    $\Ss_\tau(\vec{t}; \vec{a})$ should be strongly
    continuous in $\tau$ for those $\tau$ not equal to
    any entry of $\vec{t}$.

    \item In the spirit of Lemma \ref{lemexistsQ}, one
    could find for each $y,z \in \PP$ and $t \geq 0$ elements
    $y_0, z_0 \in \PP$ and a normal linear map $Q$ on $\AAA$ such that
    $\E [ y E_t(x) z] = Q \circ \E[y_0 x z_0]$ for all
    $x \in \Aa$.

    \item As in Theorem \ref{thmexistsnormalextension}, one
    would have for all $x \in \Aa$, $y_0,z_0 \in \PP$,
    and $\xi', \eta' \in H$ that
    \[
    \la \psi(E_t(x)) \psi(y) V \xi', \psi(z) V \eta' \ra
    = \la Q (\E[z_0^* \psi(x) y_0]) \xi', \eta'\ra
    \]
    and could therefore define $\widetilde{E}_t$ by its sesquilinear form
    \[
    \la \widetilde{E}_t(X) \psi(y) V \xi', \psi(z) V \eta'
    \ra = \la Q(\widetilde{\E}[z_0^* X y_0]) \xi', \eta'.
    \]
\end{itemize}

At the time of thesis submission (March 28, 2013), I have not verified the success of this approach.

% Thesis Chapter 7 by Dave Gaebler

\chapter{Product Systems} \label{chapproductsystems}

\section{Introduction}
For a Hilbert space $H$, the Fock space
\[
\FF(H) = \bigoplus_{n=0}^\infty H^{\otimes n}
\]
can be understood as follows: The collection $\{H^{\otimes n}\}$
forms a \textbf{bundle} of Hilbert spaces over $\N$, and furthermore,
a bundle which \textbf{tensors associatively}: there is a family
of unitary equivalences $H^{\otimes n} \otimes H^{\otimes m} \to H^{\otimes (m+n)}$ which compose in such a way that \\
$H^{\otimes n} \otimes H^{\otimes m} \otimes H^{\otimes k}
\to H^{\otimes (m+n+k)}$ is unambiguous.  The Fock space
then consists of the \textbf{square summable sections} of this bundle.

When formulated in this way, one can naturally define a
``continuous analogue of Fock space'' (\cite{ArvesonContinuousAnalogues1}-\cite{ArvesonContinuousAnalogues4}),
called a \textbf{product system of Hilbert spaces}, to be a bundle of Hilbert spaces over $(0,\infty)$ which tensors associatively.  As usual when one forms bundles of
Hilbert spaces, measurability hypotheses come into play, of which we omit the details here.  (The analogue of Fock space is
not the bundle itself, but rather the associated Hilbert space of its square-integrable sections.)  Without stating all the results precisely, here are some of the relevant aspects of the theory.
\begin{enumerate}
    \item Product systems of Hilbert spaces are naturally associated
    with E$_0$-semigroups; indeed, once appropriate morphisms have
    been defined, there is an equivalence of categories between
    (equivalence classes of) product systems of Hilbert spaces
    on the one hand, and
    E$_0$-semigroups on $B(H)$ on the other.

    \item There are several ways to construct a product system
    from a given E$_0$-semigroup, one of which we mention is
    the following technique
    of Arveson: Given the semigroup $\{\alpha_t\}$ on $B(H)$,
    form the intertwining spaces
    \[
    E_t = \{X \in B(H) \mid \forall Y \in B(H): \
    XY = Y \alpha_t(X).\}
    \]
    A straightforward calculation shows that for $X,Y \in E_t$,
    the operator $X^* Y$ must commute with everything in $B(H)$,
    so it corresponds to a scalar which we define to be
    $\la X, Y\ra$.  Furthermore, $E_t$ turns out to be complete
    in this inner product, hence a Hilbert space; moreover, these
    spaces ``tensor'' associatively, where for $X \in E_t$ and
    $Y \in E_s$ the ``tensor product'' $X \otimes Y \in E_{t+s}$
    is just the composition of operators $XY$.

    \item There is a classification of product systems into
    types I, II, and III, similar in spirit to the type theory
    of von Neumann algebras.  The classification is based
    on the notion of a \textbf{unit} for a product system
    $\{E_t\}$, which is a family of (unit) vectors $u_t \in E_t$
    that follow the given embeddings, that is, such that
    $u_t \otimes u_s$ is identified with $u_{s+t}$.  If
    the units ``span'' the product system in the appropriate sense,
    it is type I; if there exists at least one unit but they
    do not span the system, it is type II; if there are no
    units, it is type III.  Type I systems are further classified
    according to their \textbf{index}, which is a
    number in $\N \cup \{\infty\}$ defined as the dimension
    of a certain Hilbert space associated to the set
    of units, and equal to the index
    (as defined by Powers in \cite{PowersIndexTheory}) of the
    associated E$_0$-semigroup.
\end{enumerate}

\section{Hilbert C$^*$-Modules and Correspondences}
Given a C$^*$-algebra $A$, a \textbf{Hilbert $A$-module} is
a right $A$-module $E$ with an ``$A$-valued inner product''
such that $E$ is complete in the associated norm.  A Hilbert
space is precisely a Hilbert $\com$-module.  As
 another notable example, if $X$ is a locally compact Hausdorff space,    a vector bundle over $X$ (in which each fiber
    is a closed subspace of some fixed Hilbert space)
    is a Hilbert $C(X)$-module; for this reason Hilbert C$^*$-modules are sometimes conceptualized as
    ``noncommutative vector bundles.''  The basic
    theory can be found in the seminal papers \cite{Paschke}
    \cite{Rieffel}, \cite{Kasparov} and the more recent
    sources \cite{Lance}, \cite{ManuilovTroitsky}, and
    \cite{RaeburnWilliams}.  Among the notable features
    are the replacement of bounded operators on $E$
    with the more restrictive notion of \textbf{adjointable}
    operators, the set of which is denoted $\LL(E)$ and
    forms a C$^*$-algebra; if $A$ is a W$^*$-algebra and
    $E$ is \textbf{self-dual} (a property not enjoyed by
    all C$^*$-modules), then $\LL(E)$ is a W$^*$-algebra.
    If $A$ is a W$^*$-algebra and $E$ is not self-dual, one
    typically works instead with its \textbf{self-dual completion}.

Given C$^*$-algebras $A$ and $B$, a Hilbert $B$-module equipped
with a left action of $A$ (by which one means a *-homomorphism
from $A$ to $\LL(E)$) is called an \textbf{$(A,B)$-correspondence}.
As with balanced tensor products of bimodules over rings, one can form the tensor product (sometimes called the \textbf{internal tensor product}) of an $(A,B)$-correspondence with a $(B, C)$-correspondence to obtain an $(A, C)$-correspondence.  In particular, given
an $(A,A)$-correspondence $E$, one can form tensor powers
$E^{\otimes n}$, and thereby also the Fock correspondence $\FF(E) = \bigoplus_{n=0}^\infty E^{\otimes n}$.  More generally, one
can form a \textbf{product system of $(A,A)$-correspondences}
in a fashion analogous to the last section.

As outlined above, Arveson's intertwining technique produces a product system of Hilbert spaces from an E$_0$-semigroup on $B(H)$.  When
one considers an E$_0$-semigroup on a general von Neumann algebra $\MM$, however, the same technique produces a product system of correspondences over the commutant $\MM'$.
Hence, product systems of correspondences arise naturally
in the study of E$_0$-semigroups on von Neumann algebras.

The classification theory of product systems of correspondences is more complicated than that of product systems of Hilbert spaces, in part because it is more complicated even to define what a unit is.  Following \cite{SkeideWhiteNoises}, we define a unit to be a family
of elements $\xi_t \in E_t$ which tensor associatively, a
\textbf{unital} unit to be one for which $\la \xi_t, \xi_t \ra = \one$
for all $t$ (this is not automatic even with the hypothesis
that $\|\xi_t\| = 1$), and a \textbf{central} unit to be one for which
the left and right actions of $\MM$ agree, i.e. $m \cdot \xi_t
= \xi_t \cdot m$ for all $t$ and all $m \in \MM$.  Central unital units are of particular importance in classification theory; fortunately, it is known that for product systems of von Neumann modules, the existence of a central \textbf{contractive} unit (meaning $\la \xi_t, \xi_t \ra \leq \one$ for all $t$) implies the existence of a central unital unit.
A product system having such a unit is called either \textbf{spatial} or \textbf{non-type-III}, though sometimes those terms are distinguished.

\section{Units for Product Systems Associated With Sauvageot Dilations}
In this section we present some calculations toward the construction
of a unit for the product system arising from a Sauvageot dilation.
For convenience we work with the dilation $(\widetilde{\Aa}, \{\widetilde{\sigma}_t\})$ instead of its quotient $(\widehat{\Aa},
\{\widehat{\sigma}_t\})$.
\begin{itemize}
    \item Define for each $t \geq 0$ an operator  $U_t \in B(\Hh)$
    by its action on $\psi(\PP) VH$:
    \[
    U_t \psi(\sigma_{t_1}(i(a_1))) \cdots
    \psi(\sigma_{t_n}(i(a_n))) V h = \psi(\sigma_{t_1+t}(i(a_1)))
    \cdots \psi(\sigma_{t_n+t}(i(a_n))) Vh.
    \]

    \item We omit the verification that this extends to
    a well-defined map on the linear span of $\psi(\PP)VH$,
    and show its contractivity.  For simplicity, consider
    an element of $\psi(\PP)VH$, and compute
    \begin{align*}
    \left\| U_t \psi(\sigma_{t_1}(i(a_1))) \cdots
    \psi(\sigma_{t_n}(i(a_n))) V h\right\|^2&=
    \left \|\psi(\sigma_{t_1+t}(i(a_1))) \cdots
    \psi(\sigma_{t_n+t}(i(a_n))) V h\right\|^2\\
    &= \left\la V^* \psi \Big( \sigma_{t_n+t}(i(a_n))^* \cdots
    \sigma_{t_n+t}(i(a_n)) \Big) V h, h \right \ra\\
    &= \left\la \pi \circ \E \circ \sigma_t \Big(
    \sigma_{t_n}(i(a_n))^* \cdots \sigma_{t_n}(i(a_n)) \Big) h,h
    \right\ra\\
    &= \left\la \pi \circ \phi_t \circ \E \Big(
    \sigma_{t_n}(i(a_n))^* \cdots \sigma_{t_n}(i(a_n)) \Big) h,h
    \right\ra\\
    &\leq \left\la \pi \circ \E \Big(
    \sigma_{t_n}(i(a_n))^* \cdots \sigma_{t_n}(i(a_n)) \Big) h,h
    \right\ra\\
    &= \| \psi (\sigma_{t_1}(i(a_1))) \cdots
    \psi(\sigma_{t_n}(i(a_n))) Vh\|^2
    \end{align*}
    since each $\phi_t$ is contractive.

    \item Clearly $U_t U_s = U_{t+s}$.  To show that $\{U_t\}$
    is a unit, we need to verify that $U_t Y = \widetilde{\sigma}_t(Y) U_t$ for all $Y \in \widetilde{\Aa}$.  By the normality of both sides, it suffices to establish this for $Y$ in the weakly dense subalgebra $\psi(\Aa)$, which reduces again to consideration of
    elements $\psi(\PP)$.  Now letting $\sigma_{t_1}(i(a_1))
    \cdots \sigma_{t_n}(i(a_n))$ be a typical element of $\psi(\PP)$
    and $\sigma_{s_1}(i(b_1)) \cdots \sigma_{s_m}(i(b_m)) Vh$
    a typical vector in $\Hh$,
    \begin{align*}
    &U_t \psi \big( \sigma_{t_1}(i(a_1)) \cdots
    \sigma_{t_n}(i(a_n))  \sigma_{s_1}(i(b_1)) \cdots
     \sigma_{s_m}(i(b_m)) \big) Vh\\
      &\qquad \qquad \qquad = \psi \big( \sigma_{t_1+t}(i(a_1))
    \cdots \sigma_{t_n+t}(i(a_n)) \sigma_{s_1+t}(i(b_1))
    \cdots \sigma_{s_m+t}(i(b_m)) \big) Vh\\
    &\qquad \qquad \qquad = \bigg[ \widetilde{\sigma}_t
    \big(\psi \big( \sigma_{t_1}(i(a_1)) \cdots \sigma_{t_n}(i(a_n)) \big) \big) \bigg] U_t \bigg[
    \psi \big( \sigma_{s_1}(i(b_1)) \cdots \sigma_{s_m}(i(b_m))
    \big) Vh \bigg]
    \end{align*}
    as desired.

    \item So far we have a contractive unit.  To show
    centrality, consider an operator $X \in \widetilde{\Aa}'$
    and a typical vector $\psi(p) Vh$ for $p \in \PP$.  Note
    that $X$ commutes with $\psi(p)$ and $\psi(\sigma_t(p))$,
    as both are elements of $\widetilde{\Aa}$.  Then
    \[
    U_t X \psi(p) Vh = U_t \psi(p) X Vh = \psi(\sigma_t(p)) X Vh
    = X \psi(\sigma_t(p)) Vh = X U_t \psi(p) Vh
    \]
    so that $U_t X = X U_t$.
\end{itemize}

The existence of a central contractive unit implies that the
dilation semigroup is non-type-III, as referenced above.  In particular, this is the case even when the original CP-semigroup happens to be a type III E$_0$-semigroup---a striking result indeed!

\appendix
% Thesis Appendix: Moment Tables by Dave Gaebler
\chapter{Table of Values of Collapse and Moment Functions}

In the tables to come, $\vec{x}_\ell$ denotes
the tuple $(b_0, a_1, b_1, \dots, a_\ell, b_\ell)$,
and $\vec{y}_\ell$ the tuple $(a_0, b_1, a_1, \dots, b_\ell,
a_\ell)$.

\section{Right-Liberation Collapse Functions}

\begin{align*}
\LC(\vec{x}_1; \emptyset) &= \vec{x}_1 &
 \LC(\vec{x}_3; \emptyset) &= (b_0, a_1 a_2 a_3, b_3)\\
\RC(\vec{x}_1; \emptyset) &= b_0 \rho(a_1) b_1 &
 \LC(\vec{x}_3; \{1\}) &= (b_0, a_1, \mathring{b}_1, a_2 a_3,
b_3) \\
\RC(\vec{x}_1; \{1\}) &= \vec{x}_1 &
 \LC(\vec{x}_3; \{2\}) &= (b_0, a_1 a_2, \mathring{b}_2, a_3,
b_3) \\
\UC(\vec{x}_1; \emptyset) &= -b_0 \rho(a_1) b_1 &
 \LC(\vec{x}_3; \{1,2\}) &= (b_0, a_1, \mathring{b}_1, a_2, \mathring{b}_2, a_3, b_3) \\
\UC(\vec{x}_1; \{1\}) &= \vec{x}_1 &
 \RC(\vec{x}_3; \emptyset) &= b_0 \rho(a_1) b_1 \rho(a_2) b_2 \rho(a_3) b_3\\
\LC(\vec{x}_2; \emptyset) &= (b_0, a_1 a_2, b_1)
&
 \RC(\vec{x}_3; \{1\}) &= (b_0, a_1, b_1 \rho(a_2) b_2 \rho(a_3)b_3)\\
\LC(\vec{x}_2; \{1\}) &= (b_0, a_1, \mathring{b}_1, a_2, b_2) &
 \RC(\vec{x}_3; \{2\}) &= (b_0 \rho(a_1) b_1, a_2, b_2 \rho(a_3)b_3)\\
\RC(\vec{x}_2; \emptyset) &= (b_0 \rho(a_1)
b_1 \rho(a_2) b_2) &
 \RC(\vec{x}_3; \{3\}) &= (b_0 \rho(a_1) b_1 \rho(a_2) b_2,a_3,b_3)\\
\RC(\vec{x}_2; \{1\}) &= (b_0, a_1, b_1 \rho(a_2)
b_2) &
 \RC(\vec{x}_3; \{1,2\}) &= (b_0, a_1, b_1, a_2, b_2 \rho(a_3) b_3)\\
\RC(\vec{x}_2; \{2\}) &= (b_0 \rho(a_1) b_1, a_2, b_2) &
 \RC(\vec{x}_3; \{1,3\}) &= (b_0, a_1, b_1 \rho(a_2) b_2, a_3, b_3)\\
\RC(\vec{x}_2; \{1,2\}) &= \vec{x}_2 &
 \RC(\vec{x}_3; \{2,3\}) &= (b_0 \rho(a_1) b_1, a_2, b_2, a_3, b_3)\\
\UC(\vec{x}_2; \emptyset) &= (b_0 \rho(a_1)
b_1 \rho(a_2) b_2) &
 \RC(\vec{x}_3; \{1,2,3\}) &=(b_0, a_1, b_1, a_2,b_2, a_3,b_3)\\
\UC(\vec{x}_2; \{1\}) &= (b_0, a_1, -b_1 \rho(a_2)
b_2) &
 \UC(\vec{x}_3; \emptyset) &= -b_0 \rho(a_1) b_1 \rho(a_2) b_2 \rho(a_3) b_3\\
\UC(\vec{x}_2; \{2\}) &= (-b_0 \rho(a_1) b_1, a_2, b_2)  &  \UC(\vec{x}_3; \{1\}) &= (b_0, a_1, b_1 \rho(a_2) b_2 \rho(a_3)b_3)\\
\UC(\vec{x}_2; \{1,2\}) &= \vec{x}_2
& \UC(\vec{x}_3; \{2\}) &= (-b_0 \rho(a_1) b_1, a_2, -b_2 \rho(a_3)b_3)\\
& & \UC(\vec{x}_3; \{3\}) &= (b_0 \rho(a_1) b_1 \rho(a_2) b_2,a_3,b_3)\\
& & \UC(\vec{x}_3; \{1,2\}) &= (b_0, a_1, b_1, a_2, -b_2 \rho(a_3) b_3)\\
& & \UC(\vec{x}_3; \{1,3\}) &= (b_0, a_1,- b_1 \rho(a_2) b_2, a_3, b_3)\\
& & \UC(\vec{x}_3; \{2,3\}) &= (-b_0 \rho(a_1) b_1, a_2, b_2, a_3, b_3)\\
& & \UC(\vec{x}_3; \{1,2,3\}) &=(b_0, a_1, b_1, a_2,b_2, a_3,b_3)
\end{align*}

\newpage

\begin{align*}
\LC(\vec{x}_4; \emptyset) &= (b_0, a_1 a_2 a_3 a_4, b_4)\\
\LC(\vec{x}_4; \{1\}) &= (b_0, a_1, \mathring{b}_1, a_2 a_3 a_4, b_4)\\
\LC(\vec{x}_4; \{2\}) &= (b_0, a_1 a_2, \mathring{b}_2, a_3 a_4, b_4)\\
\LC(\vec{x}_4; \{3\}) &= (b_0, a_1 a_2 a_3, \mathring{b}_3, a_4, b_4)\\
\LC(\vec{x}_4; \{1,2\}) &= (b_0, a_1, \mathring{b}_1, a_2,
 \mathring{b}_2, a_3 a_4, b_4)\\
\LC(\vec{x}_4; \{1,3\}) &= (b_0, a_1, \mathring{b}_1, a_2 a_3,
 \mathring{b}_3, a_4, b_4)\\
 \LC(\vec{x}_4; \{2,3\}) &= (b_0, a_1 a_2, \mathring{b}_2, a_3,
 \mathring{b}_3, a_4, b_4)\\
\LC(\vec{x}_4; \{1,2,3\}) &= (b_0, a_1, \mathring{b}_1,
a_2, \mathring{b}_2, a_3, \mathring{b}_3, a_4, b_4)\\
\RC(\vec{x}_4; \emptyset) &= b_0 \rho(a_1) b_1 \rho(a_2)
b_2 \rho(a_3) b_3 \rho(a_4) b_4\\
\RC(\vec{x}_4; \{1\}) &= (b_0, a_1, b_1 \rho(a_2) b_2 \rho(a_3) b_3 \rho(a_4) b_4)\\
\RC(\vec{x}_4; \{2\}) &= (b_0 \rho(a_1) b_1, a_2, b_2 \rho(a_3) b_3
\rho(a_4) b_4)\\
\RC(\vec{x}_4; \{3\}) &= (b_0 \rho(a_1) b_1 \rho(a_2) b_2,
a_3, b_3 \rho(a_4) b_4)\\
\RC(\vec{x}_4; \{4\}) &= (b_0 \rho(a_1) b_1 \rho(a_2) b_2
\rho(a_3) b_3, a_4, b_4)\\
\RC(\vec{x}_4; \{1,2\}) &= (b_0, a_1, b_1, a_2, b_2 \rho(a_3)
b_3 \rho(a_4) b_4)\\
\RC(\vec{x}_4; \{1,3\}) &= (b_0, a_1, b_1 \rho(a_2) b_2,
a_3, b_3 \rho(a_4) b_4)\\
\RC(\vec{x}_4; \{1,4\}) &= (b_0, a_1, b_1 \rho(a_2) b_2
\rho(a_3) b_3, a_4, b_4)\\
\RC(\vec{x}_4; \{2,3\}) &= (b_0 \rho(a_1) b_1, a_2, b_2,
a_3, b_3 \rho(a_4) b_4)\\
\RC(\vec{x}_4; \{2,4\}) &= (b_0 \rho(a_1) b_1, a_2, b_2
\rho(a_3) b_3, a_4, b_4)\\
\RC(\vec{x}_4; \{3,4\}) &= (b_0 \rho(a_1) b_1 \rho(a_2)
b_2, a_3, b_3, a_4, b_4)
\end{align*}

\begin{align*}
\RC(\vec{x}_4; \{1,2,3\}) &= (b_0, a_1, b_1, a_2, b_2, a_3,
b_3 \rho(a_4) b_4)\\
\RC(\vec{x}_4; \{1,2,4\}) &= (b_0, a_1, b_1, a_2, b_2 \rho(a_3)
b_3, a_4, b_4)\\
\RC(\vec{x}_4; \{1,3,4\}) &= (b_0, a_1, b_1 \rho(a_2) b_2, a_3,
b_3, a_4,b_4)\\
\RC(\vec{x}_4; \{2,3,4\}) &= (b_0 \rho(a_1) b_1, a_2, b_2, a_3,
b_3, a_4, b_4)\\
\RC(\vec{x}_4; \{1,2,3,4\}) &= (b_0, a_1, b_1, a_2, b_2, a_3,
b_3, a_4, b_4)\\
\UC(\vec{x}_4; \emptyset) &= b_0 \rho(a_1) b_1 \rho(a_2)
b_2 \rho(a_3) b_3 \rho(a_4) b_4\\
\UC(\vec{x}_4; \{1\}) &= (b_0, a_1, -b_1 \rho(a_2) b_2 \rho(a_3) b_3 \rho(a_4) b_4)\\
\UC(\vec{x}_4; \{2\}) &= (-b_0 \rho(a_1) b_1, a_2, b_2 \rho(a_3) b_3
\rho(a_4) b_4)\\
\UC(\vec{x}_4; \{3\}) &= (b_0 \rho(a_1) b_1 \rho(a_2) b_2,
a_3, -b_3 \rho(a_4) b_4)\\
\UC(\vec{x}_4; \{4\}) &= (-b_0 \rho(a_1) b_1 \rho(a_2) b_2
\rho(a_3) b_3, a_4, b_4)\\
\UC(\vec{x}_4; \{1,2\}) &= (b_0, a_1, b_1, a_2, b_2 \rho(a_3)
b_3 \rho(a_4) b_4)\\
\UC(\vec{x}_4; \{1,3\}) &= (b_0, a_1, -b_1 \rho(a_2) b_2,
a_3, -b_3 \rho(a_4) b_4)\\
\UC(\vec{x}_4; \{1,4\}) &= (b_0, a_1, b_1 \rho(a_2) b_2
\rho(a_3) b_3, a_4, b_4)\\
\UC(\vec{x}_4; \{2,3\}) &= (-b_0 \rho(a_1) b_1, a_2, b_2,
a_3, -b_3 \rho(a_4) b_4)\\
\UC(\vec{x}_4; \{2,4\}) &= (-b_0 \rho(a_1) b_1, a_2, -b_2
\rho(a_3) b_3, a_4, b_4)\\
\UC(\vec{x}_4; \{3,4\}) &= (b_0 \rho(a_1) b_1 \rho(a_2)
b_2, a_3, b_3, a_4, b_4)\\
\UC(\vec{x}_4; \{1,2,3\}) &= (b_0, a_1, b_1, a_2, b_2, a_3,
-b_3 \rho(a_4) b_4)\\
\UC(\vec{x}_4; \{1,2,4\}) &= (b_0, a_1, b_1, a_2, -b_2 \rho(a_3)
b_3, a_4, b_4)\\
\UC(\vec{x}_4; \{1,3,4\}) &= (b_0, a_1, -b_1 \rho(a_2) b_2, a_3,
b_3, a_4,b_4)\\
\UC(\vec{x}_4; \{2,3,4\}) &= (-b_0 \rho(a_1) b_1, a_2, b_2, a_3,
b_3, a_4, b_4)\\
\UC(\vec{x}_4; \{1,2,3,4\}) &= (b_0, a_1, b_1, a_2, b_2, a_3,
b_3, a_4, b_4)
\end{align*}

\newpage
\section{Right-Liberation Moment Functions}
\begin{align*}
\RM(\vec{x}_1) &= b_0 \rho(a_1) b_1\\ 
\LM(\vec{x}_1) &= b_0 \rho(a_1) b_1\\
\UM(\vec{x}_1) &= 0 \\
\RM(\vec{x}_2) &= b_0 \rho(a_1) b_1 \rho(a_2) b_2\\
\LM(\vec{x}_2) &= \nu(b_1) b_0 \big[\rho(a_1 a_2)
- \rho(a_1) \rho(a_2)\big] b_2 + b_0 \rho(a_1) b_1 \rho(a_2) b_2\\
\UM(\vec{x}_2) &= \nu(b_1) b_0 \big[\rho(a_1 a_2)
- \rho(a_1) \rho(a_2)\big] b_2\\
\RM(\vec{x}_3) &= b_0 \rho(a_1) b_1 \rho(a_2) b_2 \rho(a_3) b_3
+ \nu(b_1) b_0 \big[ \rho(a_1 a_2) - \rho(a_1) \rho(a_2) \big] b_3\\
&+ \nu\big(b_1 \rho(a_2) b_2 \big) b_0 \big[ \rho(a_1 a_3) - \rho(a_1)
\rho(a_3) \big] b_3
+ \nu(b_2) b_0 \big[ \rho(a_2 a_3) - \rho(a_2) \rho(a_3) \big] b_3\\
\LM(\vec{x}_3) &= \nu(b_1) \nu(b_2) b_0 \rho(a_1 a_2 a_3) b_3
+ \nu(b_2) b_0 \rho(a_1) \mathring{b}_1 \rho(a_2 a_3) b_3\\
&\quad + \nu(b_1) b_0 \rho(a_1 a_2) \mathring{b}_2 \rho(a_3) b_3
+ b_0 \rho(a_1) \mathring{b}_1 \rho(a_2) \mathring{b}_2 \rho(a_3)b_3\\
&\quad + \nu \Big( \mathring{b}_1 \rho(a_2) \mathring{b}_2 \Big)
b_0 \big[ \rho(a_1 a_3) - \rho(a_1) \rho(a_3) \big] b_3\\
&= b_0 \rho(a_1) b_1 \rho(a_2) b_2 \rho(a_3) b_3\\
&\quad +\nu(b_1) \nu(b_2) b_0 \Big[ \rho(a_1 a_2 a_3)-\rho(a_1) \rho(a_2a_3)
- \rho(a_1 a_2) \rho(a_3) + \rho(a_1) \rho(a_2) \rho(a_3) \Big]b_3\\
&\quad + \nu(b_1) b_0 \big[\rho(a_1 a_2) - \rho(a_1) \rho(a_2) \big]
b_2 \rho(a_3) b_3 + \nu(b_2) b_0 \rho(a_1) b_1 \big[ \rho(a_2 a_3) - \rho(a_2)
\rho(a_3) \big] b_3\\
&\quad + \Big[ \nu(b_1) \nu(b_2) \nu(\rho(a_2)) -
\nu(b_1) \nu \big( \rho(a_2) b_2 \big)
- \nu\big(b_1 \rho(a_2)\big) \nu(b_2) \\
&\qquad \qquad \qquad + \nu \big( b_1 \rho(a_2) b_2 \big) \Big]b_0 \big[
\rho(a_1 a_3) - \rho(a_1) \rho(a_3) \big] b_3\\
\UM(\vec{x}_3) &= \nu(b_1) \nu(b_2) b_0 \Big[ \rho(a_1 a_2 a_3)-\rho(a_1) \rho(a_2a_3)
- \rho(a_1 a_2) \rho(a_3) + \rho(a_1) \rho(a_2) \rho(a_3) \Big]b_3\\
&\quad +  \Big[ \nu(b_1) \nu(b_2) \nu(\rho(a_2)) -
\nu(b_1) \nu \big( \rho(a_2) b_2 \big)
- \nu\big(b_1 \rho(a_2)\big) \nu(b_2) \Big]b_0 \big[
\rho(a_1 a_3) - \rho(a_1) \rho(a_3) \big] b_3
\end{align*}

\begin{align*}
\RM(\vec{x}_4) &= b_0 \rho(a_1) b_1 \rho(a_2) b_2
\rho(a_3) b_3 \rho(a_4) b_4  + \nu(b_1) b_0 \big[ \rho(a_1 a_2) - \rho(a_1) \rho(a_2)\big]
b_2 \rho(a_3) b_3 \rho(a_4) b_4\\
&\quad + \nu \big(b_1 \rho(a_2) b_2 \big) b_0 \big[
\rho(a_1 a_3) - \rho(a_1) \rho(a_3) \big] b_3 \rho(a_4) b_4 \\
&\quad + \nu \big(b_1 \rho(a_2) b_2 \rho(a_3) b_3 \big) b_0
\big[ \rho(a_1 a_4) - \rho(a_1) \rho(a_4) \big] b_4\\
&\quad + \nu(b_2) b_0 \rho(a_1) b_1 \big[ \rho(a_2 a_3)
- \rho(a_2) \rho(a_3) \big] b_3 \rho(a_4) b_4\\
&\quad + \nu \big( b_2 \rho(a_3) b_3 \big) b_0 \rho(a_1) b_1
\big[ \rho(a_2 a_4) - \rho(a_2) \rho(a_4) \big] b_4\\
&\quad + \nu(b_3) b_0 \rho(a_1) b_1 \rho(a_2) b_2
\big[ \rho(a_3 a_4) - \rho(a_3) \rho(a_4) \big] b_4\\
&\quad + \nu \big( b_1 \rho(a_2) b_2 \big) \nu(b_3) b_0
\big[ \rho(a_1 a_3 a_4) - \rho(a_1) \rho(a_3 a_4) - \rho(a_1 a_3)
\rho(a_4) + \rho(a_1) \rho(a_3) \rho(a_4) \big] b_4\\
&\quad + \big[\nu \big( b_1 \rho(a_2) b_2 \big) \nu(b_3) \nu(\rho(a_3))
- \nu \big(b_1 \rho(a_2) b_2 \big) \nu \big( \rho(a_3) b_3 \big)\\
&\qquad \qquad \qquad - \nu \big( b_1 \rho(a_2) b_2 \rho(a_3) \big) \nu(b_3) \big]
b_0 \big[ \rho(a_1 a_4) - \rho(a_1) \rho(a_4) \big] b_4\\
&\quad + \nu(b_1) \nu \big( b_2 \rho(a_3) b_3 \big) b_0
\big[ \rho(a_1 a_2 a_4) - \rho(a_1) \rho(a_2 a_4)
- \rho(a_1 a_2) \rho(a_4) + \rho(a_1) \rho(a_2) \rho(a_4) \big]b_4\\
&\quad + \big[ \nu(b_1) \nu \big( b_2 \rho(a_3) b_3 \big) \nu(\rho(a_2))
- \nu(b_1) \nu \big( \rho(a_2) b_2 \rho(a_3) b_3 \big)\\
&\qquad \qquad \qquad - \nu \big( b_1 \rho(a_2) \big) \nu \big(b_2 \rho(a_3) b_3 \big)\big]
b_0 \big[ \rho(a_1 a_4) - \rho(a_1) \rho(a_4) \big] b_4\\
&\quad + \nu(b_1) \nu(b_2) b_0 \big[ \rho(a_1 a_2 a_3)
- \rho(a_1) \rho(a_2 a_3) - \rho(a_1 a_2) \rho(a_3)
+ \rho(a_1) \rho(a_2) \rho(a_3) \big] b_3 \rho(a_4) b_4\\
&\quad + \big[\nu(b_1) \nu(b_2) \nu(\rho(a_2)) - \nu(b_1) \nu \big(
\rho(a_2) b_2 \big) \\
&\qquad \qquad \qquad - \nu \big( b_1 \rho(a_2) \big) \nu(b_2) \big]
b_0 \big[ \rho(a_1 a_3) - \rho(a_1) \rho(a_3) \big] b_3 \rho(a_4)b_4\\
&\quad + \nu(b_2) \nu(b_3) b_0 \rho(a_1) b_1 \big[ \rho(a_2 a_3 a_4)
- \rho(a_2) \rho(a_3 a_4) - \rho(a_2 a_3) \rho(a_4)
+ \rho(a_2) \rho(a_3) \rho(a_4) \big] b_4\\
&\quad + \big[ \nu(b_2) \nu(b_3) \nu(\rho(a_3)) - \nu(b_2) \nu
\big( \rho(a_3) b_3 \big) \\
&\qquad \qquad \qquad - \nu \big(b_2 \rho(a_3) \big) \nu(b_3)\big] b_0 \rho(a_1) b_1 \big[ \rho(a_2 a_4) - \rho(a_2) \rho(a_4) \big] b_4
\end{align*}

\begin{align*}
\LM(\vec{x}_4) &= \nu(b_1) \nu(b_2) \nu(b_3) b_0 \rho(a_1 a_2
a_3 a_4) b_4 + \nu(b_2) \nu(b_3) b_0 \rho(a_1) \mathring{b}_1
\rho(a_2 a_3 a_4) b_4\\
&\quad +\nu(b_1) \nu(b_3) b_0 \rho(a_1 a_2) \mathring{b}_2
\rho(a_3 a_4) b_4 + \nu(b_1) \nu(b_2) b_0
\rho(a_1 a_2 a_3) \mathring{b}_3 \rho(a_4) b_4\\
&\quad + \nu(b_3) b_0 \rho(a_1) \mathring{b}_1 \rho(a_2) \mathring{b}_2
\rho(a_3 a_4) b_4 + \nu \big( \mathring{b}_1 \rho(a_2)
\mathring{b}_2 \big) \nu(b_3) b_0 \big[ \rho(a_1 a_3 a_4)
- \rho(a_1) \rho(a_3 a_4) \big] b_4\\
&\quad + \nu(b_2) b_0 \rho(a_1) \mathring{b}_1 \rho(a_2 a_3)
\mathring{b}_3 \rho(a_4) b_4 + \nu \big( \mathring{b}_1
\rho(a_2 a_3) \mathring{b}_3 \big) \nu(b_2)  b_0 \big[ \rho(a_1 a_4)
- \rho(a_1) \rho(a_4) \big] b_4\\
&\quad + \nu(b_1) b_0 \rho(a_1 a_2) \mathring{b}_2 \rho(a_3) \mathring{b}_3
\rho(a_4) b_4 + \nu(b_1) \nu \big( \mathring{b}_2 \rho(a_3) \mathring{b}_3
\big) b_0 \big[ \rho(a_1 a_2 a_4) - \rho(a_1 a_2) \rho(a_4)\big]b_4\\
% % %
&\quad +b_0 \rho(a_1) \mathring{b}_1 \rho(a_2) \mathring{b}_2
\rho(a_3) \mathring{b}_3 \rho(a_4) b_4  \\
&\quad + \nu \big(\mathring{b}_1 \rho(a_2) \mathring{b}_2 \big) b_0 \big[
\rho(a_1 a_3) - \rho(a_1) \rho(a_3) \big] \mathring{b}_3 \rho(a_4) b_4 \\
&\quad + \nu \big(\mathring{b}_1 \rho(a_2) \mathring{b}_2 \rho(a_3) \mathring{b}_3 \big) b_0
\big[ \rho(a_1 a_4) - \rho(a_1) \rho(a_4) \big] b_4\\
&\quad + \nu \big( \mathring{b}_2 \rho(a_3) \mathring{b}_3 \big) b_0 \rho(a_1) \mathring{b}_1
\big[ \rho(a_2 a_4) - \rho(a_2) \rho(a_4) \big] b_4\\
&\quad - \nu \big(\mathring{b}_1 \rho(a_2) \mathring{b}_2 \big) \nu \big( \rho(a_3) \mathring{b}_3 \big)
b_0 \big[ \rho(a_1 a_4) - \rho(a_1) \rho(a_4) \big] b_4\\
&\quad - \nu \big( \mathring{b}_1 \rho(a_2) \big) \nu \big(\mathring{b}_2 \rho(a_3) \mathring{b}_3 \big)
b_0 \big[ \rho(a_1 a_4) - \rho(a_1) \rho(a_4) \big] b_4\\
\end{align*}

\newpage
\begin{align*}
\LM(\vec{x}_4) &= b_0 \rho(a_1) b_1 \rho(a_2) b_2 \rho(a_3) b_3 \rho(a_4) b_4 + \nu(b_1)\nu(b_2) \nu(b_3) b_0 \dss{\rho}{[1,2,3,4]}(a) b_4\\
&+ \nu(b_1) \nu(b_2) \nu(b_3) \nu(\rho(a_2)) b_0 \dss{\rho}{[1,3,4]}(a) b_4
+ \nu(b_1) \nu(b_2) \nu(b_3) \nu(\rho(a_3)) b_0 \dss{\rho}{[1,2,4]}(a) b_4\\
&+ \nu(b_1) \nu(b_3) \nu(b_3) \Big[ \nu(\rho(a_2 a_3)) + 2 \nu(\rho(a_2))
\nu(\rho(a_3)) - \nu \big( \rho(a_2) \rho(a_3) \big) \Big] b_0
\dss{\rho}{[1,4]}(a) b_4\\
&+ \nu(b_1) \nu(b_2) \nu(\rho(a_2)) b_0 \dss{\rho}{[1,3]}(a) b_3
\rho(a_4) b_4- \nu(b_1) \nu(b_2) \nu \big( \rho(a_3) b_3 \big) b_0
\dss{\rho}{[1,2,4]}(a) b_4\\
&- \nu(b_1) \nu(b_2)  \Big[ \nu \big( \dss{\rho}{[2,3]}(a) b_3
\big) - 2 \nu(\rho(a_2)) \nu \big( \rho(a_3) b_3 \big) \Big]
b_0 \dss{\rho}{[1,4]}(a) b_4\\
&+\nu(b_1) \nu(b_3) b_0 \dss{\rho}{[1,2]}(a) b_2
\dss{\rho}{[3,4]}(a) b_4 - \nu(b_1) \nu(b_3) \nu \big( \rho(a_2) b_2 \big) \dss{\rho}{[1,3,4]}(a) b_4\\
&- \nu(b_1) \nu(b_3) \nu \big(b_2 \rho(a_3) \big) b_0
\dss{\rho}{[1,2,4]} b_4\\
&+ \nu(b_1) \nu(b_3) \Big[ \nu \big(\rho(a_2) b_2 \rho(a_3) \big)
- \nu \big( \rho(a_2) b_2 \big) \nu(\rho(a_3))
- \nu(\rho(a_2)) \nu \big( b_2 \rho(a_3) \big) \Big]
b_0 \dss{\rho}{[1,4]} b_4\\
&+ \nu(b_2) \nu(b_3) b_0 \rho(a_1) b_1 \dss{\rho}{[2,3,4]}(a) b_4
- \nu(b_2) \nu(b_3) \nu \big( b_1 \rho(a_2) \big) b_0
\dss{\rho}{[1,3,4]} b_4\\
&- \nu(b_2) \nu(b_3) \Big[ \nu \big( b_1 \dss{\rho}{[2,3]}(a)
\big) + 2 \nu \big( b_1 \rho(a_2) \big) \nu(\rho(a_3)) \Big]
b_0 \dss{\rho}{[1,4]} b_4\\
&+ \nu(b_1) b_0 \dss{\rho}{[1,2]}(a) b_2 \rho(a_3)
b_3 \rho(a_4) b_4
- \nu(b_1) \nu \big( \rho(a_2) b_2 \big) b_0
\dss{\rho}{[1,3]}(a) b_3 \rho(a_4) b_4\\
&+ \nu(b_1) \nu \big( b_2 \rho(a_3) b_3 \big) b_0
\dss{\rho}{[1,2,4]}(a) b_4\\
&- \nu(b_1) \Big[ \nu \big( \rho(a_2) b_2 \rho(a_3) b_3
\big) - \nu(\rho(a_2)) \nu \big( b_2 \rho(a_3) b_3 \big)
- \nu \big( \rho(a_2) b_2 \big)
\nu \big( \rho(a_3) b_3 \big) \Big] b_0
\dss{\rho}{[1,4]}(a) b_4\\
&+ \nu(b_2) b_0 \rho(a_1) b_1 \dss{\rho}{[2,3]}(a) b_3
\rho(a_4) b_4 - \nu(b_2) \nu \big( b_1 \rho(a_2) \big) b_0
\dss{\rho}{[1,3]}(a) b_3 \rho(a_4) b_4\\
&- \nu(b_2) \nu \big( \rho(a_3) b_3 \big) b_0
\rho(a_1) b_1 \dss{\rho}{[2,4]}(a) b_4\\
&+ \nu(b_2) \Big[ \nu \big( b_1 \dss{\rho}{[2,3]}(a) b_3 \big)
+ 2 \nu \big( b_1 \rho(a_2) \big)
\nu \big( \rho(a_3) b_3 \big) \Big] b_0
\dss{\rho}{[1,4]} b_4\\
&+ \nu(b_3) b_0 \rho(a_1) b_1 \rho(a_2) b_2
\dss{\rho}{[3,4]} b_4
+ \nu(b_3) \nu \big( b_1 \rho(a_2) b_2 \big) b_0
\dss{\rho}{[1,3,4]}(a) b_4\\
&- \nu(b_3) \nu \big( b_2 \rho(a_3) \big) b_0 \rho(a_1)
b_1 \dss{\rho}{[2,4]}(a) b_4\\
&- \nu(b_3) \Big[ \nu \big( b_1 \rho(a_2) b_2 \rho(a_3)
\big) - \nu \big( b_1 \rho(a_2) b_2 \big) \nu(\rho(a_3))
- \nu \big( b_1 \rho(a_2) \big) \nu \big(b_2 \rho(a_3)\big)
\Big] b_0 \dss{\rho}{[1,4]} b_4\\
&+ \Big[ \nu \big( b_1 \rho(a_2) b_2 \rho(a_3) b_3 \big)
- \nu \big( b_1 \rho(a_2) b_2 \big) \nu \big(
\rho(a_3) b_3 \big) - \nu \big( b_1 \rho(a_2) \big)
\nu \big( b_2 \rho(a_3) b_3 \big) \Big] b_0
\dss{\rho}{[1,4]} b_4\\
&+ \nu \big( b_1 \rho(a_2) b_2 \big) b_0
\dss{\rho}{[1,3]}(a) b_3 \rho(a_4) b_4
+ \nu \big( b_2 \rho(a_3) b_3 \big) b_0
\rho(a_1) b_1 \dss{\rho}{[2,4]}(a) b_4
\end{align*}

\section{Left-Liberation Collapse Functions}
\begin{align*}
\LC'(\vec{y}_1; \emptyset) &=  a_0 a_1&
 \LC'(\vec{y}_3; \emptyset) &= a_0 a_1 a_2 a_3\\
\LC'(\vec{y}_1; \{1\}) &= (a_0, \mathring{b}_1, a_1) &
 \LC'(\vec{y}_3; \{1\}) &= (a_0, \mathring{b}_1, a_1 a_2 a_3)\\
\RC'(\vec{y}_1; \emptyset) &= \vec{y}_1 &
 \LC'(\vec{y}_3; \{2\}) &=(a_0 a_1, \mathring{b}_2, a_2 a_3)\\
\UC'(\vec{y}_1; \emptyset) &= \vec{y}_1 &
 \LC'(\vec{y}_3; \{3\}) &=(a_0 a_1 a_2, \mathring{b}_3, a_3)\\
\LC'(\vec{y}_2; \emptyset) &= a_0 a_1 a_2 &
 \LC'(\vec{y}_3; \{1,2\}) &= (a_0, \mathring{b}_1, a_1,
\mathring{b}_2,a_2 a_3)\\
\LC'(\vec{y}_2; \{1\}) &= (a_0, \mathring{b}_1, a_1 a_2)&
 \LC'(\vec{y}_3; \{1,3\}) &= (a_0, \mathring{b}_1, a_1 a_2, \mathring{b}_3, a_3)\\
\LC'(\vec{y}_2; \{2\}) &= (a_0 a_1, \mathring{b}_2, a_2) &
 \LC'(\vec{y}_3; \{2,3\}) &= (a_0 a_1, \mathring{b}_2, a_2, \mathring{b}_3, a_3)\\
\LC'(\vec{y}_2; \{1,2\}) &= (a_0, \mathring{b}_1, a_1, \mathring{b}_2, a_2) &
 \LC'(\vec{y}_3; \{1,2,3\}) &= (a_0, \mathring{b}_1, a_1,
\mathring{b}_2, a_2, \mathring{b}_3, a_3)\\
\RC'(\vec{y}_2; \emptyset) &=(a_0, b_1 \rho(a_1) b_2, a_2)&
 \RC'(\vec{y}_3; \emptyset) &= (a_0, b_1 \rho(a_1) b_2
 \rho(a_2) b_3, a_3)\\
\RC'(\vec{y}_2; \{1\}) &= \vec{y}_2 &
 \RC'(\vec{y}_3; \{1\}) &= (a_0, b_1, a_1, b_2 \rho(a_2) b_3, a_3)\\
\UC'(\vec{y}_2; \emptyset) &=(a_0, -b_1 \rho(a_1) b_2, a_2)&
 \RC'(\vec{y}_3; \{2\}) &= (a_0, b_1 \rho(a_1) b_2, a_2, b_3, a_3)\\
\UC'(\vec{y}_2; \{1\}) &= \vec{y}_2 &
 \RC'(\vec{y}_3; \{1,2\}) &= \vec{y}_3 \\
& & \UC'(\vec{y}_3; \emptyset) &= (a_0, b_1 \rho(a_1) b_2
 \rho(a_2) b_3, a_3)\\
& & \UC'(\vec{y}_3; \{1\}) &= (a_0, b_1, a_1, -b_2 \rho(a_2) b_3, a_3)\\
& & \UC'(\vec{y}_3; \{2\}) &= (a_0, -b_1 \rho(a_1) b_2, a_2, b_3, a_3)\\
& & \UC'(\vec{y}_3; \{1,2\}) &= \vec{y}_3
\end{align*}

\section{Left-Liberation Moment Functions}
\begin{align*}
\RM'(\vec{y}_1) &= 0\\
\LM'(\vec{y}_1) &= \nu(b_1) a_0 a_1\\
\UM'(\vec{y}_1) &= \nu(b_1) a_0 a_1\\
\RM'(\vec{y}_2) &= \nu \big( b_1 \rho(a_1) b_2 \big) a_0 a_2\\
\LM'(\vec{y}_2) &= \nu(b_1) \nu(b_2) a_0 a_1 a_2
+ \nu \big( \mathring{b}_1 \rho(a_1) \mathring{b}_2 \big)
a_0 a_2\\
&= \nu(b_1) \nu(b_2) a_0 a_1 a_2 + \Big[
\nu \big( b_1 \rho(a_1) b_2 \big) - \nu(b_1)
\nu\big( \rho(a_1) b_2 \big) \\
&\qquad \qquad - \nu \big(b_1 \rho(a_1)\big)
+ \nu(b_1) \nu(\rho(a_1)) \nu(b_2) \Big] a_0 a_2\\
\UM'(\vec{y}_2) &= \nu(b_1) \nu(b_2) a_0 a_1 a_2
+ \Big[\nu \big( \mathring{b}_1 \rho(a_1) \mathring{b}_2 \big)
-\nu \big(b_1 \rho(a_1) b_2 \big) \Big]
a_0 a_2\\
&= \nu(b_1) \nu(b_2) a_0 a_1 a_2 + \Big[ \nu(b_1) \nu(\rho(a_1)) \nu(b_2) - \nu(b_1) \nu\big( \rho(a_1) b_2 \big)
 - \nu \big(b_1 \rho(a_1)\big)
 \Big] a_0 a_2\\
\RM'(\vec{y}_3) &= \nu(b_1) \nu \big(b_2 \rho(a_2) b_3 \big)
a_0 a_1 a_3 + \nu \big( b_1 \rho(a_1) b_2 \big) \nu(b_3) a_0
a_2 a_3 \\
&\qquad + \Big[ \nu \big( b_1 \rho(a_1) b_2 \rho(a_2) b_3\big)
- \nu \big( b_1 \rho(a_2)\big) \nu \big( b_2 \rho(a_2) b_3\big)
- \nu(b_1) \nu \big( \rho(a_1) b_2 \rho(a_2) b_3 \big)\\
&\qquad \qquad - \nu \big( b_1 \rho(a_1) b_2 \rho(a_2) \big) \nu(b_3)
- \nu \big( b_1 \rho(a_1) b_2 \big) \nu \big( \rho(a_2)b_3\big)\\
&\qquad \qquad + \nu(b_1) \nu \big( \rho(a_1)\big) \nu \big( b_2 \rho(a_2) b_3 \big)
+\nu \big( b_1 \rho(a_1) b_1 \big) \nu \big( \rho(a_2) \big)
\nu(b_3) \Big] a_0 a_3
\end{align*}

\begin{align*}
\LM'(\vec{y}_3) &=\nu(b_1) \nu(b_2) \nu(b_3) a_0 a_1 a_2 a_3
+ \nu(b_1) \nu \big( \mathring{b}_2 \rho(a_2) \mathring{b}_3 \big)
a_0 a_1 a_3\\
&+ \nu(b_3) \nu \big( \mathring{b}_1 \rho(a_1) \mathring{b}_2 \big) a_0 a_2 a_3 + \Big[ \nu \big( \mathring{b}_1 \rho(a_1)
\mathring{b}_2 \rho(a_2) \mathring{b}_3 \big)\\
&\qquad \qquad - \nu \big( \mathring{b}_1 \rho(a_1) \big)
\nu \big( \mathring{b}_2 \rho(a_2) \mathring{b}_3 \big)
- \nu \big( \mathring{b}_1 \rho(a_1) \mathring{b}_2 \big)
\nu \big( \rho(a_2) \mathring{b}_3 \big)\\
&\qquad \qquad + \nu(b_2) \nu \big( \mathring{b}_1 \rho(a_1 a_2)
\mathring{b}_3 \big) \Big] a_0 a_3\\
&= \nu(b_1) \nu(b_2) \nu(b_3) a_0 a_1 a_2 a_3
+ \nu(b_1) \Big[  \nu \big( b_2 \rho(a_2) b_3 \big)
-  \nu(b_2) \nu \big( \rho(a_2) b_3 \big)\\
&\qquad \qquad -  \nu(b_3) \nu \big( b_2 \rho(a_2) \big)
+  \nu(b_2) \nu(b_3) \nu \big( \rho(a_2) \big) \Big] a_0 a_1 a_3 + \nu(b_3) \Big[ \nu \big( b_1 \rho(a_1) b_2 \big)\\
&\qquad \qquad- \nu(b_1) \nu \big( \rho(a_1) b_2 \big)
- \nu(b_2) \nu \big( b_1 \rho(a_1) \big)
+ \nu(b_1) \nu \big( \rho(a_1) \big) \nu(b_2) \Big] a_0 a_2 a_3\\
&\qquad+ \Big[\nu(b_2) \nu \big( b_1 \rho(a_1 a_2) b_3 \big)
- \nu(b_1) \nu(b_2) \nu \big( \rho(a_1 a_2) b_3 \big)
- \nu(b_2) \nu(b_3) \nu \big( b_1 \rho(a_1 a_2) \big)\\
&\qquad \qquad + \nu(b_1) \nu(b_2) \nu(b_3) \nu \big( \rho(a_1 a_2) \big) + \nu \big( b_1 \rho(a_1) b_2 \rho(a_2) b_3 \big)
- \nu(b_1) \nu \big( \rho(a_1) b_2 \rho(a_2) b_3 \big)\\
&\qquad \qquad - \nu(b_2) \nu \big( b_1 \rho(a_1) \rho(a_2)
b_3 \big) - \nu(b_3) \nu \big( b_1 \rho(a_1) b_2 \rho(a_2) \big) + \nu(b_1) \nu(b_2) \nu \big( \rho(a_1) \rho(a_2) b_3\big)\\
&\qquad \qquad + \nu(b_1) \nu(b_3) \nu \big( \rho(a_1)
b_2 \rho(a_2) \big) + \nu(b_2) \nu(b_3) \nu \big(b_1 \rho(a_1) \rho(a_2) \big) \\
&\qquad \qquad- \nu \big( b_1 \rho(a_1)\big) \nu \big( b_2 \rho(a_2) b_3 \big) + \nu(b_1) \nu \big(\rho(a_1)\big)
\nu\big( b_2 \rho(a_2) b_3 \big)\\
&\qquad \qquad +2 \nu(b_2) \nu \big( b_1 \rho(a_1)\big)
\nu \big(\rho(a_2) b_3 \big) + \nu(b_3) \nu \big( b_1 \rho(a_1)\big) \nu \big( b_2 \rho(a_2)\big)\\
&\qquad \qquad - 2 \nu(b_1) \nu(b_2) \nu \big( \rho(a_1)\big)
\nu \big(\rho(a_2) b_3 \big) - \nu(b_1) \nu(b_3) \nu \big(\rho(a_1)\big) \nu \big(b_2 \rho(a_2) \big)\\
&\qquad \qquad- \nu \big(b_1 \rho(a_1) b_2 \big) \nu \big( \rho(a_2) b_3 \big) + \nu(b_1) \nu \big(\rho(a_1) b_2 \big) \nu \big( \rho(a_2) b_3 \big)\\
&\qquad \qquad + \nu(b_3) \nu \big( b_1 \rho(a_1) b_2 \big) \nu \big( \rho(a_2) \big) - \nu(b_1) \nu(b_3) \nu \big( \rho(a_1) b_2 \big)\nu \big(\rho(a_2)\big)\Big] a_0 a_3
\end{align*}

\newpage
\section{Moment Polynomials}
For the sake of brevity, we use $1,2,3$ to denote
$t_1, t_2, t_3$, with the standing assumption
that $0 < t_1 < t_2 < t_3$, and omit listing $a_1, \dots, a_n$;
  hence $\Ss(1,0,3,2)$ is an abbreviation for
  $\Ss(t_1, 0, t_3, t_2; a_1, a_2, a_3, a_4)$,
  and $\phi_{2-1}$ for $\phi_{t_2-t_1}$.

  After the first few, we omit
polynomials in which 0 appears as the first or last index,
since the bimodule property easily reduces these to others, viz. \begin{align*}
\Ss(0, s_1, \dots, s_k; a_0, a_1, \dots, a_k) &= a_0 \phi_\tau
\big(\Ss(s_1-\tau, \dots, s_k-\tau; a_1, \dots, a_k) \big)\\
\Ss(s_1, \dots, s_k, 0;  a_1, \dots, a_k, a_{k+1}) &= \phi_\tau
\big(\Ss(s_1-\tau, \dots, s_k-\tau; a_1, \dots, a_k) \big) a_{k+1}
\end{align*}
where $\tau = \min (s_1, \dots, s_k)$.

We also omit polynomials with consecutive time indices equal, since these can be reduced by multiplying consecutive terms with the same time index; for instance, \\
$\Ss(t_1, t_1, t_2, t_3, t_3; a_1, a_2, a_3, a_4, a_5) = \Ss(t_1, t_2, t_3; a_1 a_2, a_3, a_4 a_5)$.

\begin{align*}
\Ss(0) &= a_1  \\
\Ss(0,1) &= a_1 \phi_1(a_2)\\
\Ss(1,0) &= \phi_1(a_1) a_2\\
\Ss(0,1,0) &= a_1 \phi_1(a_2) a_3\\
\Ss(1,0,1) &= \phi_1(a_1) a_2 \phi_1(a_3) +\omega(a_2) \big[ \phi_1(a_1 a_3) - \phi_1(a_1) \phi_1(a_3) \big] \\
\Ss(0,1,2) &= a_1 \phi_1 \big( a_2 \phi_{2-1}( a_3) \big)\\
\Ss(0,2,1) &= a_1 \phi_1 \big( \phi_{2-1}(a_2) a_3 \big)\\
\Ss(1,0,2) &= \phi_1(a_1) a_2 \phi_2(a_3) + \omega(a_2) \big[ \phi_1 \big(a_1 \phi_{2-1}(a_3)
\big) - \phi_1(a_1) \phi_2(a_3) \big]\\
\Ss(1,2,0) &= \phi_1 \big( a_1 \phi_{2-1}(a_2)\big)a_3\\
\Ss(2,0,1) &= \phi_2(a_1) a_2 \phi_1(a_3)
+ \omega(a_2) \big[ \phi_1 \big( \phi_{2-1}(a_1) a_3 \big)
- \phi_2(a_1) \phi_1(a_3) \big]\\
\Ss(2,1,0) &= \phi_1 \big( \phi_{2-1}(a_1) a_2 \big) a_3\\
\Ss(1,0,1,2) &= \phi_1(a_1) a_2 \phi_1 \big( a_3 \phi_{3-1}(a_4) \big) + \omega(a_2) \big[ \phi_1 \big( a_1 a_3 \phi_{2-1}(a_4) \big) - \phi_1(a_1) \phi_1 \big( a_3 \phi_{2-1}(a_4) \big) \big]\\
\Ss(1,0,2,1) &= \phi_1(a_1) a_2 \phi_1 \big( \phi_{2-1}(a_3) a_4 \big) +\omega(a_2) \big[ \phi_1 \big( a_1 \phi_{2-1}(a_3) a_4 \big) - \phi_1(a_1) \phi_1 \big( \phi_{2-1}(a_3) a_4 \big)\big]\\
\Ss(1,2,0,1) &= \phi_1 \big( a_1 \phi_{2-1}(a_2)\big)
a_3 \phi_1(a_4) + \omega(a_3) \big[ \phi_1 \big(
a_1 \phi_{2-1}(a_2) a_4 \big) - \phi_1 \big( a_1 \phi_{2-1}(a_2)\big) \phi_1(a_4) \big]\\
\Ss(2,1,0,1) &= \phi_1 \big( \phi_{2-1}(a_1) a_2 \big) a_3 \phi_1(a_4) + \omega(a_3) \big[ \phi_1\big( \phi_{2-1}(a_1)
a_2 a_4 \big) - \phi_1 \big( \phi_{2-1}(a_1) a_2\big)
\phi_1(a_4) \big]\\
\Ss(1,2,0,2) &=\phi_1 \big( a_1 \phi_{2-1}(a_2) \big)
a_3 \phi_2(a_4) + \omega(a_3) \big[ \phi_1 \big( a_1 \phi_{2-1}(a_2a_4) \big) - \phi_1 \big( a_1 \phi_{2-1}(a_3)\big) \phi_2(a_4) \big]\\
\Ss(2,0,1,2) &= \phi_2(a_1) a_2\phi_1 \big( a_3 \phi_{2-1}(a_4)\big) + \omega(a_2) \big[ \phi_1 \big(
\phi_{2-1}(a_1) a_3 \phi_{2-1}(a_4) \big) -
\phi_2(a_1) \phi_1 \big( a_3 \phi_{2-1}(a_4)\big)\big]\\
&\qquad \qquad \qquad + \omega(a_2) \omega(a_3) \big[
\phi_2(a_1 a_4) - \phi_2(a_1) \phi_2(a_4)\big]\\
\Ss(2,0,2,1) &= \phi_2(a_1) a_2 \phi_1 \big( \phi_{2-1}(a_3) a_4 \big) + \omega(a_2) \big[ \phi_1 \big( \phi_{2-1}(a_1 a_3) a_4\big) - \phi_2(a_1) \phi_1 \big( \phi_{2-1}(a_3) a_4\big)\big]\\
\Ss(2,1,0,2) &= \phi_1 \big( \phi_{2-1}(a_1) a_2 \big)
a_3 \phi_2(a_4) + \omega(a_3) \big[ \phi_1 \big( \phi_{2-1}(a_1) a_2 \phi_{2-1}(a_4) \big) - \phi_1 \big( \phi_{2-1}(a_1) a_2
\big) \phi_2(a_4) \big]\\
&\qquad \qquad \qquad + \omega(a_2) \omega(a_3)
\big[ \phi_2(a_1 a_4) - \phi_2(a_1) \phi_2(a_4)\big]
\end{align*}

\begin{align*}
\Ss(1,0,2,3) &= \phi_1(a_1) a_2 \phi_2 \big( a_3
\phi_{3-2}(a_4) \big) + \omega(a_2) \Big[ \phi_1 \big(
a_1 \phi_{2-1} \big( a_3 \phi_{3-2}(a_4)\big)\big)
- \phi_1(a_1) \phi_2 \big( a_3 \phi_{3-2}(a_4)\big)\Big]\\
\Ss(1,0,3,2) &= \phi_1(a_1) a_2 \phi_2 \big( \phi_{3-2}(a_3) a_4 \big) + \omega(a_2) \Big[ \phi_1\big(a_1 \phi_{2-1}\big(
\phi_{3-2}(a_3) a_4 \big) \big) - \phi_1(a_1)
\phi_2 \big(\phi_{3-2}(a_3)a_4\big) \Big]\\
\Ss(1,2,0,3) &= \phi_1 \big( a_1 \phi_{2-1}(a_2)\big)
a_3 \phi_3(a_4) + \omega(a_3) \Big[ \phi_1 \big(
a_1 \phi_{2-1}\big(a_2 \phi_{3-2}(a_4)\big)\big)
- \phi_1 \big( a_1 \phi_{2-1}(a_2) \big) \phi_3(a_4)\Big]\\
\Ss(1,3,0,2) &= \phi_1 \big( a_1 \phi_{2-1}(a_3)\big)
a_3 \phi_2(a_4) + \omega(a_3) \Big[ \phi_1 \big( a_1
\phi_{2-1} \big( \phi_{3-2}(a_2) a_4 \big) \big)
- \phi_1 \big( a_1 \phi_{3-1}(a_2)\big) \phi_2(a_4) \Big]\\
\Ss(2,0,1,3) &= \phi_2(a_1) a_2 \phi_1 \big( a_3 \phi_{3-1}(a_4)\big) + \omega(a_2) \Big[ \phi_1 \big( \phi_{2-1}(a_1) a_3 \phi_{3-1}(a_4) \big) - \phi_2(a_1)
\phi_1 \big( a_3 \phi_{3-1}(a_4) \big) \Big]\\
&\qquad \qquad \qquad + \omega(a_2) \omega(a_3) \Big[
\phi_2\big(a_1 \phi_{3-2}(a_4)\big) - \phi_2(a_1)
\phi_3(a_4) \Big]\\
\Ss(2,0,3,1) &= \phi_2(a_1) a_2 \phi_1 \big(
\phi_{3-1}(a_3) a_4 \big) + \omega(a_2) \Big[
\phi_1 \big( \phi_{2-1} \big( a_1 \phi_{3-2}(a_3) \big)
a_4 \big) - \phi_2(a_1) \phi_1 \big( \phi_{3-1}(a_3) a_4\big)
\Big]\\
\Ss(2,1,0,3) &= \phi_1 \big( \phi_{2-1}(a_1) a_2 \big)
a_3 \phi_3(a_4) + \omega(a_3) \big[ \phi_1 \big(
\phi_{2-1}(a_1) a_2 \phi_{3-1}(a_2)\big) - \phi_1 \big(
\phi_{2-1}(a_1) a_2 \big) \phi_3(a_4) \Big]\\
&\qquad \qquad \qquad + \omega(a_2) \omega(a_3)
\Big[ \phi_2 \big( a_1 \phi_{3-2}(a_4)\big)
- \phi_2(a_1) \phi_3(a_4) \Big]\\
\Ss(2,3,0,1) &= \phi_2 \big( a_1 \phi_{3-2}(a_2)\big)
a_3 \phi_1(a_4) + \omega(a_3) \Big[ \phi_1 \big(
\phi_{2-1}(a_1 \phi_{3-2}(a_3)\big) a_4 \big)
- \phi_2 \big( a_1 \phi_{3-2}(a_2)\big) \phi_1(a_4)\Big]\\
\Ss(3,0,1,2) &= \phi_3(a_1) \phi_1 \big( a-3 \phi_{2-1}(a_4)\big) + \omega(a_2) \Big[ \phi_1 \big( \phi_{3-1}(a_1) a_3 \phi_{2-1}(a_4)\big) - \phi_3(a_1) \phi_1 \big( a_3 \phi_{2-1}(a_4)\big) \Big]\\
&\qquad \qquad \qquad + \omega(a_2) \omega(a_3) \Big[
\phi_2 \big( \phi_{3-2}(a_1) a_4 \big) - \phi_3(a_1)
\phi_2(a_4) \Big]\\
\Ss(3,0,2,1) &= \phi_3(a_1) a_2 \phi_1 \big( \phi_{2-1}(a_3) a_4 \big) + \omega(a_2) \Big[ \phi_1 \big( \phi_{2-1} \big( \phi_{3-2} (a_1) a_3 \big) a_4 \big)
- \phi_3(a_1) \phi_1 \big( \phi_{2-1}(a_3) a_4 \big) \Big]\\
\Ss(3,1,0,2) &= \phi_1 \big( \phi_{3-1}(a_1) a_2 \big)
a_3 \phi_2(a_4) + \omega(a_3) \Big[ \phi_1 \big( \phi_{3-1}(a_1) a_2 \phi_{2-1}(a_4) \big) - \phi_1 \big( \phi_{3-1}(a_1) a_2 \big) \phi_2(a_4) \Big]\\
&\qquad \qquad \qquad + \omega(a_2) \omega(a_3) \Big[
\phi_2 \big( \phi_{3-2}(a_1) a_4 \big) - \phi_3(a_1)
\phi_2(a_4) \Big]\\
\Ss(3,2,0,1) &= \phi_2 \big( \phi_{3-2}(a_1) a_2 \big) a_3
\phi_1(a_4) + \omega(a_3) \Big[ \phi_1 \big( \phi_{2-1}
\big( \phi_{3-2}(a_1) a_2 \big) a_4 \big)
- \phi_2 \big( \phi_{3-2}(a_1) a_2 \big) \phi_1(a_4)\Big]
\end{align*}

\newpage
To illustrate possibilities of discontinuity,
we consider the following \\
for $0 < \tau < t_1
< t_2 < t_3$:

\begin{align*}
\Ss(t_1, \tau, t_3, 0, t_2) &= \phi_\tau \big( \phi_{t_1-\tau}(a_1) a_2
\phi_{t_3-\tau}(a_3) \big) a_4 \phi_{t_2}(a_5)\\
&\qquad + \omega(a_2) \phi_\tau \Big( \phi_{t_1-\tau}
\big( a_1 \phi_{t_3-t_1}(a_3) \big) - \phi_{t_1-\tau}(a_1)
\phi_{t_3-\tau}(a_3) \Big) a_4 \phi_{t_2}(a_5) \\
&\qquad + \omega(a_4)\phi_\tau \Big( \phi_{t_1-\tau}(a_1) a_2
 \phi_{t_2-\tau} \big( \phi_{t_3-t_2}(a_3) a_5 \big)
 \Big) \\
&\qquad + \omega(a_2) \omega(a_4)
 \phi_\tau \Big[ \phi_{t_1-\tau} \big( a_1
 \phi_{t_2-\tau} \big( \phi_{t_2-t_2}(a_3) a_5 \big) \big)
 - \phi_{t_1-\tau}(a_1) \phi_{t_2-\tau} \big(\phi_{t_3-t_2}(a_3) a_5\big) \Big]\\
&\qquad - \omega(a_4) \phi_\tau \big(\phi_{t_1-\tau}(a_1) a_2
\phi_{t_3-\tau}(a_3) \big) \phi_{t_2}(a_5)\\
&\qquad - \omega(a_2) \omega(a_4)\phi_\tau \Big( \phi_{t_1-\tau}
\big( a_1 \phi_{t_3-t_1}(a_3) \big) - \phi_{t_1-\tau}(a_1)
\phi_{t_3-\tau}(a_3) \Big) \phi_{t_2}(a_5)\\
\Ss(t_1, 0, t_3, 0, t_2) &= \phi_{t_1}(a_1)
a_2 \phi_{t_3}(a_3) a_4 \phi_{t_2}(a_5)\\
&\quad + \omega(a_2) \omega(a_4)
\Big[ \phi_{t_1} \big(a_1 \phi_{t_2-t_1} \big(\phi_{t_3-t_2}(a_3)
\big) a_5 \big) - \phi_{t_1}(a_1) \phi_{t_2} \big(
\phi_{t_3-t_2}(a_3) a_5 \big)\\
&\qquad \qquad \qquad - \phi_{t_1}\big(a_1 \phi_{t_3-t_1}(a_3) \big) \phi_{t_2}(a_5) +\phi_{t_1}(a_1) \phi_{t_3}(a_3) \phi_{t_2}(a_5) \Big]\\
&\quad + \omega(a_2) \Big[ \phi_{t_1}\big(a_1 \phi_{t_3-t_1}(a_3)\big) - \phi_{t_1}(a_1) \phi_{t_3}(a_3) \Big] a_4
\phi_{t_2}(a_5)\\
&\quad + \omega(a_4) \phi_{t_1}(a_1) a_2 \Big[ \phi_{t_2}\big(\phi_{t_3-t_2}(a_3) a_5 \big) - \phi_{t_3}(a_3) \phi_{t_2}(a_5) \Big]\\
&\quad + \Big[\omega(a_2) \omega(a_4) \omega(a_3)
- \omega(a_2) \omega \big(\phi_{t_3}(a_3) a_4 \big)
- \omega \big( a_2 \phi_{t_3}(a_3) \big) \omega(a_4)\\
&\qquad \qquad \qquad + \omega \big( a_2 \phi_{t_3}(a_3) a_4 \big) \Big] \big[ \phi_{t_1}\big(a_1 \phi_{t_2-t_1}(a_5) \big)
- \phi_{t_1}(a_1) \phi_{t_3}(a_3) \big]\\
\Ss(t_1, 0, t_3, 0, t_2) &- \lim_{\tau \to 0^+}
\Ss(t_1, \tau, t_3, 0, t_2) = \Big[\omega(a_2) \omega(a_4) \omega(a_3)
 - \omega(a_2) \omega \big(\phi_{t_3}(a_3) a_4 \big)\\
&\qquad - \omega \big( a_2 \phi_{t_3}(a_3) \big) \omega(a_4)
+ \omega \big( a_2 \phi_{t_3}(a_3) a_4 \big) \Big]
\Big[ \phi_{t_1}\big(a_1 \phi_{t_2-t_1}(a_5) \big)
- \phi_{t_1}(a_1) \phi_{t_3}(a_3) \Big]
\end{align*}

\bibliographystyle{amsalpha}
\biblio{Thesis}

\makeatletter
\makeatother
\end{document}